\newtheorem{Theo}{Theorem}[section]
\newtheorem{Prop}[Theo]{Proposition}
\newtheorem{Cor}[Theo]{Corollary}
\newtheorem{Lemma}[Theo]{Lemma}
\theoremstyle{definition}
\newtheorem{Exam}[Theo]{Example}
\newtheorem{Remark}[Theo]{Remark}
\newtheorem{Remarks}[Theo]{Remarks}
\newtheorem{Defn}[Theo]{Definition}
\newcommand{\Rep}{{\rm Rep}}
\newcommand{\Hom}{{\rm Hom}}
\newcommand{\Ext}{{\rm Ext}}
\newcommand{\End}{{\rm End}}
\newcommand{\mmod}{{\rm mod}}
\newcommand{\Z}{\mathbb{Z}}
\newcommand{\C}{\mathcal{C}}
\newcommand{\cala}{\mathcal{A}}
\newcommand{\calf}{\mathcal{F}}
\newcommand{\calx}{\mathcal{X}}
\newcommand{\calc}{\mathcal{C}}
\newcommand{\calo}{\mathcal{O}}
\newcommand{\za}{\alpha}
\newcommand{\zb}{\beta}
\newcommand{\zD}{\Delta}
\newcommand{\zg}{\gamma}
\newcommand{\zG}{\Gamma}
\newcommand{\zl}{\lambda}
\begin{document}

\title{Group actions on cluster algebras and cluster categories}

\author{Charles Paquette}\address{Department of Mathematics and Computer Science, Royal Military College of Canada,
Kingston, ON K7K 7B4, Canada}
\email{charles.paquette@usherbrooke.ca}
\author{Ralf Schiffler}\thanks{ The authors were supported by the NSF CAREER grant DMS-1254567. The first author was also supported by the Natural Sciences and Engineering Research Council of Canada, while the second was also supported by NSF Grant DMS-1800860. They are thankful to Thomas Lam, Pavel Tumarkin, Michael Gekhtman and Pierre-Guy Plamondon for useful discussions. They also want to thank an anonymous referee, as the comments made led to an improved version of the paper}
\address{Department of Mathematics, University of Connecticut,
Storrs, CT 06269-3009, USA}
\email{schiffler@math.uconn.edu}
\begin{abstract}
 We introduce admissible group actions on cluster algebras, cluster categories and quivers with potential and study the resulting orbit spaces.
The orbit space of the cluster algebra has the structure of a generalized cluster algebra. This generalized cluster structure is different from those introduced by Chekhov-Shapiro and Lam-Pylyavskyy.
For group actions on cluster algebras from surfaces, we describe the generalized cluster structure of the orbit space in terms of a triangulated orbifold. In this case, we give a complete list of exchange polynomials, and we classify the algebras of rank 1 and 2.
We also show that every admissible group action on a cluster category induces a precovering from the cluster category to the cluster category of orbits. Moreover this precovering is dense if the categories are of finite type.
\end{abstract}

\maketitle

\section{Introduction}
Cluster algebras were introduced by Fomin and Zelevinsky in \cite{FZ} in the context of canonical bases in Lie theory and total positivity. A cluster algebra $\cala=\cala(Q)$ is a subalgebra of a field of rational functions in $n$-variables whose generators, the cluster variables, are constructed recursively from an initial seed of $n$-variables. This construction, and hence the cluster algebra, is determined by a quiver $Q$ with $n$ vertices.
A strong connection between cluster algebras and representation theory was realized via cluster categories, which were introduced in \cite{BMRRT,CCS}.
The cluster character of \cite{CC,Palu} is a map from the set of objects of the cluster category $\calc$ to a ring of Laurent polynomials which provides a direct formula for the cluster variables
and gives a bijection between reachable cluster-tilting objects in $\calc$ and clusters in $\cala$. Cluster categories have been generalized in \cite{Amiot} using the theory of quivers with potential developed in \cite{DWZ}.

In this paper, we study certain group actions on cluster algebras, cluster categories and quivers with potential. We say that a group of automorphisms $G$ is admissible if it acts freely on a given cluster in $\cala$, or, equivalently, on a given cluster-tilting object in $\calc$.  On the level of quivers with potential this means that the group acts freely on the vertices of the quiver.

We define and study the corresponding orbit spaces in each of these settings. On the level of quivers with potential, we obtain a $G$-covering from the Jacobian algebras of the quiver with potential to the Jacobian algebras of the orbit quiver, see Proposition \ref{prop 21} and Corollary \ref{CovJacobian}. On the level of cluster categories, we have a $G$-precovering from the cluster category $\calc$ of the quiver with potential to the cluster category $\calc_G$ of $G$-orbits. Recall that a covering functor is a precovering functor that is also dense.
In particular, we show that $\calc$ is of finite type if and only if $\calc_G$ is of finite type, and that in this case our functor is a $G$-covering that preserves Auslander-Reiten triangles, see Propositions \ref{FiniteType} and \ref{PropARQUivers}.

The orbit space of the cluster algebra can be defined in at least two ways. On the one hand, we can take the quotient of the cluster algebra $\cala$ by identifying the cluster variables that lie in the same $G$-orbit. On the other hand, we can take the algebra $\cala_G$ generated by the images under the cluster character of all summands of cluster-tilting objects obtained from the initial cluster by $G$-orbit mutations. We will see that there are two natural ways to define a cluster character, however, one seems better behaved with respect to the precovering functor.
These two constructions yield the same algebra under some mild conditions. In general, the algebra $\cala_G$ is not an honest cluster algebra but rather a generalized cluster algebra. We point out that our generalized cluster structure is not the same as the one constructed by Chekhov and Shapiro in \cite{ChSh} and also not the one of Lam and Pylyavskyy in \cite{LP}, see Remark \ref{rem LP}.

\medskip
We devote particular attention to group actions on cluster algebras from surfaces. In this case, the initial cluster of $\cala$ corresponds to a triangulation of a surface with marked points, and the elements of $G$ are elements of the mapping class group of the surface that map the triangulation to itself. The admissibility condition translates to $G$ acting freely on the arcs of the triangulation.

The orbit space of such a group action is an orbifold.
In this situation, we give an explicit list of the exchange polynomials of the orbit cluster algebra $\cala_G$ in terms of the orbifold. We show that the algebra generated by all variables obtained by finite sequences of generalized mutations with respect to these exchange polynomials is indeed equal to the generalized cluster algebra $\cala_G$ of orbits. Some of the exchange polynomials that we get are similar to the exchange polynomials of the quasi-cluster algebras as defined in \cite{DP}. In general, the quasi-cluster algebras are different from the orbit cluster algebras $\cala_G$ coming from the action of a group $G$ on a surface; see Remark \ref{RemDP}.

We also point out that our orbifolds are different from the orbifolds considered by Felikson, Shapiro and Tumarkin \cite{FeShTu}.

The paper is organized as follows. In Section \ref{sect 2}, we recall background on quivers with potential and define admissible group actions. Our results on $G$-coverings follow in Section \ref{sect 3}. In Section \ref{sect 4}, we study admissible actions on the level of triangulated surfaces and introduce the orbifolds. Section \ref{TechnicalSection} is devoted to the computation of the exchange polynomials for the orbifolds. We use these computations in Section~\ref{sect 6} in order to define the generalized cluster algebra of an orbifold. We classify the four algebras of rank 1 and the six algebras of rank 2 in the Subsections \ref{sect 61} and \ref{sect 62}, respectively. Finally, in Section \ref{sect 7}, we come back to the study of cluster categories. We show that, in the surface case, the generalized cluster algebra of the orbifold is equal to the cluster algebra $\cala_G$, and in the finite type case,  the precovering of cluster categories is actually a covering. In order to study the cluster algebra in case $\C$ is the cluster category of a Dynkin quiver, we introduce a cluster character in $\C_G$ that gives all cluster variables of $\cala_G$.

\section{Preliminaries}\label{sect 2}

In this paper, $k$ denotes an algebraically closed field and $G$ a finite group whose order is not divisible by the characteristic of $k$. Also, $Q=(Q_0, Q_1)$ denotes a finite quiver. We compose paths like functions, that is, from right to left.

\subsection{Quivers with potential and automorphisms}\label{Quivers} Let $Q$ be a quiver. If $p, p'$ are two oriented cycles in $Q$, we write $p \sim p'$ if one can get $p'$ by cyclically rotating $p$. In other words, if $p = \alpha_r \cdots \alpha_2  \alpha_1$, then there exists $1 \le i \le r$ such that $p' = \alpha_{i-1}\cdots\alpha_1\alpha_r\cdots\alpha_{i+1}\alpha_i$. This relation is clearly an equivalence relation and the class of a cycle $p$ is denoted $[p]$. We define ${\rm cyc}(Q)$ to be the set of all equivalence classes of cycles of $Q$. Recall that a \emph{potential} for $Q$ is a (possibly infinite) linear combination of distinct elements in cyc$(Q)$. In this paper, $W$ always denotes a potential for $Q$. The pair $(Q,W)$ is called a quiver with potential \cite{DWZ}.

\medskip

An oriented cycle of length one is called a \emph{loop} and an oriented cycle of length two is called a \emph{$2$-cycle}. If $a$ is a vertex of $Q$ such that there are no loops and no $2$-cycles at $a$, then we can define the mutation $\mu_a(Q, W) = (Q', W')$ of $(Q, W)$ which is the \emph{mutation in direction} $a$ of the quiver with potential $(Q, W)$; see \cite{DWZ}. In particular, $Q'$ is a quiver with the same vertex set as the one for $Q$ and $W'$ is a potential for $Q'$. In general, the quiver $Q'$ may have $2$-cycles at $a$ (but no loops at $a$). There is a notion of \emph{right-equivalence} of quivers with potentials and even if $Q'$ has $2$-cycles at $a$, it could be possible to find a quiver with potential $(Q'', W'')$ that is right-equivalent to $(Q', W')$ so that $Q''$ has no $2$-cycles at $a$. Some authors are interested in the case where $W$ is \emph{non-degenerate}, which means that the quivers obtained from $(Q, W)$ by a finite sequence of mutations do not have $2$-cycles (up to right-equivalence). In particular, the original quiver $Q$ has no loops and no $2$-cycles. Having no $2$-cycles (and no loops) at a vertex $a$ of a quiver $Q$ is generally needed to define mutation in direction $a$ of $Q$. So in the non-degenerate setting, one can iteratively perform mutations of $(Q, W)$ in all possible directions, and at the quiver level, this is the usual quiver mutation as defined by Fomin-Zelevinsky in \cite{FZ}.

\medskip

Let $\varphi$ be an automorphism of $Q$. Clearly, $\varphi$ induces a permutation on cyc$(Q)$. We say that $\varphi$ is an \emph{automorphism} of $(Q,W)$ provided that whenever $\lambda [p]$ is a summand of $W$, with $\zl \in k$, then $\lambda \varphi[p]$ is also a summand of $W$. Let $G$ be a group of automorphisms of $(Q,W)$.
We call $G$ \emph{admissible} if each $\varphi \in G$ acts freely on $Q_0$, that is, if $\varphi(x)=x$ for some $x \in Q_0$ then $\varphi$ has to be the identity automorphism. Note that the generators of a group $G$ of automorphisms of $(Q,W)$ may act freely on $Q_0$ without $G$ being admissible.

\medskip

Since each element of $G$ acts freely on the vertices of $Q$, clearly, each element of $G$ also acts freely on the arrows of $Q$. For $a \in Q_0 \cup Q_1$, we denote by $G a$ the $G$-orbit of $a$. By the above observation, one has $|G a| = |G|$. In particular, $|G|$ divides both $|Q_0|, |Q_1|$. We define a quiver $Q_G$, called the \emph{orbit quiver} of $Q$, by $$(Q_G)_0 = \{Gx \mid x \in Q_0\}\;\; \text{and} \;\; (Q_G)_1 = \{G\alpha \mid \alpha \in Q_1\}.$$ For an illustration, see Example \ref{ex 2.2} below.

\subsection{Jacobian algebras and automorphisms}\label{sect Jac}
Let $(Q,W)$ be a quiver with potential. We recall the construction of the Jacobian algebra of $(Q,W)$. Given an arrow $\alpha$ in $Q$, consider $\partial_\alpha$ the partial differential operator on $kQ$ such that if $p=\alpha_r\cdots\alpha_1$, then
$$\partial_\alpha(p)=\sum_{i=1}^r\alpha_{i-1}\cdots\alpha_1\alpha_r\cdots\alpha_{i+1}\delta_{\alpha_i,\alpha}$$
where $\delta$ stands for the Kronecker symbol.
One can define $\partial_\alpha$ on an element $[p] \in {\rm cyc}(Q)$ by defining $\partial_\alpha[p] = \partial_\alpha(p)$. Take $I$ the ideal of $kQ$ generated by all $\partial_\alpha(W)$ where $\alpha$ runs through the set of arrows of $Q$. The \emph{Jacobian algebra} of $(Q,W)$, denoted $J(Q,W)$, is defined to be $\widehat{kQ}/\widehat{I}$ where $\widehat{kQ}$ is the completed path algebra of $Q$ and $\widehat{I}$ is the completion of $I$ in $\widehat{kQ}$. This algebra is not always finite dimensional. In case it is finite dimensional, the pair $(Q,W)$ is called \emph{Jacobi-finite}.

\medskip

Now, let $G$ be an admissible group of automorphisms of $(Q,W)$. Given an element $[p]$ of cyc$(Q)$, we denote by $G[p]$ its $G$-orbit, which is a subset of cyc$(Q)$. Let $\mathcal{E}$ be the set of all $G$-orbits in cyc$(Q)$. We can decompose $W$ as
\begin{equation}\label{eq 1}
 W = \sum_{e \in \mathcal{E}}\lambda_{e}\left(\sum_{[p] \in e}[p]\right).
\end{equation}

\begin{Lemma}
Any $\varphi \in G$ induces an automorphism of $J(Q,W)$.
\end{Lemma}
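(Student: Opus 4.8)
The plan is to show that $\varphi$ induces a well-defined algebra homomorphism on $J(Q,W)$ by first producing a homomorphism on the completed path algebra $\widehat{kQ}$ and then checking that it descends to the quotient by $\widehat{I}$. First I would observe that since $\varphi$ is an automorphism of the quiver $Q$, it permutes the vertices $Q_0$ and the arrows $Q_1$, and hence extends canonically to a $k$-algebra automorphism $\varphi_*$ of the path algebra $kQ$ by sending a path $\alpha_r\cdots\alpha_1$ to $\varphi(\alpha_r)\cdots\varphi(\alpha_1)$. Because $\varphi$ is a bijection on arrows, $\varphi_*$ is bijective and preserves path length, so it is continuous for the $\mathfrak{m}$-adic topology and extends uniquely to a continuous $k$-algebra automorphism of the completed path algebra $\widehat{kQ}$, which I will still denote $\varphi_*$.

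Next I would verify that $\varphi_*$ sends the Jacobian ideal $\widehat{I}$ into itself, which is the crux of the argument. The key compatibility to establish is that partial differentiation intertwines with $\varphi_*$ in the precise sense
\begin{equation}\label{eq intertwine}
\varphi_*\bigl(\partial_\alpha[p]\bigr) = \partial_{\varphi(\alpha)}\bigl(\varphi_*[p]\bigr)
\end{equation}
for every arrow $\alpha$ and every cycle class $[p]$. This follows by unwinding the definition of $\partial_\alpha$: applying $\varphi_*$ to each summand $\alpha_{i-1}\cdots\alpha_1\alpha_r\cdots\alpha_{i+1}\delta_{\alpha_i,\alpha}$ turns it into $\varphi(\alpha_{i-1})\cdots\varphi(\alpha_{i+1})\delta_{\alpha_i,\alpha}$, and since $\varphi$ is injective on arrows the Kronecker symbol $\delta_{\alpha_i,\alpha}$ equals $\delta_{\varphi(\alpha_i),\varphi(\alpha)}$, which is exactly the $i$-th summand of $\partial_{\varphi(\alpha)}(\varphi_*(p))$. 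Summing over $i$ gives the claimed identity.

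With this intertwining relation in hand, I would use the hypothesis that $\varphi$ is an automorphism of $(Q,W)$, meaning $\varphi_*(W)=W$ (up to the reordering of summands guaranteed by the condition that $\lambda\varphi[p]$ is a summand whenever $\lambda[p]$ is). Applying \eqref{eq intertwine} with $[p]$ ranging over the cycle classes appearing in $W$ and using $\varphi_*(W) = W$, I get $\varphi_*(\partial_\alpha W) = \partial_{\varphi(\alpha)}(\varphi_* W) = \partial_{\varphi(\alpha)} W$. Thus $\varphi_*$ carries the generator $\partial_\alpha W$ of $I$ to another generator $\partial_{\varphi(\alpha)} W$ of $I$; since $\varphi$ permutes the arrows, $\varphi_*$ permutes the full generating set $\{\partial_\alpha W\}_{\alpha\in Q_1}$ and therefore fixes the ideal $I$ setwise. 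By continuity the same holds for the completion $\widehat{I}$. Consequently $\varphi_*$ descends to an automorphism of the quotient $J(Q,W)=\widehat{kQ}/\widehat{I}$, and applying the same reasoning to $\varphi^{-1}$ shows the induced map is invertible, completing the argument.

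I expect the main obstacle to be the bookkeeping in the intertwining identity \eqref{eq intertwine}, specifically making sure the cyclic structure of $\partial_\alpha$ is respected under $\varphi_*$ and that the decomposition \eqref{eq 1} of $W$ into $G$-orbits is used correctly so that $\varphi_*(W)=W$ genuinely holds as an equality of potentials rather than merely up to right-equivalence. The subtlety that differentiation of a cyclic class is defined via a choice of representative, together with the need to track that $\varphi_*$ respects the cyclic-rotation equivalence $\sim$, is where care is required; everything else is formal.
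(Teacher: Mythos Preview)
Your proposal is correct and follows essentially the same approach as the paper's proof: extend $\varphi$ to a continuous automorphism of $\widehat{kQ}$, establish the intertwining identity $\varphi(\partial_\alpha[p]) = \partial_{\varphi(\alpha)}\varphi([p])$, use the $G$-orbit decomposition of $W$ to conclude $\varphi(\partial_\alpha W) = \partial_{\varphi(\alpha)} W$, and pass to the quotient by continuity. Your version is simply more detailed in spelling out why the extension is continuous and why the Kronecker symbol is preserved under $\varphi$.
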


\begin{proof}
Let $\varphi \in G$. Clearly, we can extend $\varphi$ to a continuous automorphism of $\widehat{kQ}$, still denoted $\varphi$.
Observe that for all $\alpha \in Q_1$ and $[p] \in {\rm cyc}(Q)$, we have $\varphi(\partial_\alpha[p]) = \partial_{\varphi(\alpha)}\varphi([p])$.
Therefore, equation (\ref{eq 1}) implies that  $\varphi(\partial_\alpha W) = \partial_{\varphi(\alpha)}W$. This yields $\varphi(I) = I$.
Therefore, $\widehat{I} = \widehat{\varphi(I)}=\varphi(\widehat{I})$, since $\varphi$ is continuous. Thus, we get an automorphism $\varphi$ at the level of the quotient $\widehat{kQ}/\widehat{I}$.
\end{proof}

The next lemma guarantees that the equivalence classes of cycles in $Q_G$ coincide with the $G$-orbits of equivalence classes of cycles in $Q$.

\begin{Lemma} Let $[p],[q] \in {\rm cyc}(Q)$ with $p=\alpha_r \cdots \alpha_1$ and $q = \beta_r\cdots\beta_1$. If we have $[G\alpha_r \cdots G\alpha_1] = [G\beta_r\cdots G\beta_1]$, then $G[p] = G[q]$.
\end{Lemma}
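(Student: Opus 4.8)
The plan is to show that the cyclic equivalence in $Q_G$ lifts to a single element of $G$ relating $p$ and $q$. First I would record that $p$ and $q$ both have length $r$ and that, by the definition of $\sim$ on cycles of $Q_G$, the hypothesis $[G\alpha_r\cdots G\alpha_1]=[G\beta_r\cdots G\beta_1]$ says that the second cycle is a cyclic rotation of the first. Thus there is an index $i$ with $G\beta_j=G\alpha_{i+j-1}$ for all $j$, the subscripts of the $\alpha$'s read cyclically in $\{1,\dots,r\}$. Since arrows of $Q_G$ are by definition $G$-orbits of arrows of $Q$, each equality $G\beta_j=G\alpha_{i+j-1}$ means $\beta_j$ and $\alpha_{i+j-1}$ lie in one $G$-orbit, so I may pick $g_j\in G$ with $\beta_j=g_j\,\alpha_{i+j-1}$.

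The heart of the argument, and the step I expect to be the main obstacle, is to prove that all the $g_j$ coincide; this is exactly where admissibility (freeness on vertices) enters. Writing $s(\cdot)$ and $t(\cdot)$ for source and target, I would use that any $\varphi\in G$, being a quiver automorphism, satisfies $s(\varphi(\alpha))=\varphi(s(\alpha))$ and $t(\varphi(\alpha))=\varphi(t(\alpha))$. Because $q=\beta_r\cdots\beta_1$ is an oriented cycle, consecutive arrows are composable, so $t(\beta_j)=s(\beta_{j+1})$ for every $j$ (cyclically), and the same holds for the corresponding consecutive arrows $\alpha_{i+j-1},\alpha_{i+j}$ of the cycle $p$, giving a common vertex $v=t(\alpha_{i+j-1})=s(\alpha_{i+j})$. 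Substituting $\beta_j=g_j\alpha_{i+j-1}$ and $\beta_{j+1}=g_{j+1}\alpha_{i+j}$ into $t(\beta_j)=s(\beta_{j+1})$ yields $g_j(v)=g_{j+1}(v)$, so $g_{j+1}^{-1}g_j$ fixes $v$; admissibility then forces $g_{j+1}^{-1}g_j=\mathrm{id}$, i.e.\ $g_j=g_{j+1}$. Propagating this equality around the cycle produces a single $g\in G$ with $g_j=g$ for all $j$.

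Finally, from $\beta_j=g\,\alpha_{i+j-1}$ for all $j$ I would conclude that $q=g\cdot p'$, where $p'=\alpha_{i-1}\cdots\alpha_1\alpha_r\cdots\alpha_i$ is the cyclic rotation of $p$ used above. Since $g$ is an automorphism preserving cyclic equivalence and $[p']=[p]$, this gives $[q]=g[p]$, hence $[q]\in G[p]$, and therefore $G[p]=G[q]$. Everything outside the middle paragraph is just bookkeeping with cyclic indices; the only genuine content is that the locally chosen group elements are forced to agree, which is dictated precisely by the freeness built into admissibility.
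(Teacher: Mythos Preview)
Your proposal is correct and follows essentially the same approach as the paper: after cyclically aligning the two words so that $\beta_j$ and $\alpha_{i+j-1}$ share a $G$-orbit, admissibility at the common vertex between consecutive arrows forces the individually chosen group elements to coincide. The paper's write-up is marginally terser---it fixes one $g$ with $g\alpha_1=\beta_1$ and inductively shows $g\alpha_j=\beta_j$ by noting that $g\alpha_j$ and $\beta_j$ share both a starting vertex and a $G$-orbit---but the content is identical.
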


\begin{proof}We are given that
$$[G\alpha_r \cdots G\alpha_1] = [G\beta_r\cdots G\beta_1].$$
By cyclically permuting $q$ if necessary, we may assume that, for each $i$, the arrows $\alpha_i, \beta_i$ lie in the same $G$-orbit. Let $g \in G$ with $g\alpha_1 = \beta_1$. Observe that the arrows $g\alpha_2, \beta_2$ both start at the same vertex of $Q$ and lie in the same $G$-orbit. Therefore, since $G$ is admissible, we have $g\alpha_2 = \beta_2$. By induction, we have $g \alpha_i = \beta_i$ for $1 \le i \le r$, that is, $gp=q$.
\end{proof}

Observe that we have a $k$-linear functor $\pi: kQ \to kQ_G$ of the corresponding $k$-categories such that for $a \in Q_0 \cup Q_1$, $\pi(a)=Ga$. Later, we will study this functor in more details.
Recall that since $G$ is an admissible group of automorphisms of $(Q,W)$, we can decompose the potential $W$ as
$$W = \sum_{G[p] \in {\rm cyc}(Q_G)}\lambda_{G[p]}\left(\sum_{[q] \in G[p]}[q]\right).$$
We define the following potential on the orbit quiver $Q_G$
$$W_G = \sum_{G[p] \in {\rm cyc}(Q_G)}\left(\lambda_{G[p]}\big|G[p]\big|\right)G[p].$$ Observe that
$$\partial_{G\alpha}(G[p])=|{\rm stab}(G,[p])|\pi\left(\sum_{[q] \in G[p]}\partial_{\alpha}[q]\right),$$
where ${\rm stab}(G,[p]) = \{g \in G \mid g[p]=[p]\}$ is the stabilizer subgroup of $[p]$. Since $$|{\rm stab}(G,[p])||G[p]| = |G|,$$
we see that
\begin{eqnarray*}\partial_{G\alpha}W_G &=& \sum_{G[p] \in {\rm cyc}(Q_G)}\left(\lambda_{G[p]}\big|G[p]\big|\right)\partial_{G\alpha}(G[p])\\ &=& \sum_{G[p] \in {\rm cyc}(Q_G)}\left(\lambda_{G[p]}\big|G\big| \right)\pi\left(\sum_{[q] \in G[p]}\partial_{\alpha}[q]\right)\\ &=& |G|\pi(\partial_\alpha(W)).\end{eqnarray*}
Define $I_G$ to be the ideal of $\widehat{kQ_G}$ generated by the elements $\partial_{G\alpha}(W_G)$. Since the characteristic of $k$ does not divide $|G|$, we see that $\pi$ sends the generator $\partial_\alpha(W)$ of $I$ to a scalar multiple of the generator $\partial_{G\alpha}(W_G)$ of $I_G$. We define the Jacobian algebra of the orbit as $J(Q_Q,W_G) = \widehat{kQ_G}/\widehat{I_G}$.

\begin{Exam}\label{ex 2.2} Consider the following quiver $Q$:
$$\xymatrix{
 & & b_1 \ar[rr]^{\alpha_1}& & c_1 \ar[dl]^{\beta_1} & & \\
&&&a_1 \ar[dr]^{\delta_1} \ar[ul]^{\gamma_1}&&&\\
c_3 \ar[rr]^{\beta_3} & & a_3 \ar[ur]^{\delta_3} \ar[dl]^{\gamma_3} & & a_2 \ar[rr]^{\gamma_2} \ar[ll]^{\delta_2} & & b_2 \ar[dl]^{\alpha_2}\\
& b_3 \ar[ul]^{\alpha_3} & & & & c_2 \ar[ul]^{\beta_2} &
}$$
Consider the cyclic group $G$ of order $3$ with generator $g$ such that $g$ acts on $Q_0 \cup Q_1$ by increasing by $1$, modulo $3$, the indices of the symbols. Clearly, $G$ is admissible. Take $W = \delta_3\delta_2\delta_1 + \sum_{i=1}^3\gamma_i\beta_i\alpha_i$. Then $G$ is an admissible group of automorphisms of $(Q,W)$. The quiver $Q_G$ is
$$\xymatrix{&b \ar[dr]^\alpha & \\ a \ar[ur]^\gamma \ar@(ld,lu)^\delta & & c \ar[ll]^\beta}$$
where $\delta = G\delta_1, \alpha = G\alpha_1, \beta = G\beta_1, \gamma = G\gamma_1, a = Ga_1, b = Gb_1$ and $c = Gc_1$. Now,
$$W_G = \delta^3 + 3\gamma\beta\alpha$$
The generators of $I_G$ are $3\delta^2, 3\gamma\beta, 3\beta\alpha, 3\alpha\gamma$. We have
$$J(Q_G,W_G)=kQ_G/\langle \partial_{G\alpha} W_G \mid G\alpha \in (Q_G)_1\rangle = kQ_G/\langle 3\delta^2, 3\gamma\beta, 3\beta\alpha, 3\alpha\gamma \rangle.$$
\end{Exam}

\subsection{Ginzburg DG-algebras}\label{sect DG}
Now, let $\Gamma(Q,W)$ be the (completed) Ginzburg DG-algebra of $(Q, W)$. Recall that as a graded algebra, $\Gamma(Q,W)$ is generated in non-positive degrees and is the completed graded quiver algebra $\widehat{k\overline{Q}}$ where $\overline{Q}$ is obtained from the quiver $Q$ by adding the following arrows: for each arrow $\alpha: i \to j$ in $Q$, we add an arrow $\alpha^*: j \to i$; and for each vertex $i$ in $Q$, we add a loop $t_i : i \to i$. To make $\widehat{k\overline{Q}}$ a graded algebra, we need to define the degree of the arrows of $\overline{Q}$. The arrows from $Q_1$ as well as the trivial paths $\{e_i \mid i \in Q_0\}$ are declared to be of degree zero. The arrows in $\{\alpha^* \mid \alpha \in Q_1\}$ are declared to be of degree $-1$ and the loops $\{t_i \mid i \in Q_0\}$ are of degree $-2$.
The DG-algebra $\Gamma(Q,W)$ is equipped with a continuous differential $\mathfrak{d}$ defined on $\alpha^*$ and $t_i$ by
$$\mathfrak{d}\alpha^* = \partial_\alpha(W)$$
and
$$\mathfrak{d}t_i = e_i\left(\sum_{\alpha \in Q_1}(\alpha \alpha^* - \alpha^*\alpha)\right)e_i,$$
and extended by the Leibniz rule to all of $\Gamma(Q,W)$. In particular, $\mathfrak{d}$ vanishes on $kQ$.
Given an automorphism $\varphi$ of $(Q,W)$, we extend its action to a unique (graded) automorphism of the graded algebra $k\overline{Q}$ as follows. We set $\varphi(\alpha^*) = (\varphi(\alpha))^*$ and $\varphi(t_i) = t_{\varphi(i)}$. This clearly extends to a continuous automorphism of $\widehat{ k\overline{Q}}$.

\section{The cluster category of $G$-orbits}\label{sect 3}
In this section we define the cluster category of $G$-orbits as the cluster category of the quiver $Q_G$ with its corresponding potential. When $G$ is an admissible group of automorphisms of $(Q,W)$ such that $(Q,W)$ is Jacobi-finite, we will see that we have two Hom-finite $2$-Calabi-Yau triangulated categories $\C(Q,W), \C(Q_G, W_G)$ associated to the quivers with potentials $(Q,W), (Q_G, W_G)$, respectively. These categories will be called cluster categories and we will show that we have a $G$-precovering functor $F: \C(Q,W) \to \C(Q_G, W_G)$ (see Proposition \ref{prop 4.1}) and this functor is compatible with mutations (see Subsection \ref{sect cto}). Precoverings of cluster categories together with mutations are also studied in \cite{OO} with the purpose of mutating some quivers of endomorphism algebras of $2$-Calabi-Yau tilted algebras having loops or $2$-cycles.

\subsection{Coverings of $k$-categories}\label{Covering}

In this subsection, we introduce the notion of $G$-(pre)covering of algebras or categories. A \emph{skeletal} category is a category for which different objects are not isomorphic. Let $\mathcal{A}, \mathcal{B}$ be two skeletal $k$-categories and $G$ be a group of automorphisms of $\mathcal{A}$. A $k$-linear functor $F: \mathcal{A} \to \mathcal{B}$ is called a $G$-\emph{precovering} if we have functorial isomorphisms $$\bigoplus_{g \in G}\mathcal{A}(a,gb) \cong \mathcal{B}(Fa,Fb)$$
and
$$\bigoplus_{g \in G}\mathcal{A}(ga,b) \cong \mathcal{B}(Fa,Fb)$$
induced by the sum of the images of $F$.
If, moreover, the functor $F$ is surjective, then $F$ is called a $G$-\emph{covering}. We refer the reader to Bongartz-Gabriel \cite{BG} for more details on $G$-coverings.

\medskip

These definitions can be adapted to the differential graded cases. Assume now that $\mathcal{A}, \mathcal{B}$ are skeletal DG $k$-categories with differentials $\mathfrak{d}_\mathcal{A}, \mathfrak{d}_\mathcal{B}$, respectively. Let $F: \mathcal{A} \to \mathcal{B}$ be a $k$-linear functor that is graded (that is, respect the grading of morphisms) and commutes with the differentials. The functor $F$ is called a $G$-\emph{precovering} of DG-categories if, for $i \in \Z$, we have functorial isomorphisms
$$\bigoplus_{g \in G}\mathcal{A}(a,gb)_i \cong \mathcal{B}(Fa,Fb)_i$$
and
$$\bigoplus_{g \in G}\mathcal{A}(ga,b)_i \cong \mathcal{B}(Fa,Fb)_i$$
of degree $i$ maps induced by $F$.
If, moreover, the functor $F$ is surjective, then $F$ is called a $G$-\emph{covering}.

\medskip

Let $A$ be a $k$-algebra having a complete set $e_1, \ldots, e_n$ of pairwise orthogonal primitive idempotents. It will be convenient for us to think of $A$ as a (skeletal) $k$-category, also denoted $A$. The objects of $A$ are the idempotents of $A$ and the morphisms from $e_i$ to $e_j$ are given by the elements in $e_jAe_i$. This is a Hom-finite category if and only if $A$ is finite dimensional. If $A$ is a DG algebra, then the corresponding category is a DG category.
Observe that if $x \in e_jAe_i$ and $y \in e_lAe_k$ with $k \ne j$, then $yx$ is not defined in the category $A$ while it is zero in the algebra $A$.

Let $G$ be a group of admissible automorphisms of $(Q,W)$. Recall that we have a $k$-linear functor $\pi: kQ \to kQ_G$ of $k$-categories such that for $a \in Q_0 \cup Q_1$, $\pi(a)=Ga$. This functor $\pi$ is clearly a $G$-covering. Moreover, it extends to a $k$-linear continuous functor $\pi: \widehat{kQ} \to \widehat{kQ_G}$, which is also a $G$-covering.

\begin{Prop}\label{prop 21}
We have a $G$-covering $$\pi: J(Q,W) \to J(Q_G, W_G)$$
induced by the $G$-covering $\pi: \widehat{kQ} \to \widehat{kQ_G}.$
\end{Prop}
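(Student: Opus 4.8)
The plan is to leverage the $G$-covering $\pi : \widehat{kQ} \to \widehat{kQ_G}$ that has already been established, and to show that it descends to the quotients $J(Q,W) = \widehat{kQ}/\widehat{I}$ and $J(Q_G,W_G) = \widehat{kQ_G}/\widehat{I_G}$. First I would verify that $\pi$ maps $\widehat{I}$ into $\widehat{I_G}$. This follows from the computation carried out just before the statement: for each arrow $\alpha \in Q_1$ we have $\pi(\partial_\alpha W) = \tfrac{1}{|G|}\,\partial_{G\alpha}(W_G)$, and since $|G|$ is invertible in $k$ (the characteristic of $k$ does not divide $|G|$), the image $\pi(\partial_\alpha W)$ is a scalar multiple of the generator $\partial_{G\alpha}(W_G)$ of $I_G$. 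As the $\partial_\alpha W$ generate $I$ and $\pi$ is $k$-linear and continuous, we get $\pi(\widehat{I}) \subseteq \widehat{I_G}$, so $\pi$ induces a well-defined $k$-linear functor on the quotients, which I continue to denote $\pi$.

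Next I would check the two defining functorial isomorphisms of a $G$-precovering at the level of the quotients. For the covering $\pi : \widehat{kQ} \to \widehat{kQ_G}$ we already have, for objects $e_i, e_j$,
\begin{equation*}
\bigoplus_{g \in G}\widehat{kQ}(e_i, g\,e_j) \;\xrightarrow{\ \sim\ }\; \widehat{kQ_G}(\pi e_i, \pi e_j),
\end{equation*}
and symmetrically with $g$ acting on the source. Passing to the quotient, the morphism space $J(Q_G,W_G)(\pi e_i,\pi e_j)$ is obtained from $\widehat{kQ_G}(\pi e_i,\pi e_j)$ by factoring out the relevant component of $\widehat{I_G}$, and likewise on the source side one factors out the corresponding component of $\widehat{I}$. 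The key point is that the isomorphism above is compatible with these subspaces, i.e. it carries $\bigoplus_{g}\bigl(\widehat{I}\bigr)(e_i,g\,e_j)$ isomorphically onto $\bigl(\widehat{I_G}\bigr)(\pi e_i,\pi e_j)$. That compatibility is exactly the content of the relation $\pi(\partial_\alpha W) \doteq \partial_{G\alpha}(W_G)$ together with the admissibility-based Lemma identifying the $G$-orbits of cycles in $Q$ with cycles in $Q_G$: every generator of $\widehat{I_G}$ lifts, under the covering isomorphism, to the $G$-orbit sum of generators of $\widehat{I}$, and conversely. Granting this, the induced maps on quotients are still isomorphisms, so $\pi$ is a $G$-precovering; surjectivity on objects and morphisms is inherited from the covering $\pi$ on $\widehat{kQ}$, giving a $G$-covering.

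The main obstacle I anticipate is the compatibility of the covering isomorphism with the two-sided ideals, precisely because $\widehat{I}$ and $\widehat{I_G}$ are \emph{two-sided} (not just spanned by the generators $\partial_\alpha W$, but by all paths times these generators) and are then completed. I would need to argue that multiplying a generator on the left and right by paths interacts correctly with $\pi$ and with the $G$-action: a path from $e_i$ to $e_j$ in $Q_G$ lifts, for a fixed starting idempotent, to a unique path in $Q$ by admissibility (this is the mechanism used in the second Lemma of Subsection~\ref{sect Jac}), so that the homogeneous components of the two-sided ideals match up orbit-by-orbit under the covering isomorphism. Care is also needed to ensure that completing commutes with taking these components, which follows from the continuity of $\pi$ and the fact that $\widehat{I} = \widehat{\pi^{-1}(I_G)}$ component-wise in each finite-codimension piece. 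Once the ideal components are matched, the statement follows formally from the already-established covering property of $\pi$ on the completed path algebras.
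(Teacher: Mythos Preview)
Your proposal is correct and follows essentially the same approach as the paper: show that the $G$-covering isomorphism $\bigoplus_{g}\widehat{kQ}(a,gb)\cong\widehat{kQ_G}(Ga,Gb)$ restricts to an isomorphism on the ideal components $\bigoplus_{g}\widehat{I}(a,gb)\cong\widehat{I_G}(Ga,Gb)$, and then pass to the quotients via the obvious short exact sequence diagram. The paper's proof is terser---it simply invokes ``the results in \ref{sect Jac}'' for the restriction to ideals and then writes down the commutative diagram---whereas you spell out more of the reasoning (the two-sided ideal and completion issues), but the underlying argument is the same.
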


\begin{proof} For vertices $a,b$ in $Q_0$, we have a functorial isomorphism $$p:\bigoplus_{g \in G}\widehat{kQ}(a,gb) \cong \widehat{kQ_G}(Ga,Gb)$$
which, by the results in \ref{sect Jac}, restricts to an isomorphism
$$f:\bigoplus_{g \in G}\widehat{I}(a,gb) \cong \widehat{I_G}(Ga,Gb).$$
Now, consider the commutative diagram
$$\xymatrix{0 \ar[r] & \bigoplus_{g \in G}\widehat{I}(a,gb) \ar[r] \ar[d]^f & \bigoplus_{g \in G}\widehat{kQ}(a,gb) \ar[r] \ar[d]^p & \bigoplus_{g \in G}J(Q,W)(a,gb) \ar[r] & 0\\ 0 \ar[r] & \widehat{I_G}(Ga,Gb) \ar[r] & \widehat{kQ_G}(Ga,Gb) \ar[r] & J(Q_G,W_G)(Ga,Gb) \ar[r] & 0}$$
There is an induced isomorphism
$$h:\bigoplus_{g \in G}J(Q,W)(a,gb) \to J(Q_G,W_G)(Ga,Gb),$$
which is functorial. Similarly, there is a functorial isomorphism
$$\bigoplus_{g \in G}J(Q,W)(ga,b) \to J(Q_G,W_G)(Ga,Gb).$$
\end{proof}

\begin{Lemma} \label{LemmaDiff}Let $\varphi$ be an automorphism of $(Q,W)$ and extend $\varphi$ to a graded automorphism of $\widehat{k\overline{Q}}$ as previously. Then $\varphi$ induces an automorphism of $\Gamma(Q,W)$, that is, $\varphi$ commutes with the differential $\mathfrak{d}$.
\end{Lemma}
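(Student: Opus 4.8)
The plan is to exploit that $\varphi$ is a continuous \emph{graded algebra automorphism} of $\widehat{k\overline{Q}}$ while $\mathfrak{d}$ is a continuous graded derivation of degree $+1$, and to reduce the identity $\varphi\circ\mathfrak{d} = \mathfrak{d}\circ\varphi$ to a check on the algebra generators. Consider the map $D := \varphi\circ\mathfrak{d} - \mathfrak{d}\circ\varphi$. Applying the graded Leibniz rule to each term shows that for homogeneous $x,y$ one has $D(xy) = D(x)\varphi(y) + (-1)^{|x|}\varphi(x)D(y)$; that is, $D$ is a $\varphi$-twisted graded derivation. Since $D$ is continuous, it is determined by its values on the topological generators $\{e_i\}$, $\{\alpha : \alpha\in Q_1\}$, $\{\alpha^*\}$, $\{t_i\}$ of $\widehat{k\overline{Q}}$: if $D$ vanishes on all of these, then induction on path length together with continuity forces $D = 0$, which is exactly the assertion. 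So it remains to verify $\varphi(\mathfrak{d}x) = \mathfrak{d}(\varphi x)$ for each generator $x$.

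Two families are immediate. For an idempotent $e_i$ we have $\mathfrak{d}e_i = 0$ and $\varphi(e_i) = e_{\varphi(i)}$ with $\mathfrak{d}e_{\varphi(i)} = 0$, and for an arrow $\alpha\in Q_1$ we have $\mathfrak{d}\alpha = 0$ while $\varphi(\alpha)\in Q_1$ also satisfies $\mathfrak{d}\varphi(\alpha) = 0$; in both cases $D$ vanishes. For the starred arrows, recall $\mathfrak{d}\alpha^* = \partial_\alpha(W)$ and $\varphi(\alpha^*) = (\varphi\alpha)^*$, so the required identity reads $\varphi(\partial_\alpha W) = \partial_{\varphi(\alpha)}(W)$. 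This is precisely what was established in Subsection \ref{sect Jac}: from $\varphi(\partial_\alpha[p]) = \partial_{\varphi(\alpha)}\varphi([p])$ together with $\varphi(W) = W$ (which holds because $\varphi$ is an automorphism of $(Q,W)$), one obtains $\varphi(\partial_\alpha W) = \partial_{\varphi(\alpha)}\varphi(W) = \partial_{\varphi(\alpha)}(W)$. Hence $D$ vanishes on every $\alpha^*$.

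The crux is the loops $t_i$, which carry the degree $-2$ part of the differential. Here $\varphi(t_i) = t_{\varphi(i)}$, so I must compare $\varphi(\mathfrak{d}t_i)$ with $\mathfrak{d}t_{\varphi(i)}$. Applying the graded automorphism $\varphi$ to $\mathfrak{d}t_i = e_i\big(\sum_{\alpha\in Q_1}(\alpha\alpha^* - \alpha^*\alpha)\big)e_i$ and using $\varphi(e_i) = e_{\varphi(i)}$, $\varphi(\alpha) \in Q_1$ and $\varphi(\alpha^*) = (\varphi\alpha)^*$ gives $e_{\varphi(i)}\big(\sum_{\alpha\in Q_1}(\varphi(\alpha)(\varphi\alpha)^* - (\varphi\alpha)^*\varphi(\alpha))\big)e_{\varphi(i)}$. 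Since $\varphi$ restricts to a bijection of $Q_1$, substituting $\beta = \varphi(\alpha)$ reindexes the sum over all of $Q_1$, yielding exactly $\mathfrak{d}t_{\varphi(i)}$. Thus $D$ vanishes on the $t_i$ as well, completing the reduction. The only real care needed is the passage from generators to the whole completed algebra, i.e.\ justifying that a continuous twisted derivation vanishing on generators is identically zero; this follows from the Leibniz rule on finite paths and the density of $k\overline{Q}$ in $\widehat{k\overline{Q}}$, and presents no genuine difficulty.
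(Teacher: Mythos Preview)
Your proof is correct and follows essentially the same route as the paper: reduce to generators and check $\alpha^*$ and $t_i$ by the identical computations. The one difference is that the paper simply asserts ``it suffices to check the compatibility on the arrows of degree $-1,-2$'' without justification, whereas you supply the reason explicitly via the $\varphi$-twisted derivation argument for $D=\varphi\mathfrak{d}-\mathfrak{d}\varphi$; this is a welcome clarification but not a different approach.
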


\begin{proof}
It suffices to check the compatibility on the arrows of degree $-1,-2$. We have $$\mathfrak{d}\varphi(\alpha^*) = \mathfrak{d}(\varphi(\alpha)^*) = \partial_{\varphi(\alpha)}(W) = \partial_{\varphi(\alpha)}(\varphi(W)) = \varphi(\partial_\alpha(W)) = \varphi\mathfrak{d}(\alpha^*)$$
and
\begin{eqnarray*}\mathfrak{d}\varphi(t_i) &=& \mathfrak{d}(t_{\varphi(i)})\\& = & e_{\varphi(i)}\left(\sum_{\alpha \in Q_1}(\alpha \alpha^* - \alpha^*\alpha)\right)e_{\varphi(i)}\\ &=& e_{\varphi(i)}\left(\sum_{\alpha \in Q_1}(\varphi(\alpha) \varphi(\alpha)^* - \varphi(\alpha)^*\varphi(\alpha))\right)e_{\varphi(i)}\\ &=& \varphi\left(e_i\left(\sum_{\alpha \in Q_1}(\alpha \alpha^* - \alpha^*\alpha)\right)e_i\right)\\& =& \varphi\mathfrak{d}(t_i).\end{eqnarray*}
\end{proof}

Now, consider the (completed) Ginzburg orbit DG-algebra $\Gamma(Q_G, W_G)$ with differential $\mathfrak{d}_G$. As seen in Lemma \ref{LemmaDiff}, we have $\mathfrak{d}\varphi = \varphi\mathfrak{d}$ whenever $\varphi \in G$. This means that the differential $\mathfrak{d}_G$ in $\Gamma(Q_G, W_G)$ comes from the differential $\mathfrak{d}$ of $\Gamma(Q,W)$. In order to consider $G$-coverings of DG-algebras, we can naturally think of the Ginzburg DG-algebras $\Gamma(Q,W), \Gamma(Q_G, W_G)$ as DG-categories. We get a graded $G$-covering
$$\pi: \Gamma(Q,W) \to \Gamma(Q_G, W_G),$$
of DG-categories, that is, for each $i \le 0$, we have natural isomorphisms
$$\bigoplus_{g \in G}\widehat{k\overline{Q}}(a,gb)_i \cong \widehat{k\overline{Q_G}}(Ga,Gb)_i$$
and
$$\bigoplus_{g \in G}\widehat{k\overline{Q}}(ga,b)_i \cong \widehat{k\overline{Q_G}}(Ga,Gb)_i$$
of degree $i$ maps. Moreover, $\pi$ commutes with the differentials $\mathfrak{d}_G$ and $\mathfrak{d}$.

\begin{Remark}
Observe that using the $G$-covering functor $\pi: \Gamma(Q,W) \to \Gamma(Q_G, W_G)$ together with the fact that the Jacobian algebra is the zero-th cohomology of the Ginzburg DG-algebra, we can recover Proposition \ref{prop 21}.
\end{Remark}

\subsection{Perfect derived categories}

Let $G$ be an admissible group of automorphisms of $(Q,W)$ and consider the graded $G$-covering
$\pi: \Gamma(Q,W) \to \Gamma(Q_G, W_G)$ as obtained above. Given a DG $k$-algebra (or category) $\Lambda$, we let $\mathcal{H}(\Lambda)$ denote the homotopy category of the category of DG $\Lambda$-modules and we let per$\Lambda$ denote the full subcategory of $\mathcal{H}(\Lambda)$ of the perfect DG $\Lambda$-modules: it is the smallest full triangulated subcategory of $\mathcal{H}(\Lambda)$ containing $\Lambda$ that is closed under isomorphisms and direct summands. Finally, we let f.d.$\Lambda$ denote the full subcategory of $\mathcal{H}(\Lambda)$ of the DG-modules whose total homology is finite dimensional. When $\Lambda$ is a Ginzburg DG-algebra of a Jacobi-finite quiver with potential, the subcategory f.d.$\Lambda$ is a triangulated subcategory of per$\Lambda$, and consequently, the quotient per$\Lambda/{\rm f.d.}\Lambda$ is a Hom-finite $2$-Calabi-Yau triangulated $k$-category; see \cite{Amiot}.

\medskip

We want to define a functor
$$F: {\rm per}\Gamma(Q,W) \to {\rm per}\Gamma(Q_G, W_G)$$
at the level of the perfect derived categories of DG-modules. Let $M^\bullet = (M_i)_{i \in \Z}$ be a DG-module in ${\rm per}\Gamma(Q,W)$ with differential $(d_i: M_i \to M_{i+1})_{i \in \Z}$. Observe that each $M_i$ is a $kQ$-modules and each $d_i$ is a morphism of $kQ$-modules. Consider the $G$-covering $\pi: kQ \to kQ_G$. There is an induced push-down functor $\pi_\lambda: {\rm Rep}(Q) \to {\rm Rep}(Q_G)$. For $x \in Q_0$, we have $(\pi_\lambda M)(Gx) = \oplus_{g \in G}M(gx)$ and for $\alpha \in Q_1$, we have $(\pi_\lambda M)(G\alpha) = \oplus_{g \in G}M(g\alpha)$. This functor $\pi_\lambda$ is a $G$-precovering. We define $F M^\bullet$ to be the complex $(\pi_\lambda M_i)_{i \in \Z}$ with differentials $(\pi_\lambda d_i)_{i\in \Z}$. We need to check that this is well defined. First of all, since $\pi_\lambda$ is a functor, it is clear that $(\pi_\lambda d_i)_{i \in \Z}$ is a differential. Fix $i \in \Z$. We have
$$\pi_\lambda M_i = \bigoplus_{Gx \in (Q_G)_0}(\pi_\lambda M_i) e_{Gx},$$
where $(\pi_\lambda M_i) e_{Gx} = \bigoplus_{y \in Gx}M_i e_y$. Assume that $\alpha: t \to s$, so that $\alpha^*: s \to t$. Then $\alpha^*$ induces a linear map $(M^\bullet(\alpha^*))_i: M_ie_s \to M_{i-1}e_t$. Therefore, for $g \in G$, we have a linear map $(M^\bullet(g\alpha^*))_i = (M^\bullet((g\alpha)^*))_i: M_ie_{gs} \to M_{i-1}e_{gt}$. As we have $(\pi_\lambda M_i) e_{Gs} = \bigoplus_{g \in G}M_i e_{gs}$ and $(\pi_\lambda M_{i-1}) e_{Gt} = \bigoplus_{g \in G}M_{i-1} e_{gt}$, this induces a linear map $(\pi_\lambda M_i) e_{Gs} \to (\pi_\lambda M_{i-1}) e_{Gt}$ that we define to be the action of $G\alpha^*$ on $(F M^\bullet)_i = \pi_\lambda M_i$. Similarly, we can define the action of $Gt_i$ on $(F M^\bullet)_i = \pi_\lambda M_i$. This makes $F M^\bullet$ a graded $k\overline{Q_G}$-module. Since $\pi$ sends the ideal generated by the arrows of $\bar Q$ to the ideal generated by arrows of $\bar Q_G$, we have that $F M^\bullet$ is actually a $\widehat{k\overline{Q_G}}$-module. One has to check that the differential $(\pi_\lambda d_i)_{i \in \Z}$ satisfies the Leibniz rule and one needs to define $F$ on morphisms. For this purpose, let $f^\bullet = (f_i)_{i \in \Z}: M^\bullet \to N^\bullet$ be a morphism of DG-modules. We define $F f$ to be the morphism $(\pi_\lambda f_i)_{i \in \Z}$. For a homogeneous element $a$ in a DG-algebra, we let $|a|$ denote its degree.

\begin{Lemma}  The differential $(\pi_\lambda d_i)_{i \in \Z}$ defined above satisfies the Leibniz rule and if $f^\bullet: M^\bullet \to N^\bullet$ is a morphism (of degree zero) of DG-modules, then $F(f^\bullet)=(\pi_\lambda f_i)_{i \in \Z}: F(M^\bullet) \to F(N^\bullet)$ is a morphism of DG-modules.
\end{Lemma}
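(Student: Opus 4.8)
\emph{Plan.} The whole statement will be reduced to the $G$-covering $\pi\colon\Gamma(Q,W)\to\Gamma(Q_G,W_G)$, to the already established commutation $\pi\mathfrak{d}=\mathfrak{d}_G\pi$, and to the compatibility of the push-down $\pi_\lambda$ with the covering, summand by summand. The key structural observation is that, because $G$ acts freely and the orbit generators $G\alpha$, $G\alpha^*$, $Gt_i$ all act on the summand $M_ie_{gx}$ of $(FM^\bullet)_i$ through their lifts $g\alpha$, $(g\alpha)^*$, $t_{gi}$, the entire DG-structure of $FM^\bullet$ is diagonal in the index $g\in G$: both the differential $\pi_\lambda d_i$ and the action of every generator preserve the decomposition $(FM^\bullet)_i=\bigoplus_{Gx}\bigoplus_{g\in G}M_ie_{gx}$, and on the $g$-th summand they coincide with the $g$-translate of the corresponding structure map of $M^\bullet$.

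First I would record this local description precisely. Since $\pi_\lambda$ is a functor on representations, each $\pi_\lambda d_i$ is a morphism of $Q_G$-representations, hence respects the vertex decomposition and maps $M_ie_{gx}$ into $M_{i+1}e_{gx}$, acting there exactly as $d_i$. For a degree-$0$ element $\pi(\xi)$ with $\xi\in\widehat{kQ}$, its action on the summand $M_ie_{gx}$ is, by the definition of the push-down, through the lift $g\xi$ starting at $gx$. Combining this with $\pi\mathfrak{d}=\mathfrak{d}_G\pi$ gives, for instance,
\[
\mathfrak{d}_G(G\alpha^*)=\mathfrak{d}_G\pi(\alpha^*)=\pi\mathfrak{d}(\alpha^*)=\pi(\partial_\alpha W),
\]
whose action on the $g$-summand is $g(\partial_\alpha W)=\partial_{g\alpha}(W)=\mathfrak{d}\big((g\alpha)^*\big)$; the analogous identity holds for the loops $Gt_i$.

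For the Leibniz rule I would first note that it is multiplicative: if the module differential satisfies it for homogeneous $a,b\in\widehat{k\overline{Q_G}}$, then it satisfies it for $ab$, because $\mathfrak{d}_G$ is a graded derivation. As both sides are continuous, it therefore suffices to verify the rule on the topological generators $G\alpha$, $G\alpha^*$, $Gt_i$, and by the previous paragraph it suffices to verify it on each summand $M_ie_{gx}$. There it becomes the Leibniz rule of $M^\bullet$ for the lifted generator: for $G\alpha$ one has $\mathfrak{d}_G(G\alpha)=0$ and the identity reduces to the $kQ_G$-linearity of $\pi_\lambda d_i$; for $G\alpha^*$ and $Gt_i$ the displayed matching of differentials turns it into the Leibniz rule of $M^\bullet$ for $(g\alpha)^*$ and $t_{gi}$, with the correct Koszul signs $(-1)^{-1}$ and $(-1)^{-2}$. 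Since $M^\bullet$ is a genuine DG-module, these hold.

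The morphism statement is then short. That $F(f^\bullet)=(\pi_\lambda f_i)$ is a chain map follows from functoriality of $\pi_\lambda$, via $\pi_\lambda f_{i+1}\circ\pi_\lambda d_i^M=\pi_\lambda(f_{i+1}d_i^M)=\pi_\lambda(d_i^Nf_i)=\pi_\lambda d_i^N\circ\pi_\lambda f_i$; and $\widehat{k\overline{Q_G}}$-linearity of $F(f^\bullet)$ is checked on generators and on each summand, where $\pi_\lambda f_i$ acts as $f_i$ and each generator acts through its lift, so that the linearity relations $f\big((g\alpha)^*m\big)=(g\alpha)^*f(m)$ and $f(t_{gi}m)=t_{gi}f(m)$, part of $f^\bullet$ being a morphism of DG-$\Gamma(Q,W)$-modules, give exactly what is needed. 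The step demanding the most care is the Leibniz verification for $G\alpha^*$ and $Gt_i$: one must match, summand by summand, the action of $\mathfrak{d}_G$ of the generator with that of $\mathfrak{d}$ of its lift while tracking the Koszul signs and the normalization built into $W_G$, and it is precisely the commutation $\pi\mathfrak{d}=\mathfrak{d}_G\pi$ that makes this matching work.
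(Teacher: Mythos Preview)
Your proposal is correct and follows essentially the same approach as the paper: both arguments check the Leibniz rule on the generating arrows of $\overline{Q_G}$ and, summand by summand in the decomposition $(FM^\bullet)_ie_{Gx}=\bigoplus_{g\in G}M_ie_{gx}$, reduce it to the Leibniz rule of the original DG-module $M^\bullet$ with respect to the lifted arrow. The paper carries this out as an explicit coordinate computation on an element $z=(z_g)_{g\in G}$ acted on by $a=Gb$, while you phrase the same reduction more conceptually via the commutation $\pi\mathfrak{d}=\mathfrak{d}_G\pi$ and the multiplicativity of the Leibniz identity; the content is the same.
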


\begin{proof}
Let $a \in \Gamma(Q_G, W_G): Gx \to Gy$ be an arrow of degree $-2 \le j \le 0$ and $z = (z_g)_{g \in G} \in (\pi_\lambda M_i) e_{Gx} = \bigoplus_{g \in G}M_i e_{gx}$. We may assume that $a = Gb$ for some arrow $b:x \to y$ of degree $j$ in $\Gamma(Q,W)$. We have
$(\pi_\lambda d_i) ((z_g)_{g \in G}) = (d_i(z_g))_{g \in G}$. Also, $|gb|=|a|$ for all $g \in G$. Therefore,

\begin{eqnarray*}(\pi_\lambda d_{i+j})(za) &=& (\pi_\lambda d_{i+j})((z_g(gb))_{g \in G})\\& =& (d_{i+j}(z_g(gb)))_{g \in G} \\ & = & (d_i(z_g)(gb)+ (-1)^{|a|}z_g\mathfrak{d}_j(gb))_{g \in G} \\ & = & (d_i(z_g)gb)_{g \in G} + (-1)^{|a|}(z_g\mathfrak{d}_j(gb))_{g \in G}\\ & = & \pi_\lambda d_i(z)a + (-1)^{|a|}z\mathfrak{d}_{G,j}(a)\end{eqnarray*}
which shows that the differential satisfies the Leibniz rule. Now, let $f: M^\bullet \to N^\bullet$ be a morphism of DG-modules. We have

\begin{eqnarray*}\pi_\lambda f_{i+j}(za) & = & \pi_\lambda f_{i+j}((z_g(gb))_{g \in G})\\ & = & (f_{i+j}(z_g(gb)))_{g \in G})\\
& = & (f_{i}(z_g)(gb))_{g \in G}\\ & = &(f_{i}(z_g))_{g \in G}a\\ & = & (\pi_\lambda f_i)(z)a,  \end{eqnarray*}
which shows that $(\pi_\lambda f_i)_{i \in \Z}$ induces a morphism $F f: F M^\bullet \to F N^\bullet$ of DG-modules.
\end{proof}

Observe finally that $F$ is additive and $F (\Gamma(Q,W))$ lies in the additive hull of $\Gamma(Q_G, W_G)$ so that $F$ is a well-defined functor at the level of the perfect derived categories. Consider now the functor $\pi^\lambda: {\rm Rep}(Q_G) \to {\rm Rep}(Q)$ which is right adjoint to $\pi_\lambda$. For $M \in \Rep(Q)$, we have $\pi^\lambda \pi_\lambda  M = \oplus_{g \in G} gM$. We first need to extend $\pi^\lambda$ to a functor $\bar F: {\rm per}\Gamma(Q_G,W_G) \to {\rm per}\Gamma(Q,W)$. Let $M^\bullet = (M_i)_{i \in \Z}$ be a DG-module in ${\rm per}\Gamma(Q_G,W_G)$ with differential $(d_i)_{i \in \Z}$. We define $\bar F M^\bullet$ to be the complex $(\pi^\lambda M_i)_{i \in \Z}$ with differential $(\pi^\lambda d_i)_{i\in \Z}$. One can check that this defines a DG-module in $\mathcal{H}(\Gamma(Q,W))$. One also needs to define $\bar F$ on morphisms on the natural way: if $f^\bullet = (f_i)_{i\in\Z}: M^\bullet \to N^\bullet$ is a morphism of DG-module, then $(\pi^\lambda f_i)_{i \in \Z}$ is a morphism of DG-$\Gamma(Q, W)$-modules. One can check that $\bar F$ defines a functor from per$\Gamma(Q_G, W_G)$ to $\mathcal{H}(\Gamma(Q,W))$.

\begin{Lemma} \label{Adjoint}We have an adjoint pair $(F, \bar F)$. Moreover, for $M^\bullet \in {\rm per}\Gamma(Q,W)$, we have $\bar F F M^\bullet \cong \oplus_{g \in G} gM^\bullet$. In particular, since $G$ is finite, the functor $\bar F$ is from ${\rm per}\Gamma(Q_G,W_G)$ to ${\rm per}\Gamma(Q,W)$.
\end{Lemma}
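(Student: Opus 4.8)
The plan is to derive the whole statement from the degree-wise adjunction $(\pi_\lambda, \pi^\lambda)$ on representations and, at each step, to check that the representation-level facts are compatible with the differentials and with the extra arrows $\alpha^*, t_i$ of the Ginzburg algebras, so that they promote to statements about DG-modules.

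First I would establish the adjunction by comparing Hom-complexes. For $M^\bullet\in{\rm per}\,\Gamma(Q,W)$ and $N^\bullet\in{\rm per}\,\Gamma(Q_G,W_G)$, a homogeneous map of degree $n$ is, in each internal degree, a morphism of $kQ_G$- (resp. $kQ$-) representations; applying the natural isomorphism $\Hom_{kQ_G}(\pi_\lambda X,Y)\cong\Hom_{kQ}(X,\pi^\lambda Y)$ in every internal degree yields a natural isomorphism of graded spaces $\Hom^\bullet(FM^\bullet,N^\bullet)\cong\Hom^\bullet(M^\bullet,\bar FN^\bullet)$. The point to verify is that this is an isomorphism of complexes, i.e. that it commutes with the Hom-complex differential $\partial(f)=d\circ f-(-1)^{|f|}f\circ d$; this follows from the naturality of the adjunction together with the fact that $F$ and $\bar F$ are DG-functors, namely they preserve the grading and commute with $\mathfrak d$ (for $F$ this is the Leibniz-rule computation of the preceding lemma, and $\bar F$ is handled identically). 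Taking $H^0$ of this chain isomorphism gives the adjunction bijection on morphisms in the homotopy category, and since $F$ and $\bar F$ are triangulated, it restricts to the perfect subcategories, so $(F,\bar F)$ is an adjoint pair.

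Next I would prove $\bar FFM^\bullet\cong\bigoplus_{g\in G}gM^\bullet$. In each internal degree $i$ this is exactly the given isomorphism $\pi^\lambda\pi_\lambda M_i\cong\bigoplus_{g\in G}gM_i$ of representations, where $gM^\bullet$ denotes the twist of $M^\bullet$ by the automorphism $g$ of $\Gamma(Q,W)$ from Lemma \ref{LemmaDiff}; as $G$ is finite, $\bigoplus_{g\in G}gM^\bullet$ is a genuine DG-$\Gamma(Q,W)$-module. It remains to check that the degree-wise isomorphisms assemble into an isomorphism of DG-modules, i.e. are compatible with the differentials $\pi^\lambda\pi_\lambda d_i$ and with the action of the degree $-1$ and $-2$ arrows. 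Since the isomorphism $\pi^\lambda\pi_\lambda(-)\cong\bigoplus_g g(-)$ is natural in the representation and each structure map of $M^\bullet$ is a morphism of representations, the relevant squares commute, giving the claimed DG-isomorphism.

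Finally, to see that $\bar F$ takes values in ${\rm per}\,\Gamma(Q,W)$, I would use that ${\rm per}\,\Gamma(Q_G,W_G)$ is the thick subcategory generated by the free module $\Gamma(Q_G,W_G)=\bigoplus_{Gx}P_{Gx}$. Because $\pi$ is a covering, the push-down $F$ carries the indecomposable summand $P_x$ of the free module $\Gamma(Q,W)$ to $P_{Gx}$, so by the previous paragraph $\bar F(P_{Gx})=\bar FF(P_x)\cong\bigoplus_{g\in G}gP_x$, a finite direct sum of perfect modules (each $gP_x$ is perfect since $g$ is an automorphism), hence perfect. The full subcategory of those $N^\bullet$ with $\bar FN^\bullet\in{\rm per}\,\Gamma(Q,W)$ is thick (as $\bar F$ is triangulated and ${\rm per}\,\Gamma(Q,W)$ is thick in $\mathcal H(\Gamma(Q,W))$) and contains all the generators $P_{Gx}$, so it is everything; thus $\bar F$ restricts to a functor ${\rm per}\,\Gamma(Q_G,W_G)\to{\rm per}\,\Gamma(Q,W)$. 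The main obstacle throughout is the DG-compatibility of the first two steps: promoting the purely representation-theoretic adjunction and decomposition past the differentials and the higher-degree arrows $\alpha^*,t_i$, which is exactly what makes these statements hold for DG-modules and descend to the triangulated categories.
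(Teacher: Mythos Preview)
Your proposal is correct and follows exactly the approach indicated by the paper, whose proof is the single line ``This follows from the analogous properties for the functors $\pi_\lambda,\pi^\lambda$.'' You have simply unpacked that line: lifting the representation-level adjunction $(\pi_\lambda,\pi^\lambda)$ and the decomposition $\pi^\lambda\pi_\lambda\cong\bigoplus_{g\in G}g(-)$ to the DG level by checking compatibility with the differentials and the extra arrows, and then deducing perfectness via a thickness argument on generators.
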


\begin{proof}
This follows from the analogous properties for the functors $\pi_\lambda, \pi^\lambda$.
\end{proof}

\subsection{Cluster categories}\label{sect cluster cat}
The cluster category $\C(Q,W)$ of the quiver with potential $(Q,W)$ is defined in \cite{Amiot} as follows.
$$\C(Q,W) = {\rm per}\Gamma(Q,W)/{\rm f.d.}\Gamma(Q,W).$$
In this short subsection, we will study the category
$$\C(Q_G,W_G) = {\rm per}\Gamma(Q_G,W_G)/{\rm f.d.}\Gamma(Q_G,W_G).$$ Observe that
f.d.$\Gamma(Q,W)$ is clearly sent to f.d.$\Gamma(Q_G,W_G)$ by $F$. Therefore, the exact functor $F$ induces a functor
$$F: \C(Q,W) \to \C(Q_G,W_G).$$
This is an exact functor of triangulated categories. In general, this functor is neither full nor dense. We have the following.

\begin{Prop} \label{prop 4.1} The functor $F: {\rm per}\Gamma(Q,W) \to {\rm per}\Gamma(Q_G,W_G)$ is a $G$-precovering. It induces a $G$-precovering $F: \C(Q,W) \to \C(Q_G, W_G)$.
\end{Prop}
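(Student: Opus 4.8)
The plan is to read off both defining isomorphisms of a $G$-precovering from the adjunction data already assembled, namely the adjoint pair $(F,\bar F)$ of Lemma \ref{Adjoint} together with the computation $\bar F F M^\bullet \cong \bigoplus_{g \in G} gM^\bullet$, where the $G$-action on objects of the derived categories is by the twists $M^\bullet \mapsto gM^\bullet$. The one extra structural input I need is that this adjunction is \emph{ambidextrous}: since $G$ is finite and everything is $k$-linear, coinduction and induction along the covering coincide, so the restriction-type functor $\bar F$ (built from $\pi^\lambda$) is not only a right adjoint of the push-down $F$ (as recorded in Lemma \ref{Adjoint}) but also a left adjoint of it. In other words both $(F,\bar F)$ and $(\bar F, F)$ are adjoint pairs of triangle functors on the perfect derived categories.

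First I would establish the statement at the level of ${\rm per}$. For $M^\bullet, N^\bullet \in {\rm per}\Gamma(Q,W)$, the pair $(F,\bar F)$ and Lemma \ref{Adjoint} give
\begin{equation*}
\Hom(FM^\bullet, FN^\bullet) \cong \Hom\big(M^\bullet, \bar F F N^\bullet\big) \cong \Hom\Big(M^\bullet, \bigoplus_{g \in G} gN^\bullet\Big) \cong \bigoplus_{g \in G}\Hom(M^\bullet, gN^\bullet),
\end{equation*}
which is the first required isomorphism. Dually, the pair $(\bar F, F)$ together with $\bar F F M^\bullet \cong \bigoplus_{g} gM^\bullet$ gives
\begin{equation*}
\Hom(FM^\bullet, FN^\bullet) \cong \Hom\big(\bar F F M^\bullet, N^\bullet\big) \cong \bigoplus_{g \in G}\Hom(gM^\bullet, N^\bullet),
\end{equation*}
the second required isomorphism. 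Naturality of the adjunction units and counits makes these functorial in both arguments, so $F: {\rm per}\Gamma(Q,W) \to {\rm per}\Gamma(Q_G,W_G)$ is a $G$-precovering.

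Next I would descend to the cluster categories $\C(Q,W) = {\rm per}\Gamma(Q,W)/{\rm f.d.}\Gamma(Q,W)$ and $\C(Q_G,W_G)$. The key observation is that every functor in play respects the finite-dimensional subcategories: $F$ sends ${\rm f.d.}\Gamma(Q,W)$ into ${\rm f.d.}\Gamma(Q_G,W_G)$ (already noted in the text), each twist $g$ preserves ${\rm f.d.}$ since it is an automorphism, and $\bar F$ preserves ${\rm f.d.}$ because $Q$ is finite, so $\pi^\lambda$ of a finite-dimensional module is again finite-dimensional ($\dim \pi^\lambda N = \sum_{x \in Q_0}\dim N(Gx) < \infty$). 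Consequently both adjoint pairs $(F,\bar F)$ and $(\bar F, F)$, the natural isomorphism $\bar F F(-) \cong \bigoplus_g g(-)$, and the $G$-action by twists all descend to the Verdier quotients, and I would then rerun the two displayed computations verbatim inside $\C(Q,W)$ and $\C(Q_G,W_G)$ to obtain the two precovering isomorphisms for the induced functor.

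The main obstacle is making the descent to the Verdier quotient rigorous, not the formal Hom-computations. Concretely I must justify that an adjoint pair of triangle functors whose members respect the thick subcategories being killed induces an adjoint pair on the quotients (a standard but not wholly automatic localization statement), and that the canonical isomorphism $\bar F F \cong \bigoplus_g g$ is compatible with the localization functors so that it survives in the quotient. The secondary point to spell out is the ambidexterity itself: the excerpt records only the pair $(F,\bar F)$, so I would need to argue explicitly that $\bar F$ is \emph{also} left adjoint to $F$, tracing this back to the coincidence of induction and coinduction for the finite group $G$ acting on $\Rep(Q)$.
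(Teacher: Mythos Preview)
Your approach is essentially the same as the paper's: both derive the first precovering isomorphism from the adjunction $(F,\bar F)$ of Lemma~\ref{Adjoint} together with $\bar F F N^\bullet \cong \bigoplus_{g} gN^\bullet$, and both pass to the Verdier quotient by observing that all functors involved respect the ${\rm f.d.}$ subcategories. The only visible difference is in the second isomorphism $\bigoplus_{g}\Hom(gM^\bullet,N^\bullet)\cong\Hom(FM^\bullet,FN^\bullet)$: the paper writes ``similarly'' without further comment, whereas you spell this out via ambidexterity of $(F,\bar F)$. Note that ambidexterity is not strictly needed here: since each $g$ is an autoequivalence with $Fg\cong F$, the map $\bigoplus_{g}\Hom(gM^\bullet,N^\bullet)\to\bigoplus_{g}\Hom(M^\bullet,g^{-1}N^\bullet)$ given by $f\mapsto g^{-1}f$ is an isomorphism compatible with the ``sum of images of $F$'' maps, so the second precovering isomorphism follows from the first by reindexing. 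Either route is fine, and your identification of the descent step as the point requiring the most care is accurate; the paper is equally terse there.
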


\begin{proof} The first part of the proof is an adaptation of Asashiba's proof \cite[Theorem 4.3 and 4.4]{Asa}. Let $M^\bullet, N^\bullet \in {\rm per}\Gamma(Q,W)$. Since $G$ is finite, we have a functorial isomorphism
$$\oplus_{g \in G}\Hom_{{\rm per}\Gamma(Q,W)}(M^\bullet, gN^\bullet) \cong \Hom_{{\rm per}\Gamma(Q,W)}(M^\bullet, \oplus_{g \in G}gN^\bullet).$$
The latter is functorially isomorphic to
$$\Hom_{{\rm per}\Gamma(Q,W)}(M^\bullet, \bar F F N^\bullet)$$
which, by the adjunction property, is functorially isomorphic to
$$\Hom_{{\rm per}\Gamma(Q_G,W_G)}(F M^\bullet, F N^\bullet).$$
Similarly, we have a functorial isomorphism $$\oplus_{g \in G}\Hom_{{\rm per}\Gamma(Q,W)}(gM^\bullet, N^\bullet) \cong \Hom_{{\rm per}\Gamma(Q_G,W_G)}(F M^\bullet, F N^\bullet).$$ This shows the first part of the lemma.

For the second part, we only need to observe that the functorial isomorphism
$$\oplus_{g \in G}\Hom_{{\rm per}\Gamma(Q,W)}(M^\bullet, gN^\bullet) \cong \Hom_{{\rm per}\Gamma(Q_G,W_G)}(F M^\bullet, F N^\bullet)$$
induces a functorial isomorphism
$$\oplus_{g \in G}\Hom_{\C(Q,W)}(M^\bullet, gN^\bullet) \cong \Hom_{\C(Q_G,W_G)}(F M^\bullet, F N^\bullet).$$
Similarly, we get a functorial isomorphism
$$\oplus_{g \in G}\Hom_{\C(Q,W)}(gM^\bullet, N^\bullet) \cong \Hom_{\C(Q_G,W_G)}(F M^\bullet, F N^\bullet). \qedhere$$

\end{proof}

When $(Q,W)$ is Jacobi-finite, since we have a $G$-covering $J(Q,W) \to J(Q_G, W_G)$, the pair $(Q_G, W_G)$ is also Jacobi-finite, so that by \cite[Theorem 3.5]{Amiot} again, $\C(Q_G,W_G)$ is a $2$-Calabi-Yau triangulated Hom-finite Krull-Schmidt $k$-category. The category $\C(Q_G,W_G)$ is then called the \emph{cluster category} of $(Q_G, W_G)$. Note that $Q_G$ may have loops and $2$-cycles. As a consequence, the potential $W_G$ need not be non-degenerate, even when $W$ is.

\subsection{Cluster-tilting objects and mutations}\label{sect cto}

In this subsection, we assume that $(Q,W)$ is Jacobi-finite and we let $G$ be an admissible group of automorphisms of $(Q,W)$. In particular, both $\C(Q,W), \C(Q_G, W_G)$ are $2$-Calabi-Yau triangulated Hom-finite Krull-Schmidt $k$-category. Let $T$ be a basic cluster-tilting object in $\C(Q,W)$. Equivalently, $\Hom_{\C(Q,W)}(T, T[1])=0$ and $T$ has exactly $n$ non-isomorphic direct summands, where $n = |Q_0|$. We call such a $T$ a \emph{$G$-cluster-tilting object} if $gT \cong T$ for all $g \in G$.
Clearly, the projective module $\Gamma(Q,W)$ is a $G$-cluster-tilting object.
If $U$ is an indecomposable direct summand of $T$ and $T$ is $G$-cluster-tilting, then for $g \in G$, we have that $gU$ is isomorphic to a direct summand of $T$. We will denote by $\overline{U}$ the direct sum of all the non-isomorphic such $gU$ and by $T_U$ the rigid object $T/\overline{U}$.

We recall some notions from Iyama-Yoshino; see \cite{IY}. Let $\mathcal{D}$ be an additive subcategory of $\C(Q,W)$ which is closed under taking direct summands and such that for $D_1, D_2 \in \mathcal{D}$, we have $\Hom(D_1, D_2[1])=0$. Assume also that $\mathcal{D}$ is functorially finite in $\C(Q,W)$. Let $\mathcal{X}$ be an additive subcategory of $\C(Q,W)$ which is closed under taking direct summands, contains $\mathcal{D}$, and is such that for $D \in \mathcal{D}$ and $X \in \mathcal{X}$, we have $\Hom(D,X[1])=0$. Given an object $X \in \mathcal{X}$, take a left $\mathcal{D}$-approximation $X \to D'$ and consider a triangle
$$X \stackrel{f}{\to} D' \to C_{X,f} \to X[1].$$
Then $\Hom(C_{X,f}, D[1])=0$ for all $D \in \mathcal{D}$ and $C_{X,f}$ is nonzero if $X$ is not in $\mathcal{D}$. Consider the additive subcategory $\mathcal{Y}$ of $\C(Q,W)$ generated by all such $C_{X,f}$. Clearly, $\mathcal{Y}$ contains $\mathcal{D}$ (the approximations above are not necessarily minimal) and for $Y \in \mathcal{Y}, D \in \mathcal{D}$, we have $\Hom(Y, D[1])=0$. By Proposition 2.1(1) in \cite{IY}, the category $\mathcal{Y}$ is closed under direct summands. Following the terminology in \cite{IY}, the pair $(\mathcal{X},\mathcal{Y})$ is called a $\mathcal{D}$-\emph{mutation pair}. It follows from Proposition 5.1 in \cite{IY} that $\mathcal{X}$ is a cluster-tilting subcategory if and only if so is $\mathcal{Y}$.

\medskip

As an application, we consider the following. Let $T$ be a $G$-cluster-tilting object in $\C(Q,W)$ and $U$ an indecomposable direct summand of $T$. Let $\mathcal{D}$ be the additive subcategory generated by the indecomposable direct summands of $T_U$ and let $\mathcal{X}$ be the one generated by the indecomposable direct summands of $T$. Clearly, $\mathcal{D}, \mathcal{X}$ are as above. Let $f_U: U \to D_U$ be a minimal left $\mathcal{D}$-approximation of $U$ in $\C(Q,W)$ and let $C_U$ be the cone of $f_U$.
Since each $g \in G$ can be seen as an automorphism of $\C(Q,W)$, the triangle $$U \stackrel{f_U}{\to} D_U \to C_U \to U[1]$$ is sent to the triangle $$gU \stackrel{gf_U}{\to} gD_U \to gC_U \to gU[1]$$
as $(gU)[1] \cong g(U[1])$. Now, $gU \in \mathcal{X}, gD_U \in \mathcal{D}$ and $gf_U$ is a minimal left $\mathcal{D}$-approximation of $gU$, so $gC_U \cong C_{gU}$. Now, let $f_{\overline{U}}: \overline{U} \to D_{\overline{U}}$ be a minimal left $\mathcal{D}$-approximation of $\overline{U}$ in $\C(Q,W)$.

\begin{Lemma} We have $C_{\overline{U}} \cong \overline{C_U}$, where $\overline{C_U}$ is the direct sum of the non-isomorphic objects in $\{gC_U \mid g \in G\}$.
\end{Lemma}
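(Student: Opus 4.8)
The plan is to identify the minimal left $\mathcal{D}$-approximation of $\overline{U}$ with the direct sum of the minimal left $\mathcal{D}$-approximations of the distinct translates $gU$, and then to match the resulting cone with $\overline{C_U}$ using the bijectivity of the Iyama--Yoshino mutation correspondence. Fix representatives $g_1 U, \dots, g_m U$ of the distinct isomorphism classes occurring in the $G$-orbit of $U$, so that $\overline{U} \cong \bigoplus_{i=1}^m g_i U$. From the discussion preceding the lemma we know that $g_i f_U \colon g_i U \to g_i D_U$ is a minimal left $\mathcal{D}$-approximation of $g_i U$ with cone $g_i C_U \cong C_{g_i U}$. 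Since a left $\mathcal{D}$-approximation of a direct sum may be taken to be the direct sum of left $\mathcal{D}$-approximations of the summands, the morphism $\bigoplus_{i=1}^m g_i f_U \colon \overline{U} \to \bigoplus_{i=1}^m g_i D_U$ is a left $\mathcal{D}$-approximation, and forming the direct sum of the defining triangles shows that its cone is $\bigoplus_{i=1}^m g_i C_U$.

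The key step is to check that this approximation is already minimal, so that $C_{\overline{U}} \cong \bigoplus_{i=1}^m g_i C_U$. For this I would use the criterion, valid in the present mutation pair because $\Hom(D, X[1]) = 0$ for $D \in \mathcal{D}$ and $X \in \mathcal{X}$, that a left $\mathcal{D}$-approximation is minimal if and only if its cone has no nonzero direct summand lying in $\mathcal{D}$. Indeed, if the target of a left approximation had a superfluous summand in $\mathcal{D}$ it would split off the cone, and conversely a summand $E \in \mathcal{D}$ of the cone satisfies $\Hom(E, U[1]) = 0$, hence splits off the target. By \cite{IY} each $C_{g_i U}$ is indecomposable and does not lie in $\mathcal{D}$, its image in $\mathcal{Y}/\mathcal{D}$ being indecomposable, so $\bigoplus_{i=1}^m g_i C_U$ has no nonzero summand in $\mathcal{D}$; the criterion then shows $\bigoplus_{i=1}^m g_i f_U$ is minimal, and uniqueness of minimal approximations yields $C_{\overline{U}} \cong \bigoplus_{i=1}^m g_i C_U$.

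It remains to identify $\bigoplus_{i=1}^m g_i C_U$ with $\overline{C_U}$, that is, to show the objects $g_1 C_U, \dots, g_m C_U$ are pairwise non-isomorphic and exhaust the isomorphism classes in $\{ g C_U \mid g \in G \}$. Exhaustion is immediate, since $g U \cong g_i U$ forces $g C_U \cong C_{gU} \cong C_{g_i U} \cong g_i C_U$. For pairwise distinctness I would invoke the Iyama--Yoshino equivalence $\mathcal{X}/\mathcal{D} \xrightarrow{\sim} \mathcal{Y}/\mathcal{D}$ sending $X$ to $C_X$, under which the distinct indecomposables $g_1 U, \dots, g_m U$ outside $\mathcal{D}$ map to distinct indecomposables $g_1 C_U, \dots, g_m C_U$ outside $\mathcal{D}$. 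Hence $\bigoplus_{i=1}^m g_i C_U$ is exactly the multiplicity-free sum of the distinct elements of $\{ gC_U \mid g \in G\}$, which is $\overline{C_U}$, completing the proof.

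I expect the main obstacle to be the minimality step: one must rule out that passing from $\bigoplus_{i=1}^m g_i f_U$ to a minimal approximation absorbs some summand of $\mathcal{D}$, and dually that the correspondence $U \mapsto C_U$ does not collapse distinct classes within the orbit. Both are controlled by the facts that, in this mutation pair, minimal cones carry no $\mathcal{D}$-summands and that the Iyama--Yoshino mutation is a bijection on isomorphism classes of indecomposables outside $\mathcal{D}$.
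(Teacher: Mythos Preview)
Your proof is correct and follows essentially the same strategy as the paper's: identify the minimal left $\mathcal{D}$-approximation of $\overline{U}$ as the direct sum of the minimal approximations of the representatives $g_i U$, read off the cone as $\bigoplus_i g_i C_U$, and then argue that the correspondence $U \mapsto C_U$ is bijective on indecomposables outside $\mathcal{D}$. The only differences are cosmetic: you justify minimality via the ``no $\mathcal{D}$-summand in the cone'' criterion (the paper simply asserts minimality is easy to check), and for the bijectivity step you invoke the Iyama--Yoshino equivalence $\mathcal{X}/\mathcal{D} \simeq \mathcal{Y}/\mathcal{D}$ directly, whereas the paper spells out one concrete reason for it, namely that $D_U \to C_U$ is a right $\mathcal{D}$-approximation, so $C_U$ determines $U$.
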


\begin{proof}
It is easy to check that the direct sum of the $g f_U$ for $g \in G$ forms a minimal left $\mathcal{D}$-approximation of $\overline{U}$ in $\C(Q,W)$. Therefore, we just need to check that $gU \cong U$ if and only if $C_U \cong C_{gU}$. The necessity follows from the left-approximation property. For the sufficiency, we just need to observe that if we have a left $\mathcal{D}$-approximation $f_U$ of $U$ with the corresponding triangle
$$U \stackrel{f_U}{\to} D_U \stackrel{f_U'}{\to} C_U \to U[1],$$
then $f_U'$ is a right $\mathcal{D}$-approximation of $C_U$.
\end{proof}

Now, we can set $\mu(T,\overline{U}) = (T/\overline{U}) \oplus C_{\overline{U}}$ and by construction, $\mathcal{Y}$ is the additive subcategory generated by the indecomposable direct summands of $\mu(T,\overline{U})$. In particular, $\mathcal{Y}$ is a cluster-tilting subcategory, meaning that $\mu(T,\overline{U})$ is a cluster-tilting object. It is clear that $\mu(T,\overline{U})$ is also $G$-cluster-tilting. We denote by $\mathcal{D}_G$ the full additive subcategory in $\C(Q_G, W_G)$ generated by the indecomposable direct summands of $F(T_U)$ and by $D_G$ the basic object of $F(T_U)$.

\begin{Prop}
Assume that $(Q,W)$ is Jacobi-finite. Let $h: U \to D$ be a minimal left $\mathcal{D}$-approximation of $U$ in $\mathcal{C}(Q,W) $. Then $F h$ is a left $\mathcal{D}_G$-approximation of $F U$ in $\C(Q_G, W_G)$.
\end{Prop}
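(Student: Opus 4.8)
The plan is to verify the defining lifting property of a left $\mathcal{D}_G$-approximation directly, transporting each test morphism back to $\C(Q,W)$ via the $G$-precovering isomorphism of Proposition \ref{prop 4.1} (where $h$ is already a left $\mathcal{D}$-approximation) and then pushing the resulting factorization forward again.

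First I would record two reductions. Since $D$ lies in $\mathcal{D}={\rm add}\,T_U$, its image $FD$ lies in $\mathcal{D}_G={\rm add}\,F(T_U)$, so $Fh\colon FU\to FD$ is a morphism with target in $\mathcal{D}_G$, as required. To show it is a left approximation it suffices to prove that every morphism from $FU$ into an object of $\mathcal{D}_G$ factors through $Fh$. Every object of $\mathcal{D}_G$ is a direct summand of a finite direct sum of copies of $F(T_U)$, and $F(T_U)=\bigoplus_i F(N_i)$ where the $N_i$ are the indecomposable summands of $T_U$; hence, by additivity and the usual summand argument, it is enough to treat target objects of the form $FN$ with $N$ an indecomposable direct summand of $T_U$.

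Next, fix such an $N$ and an arbitrary morphism $\psi\colon FU\to FN$ in $\C(Q_G,W_G)$. Using $F(gN)\cong FN$ for every $g\in G$, the $G$-precovering isomorphism of Proposition \ref{prop 4.1} reads
$$\Hom_{\C(Q_G,W_G)}(FU,FN)\;\cong\;\bigoplus_{g\in G}\Hom_{\C(Q,W)}(U,gN),$$
and is induced by $F$. I would use its inverse to write $\psi=\sum_{g\in G}F(\psi_g)$ for uniquely determined morphisms $\psi_g\colon U\to gN$ in $\C(Q,W)$. The key observation is that each target $gN$ again lies in $\mathcal{D}$: the object $T_U$ is $G$-invariant, because $T=T_U\oplus\overline{U}$ is $G$-cluster-tilting and $\overline{U}$ is $G$-invariant by construction, so $gT\cong T$ and $g\overline{U}\cong\overline{U}$ force $gT_U\cong T_U$; thus $gN$ is a summand of $T_U$, i.e. $gN\in\mathcal{D}$.

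Finally, since $h\colon U\to D$ is a left $\mathcal{D}$-approximation and $gN\in\mathcal{D}$, each $\psi_g$ factors as $\psi_g=\phi_g\circ h$ with $\phi_g\colon D\to gN$. Applying $F$ gives $F(\psi_g)=F(\phi_g)\circ Fh$, and summing over $g\in G$ yields
$$\psi=\sum_{g\in G}F(\psi_g)=\Big(\sum_{g\in G}F(\phi_g)\Big)\circ Fh,$$
where $\sum_{g\in G}F(\phi_g)\colon FD\to FN$ is a morphism in $\mathcal{D}_G$. This produces the required factorization of $\psi$ through $Fh$, so $Fh$ is a left $\mathcal{D}_G$-approximation. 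The point requiring the most care is the compatibility between the decomposition $\psi=\sum_g F(\psi_g)$ dictated by the precovering isomorphism and the reassembly of the lifts into $\sum_g F(\phi_g)$; this works precisely because the isomorphism is induced by $F$, and relies on the identification $F(gN)\cong FN$ together with the $G$-stability of $\mathcal{D}$. Note that minimality of $h$ is not used in this argument.
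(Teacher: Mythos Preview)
Your proof is correct and follows essentially the same approach as the paper: both arguments pull a test morphism $FU\to FD'$ back through the $G$-precovering isomorphism to a family $(\psi_g\colon U\to gD')_{g\in G}$, use the $G$-stability of $\mathcal{D}$ to factor each $\psi_g$ through $h$, and push the factorization forward via $F$. The paper phrases the pullback step through the adjunction $(F,\bar F)$ and is slightly more explicit about the natural isomorphisms $\phi_g\colon F\circ g\to F$ (which you suppress in writing $\psi=\sum_g F(\psi_g)$ but correctly flag at the end), and it works directly with an arbitrary $D'\in\mathcal{D}$ rather than reducing to indecomposable summands, but these are cosmetic differences.
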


\begin{proof} Since $(Q,W)$ is Jacobi-finite, the cluster categories $\C(Q,W), \C(Q_G, W_G)$ are Hom-finite. Let $D' \in \mathcal{D}$ be arbitrary, so that $F D'$ is arbitrary in $\mathcal{D}_G$. Since $F$ is a $G$-precovering, for each $g \in G$, there exists a natural isomorphism $\phi_g:  F\circ g \to F$ such that
$$(*): \quad \Phi_{U,D'}: \oplus_{g \in G}\Hom(U, gD') \to \Hom(F U, F D')$$
is given by $(f_g)_{g \in G} \to \sum_{g \in G}(\phi_g D') \circ F(f_g)$.
Let $f: F U \to F D'$ be any morphism. Since $(F, \bar F)$ is an adjoint pair and since $\bar F F D' \cong \oplus_{g \in G}gD'$, there is a morphism $\bar f \in \Hom_{\C(Q,W)}(U, \oplus_{g \in G}gD')$ corresponding to $f$ through the adjunction isomorphism $$\Hom_{\C(Q,W)}(U, \oplus_{g \in G}gD') \cong \Hom_{\C(Q_G,W_G)}(F U, F D').$$ Decompose $\bar f$ as $\bar f = (f_g)_{g \in G}$. Since $h$ is a left $\mathcal{D}$-approximation of $U$, there is a morphism $\eta: D \to \oplus_{g \in G}gD'$ such that $\bar f = \eta h$. Now, we have $(F f_g)_{g \in G} = F \eta F h$. Now, the diagram
$$\xymatrix{& F D \ar[dr]^{F \eta}&&&\\ F U \ar[ur]^{F h} \ar[rr]^{(F f_g)_{g \in G}} & & \oplus_{g \in G}F gD' \ar[rr]^{(\phi_g D')_{g \in G}} && F D'}$$
yields
$$f = (\phi_g D')_{g \in G}(F f_g)_{g \in G} = ((\phi_g D')_{g \in G}F \eta) F h$$
which shows that $F h$ is a left $\mathcal{D}_G$-approximation of $F U$ in $\C(Q_G, W_G)$.
\end{proof}

In the above setting, the process of replacing $\overline{U}$ in $T$ by the cone $C_{\overline{U}}$ of a minimal left $\mathcal{D}$-approximation $\overline{U} \to D_{\overline{U}}$ is called the (Iyama-Yoshino) \emph{orbit mutation} of $U$ in $T$. Note that $gC_{\overline{U}} \cong C_{\overline{U}}$ for all $g \in G$ and hence $C_{\overline{U}} \cong \overline{C_U}$, that is, the indecomposable direct summands of $C_{\overline{U}}$ are precisely the non-isomorphic objects of $\{gC_U \mid g \in G\}$.

\begin{Cor}
Let $T$ be a $G$-cluster-tilting object in $\C(Q,W)$.
Let $U$ be an indecomposable direct summand of $T$. The orbit mutation of $U$ in $T$ corresponds to the classical mutation of $F U$ inside the cluster-tilting object $F T$ of $\C(Q_G, W_G)$.
\end{Cor}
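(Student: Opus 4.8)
The plan is to combine three ingredients: that $F$ is an exact functor of triangulated categories, that $F$ collapses each $G$-orbit of objects to a single object, and the preceding Proposition, which says that $F$ carries a left $\mathcal{D}$-approximation of $U$ to a left $\mathcal{D}_G$-approximation of $FU$. First I would record the orbit-collapsing behaviour of $F$: from the definition of the push-down functor, $(\pi_\lambda(gM))(Gx) = \bigoplus_{h \in G} M(h^{-1}gx) = (\pi_\lambda M)(Gx)$, so $F(gX) \cong FX$ for every object $X$ and every $g \in G$. In particular $F(\overline{U}) \cong (FU)^{\oplus m}$ and $F(\overline{C_U}) \cong (FC_U)^{\oplus m'}$ for suitable multiplicities $m,m'$. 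Using $gT \cong T$ together with the precovering isomorphism of Proposition \ref{prop 4.1}, one gets
$\Hom_{\C(Q_G,W_G)}(FT, FT[1]) \cong \bigoplus_{g \in G}\Hom_{\C(Q,W)}(T, gT[1]) = 0$,
and $FT$ has exactly $|(Q_G)_0| = |Q_0|/|G|$ non-isomorphic indecomposable summands; hence $FT$ is a cluster-tilting object of $\C(Q_G, W_G)$ having $FU$ as a summand, with complement $\mathcal{D}_G = \mathrm{add}\,F(T_U)$. This is exactly the data defining the classical mutation of $FU$ in $FT$.

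Next I would run the core argument. Let $h \colon U \to D$ be the minimal left $\mathcal{D}$-approximation of $U$, sitting in the triangle $U \stackrel{h}{\to} D \to C_U \to U[1]$. Applying the exact functor $F$ yields a triangle $FU \stackrel{Fh}{\to} FD \to FC_U \to FU[1]$ in $\C(Q_G, W_G)$, and by the previous Proposition $Fh$ is a left $\mathcal{D}_G$-approximation of $FU$. Thus $FC_U$ is the cone of a left $\mathcal{D}_G$-approximation of $FU$, which is precisely the object produced by the Iyama-Yoshino mutation of $FU$ relative to $\mathcal{D}_G$. Invoking the mutation-pair formalism of \cite{IY} on the $Q_G$-side, with $\mathcal{X}_G = \mathrm{add}\,FT$ and the mutated subcategory $\mathcal{Y}_G$ generated by all such cones, we get $FC_U \in \mathcal{Y}_G$; since $\mathcal{Y}_G$ is closed under summands and is cluster-tilting by Proposition 5.1 of \cite{IY}, the basic object $F(T_U) \oplus FC_U$ coincides with the basic object of the classically mutated object $F(T_U) \oplus C_{FU}$, where $C_{FU}$ is the cone of a minimal left $\mathcal{D}_G$-approximation of $FU$. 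Finally, since the orbit mutation gives $\mu(T,\overline{U}) = T_U \oplus \overline{C_U}$, applying $F$ and discarding the multiplicities recorded above shows that $F(\mu(T,\overline{U}))$ and the classical mutation of $FT$ at $FU$ share the common basic object $F(T_U) \oplus FC_U$, which is the asserted correspondence.

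The step I expect to be the main obstacle is controlling minimality: the preceding Proposition only guarantees that $Fh$ is a left $\mathcal{D}_G$-approximation, not a minimal one, so a priori $FC_U$ could differ from the cone $C_{FU}$ of the minimal approximation by a direct summand lying in $\mathcal{D}_G$. I expect to circumvent this by working with the Iyama-Yoshino mutated subcategory $\mathcal{Y}_G$ rather than with a single cone, since $\mathcal{Y}_G$ is independent of the chosen approximation and closed under summands, so that the mutation is determined regardless of minimality. Alternatively, one can verify directly that $Fh$ is left minimal by using that $h$ is a radical morphism out of the indecomposable $U$ and that the precovering isomorphism $\Hom(FU,FD) \cong \bigoplus_{g \in G}\Hom(U,gD)$ reflects radicality.
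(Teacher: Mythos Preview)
Your proposal is correct and, since the paper states this Corollary without proof as an immediate consequence of the preceding Proposition, your argument spells out precisely the intended reasoning: apply the exact functor $F$ to the exchange triangle and invoke that $Fh$ is a left $\mathcal{D}_G$-approximation. Your discussion of the minimality issue is a worthwhile addition that the paper glosses over; your resolution via the Iyama--Yoshino mutation-pair formalism (which is insensitive to minimality) is the right way to close that gap.
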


In the above corollary, the assumption that $T$ is $G$-cluster-tilting is necessary. In general, an indecomposable rigid object in $\C(Q,W)$ is not sent by $F$ to a rigid object in $\C(Q_G, W_G)$. In particular, a cluster-tilting object in $\C(Q,W)$ is not necessarily sent to a cluster-tilting object in $\C(Q_G, W_G)$ through $F$.

\begin{Cor}
Let $T$ be an object in $\C(Q,W)$ obtained by a sequence of orbit mutations of the rigid object $\Gamma(Q,W)$ seen as an object in $\C(Q,W)$. Then $F T$ is (not necessarily basic) cluster-tilting in $\C(Q_G,W_G)$.
\end{Cor}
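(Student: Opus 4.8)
The plan is to argue by induction on the number $m$ of orbit mutations needed to produce $T$ from $\Gamma(Q,W)$, using the correspondence between orbit mutations and classical mutations recorded in the preceding corollary. The two facts I would invoke at every stage are: first, that orbit mutation carries a $G$-cluster-tilting object to a $G$-cluster-tilting object (established in Subsection~\ref{sect cto}), so the object $T$ obtained after any number of orbit mutations of $\Gamma(Q,W)$ is itself $G$-cluster-tilting; and second, that classical Iyama--Yoshino mutation at a summand sends a cluster-tilting subcategory to a cluster-tilting subcategory, which is exactly the statement that $\mathcal{X}$ is cluster-tilting if and only if $\mathcal{Y}$ is, in the mutation-pair machinery recalled from \cite{IY}.

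For the base case $m=0$ I would first identify the image $F(\Gamma(Q,W))$ explicitly. Writing $\Gamma(Q,W)=\bigoplus_{x\in Q_0}P_x$ with $P_x=e_x\Gamma(Q,W)$, the functor $F$ is built degreewise from the push-down $\pi_\lambda$, and the push-down of an indecomposable projective at $x$ is the indecomposable projective at $Gx$ (a standard property of $G$-coverings, see \cite{BG}). Since $G$ acts freely on $Q_0$, each orbit $Gx\in(Q_G)_0$ has exactly $|G|$ preimages in $Q_0$, so
$$F(\Gamma(Q,W))\;\cong\;\bigoplus_{x\in Q_0}P_{Gx}\;\cong\;\bigoplus_{Gx\in(Q_G)_0}P_{Gx}^{\oplus|G|}\;\cong\;\Gamma(Q_G,W_G)^{\oplus|G|}.$$
This has the same indecomposable summands as the canonical cluster-tilting object $\Gamma(Q_G,W_G)$, so ${\rm add}\,F(\Gamma(Q,W))={\rm add}\,\Gamma(Q_G,W_G)$ is a cluster-tilting subcategory; hence $F(\Gamma(Q,W))$ is cluster-tilting but, because of the multiplicity $|G|$, generally not basic. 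This is precisely the source of the ``not necessarily basic'' clause in the statement.

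For the inductive step, suppose $T$ is obtained by $m$ orbit mutations and, by the inductive hypothesis, $T$ is $G$-cluster-tilting and $FT$ is cluster-tilting in $\C(Q_G,W_G)$. Let $T'=\mu(T,\overline{U})$ be the orbit mutation of an indecomposable summand $U$. By the preceding corollary the orbit mutation of $U$ in $T$ is intertwined by $F$ with the classical mutation of $FU$ inside $FT$: concretely, since $F\,\overline{U}\cong(FU)^{\oplus k}$ and $F\,\overline{C_U}\cong(FC_U)^{\oplus k}$ for the orbit size $k$ of $U$ (using that $F\circ g\cong F$), passing to $F$ replaces every copy of the summand $FU$ by $FC_U$, which is exactly Iyama--Yoshino mutation of $FT$ at $FU$. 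As mutation preserves the cluster-tilting property, $FT'$ is again cluster-tilting, completing the induction.

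The main obstacle I anticipate is the bookkeeping around non-basic objects: one must check both that the identification $F(\Gamma(Q,W))\cong\Gamma(Q_G,W_G)^{\oplus|G|}$ holds on the nose in the DG setting, and that Iyama--Yoshino mutation behaves correctly when the cluster-tilting object carries repeated summands, since under $F$ an entire $G$-orbit $\overline{U}$ of summands collapses to a single summand $FU$ occurring with multiplicity. The cleanest way around this is to phrase everything in terms of the additive subcategory ${\rm add}\,FT$ (equivalently, the underlying basic object), on which the mutation-pair theory of \cite{IY} applies verbatim and for which multiplicities are irrelevant to the cluster-tilting conclusion. As a consistency check one also has the direct rigidity computation $\Hom_{\C(Q_G,W_G)}(FT,FT[1])\cong\bigoplus_{g\in G}\Hom_{\C(Q,W)}(T,(gT)[1])=0$, obtained from the precovering isomorphism of Proposition~\ref{prop 4.1} together with $gT\cong T$.
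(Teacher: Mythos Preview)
Your proposal is correct and is precisely the argument the paper leaves implicit: the corollary is stated without proof, and the intended justification is exactly the induction you spell out, combining the base case $F(\Gamma(Q,W))\cong\Gamma(Q_G,W_G)^{\oplus|G|}$ with the preceding corollary and the Iyama--Yoshino fact that mutation preserves cluster-tilting subcategories. Your remark about handling multiplicities by passing to $\mathrm{add}\,FT$ is the right way to make the inductive step clean.
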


\begin{Cor} \label{CovJacobian}
Let $T$ be a $G$-cluster-tilting object in $\C(Q,W)$.
Then there is a $G$-covering $\End_{\C(Q, W)}(T) \to \End_{\C(Q_G, W_G)}(F T)$.
\end{Cor}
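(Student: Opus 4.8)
The plan is to deduce the statement directly from Proposition \ref{prop 4.1} by restricting the $G$-precovering $F\colon\C(Q,W)\to\C(Q_G,W_G)$ to the additive subcategories generated by $T$ and by $FT$, and then reading the endomorphism algebras as skeletal $k$-categories in the sense of Subsection \ref{Covering}. First I would fix the categories. Write $T=\bigoplus_{i=1}^{n}T_i$ with the $T_i$ indecomposable and pairwise non-isomorphic, and let $\mathcal{A}$ be the full skeletal subcategory of $\C(Q,W)$ on the objects $T_1,\dots,T_n$; this is exactly the $k$-category attached to $\End_{\C(Q,W)}(T)$. Since $T$ is $G$-cluster-tilting we have $gT\cong T$, so each $g\in G$ permutes $\{T_1,\dots,T_n\}$ up to isomorphism and therefore acts on $\mathcal{A}$ by automorphisms. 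Let $\mathcal{B}$ be the full skeletal subcategory of $\C(Q_G,W_G)$ on the indecomposable summands of $FT$, i.e.\ the $k$-category attached to $\End_{\C(Q_G,W_G)}(FT)$.

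Next I would produce the covering isomorphisms. For indecomposable summands $T_i,T_j$ of $T$, Proposition \ref{prop 4.1} yields functorial isomorphisms
$$\bigoplus_{g\in G}\Hom_{\C(Q,W)}(T_i,gT_j)\cong\Hom_{\C(Q_G,W_G)}(FT_i,FT_j)$$
and
$$\bigoplus_{g\in G}\Hom_{\C(Q,W)}(gT_i,T_j)\cong\Hom_{\C(Q_G,W_G)}(FT_i,FT_j).$$
Because $G$ preserves $\mathrm{add}(T)$, each $gT_j$ again lies in $\mathcal{A}$, so the left-hand sides are precisely $\bigoplus_{g\in G}\mathcal{A}(T_i,gT_j)$ and $\bigoplus_{g\in G}\mathcal{A}(gT_i,T_j)$. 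These are exactly the two defining isomorphisms of a $G$-precovering $F\colon\mathcal{A}\to\mathcal{B}$, so the only remaining point is surjectivity of $F$ on objects.

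The step I expect to be the main obstacle is this object-level bookkeeping: I must show that the indecomposable summands of $FT=\bigoplus_i FT_i$ are exactly the images $FT_i$ and that every object of $\mathcal{B}$ is hit. This rests on the action of $G$ on $\{T_1,\dots,T_n\}$ being \emph{free}, so that $gT_i\not\cong T_i$ for $g\neq 1$. Granting freeness, taking $T_i=T_j$ in the first isomorphism gives $\End_{\C(Q_G,W_G)}(FT_i)\cong\bigoplus_{g\in G}\Hom(T_i,gT_i)$, in which the identity component contributes the local ring $\End(T_i)$ while every off-orbit summand $\Hom(T_i,gT_i)$ consists of non-isomorphisms and hence lands in the radical; thus $\End(FT_i)$ is local and $FT_i$ is indecomposable. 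Conversely, if $FT_i\cong FT_j$ then $\Hom(FT_i,FT_j)$ contains an isomorphism, forcing $T_i\cong gT_j$ for some $g$, so distinct $G$-orbits give non-isomorphic $FT_i$; equivalently, the pushdown of a covering with free action preserves indecomposability, cf.\ \cite{BG}. This identifies the objects of $\mathcal{B}$ with the $G$-orbits of the $T_i$ and makes $F$ surjective on objects.

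Finally, I would justify the freeness itself from admissibility: it holds for the initial object $\Gamma(Q,W)$, whose indecomposable summands are indexed by $Q_0$ on which $G$ acts freely, and it is preserved under orbit mutation because an orbit of size $|G|$ is replaced by the cone construction of Subsection \ref{sect cto}, whose summands again form a free $G$-orbit. Once freeness is in place, the two displayed isomorphisms together with surjectivity on objects are precisely the definition of a $G$-covering $\End_{\C(Q,W)}(T)\to\End_{\C(Q_G,W_G)}(FT)$, and everything else is a direct transcription of Proposition \ref{prop 4.1}.
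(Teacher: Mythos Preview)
Your approach is correct and is precisely what the paper intends: the corollary is stated without proof and is meant to follow immediately from the $G$-precovering property of Proposition~\ref{prop 4.1}, restricted to $\mathrm{add}(T)$. Your added care about indecomposability of the $FT_i$ and surjectivity on objects fills in details the paper omits entirely.

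One small point: your justification of freeness of the $G$-action on the indecomposable summands of $T$ (via preservation under orbit mutation from $\Gamma(Q,W)$) only covers $G$-cluster-tilting objects reachable by orbit mutations, whereas the corollary as stated allows an arbitrary $G$-cluster-tilting object. The paper does not address this either, and in its only application of the corollary (Proposition~\ref{FiniteType}) the object is the initial $\Gamma(Q,W)$, where freeness is immediate from admissibility, so the gap is harmless in context. If you want the statement in full generality you would need an independent argument that $gT_i\cong T_i$ forces $g=1$ for any basic $G$-cluster-tilting $T$; otherwise, simply restricting the hypothesis to reachable $G$-cluster-tilting objects is the honest fix.
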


\section{Surfaces and orbifolds}\label{sect 4}

Building on work of Fock and Goncharov \cite{FG1, FG2}, and of
Gekhtman, Shapiro and Vainshtein \cite{GSV},
Fomin, Shapiro and Thurston \cite{FST} associated a cluster algebra
to any {\it bordered surface with marked points}. Oriented Riemann orbifolds have been considered in \cite{FeShTu, ChSh, IwNa} in the context of cluster algebras. The triangulated orbifolds considered in \cite{FeShTu} is the geometric framework which allowed the same authors to complete the classification of skew-symmetrizable cluster algebras of finite type, in \cite{FeShTu2}. In \cite{ChSh}, the authors have also studied orbifolds, defined in a similar way, in the context of Teichm\"{u}ller theory. They have shown that the $\lambda$-lengths relation for the arcs in an orbifold behave like a three-term exchange relation of a generalized-cluster algebra, which is defined there.

\medskip

We fix the following notation.
\begin{itemize}
\item $S$ is a connected oriented Riemann surface with
(possibly empty)
boundary $\partial S$.
\item $M\subset S$ is a finite set of {\it marked points} with at least one marked point on each connected component of the boundary.
\end{itemize}
We will refer to the
pair $(S,M)$ simply as a \emph{surface}. A surface is called \emph{closed} if the boundary is empty. A connected component of $\partial S$ is called a \emph{boundary component}. Marked points in the interior of $S$ are called \emph{punctures}.

\medskip

An {orbifold} is a surface   with  additional data. Each puncture $b$ comes with a positive integer $m_b$ attached to it, called its \emph{isotropy}, and there is also a finite set of points $\mathcal{O}$ on $S \backslash (\partial S \cup M)$ called \emph{orbifold points}. More precisely, an \emph{orbifold} is a triple $(S,M,\mathcal{O})$ together with a function $m:M \to \Z_{\ge 1}$ such that $m_b:=m(b)=1$ whenever $b \in \partial S$. A puncture $b$ with isotropy $m_b$ will be called an $m_b$-\emph{puncture} and a $1$-\emph{puncture} is often called an \emph{ordinary puncture}.

\medskip

For technical reasons, when $\mathcal{O}$ is empty, we require that $(S,M)$ is not
a sphere with $1, 2$ or $3$ punctures;
a monogon with $0$ or $1$ puncture;
or a bigon or triangle without punctures.

\medskip

An orbifold point is denoted by a cross $\times$ in the surface, a marked point with isotropy one is denoted by a dot {\tiny$^\bullet$} while a puncture with isotropy greater than one is denoted by $\otimes$.

\subsection{Arcs and triangulations}

An \emph{arc} $\gamma$ in $(S,M)$ is a curve in $S$, considered up
to isotopy,
 such that
\begin{itemize}
\item[(a)] the endpoints of $\gamma$ are in $M$;
\item[(b)] $\gamma$ is disjoint from $\mathcal{O}$ and, except for the endpoints, $\gamma$ is disjoint from $M$ and
  from $\partial S$,
\item[(c1)] $\gamma$ does not cut out an unpunctured monogon, unless there is exactly one orbifold point in the monogon;
\item[(c2)] $\gamma$ does not cut out an unpunctured bigon;
\item [(d)] $\gamma$ does not cross itself, except that its endpoints may coincide.
\end{itemize}
If $\gamma$ is an arc with endpoints $a,b$, we will often indicate this by $\gamma: a - b$ or by $\gamma: b - a$. Curves that connect two marked points and lie entirely on the boundary of $S$ without passing
through a third marked point are called \emph{boundary segments}. By (c1) and (c2), boundary segments are not arcs.
A \emph{closed loop} is a closed curve in $S$ which is disjoint from the boundary of $S$.

\smallskip

For any two arcs $\gamma,\gamma'$ in $S$, define
\[e(\gamma,\gamma') = \min\{
\textup{number of crossings of
 $\alpha$ and $\alpha'\mid \alpha\simeq\gamma,\alpha'\simeq\gamma'$}\},
\]
 where $\alpha$
and $\alpha'$ range over all arcs isotopic to
$\gamma$ and $\gamma'$, respectively.
We say that arcs $\gamma$ and $\gamma'$ are \emph{compatible} if $e(\gamma,\gamma')=0$.

An \emph{ideal triangulation} is a maximal collection of
pairwise compatible arcs (together with all boundary segments).
The arcs of a
triangulation cut the surface into \emph{ideal triangles}.
Triangles that have exactly two distinct sides are called \emph{self-folded} triangles.  Note that a self-folded triangle consists of
a loop $\ell$, together with an arc $r$ to an enclosed puncture which we call a
\emph{radius}. If $m$ denotes the isotropy of the puncture inside the self-folded triangle, then the triangle is called $m$-\emph{self-folded}. A triangle that has only one arc has to be a loop enclosing exactly one orbifold point. Such a triangle is called an \emph{orbifold triangle}. A triangle that is neither self-folded nor an orbifold triangle is called a \emph{standard triangle}. A triangle is called \emph{internal} if no edge of the triangle is a boundary segment. A self-folded or orbifold triangle is always internal.
Examples of ideal triangulations are given in Figure \ref{figtriangulations}.

\begin{figure}
\begin{center}
\scriptsize\scalebox{1.2}{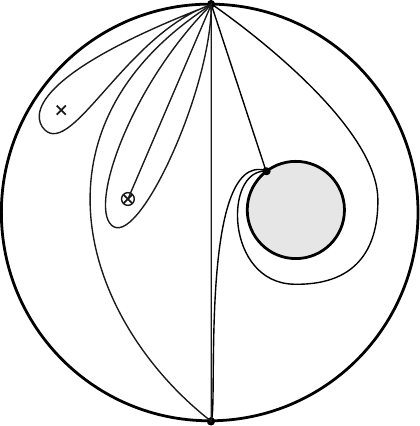}
\caption{An ideal triangulations of an annulus with one $m$-puncture and one orbifold point. The arc $6$ is the loop of an $m$-self-folded triangle whose radius is the arc 1. The arc 8 is the loop of an orbifold triangle}
\label{figtriangulations}
\end{center}
\end{figure}

The following is well known when $\mathcal{O}=\emptyset$.

\begin{Lemma}\label{LemmaRank}
\label{Ideal-Tri} The number of arcs in an ideal triangulation is exactly
\[n=6g+3b+3p + 2x +c-6,\]
where $g$ is the
genus of $S$, $b$ is the number of boundary components, $p$ is the number of punctures, $x$ is the number of orbifold points and $c=|M|-p$ is the
number of marked points on the boundary of $S$.
The number $n$ is called the \emph{rank} of $(S,M)$.
\end{Lemma}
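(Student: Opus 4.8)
The plan is to read the formula off the Euler characteristic of $S$ together with a two-way count of the edge--face incidences of an ideal triangulation; the only work beyond the classical case $\mathcal{O}=\emptyset$ is the bookkeeping for self-folded and orbifold triangles. First I fix an ideal triangulation $T$ and record its cell structure. The vertices are exactly the marked points, so $V=|M|=p+c$. The edges are the arcs together with the boundary segments; since a boundary component carrying $k$ marked points is cut into $k$ boundary segments, there are $c$ boundary segments in all, whence $E=n+c$. The faces are the triangles of $T$, which I split into the $F'$ \emph{three-sided} ones (standard and self-folded) and the orbifold triangles. By condition (c1) and maximality of $T$, every orbifold point is enclosed in exactly one orbifold triangle, so there are $x$ of these and $F=F'+x$. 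Each face is topologically a disk (a self-folded triangle is the disk bounded by its loop, with the radius appearing twice on its boundary; an orbifold triangle is the disk bounded by its single loop, the orbifold point being an ordinary interior point), so $T$ is a genuine CW decomposition of $S$ and $V-E+F=2-2g-b$ gives
\[ p-n+F=2-2g-b. \]

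Next I count incidences between edges and faces in two ways. From the face side, each three-sided triangle contributes $3$ and each orbifold triangle contributes $1$ (its single loop), for a total of $3F'+x$. From the edge side, every arc is interior and borders two faces, while every boundary segment borders one, for a total of $2n+c$; this correctly accounts for loops and radii, each being an interior arc bordering two faces, the two sides of a radius both facing its own self-folded triangle. Equating the two counts yields $3F'+x=2n+c$. Eliminating $F'$ and $F$ between this relation and the Euler identity above, and clearing denominators, produces $n=6g+3b+3p+2x+c-6$, as claimed.

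Since the case $\mathcal{O}=\emptyset$ is already known, there is also a shorter route by reduction: replacing each orbifold point by an ordinary puncture converts each orbifold triangle (a single loop) into a once-punctured monogon, which is triangulated by adding a single radius, i.e. into a self-folded triangle. This yields a triangulation of a genuine surface with $p+x$ punctures and the same $g,b,c$, having $n+x$ arcs, so the classical formula gives $n+x=6g+3b+3(p+x)+c-6$, which rearranges to the desired identity (one should only check that the enlarged surface avoids the excluded degenerate cases). In either approach the main obstacle is the careful treatment of the degenerate triangles—above all, verifying that an orbifold triangle contributes a single face and a single edge--face incidence while still being a disk—so that the standard Euler count applies verbatim.
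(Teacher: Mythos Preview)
Your proposal is correct. Your second (``shorter'') route---promote each orbifold point to a puncture, add one radius per orbifold triangle, and invoke the classical formula for the resulting ordinary surface $(S,M\cup\mathcal{O})$---is exactly the paper's proof, essentially verbatim.

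Your first approach, the direct Euler-characteristic and edge--face incidence count, is a genuinely different and self-contained argument that avoids citing the classical case as a black box. The bookkeeping is right: the key points are that an orbifold triangle is an honest disk (the orbifold point is an ordinary interior point, not a vertex of the CW structure) contributing one face and one edge--face incidence, and that the radius of a self-folded triangle contributes two incidences on the edge side, both into its own triangle. The paper's reduction is quicker if one is willing to quote the $\mathcal{O}=\emptyset$ case; your direct count has the advantage of proving that case simultaneously.
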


\begin{proof} Consider the surface $(S,M')$ obtained by taking $M' = M \cup \mathcal{O}$. To get a triangulation of the ordinary surface $(S,M')$, we only need to add an arc for each point in $\mathcal{O}$. Therefore, $n+x=6g+3b+3(p + x) +c-6$, which gives the wanted expression for $n$.
\end{proof}

Ideal triangulations are connected to each other by sequences of
{\it flips}.  Each flip replaces a single arc $\gamma$
in $T$ by a unique new arc $\gamma' \neq \gamma$
such that
 \[T'=(T\setminus\{\zg\})\cup\{\zg'\}\]
is a triangulation.

\subsection{Tagged arcs}
 Note that an arc $\gamma$ that lies inside a self-folded triangle
in $T$ cannot be flipped.
In order to rectify this problem, the authors of \cite{FST}
were led to introduce the slightly more general notion
of {\it tagged arcs}. We adapt the notion for triangulations of orbifolds.

A {\it tagged arc} is obtained by taking an arc that does not
cut out a once-punctured monogon
and marking (``tagging")
each of its ends in one of two ways, {\it plain} or {\it notched},
so that the following conditions are satisfied:
\begin{itemize}
\item an endpoint lying on the boundary of $S$ must be tagged plain
\item both ends of a loop must be tagged in the same way.
\end{itemize}
Thus there are four ways to tag an arc between two distinct punctures and there are two ways to tag a loop at a puncture; see Figure \ref{taggedarcs}.
The notching is indicated by a bow tie.

\begin{figure}[h]
\begin{center}
 {\tiny \scalebox{1.15}{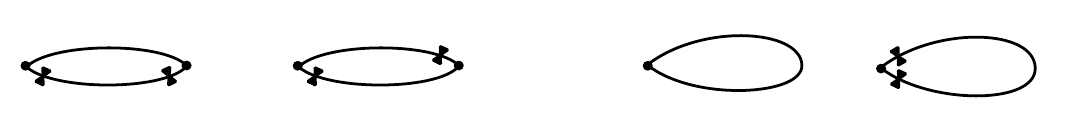}}
 \caption{Four ways to tag an arc between two punctures (left); two ways to tag a loop at a puncture (right)}
 \label{taggedarcs}
\end{center}
\end{figure}

\medskip

One can represent an ordinary arc $\beta$ by
a tagged arc $\iota(\beta)$ as follows.  If $\beta$
does not cut out a once-punctured monogon, then $\iota(\beta)$
is simply $\beta$ with both ends tagged plain.
Otherwise, $\beta$ is a loop based at some marked point $q$
and cutting out
a punctured monogon with the sole puncture $p$ inside it.
Let $\alpha$ be the unique arc connecting $p$ and $q$ and compatible
with $\beta$.  Then $\iota(\beta)$
is obtained by tagging $\alpha$ plain at $q$ and notched at $p$.

Tagged arcs $\alpha$ and
$\beta$ are called {\it compatible} if and only if the following
properties hold:
\begin{itemize}
\item the arcs $\alpha^0$ and $\beta^0$ obtained from
   $\alpha$ and $\beta$ by forgetting the taggings are compatible;
\item if $\alpha^0=\beta^0$ then at least one end of $\alpha$
  must be tagged in the same way as the corresponding end of $\beta$;
\item if $\alpha^0\neq \beta^0$ but they share an endpoint $a$,
 then the ends of $\alpha$ and $\beta$ connecting to $a$ must be tagged in the
same way.
\end{itemize}

A maximal collection
of pairwise compatible tagged arcs is called a {\it tagged triangulation}. Assume that $T$ is a tagged triangulation of $(S,M)$. We define a triangulation $\tau(T)$ without tags as follows. As a first case, assume that there is a puncture $b$ having two arcs $\alpha, \beta$ of $T$ connected to $b$ such that $\alpha$ is tagged plain at $b$ while $\beta$ is tagged notched at $b$. Then $\alpha^0 = \beta^0 : a - b$ and $\alpha, \beta$ are tagged the same way at $a$. Moreover, there is no other arcs having $b$ as endpoint. In this case, let $\tau(\alpha)$ be the arc $\alpha^0$ and $\tau(\beta)$ be the loop $a - a$ enclosing the puncture $b$ and tagged plain. If $\gamma$ is a tagged arc not as in the latter case, we let $\tau(\gamma) = \gamma^0$. It is easy to check that $\tau(T):=\{\tau(\gamma) \mid \gamma \in T\}$ is an ideal triangulation of $(S,M)$. Also, if $T$ is an ideal triangulation, then $\tau(\iota(T))=T$.

\subsection{Quivers and cluster categories}

In this subsection, $(S,M)$ is an ordinary surface, that is, $\mathcal{O}=\emptyset$. Given an ideal triangulation $T=\{\tau_1,\tau_2,\ldots,\tau_n\}$, the associated quiver $Q_T$ introduced in \cite{FST} can be defined as follows.
The vertices of $Q_T$ are in bijection with the arcs of $T$, and we denote the vertex of $Q_T$ corresponding to the arc $\tau_i$ simply by $i$. The arrows of $Q_T$ are defined as follows.
For any triangle $\Delta$ in $T$ which is not self-folded, we add an arrow $i\to j$
whenever
\begin{itemize}
\item[(a)] $\tau_i$ and $\tau_j$ are sides of
  $\Delta$ with  $\tau_j$ following $\tau_i$  in the
  clockwise order;
\item[(b)] $\tau_j$ is a radius in a self-folded triangle enclosed by a loop $\tau_\ell$, and $\tau_i$ and $\tau_\ell$ are sides of
  $\Delta$ with  $\tau_\ell$ following $\tau_i$  in the
clockwise order;
\item[(c)] $\tau_i$ is a radius in a self-folded triangle enclosed by a loop $\tau_\ell$, and $\tau_\ell$ and $\tau_j$ are sides of
  $\Delta$ with  $\tau_j$ following $\tau_\ell$  in the
clockwise order;
\item[(d)] $\tau_i, \tau_j$ are radii of self folded triangles with respective loops $\tau_{\ell}, \tau_{m}$ where $\tau_\ell, \tau_m$ are sides of
  $\Delta$ with $\tau_\ell$ following $\tau_m$ in the
clockwise order;
\end{itemize}
Then we remove all 2-cycles. If $T$ is tagged, then the quiver $Q_T$ of $T$ coincides with the quiver $Q_{\tau(T)}$ of the ideal triangulation $\tau(T)$.

\medskip

One can attach a cluster category, defined by a quiver with potential, to any triangulation $T$ of the ordinary surface $(S,M)$; see \cite{Amiot}. Let us recall the main ingredients of this construction. We let $W_T$ denote a potential in $\widehat{kQ_T}$. An example of a potential is the \emph{canonical potential} (or \emph{Labardini potential}) attached to $T$; see \cite{Labardini}. In case where there is no self-folded triangle in $T$, this potential $W_{T, {\rm ca}}$ is a sum of cycles, where a given cycle in $W_{T, {\rm ca}}$ is either a cycle of length $3$ corresponding to an internal triangle of $T$ or else is a cycle corresponding to surrounding once a puncture.
 In particular, the number of terms in $W_{T, {\rm ca}}$ is the number of internal triangles in $T$ plus the number of punctures in $M$. The Labardini potential can also be defined in the cases where $T$ has self-folded triangles (see \cite{Labardini}), but the definition is slightly more involved.

\medskip

Recall from Section \ref{sect cluster cat} that to the pair $(Q_T, W_T)$, one can attach the cluster category $\C(Q_T, W_T)$. In this category, one can perform mutations at any summand of a cluster-tilting object, regardless of the local properties of the quiver of that cluster-tilting object. Since we are mainly working with cluster categories, we will generally not assume that the potential $W_T$ is non-degenerate. Let us just mention the following fact.

\begin{Prop}
\cite{Labardini2}
Let $S$ be a surface with non-empty boundary. Then $W_{T,\textup{ca}}$ is non-degenerate. Moreover, for every mutation $\mu_a$, the potential $\mu_a W_{T,\textup{ca}}$ is right equivalent to the potential $W_{\mu_a( T),\textup{ca}}$. In particular, there is an isomorphism of Jacobian algebras
$J( \mu_a(Q_T, W_{T,{\textup{ca}}}))
 \cong J(Q_{\mu_a(T)}, W_{\mu_a(T), {\rm ca}})$.
\end{Prop}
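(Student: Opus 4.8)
The plan is to deduce all three assertions from a single combinatorial-geometric compatibility, namely that a flip of a (tagged) triangulation corresponds, under the assignment $T \mapsto (Q_T, W_{T,\textup{ca}})$, to a mutation of quivers with potential. The core claim is thus the second assertion: for every arc $\tau_a \in T$ there is a right-equivalence
$$\mu_a(Q_T, W_{T,\textup{ca}}) \cong (Q_{\mu_a(T)}, W_{\mu_a(T),\textup{ca}}),$$
where $\mu_a(T)$ denotes the flip of $\tau_a$. Granting this, the Jacobian isomorphism follows at once, since right-equivalent quivers with potential have isomorphic Jacobian algebras by Derksen--Weyman--Zelevinsky, and $J(\mu_a(Q_T,W_{T,\textup{ca}}))$ is by definition the Jacobian algebra of the mutated pair.

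I would prove the compatibility by a local analysis. Both operations are supported near $\tau_a$: the flip alters only the two triangles incident to $\tau_a$, while the quiver mutation changes only the arrows incident to $a$ together with the arrows among the neighbours of $a$, and the relevant part of $W_{T,\textup{ca}}$ consists of the cycles passing through $a$. I would therefore enumerate the finitely many local configurations of the two triangles sharing $\tau_a$ -- two standard triangles, configurations involving a self-folded triangle, cases where the two triangles share a further edge or enclose a puncture, and the degenerate cases where an endpoint of $\tau_a$ carries few arcs -- and, in each, compute $\mu_a(Q_T, W_{T,\textup{ca}})$ from the DWZ mutation rule and $(Q_{\mu_a(T)}, W_{\mu_a(T),\textup{ca}})$ directly from the flipped triangulation. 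In every case the reduced mutated potential and the canonical potential of $\mu_a(T)$ differ only by an automorphism of the complete path algebra that fixes the vertices and substitutes arrows unitriangularly; exhibiting this explicit automorphism is what produces the right-equivalence.

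Non-degeneracy then follows by induction on the length of a mutation sequence. On a surface with non-empty boundary one passes to tagged triangulations, in which every arc is flippable, so every vertex $a$ of $Q_{T'}$ (for any triangulation $T'$) admits a flip and the compatibility applies to each mutation step. Since the quiver $Q_{T'}$ of a triangulation is by construction free of loops and $2$-cycles, and since mutation of quivers with potential is well-defined up to right-equivalence, an induction shows that any finite sequence of mutations of $(Q_T,W_{T,\textup{ca}})$ remains right-equivalent to $(Q_{T''}, W_{T'',\textup{ca}})$ for the correspondingly flipped triangulation $T''$; in particular no $2$-cycle ever appears up to right-equivalence, which is exactly non-degeneracy.

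The main obstacle is the bookkeeping for the configurations involving self-folded triangles and punctures, where $W_{T,\textup{ca}}$ carries, besides the $3$-cycles of internal triangles, the ``large'' cycles that surround a puncture. There one must follow how a flip creates or destroys a self-folded triangle and how the associated puncture-cycle transforms, and one must carry out the DWZ reduction -- splitting off the trivial part of the mutated potential and identifying its reduced part -- with care. Restricting to surfaces with non-empty boundary eases this: the boundary guarantees that none of the small exceptional surfaces excluded earlier occur and that every tagged arc is flippable, so the inductive argument for non-degeneracy closes without the extra difficulties of the closed-surface case.
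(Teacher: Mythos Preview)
The paper does not give its own proof of this proposition; it is stated with a citation to Labardini-Fragoso \cite{Labardini2} (and the earlier \cite{Labardini}) and used as a known result. Your outline is an accurate sketch of the strategy carried out in those references: the local case-by-case verification that a flip at $\tau_a$ matches, up to right-equivalence and reduction, the DWZ mutation at $a$, followed by the induction on mutation sequences to obtain non-degeneracy. So there is nothing to compare against in the present paper, and your plan aligns with the approach in the cited source.

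One small caution: you say that on a surface with non-empty boundary ``every tagged arc is flippable''; flippability in the tagged sense holds for \emph{all} surfaces (that is the point of tagged triangulations), and the role of the non-empty boundary in Labardini's argument is rather to control the delicate puncture-cycle terms and to ensure the reduction step behaves well, not to guarantee flippability. This does not affect the structure of your argument, but the justification for the boundary hypothesis should be placed where it is actually used.
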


\subsection{Group actions on triangulations}

Now, fix a tagged triangulation $T$ of $(S, M)$. For us, a \emph{homeomorphism} of $(S,M)$ is always an orientation-preserving homeomorphism of $S$ that maps $M$ to $M$. Two homeomorphisms $\varphi_1, \varphi_2$ of $(S,M)$ are \emph{isotopic} if their actions on $M$ coincide and if there is an isotopy $h: S \times [0,1] \to S$ such that $h(-,0)=\varphi_1$, $h(-,1)=\varphi_2$ and for $t \in (0,1)$, $h(-,t)$ has the same action on $M$ as $\varphi_1$. Following \cite{ASS}, we consider $\textup{MCG}(S,M)$ the mapping class group of $(S,M)$. The elements of $\textup{MCG}(S,M)$ are the homeomorphisms of $(S,M)$ up to the above-defined isotopy relation. This is a group under composition. We define $\textup{MCG}(S,M,T)$ to be the subgroup of $\textup{MCG}(S,M)$
of those elements $g$ that map $\tau(T)$ to $\tau(T)$ and preserve the tagging of arcs in the following way. If $\alpha: a - b \in T$, we require that the tagged arc $g\alpha: ga - gb$ is such that $\alpha, g\alpha$ are tagged the same way at $a, ga$, respectively; and  $\alpha, g\alpha$ are tagged the same way at $b, gb$, respectively. Since $T$ is finite, the group $\textup{MCG}(S,M,T)$ is always finite. Indeed, any element of $\textup{MCG}(S,M,T)$ fixing each arc of $T$ and each marked point of $M$ has to be the identity element.
An element in $\textup{MCG}(S,M,T)$ is called a \emph{$T$-automorphism} of $(S,M)$.

\medskip

An \emph{admissible group} is a  group $G$ of $T$-automorphisms that acts \emph{freely} on $T$, that is, if $g \in G$ fixes an arc of $T$ (but not necessarily its endpoints), then $g$ is the identity automorphism.
From now on, let $G$ be an admissible group. Let $b$ be a triangle from $T$, an arc of $T$, a boundary segment or a marked point of $M$.
The subgroup $G_b$ of all  $g \in G$ that map the set $b$ to itself will be called the \emph{isotropy group of $b$}. We sometimes say that $b$ has trivial isotropy if $G_b$ is trivial, that is, if $g(b)=b$ then $g$ is the identity.
Notice that the isotropy group of an arc is always trivial, since $G$ is admissible.

\begin{Lemma}\label{lem 22} Let $G$ be a non-trivial admissible group of $T$-automorphisms of $(S,M)$ and $b$ be a triangle, marked point or boundary segment with  non-trivial isotropy group $G_b$.
\begin{enumerate}[$(1)$]
    \item If $b$ is a triangle, then $b$ is not self-folded and $G_b$ has order $3$.
    \item Otherwise, $b$ is a puncture and $G_b$ is a cyclic group whose order is a divisor of the number of arcs incident to $b$, and of the number of loops incident to $b$.
    \end{enumerate}
\end{Lemma}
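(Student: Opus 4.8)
The plan is to run the whole argument on two facts valid for every $g \in G$: that $g$ is orientation-preserving, and that by admissibility $g$ fixes no arc of $T$ unless $g$ is the identity. For each type of object $b$ I would exhibit a finite combinatorial set carrying a $G_b$-action and then read off the order and the structure of $G_b$ from that action, using orientation to control the \emph{shape} of the action and admissibility to control its \emph{freeness}.

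For (1), let $g \in G_b$ be non-trivial. An orientation-preserving self-homeomorphism of a triangle permutes its three sides by a rotation, so the induced action on the set of sides gives a homomorphism $G_b \to C_3$ (a transposition of two sides would reverse orientation and is excluded). I would first rule out the degenerate triangles: if $b$ is self-folded with loop $\ell$ and radius $r$, then $g$ cannot interchange $\ell$ and $r$ since one is a loop and the other is not, so $g$ fixes $\ell$ and hence $g = \mathrm{id}$ by admissibility; an orbifold triangle has a single (loop) side, again fixed by $g$. Thus $b$ is standard. Since not all three sides can be boundary segments (that would make $b$ a disk component with three boundary marked points, which is excluded by the standing hypotheses), at least one side is an arc; hence any element of the kernel of $G_b \to C_3$ acts trivially on the sides, fixes that arc, and is therefore trivial. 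So $G_b \to C_3$ is injective with non-trivial image, forcing $G_b \cong C_3$ and $|G_b| = 3$.

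For (2), I would first eliminate boundary segments and boundary marked points. A non-trivial $g$ fixing a boundary segment setwise preserves the orientation of the boundary circle, hence fixes both endpoints and thus the segment; the unique triangle on that segment is then fixed by $g$, and since $g$ fixes one of its sides it fixes all three (a non-trivial rotation fixes none), in particular an arc side, giving $g = \mathrm{id}$. The same orientation argument shows that any $g \in G_b$ fixing a boundary marked point fixes its two incident boundary segments, again forcing $g = \mathrm{id}$. Hence a non-trivially isotropic $b$ is a puncture. Around $b$, the ends of the arcs incident to $b$ (a loop contributing two ends) are arranged in a cyclic order that every $g \in G_b$ preserves, since $g$ is orientation-preserving and fixes $b$; a cyclic-order-preserving permutation of a finite cyclic set is a rotation, so $G_b$ maps into a cyclic group of rotations, and this map is injective because an element fixing one arc-end fixes an arc and is trivial. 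Thus $G_b$ is cyclic. Finally, $G_b$ acts on the set of arcs incident to $b$ and on the subset of loops at $b$, and both actions are free by admissibility, so counting orbits shows $|G_b|$ divides the number of arcs incident to $b$ and the number of loops incident to $b$.

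The main obstacle I anticipate is justifying the ``orientation-preserving implies rotation'' statements at the level of mapping classes rather than honest homeomorphisms: I must check that the induced permutation of the sides of a triangle, and of the cyclically ordered arc-ends around a puncture, is well defined on isotopy classes, really is a rotation, and is faithful on $G_b$. Some care is also needed to confirm that the degenerate configurations (an all-boundary triangle, or a puncture with no incident arcs) cannot occur under the standing technical hypotheses on $(S,M,\mathcal{O})$, so that the ``fixes an arc'' contradiction is always available.
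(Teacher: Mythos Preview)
Your proposal is correct and follows essentially the same route as the paper: rule out boundary points and boundary segments via the orientation-preserving constraint plus freeness on arcs, embed $G_b$ for a puncture into the cyclic group of rotations of the arc-ends around $b$, and embed $G_b$ for a triangle into $C_3$ via its action on the sides. Two very small remarks: in this lemma $(S,M)$ is an honest surface (the orbifold is the quotient), so there are no orbifold triangles to consider; and your use of arc-\emph{ends} around a puncture is in fact slightly cleaner than the paper's phrasing, which speaks of ``arcs in cyclic order'' and then obtains the divisibility separately from the free action on loops and on non-loops.
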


\begin{proof}
We first claim that $b$ cannot be a marked point on the boundary or a boundary segment. Assume otherwise. Assume further that $B$ is a boundary component of $S$ with $b \in B$.
Let $g \in G_b$ be non-trivial. Then $g$ maps $B$ to $B$ and hence $g$ permutes the marked points of $B$. Suppose first that $b$ is a boundary segment in $B$. Then $b$ is the bounding curve of a unique  standard triangle $\delta$ of $T$. Since $b$ is fixed by $g$, we see that $\delta$ is fixed by $g$. Now, $\delta$ has at least one internal arc. If it has exactly one, say $a$, then $g$ fixes $a$, a contradiction to $G$ being admissible. If $\delta$ has two internal arcs, then $g$ permutes these internal arcs. But then, $g$ reverses the orientation of $\delta$, a contradiction. Suppose now that $b$ is a marked point of $B$. If $b$ is the unique marked point of $B$, then we take $c$ to be the unique boundary segment of $B$ and the above argument applies. Otherwise, let $c_1, c_2$ be the two boundary segments attached to $b$. If each $c_i$ is fixed by $g$, then the above argument applies. Otherwise, $g$ permutes $c_1, c_2$ but then reverse the orientation on $B$, a contradiction.

Suppose now that $b$ is a puncture. If $b$ lies inside a self-folded triangle, then clearly, the loop of that self-folded triangle is fixed by $G_b$, a contradiction. Let $c_0, \ldots, c_{m-1}$ be the arcs of $T$ incident to $b$ in cyclic order around $b$. Any $h \in G_b$ induces a permutation $\sigma_h$ of $c_0, \ldots, c_{m-1}$. Since $G_b$ preserves the orientation of $S$, every $h$ is uniquely determined by its action on $c_0$: if $h(c_0) = c_i$, then $h(c_j) = c_{j+i}$, where the indices are taken modulo $m$. Take $g_0 \in G_b$ with $g_0(c_0) = c_i$ where $i > 0$ is minimal. We claim that $G_b$ is the cyclic group generated by $g_0$. Let $h \in G_b$ and assume that $h(c_0) = c_j$ where $j \ge i$. Then for $t \in \Z$, the element $g_0^{-t}h$ is such that $g_0^{-t}h(c_0) = c_{j-ti}$. There exists $t \ge 1$ such that $0 \le j-ti < i$. By minimality of $i$, we have $j=ti$ and $g_0^{-t}h$ fixes $c_0$, showing that $h = g_0^t$. This shows that $G_b$ is cyclic. Since $g_0^m = 1$, we see that the order of $G_b$ divides $m$. Since an element of $G$ sends a loop of $T$ to a loop of $T$ and a non-loop of $T$ to a non-loop of $T$, the second statement of the proposition follows.

The only case left is when $b$ is a triangle from $T$. As observed above, no arc of $b$ is a boundary segment. Also, $b$ is not self-folded, as otherwise, its loop would be fixed by $G_b$, which is impossible. Every non-identity element $h$ in $G_b$ induces a rotation of order $3$ of $b$. Using the fact that $G$ acts freely on $T$, we see that $G_b$ is generated by any non-identity element $h \in G_b$ and hence, $G_b$ has order three.
\end{proof}

\medskip

Since $G$ is finite and admissible, it acts properly discontinuously on $(S,M)$ and the orbit space $S_G:=S/G$ is a Riemann surface. Moreover, since $G$ consists only of orientation preserving homeomorphisms, $S_G$ is actually oriented (with the induced orientation from $S$) with finitely many isolated singular points. We refer the reader to W. Thurston's notes \cite[Chapter 13]{WThurston} for results in this direction and also for more details concerning these orbit spaces.

One way to study the orbit space is through a fundamental domain. For each internal and standard triangle having a non-trivial isotropy group, consider the unique point in its interior which is a singular point. Let $\overline{\mathcal{O}}$ denote the set of all of these points. Consider the oriented Riemann surface $(S,M')$ where $M' = M \cup \overline{\mathcal{O}}$ and let $T^{\mathcal{O}}$ be the triangulation obtained from $\tau(T)$ by adding three arcs to each marked point of $\overline{\mathcal{O}} \subseteq M'$. It is clear that $G$ is an admissible group of $T^{\mathcal{O}}$-automorphisms of $(S,M')$ and where each triangle from $T^{\mathcal{O}}$ now has a trivial isotropy group. We construct a collection $\mathfrak{C}$ of triangles from $T^{\mathcal{O}}$ as follows. Start with a triangle $\delta_1$ of $T^{\mathcal{O}}$ and set $\mathfrak{C}_1 = \{\delta_1\}$. In general, suppose that we have constructed $\mathfrak{C}_t$ for $t \ge 1$. If for any given $\delta \in \mathfrak{C}_t$, all triangles adjacent to $\delta$ are in the $G$-orbit of some triangle of $\mathfrak{C}_t$, then we set $\mathfrak{C}:= \mathfrak{C}_t$. Otherwise, there is a triangle $\delta_{t+1}$ adjacent to a triangle from $\mathfrak{C}_t$ which is not in the $G$-orbit of any triangle of $\mathfrak{C}_t$. We set $\mathfrak{C}_{t+1}:=\mathfrak{C}_t \cup \{\delta_{t+1}\}$. Continuing this way, since $T^{\mathcal{O}}$ has finitely many triangles, we get a final collection $\mathfrak{C} := \mathfrak{C}_s$ for some $s$ of triangles from $T^{\mathcal{O}}$ having the property that for any $\delta \in \mathfrak{C}$, all triangles adjacent to $\delta$ are in the $G$-orbit of some triangle of $\mathfrak{C}$. We denote by $\mathfrak{F}$ the union of all triangles from $\mathfrak{C}$. Observe that $\mathfrak{F}$ is connected by construction. Note also that $\mathfrak{F}$ forms an oriented Riemann surface, and the arcs and boundary segments of $T^{\mathcal{O}}$ bounding a triangle of $\mathfrak{C}$ induce a triangulation $T_{\mathfrak{F}}$ of $\mathfrak{F}$.

\begin{Lemma}
The surface $\mathfrak{F}$ is (the closure of) a fundamental domain for $S$ under the action of $G$.
\end{Lemma}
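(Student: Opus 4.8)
The plan is to reduce the topological assertion to a combinatorial statement about the dual graph of $T^{\mathcal{O}}$. Recall from the construction preceding the statement that $G$ acts on the triangles of $T^{\mathcal{O}}$ with every isotropy group trivial: this is where Lemma \ref{lem 22} enters, since the only triangles of $\tau(T)$ with nontrivial isotropy are the internal standard triangles with isotropy of order $3$, and adjoining the central point of $\overline{\mathcal{O}}$ together with three arcs subdivides each such triangle into three smaller triangles that the order-$3$ rotation permutes cyclically. Thus $G$ acts \emph{freely} on the set of triangles of $T^{\mathcal{O}}$, so each $G$-orbit of triangles has exactly $|G|$ elements. My goal is to show that $\mathfrak{C}$ contains exactly one triangle from each orbit; the fundamental domain properties will then follow formally.

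That $\mathfrak{C}$ meets each orbit at most once is immediate from the construction, since at each step $\delta_{t+1}$ is chosen outside the $G$-orbit of every triangle already in $\mathfrak{C}_t$; by induction the members of $\mathfrak{C}=\mathfrak{C}_s$ lie in pairwise distinct orbits. For the converse, that every orbit is represented, I would consider the set $\mathcal{T}' := G\cdot\mathfrak{C}$ of all triangles lying in the orbit of some triangle of $\mathfrak{C}$, and show that it is closed under adjacency in the dual graph, where two triangles are adjacent when they share an arc or boundary segment. Indeed, if $\delta' = g\delta$ with $\delta\in\mathfrak{C}$ and $g\in G$, then since $g$ is a homeomorphism preserving $T^{\mathcal{O}}$ it carries the triangles adjacent to $\delta$ onto those adjacent to $\delta'$; by the defining property of $\mathfrak{C}$ the former all lie in $\mathcal{T}'$, hence so do the latter. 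As $S$ is connected the dual graph of $T^{\mathcal{O}}$ is connected, so the nonempty adjacency-closed set $\mathcal{T}'$ must be all of the triangles of $T^{\mathcal{O}}$. This yields completeness, and $\mathfrak{C}$ is a complete, irredundant set of orbit representatives.

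It then remains to deduce the two defining properties of a fundamental domain for the properly discontinuous action of $G$ on $S$. Writing $U = \mathrm{int}_S\,\mathfrak{F}$, the covering property $S = \bigcup_{g\in G} g\mathfrak{F}$ holds because the closed triangles of $T^{\mathcal{O}}$ tile $S$ and every triangle is $G$-equivalent to one in $\mathfrak{C}$. For disjointness of the translates of the interior, suppose $g\neq e$ and $U\cap gU \neq \emptyset$; this set is open in $S$ and contained in $\mathfrak{F}$, and the union of the open triangles $\mathrm{int}\,\delta$ for $\delta\in\mathfrak{C}$ is dense in $\mathfrak{F}$, so we may pick $x\in\mathrm{int}\,\delta\cap gU$ for some $\delta\in\mathfrak{C}$. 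Then $g^{-1}x$ lies in both $\mathfrak{F}$ and the open triangle $g^{-1}\delta$; since the interior of any triangle meets no other cell of $T^{\mathcal{O}}$, this forces $g^{-1}\delta\in\mathfrak{C}$. But $\delta$ and $g^{-1}\delta$ lie in the same orbit, so irredundancy gives $g^{-1}\delta = \delta$, and freeness of the action on triangles then gives $g=e$, a contradiction. Finally, $\mathfrak{F}$ is closed as a finite union of closed triangles and equals $\overline{U}$ since each triangle is the closure of its interior, so $\mathfrak{F}$ is the closure of the fundamental domain $U$.

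I expect the only genuinely delicate point to be the completeness argument, specifically the appeal to connectedness of the dual graph of $T^{\mathcal{O}}$ in the presence of self-folded triangles, whose radius is glued to itself and which are therefore adjacent to a single other triangle across their loop; all the remaining verifications are the routine tiling bookkeeping sketched above.
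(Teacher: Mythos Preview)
Your proof is correct and follows essentially the same approach as the paper: both reduce to showing that $\mathfrak{C}$ is a complete irredundant set of $G$-orbit representatives for the triangles of $T^{\mathcal{O}}$, and both obtain completeness from the adjacency-closure of $G\cdot\mathfrak{C}$ together with connectedness of the dual graph (the paper phrases this as an explicit path argument, you phrase it as a closed-set argument, but these are the same). You go a bit further than the paper by spelling out why freeness on triangles holds and by verifying the covering and interior-disjointness properties of a fundamental domain, whereas the paper's proof stops once the transversal property of $\mathfrak{C}$ is established and relegates $S=\bigcup_{g\in G} g\mathfrak{F}$ to a remark afterward.
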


\begin{proof}
We use the notations in the above paragraph. Let $\Delta = \Delta_0$ be a triangle in $T^{\mathcal{O}}$. Since $S$ is connected, there is a sequence of triangles $\Delta_0, \Delta_1, \ldots, \Delta_m$ of $T^{\mathcal{O}}$ such that $\Delta_i$ shares an edge with $\Delta_{i+1}$ for all $0 \le i \le m-1$ and $\Delta_m$ lies in $\mathfrak{C}$. By the defining property of $\mathfrak{C}$, $\Delta_{m-1}$ is in the $G$-orbit of a triangle in $\mathfrak{C}$. By induction, we get that all $\Delta_i$ are in the $G$-orbit of a triangle in $\mathfrak{C}$. This, combined with the definition of $\mathfrak{C}$, ensures that $\mathfrak{C}$ contains exactly one triangle from each $G$-orbit of the triangles in $T^{\mathcal{O}}$. This completes the proof of the lemma.
\end{proof}

From this, it is easy to see that $S = \bigcup_{g \in G}g\mathfrak{F}$ and $s|G|$ is the number of triangles of $T^{\mathcal{O}}$. Now, the orbit space $S_G$ can be thought of as $\mathfrak{F}$ in which arcs in the same $G$-orbit are being identified; see Figure \ref{fig:hexagon2}. We will always make this identification from now on.

It follows from Lemma \ref{lem 22} that the points in $S$ with non-trivial isotropy are either punctures or points inside standard internal triangles of $\tau(T)$, and the collection of those latter points was denoted $\overline{\mathcal{O}}$.
The orbits $M_G:=M/G$ of $M$ then correspond to marked points in $S_G$. Lemma \ref{lem 22} together with Lemma \ref{lemmaboundary} below guarantees that the punctures of $S_G$ correspond to the orbits of the punctures of $M$; and the marked points on the boundary of $S_G$ correspond to the orbits of the marked points on $\partial S$. For each puncture $b$ of $S$, let $m_b$ be the order of its isotropy group $G_b$. This defines a
function $m: M_G \to \Z_{\ge 1}$ that associate to each puncture $b$ the number $m_b$ and to each marked point on the boundary of $S_G$ the number $1$.
Note that the $G$-orbits $\mathcal{O}$ of the points in $\overline{\mathcal{O}}$ are disjoint from $\partial S_G, M_G$. Therefore, $(S_G, M_G, \mathcal{O})$ is an orbifold. An $m$-puncture in $M_G$ is called an \emph{ordinary puncture}, if $m=1$; and a \emph{$G$-puncture}, if $m>1$.

As noticed above, as surfaces (without taking into account the marked points), the orbit space $S_G$ can be identified with $\mathfrak{F}$. On the one hand, $S_G$ is equipped with two disjoint sets of points: the marked points $M_G$ together with the orbifold points ${\mathcal{O}}$. On the other hand, $\mathfrak{F}$ has \emph{no orbifold} point but rather marked points, which are $M_\mathfrak{F}:=M_G \cup \mathcal{O}$. Therefore, an arc in $\mathfrak{F}$ is understood to be an arc of the marked surface $(\mathfrak{F}, M_\mathfrak{F})$.

\medskip

The next lemma guarantees that the boundary components of $S_G$ are in correspondence with the orbits of the boundary components of $S$ under the action of $G$. In particular, there is no new boundary component in $S_G$. Of course, the fact that $G$ consists only of orientation-preserving homeomorphisms is crucial. For instance, if $S$ is the sphere with all punctures and arcs on the equator and $G = \Z_2$ is the group generated by the reflection along the equator, then $S_G$ is a disk and hence, a new boundary component is created.

\begin{Lemma} \label{lemmaboundary}The $G$-orbits of the boundary components of $S$ correspond to the boundary components of $S_G$.
\end{Lemma}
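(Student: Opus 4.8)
The plan is to establish the bijection in two directions, relying on the fact that $S = \bigcup_{g \in G} g\mathfrak{F}$ and that $G$ consists of orientation-preserving homeomorphisms acting properly discontinuously. First I would recall that the orbit space $S_G = S/G$ carries the quotient topology, with quotient map $q: S \to S_G$. Since each $g \in G$ maps $\partial S$ to $\partial S$ (homeomorphisms of $(S,M)$ send boundary to boundary), the group $G$ permutes the boundary components of $S$, and $q$ restricts to a map $q|_{\partial S}: \partial S \to q(\partial S)$. I would first argue that $q(\partial S) \subseteq \partial S_G$ and conversely that any boundary point of $S_G$ has a preimage on $\partial S$; this uses that interior points of $S$ have isotropy groups acting by rotation (orientation-preserving), so their images remain interior points of the orbifold, whereas boundary points are sent to boundary points. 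This is where the orientation-preserving hypothesis is essential, exactly as illustrated by the sphere-reflection counterexample in the paragraph preceding the lemma: a reflection would fold an interior neighborhood onto a half-disk and manufacture new boundary.

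Next I would construct the correspondence explicitly. For a boundary component $B$ of $S$, consider its $G$-orbit $GB = \{gB \mid g \in G\}$; since elements of $G$ either fix $B$ setwise or map it to a disjoint boundary component, the union $\bigcup_{g} gB$ descends to a single closed curve $q(B)$ in $S_G$. I would check that $q(B)$ is a boundary component of $S_G$: locally near a boundary point, the isotropy group $G_x$ (by Lemma \ref{lem 22}, the only non-trivial isotropy on the boundary would have to fix a boundary segment or marked point, which that lemma rules out for admissible $G$) is trivial, so $q$ is a local homeomorphism there and carries the boundary circle to a boundary circle homeomorphically. Thus each $G$-orbit of boundary components of $S$ yields a well-defined boundary component of $S_G$, giving a well-defined map from $G$-orbits of boundary components to boundary components of $S_G$.

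To finish I would show this map is a bijection. For injectivity, if two boundary components $B, B'$ of $S$ satisfy $q(B) = q(B')$, then every point of $B'$ is $G$-equivalent to a point of $B$; since $G$ permutes boundary components, $B' = gB$ for some $g \in G$, so $B, B'$ lie in the same orbit. For surjectivity, let $C$ be a boundary component of $S_G$ and pick any point $y \in C$; choose $x \in q^{-1}(y)$, which lies on $\partial S$ by the first paragraph's analysis (an interior $x$ would force $y$ to be interior since the isotropy acts by orientation-preserving rotation), so $x$ lies on some boundary component $B$ of $S$ with $q(B) = C$. Connectedness of the boundary circle $C$ together with the fact that $q(B)$ is both open and closed in $C$ (it is a full boundary component) gives $q(B) = C$.

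The main obstacle I expect is the verification that $q(\partial S) = \partial S_G$ exactly, that is, ruling out both the creation of spurious boundary from interior points and the loss of genuine boundary. The delicate direction is showing no interior point of $S$ maps to a boundary point of $S_G$: this reduces to analyzing the local model at a point $x$ with isotropy $G_x$, where one must confirm that the quotient of a small disk by the finite cyclic group $G_x$ acting by orientation-preserving rotations is again a disk (a cone point, but topologically a disk interior) rather than a half-disk. The cited notes of Thurston \cite[Chapter 13]{WThurston} provide exactly this local structure theory for orientation-preserving properly discontinuous actions, so I would invoke that rather than redo the local computation. Once the local model is settled, the global bijection follows formally from properly discontinuous action and connectedness of boundary components.
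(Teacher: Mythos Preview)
Your proposal is correct but takes a genuinely different route from the paper. You argue topologically via the quotient map $q\colon S\to S_G$: boundary points have trivial isotropy (using Lemma~\ref{lem 22}), so $q$ is a local homeomorphism near $\partial S$, while interior points with nontrivial isotropy have cyclic groups acting by rotation, so their local quotient is a cone point and hence still interior. The bijection then follows formally. The paper instead works combinatorially through the fundamental domain $\mathfrak{F}$ already constructed: it identifies $S_G$ with $\mathfrak{F}$ (with gluings) and checks directly that no arc of $T^{\mathcal{O}}$ bounding a triangle of $\mathfrak{C}$ becomes a boundary segment after the gluing, by exhibiting for each such arc $\beta$ a second triangle in $\mathfrak{C}$ glued along the orbit of $\beta$.

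Your approach is cleaner conceptually and would work for any orientation-preserving properly discontinuous finite group action, not just one compatible with a triangulation; it outsources the local analysis to Thurston's notes, which the paper already cites. The paper's approach is more self-contained and stays within the combinatorial framework (triangles and arcs) used throughout Section~\ref{sect 4}, which is convenient since the subsequent Proposition~\ref{PropBijec} also argues via $\mathfrak{F}$ and $T^{\mathcal{O}}$. One small point to tighten in your write-up: Lemma~\ref{lem 22} is stated only for marked points, boundary segments, and triangles, so to conclude that an \emph{arbitrary} boundary point $x$ has trivial isotropy you should note that if $g(x)=x$ with $x$ interior to a boundary segment $\sigma$, then $g(\sigma)\cap\sigma\ni x$ forces $g(\sigma)=\sigma$, whence Lemma~\ref{lem 22} applies.
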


\begin{proof}
We use the above notation and we identify $S_G$ with $\mathfrak{F}$. It is not hard to check that any boundary segment $\alpha$ which is an edge of a triangle of $\mathfrak{C}$ is also a boundary segment in the orbit space $\mathfrak{F}$. Let $\mathfrak{T}$ be the collection of all arcs of $T^{\mathcal{O}}$ that are edges of a triangle of $\mathfrak{C}$. It is sufficient to prove that no element of $\mathfrak{T}$ is a boundary segment in the orbit space $\mathfrak{F}$. Note that the arc $d$ in the right picture of Figure \ref{fig:hexagon2} is not a boundary segment of $\mathfrak{F}$ because of the gluing.  Let $\beta \in \mathfrak{T}$. If $\beta$ is the radius of a self-folded triangle, then the interior of the corresponding self-folded triangle is entirely contained in $\mathfrak{F}$, and hence $\beta$ is not a boundary segment in $\mathfrak{F}$. Assume now that $\beta \in \mathfrak{T}$ is not the radius of a self-folded triangle. Let $\Delta_1, \Delta_2$ be the two triangles in $T^{\mathcal{O}}$ adjacent to $\beta$. We may assume that only one of $\Delta_1, \Delta_2$, say $\Delta_1$, lies in $\mathfrak{C}$. By definition of $\mathfrak{C}$, there is $1 \ne g$ in $G$ such that $g\Delta_2$ lies in $\mathfrak{C}$. Since $G$ acts freely on arcs, we have $g\beta \ne \beta$. Therefore, in $\mathfrak{F}$, $\beta$ and $g\beta$ are glued along two distinct triangles $\Delta_1$ and $g\Delta_2$ of $\mathfrak{C}$. Hence, $\beta$ is not a boundary segment in $\mathfrak{F}$.
\end{proof}

A tagged arc or curve of $(S,M')$ is \emph{$\overline{\mathcal{O}}$-avoiding} if it is not incident to a point in $\overline{\mathcal{O}}$.
We say that the $G$-orbit of an arc $\zg$ in $(S,M')$ consists of compatible arcs if for all $g\in G$ the arcs $g\zg$ and $\zg$ are compatible. For example the orbit of the arc $\zg$ in the center of Figure \ref{fig:hexagon2} does not consist of compatible arcs.

\begin{Prop} \label{PropBijec}
There is a bijection between the $G$-orbits consisting of compatible tagged arcs in $(S,M)$ with the tagged arcs of $(S_G, M_G, \mathcal{O})$. Moreover, this bijection induces a bijection between $G$-stable tagged triangulations of $(S,M)$ and tagged triangulations of $(S_G, M_G, \mathcal{O})$.
\end{Prop}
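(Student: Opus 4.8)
The plan is to realize the bijection geometrically through the branched covering $p\colon S \to S_G = S/G$ and the fundamental domain $\mathfrak{F}$ constructed above. The forward map sends a $G$-orbit $G\gamma$ consisting of compatible tagged arcs to its image $p(\gamma)$ in $S_G$; this is well defined on orbits since $p(g\gamma)=p(\gamma)$. The first thing I would check is that $p(\gamma)$ is genuinely a single tagged arc of the orbifold $(S_G,M_G,\mathcal{O})$. The crucial observation is that the hypothesis that the orbit consists of compatible arcs is exactly what forces $p(\gamma)$ to be embedded: if $p(\gamma)$ had a self-crossing at a point $y=p(z_1)=p(z_2)$ with $z_1\neq z_2$ on $\gamma$, then $z_2=g z_1$ for some $g\neq 1$, so $\gamma$ and $g\gamma$ would cross, contradicting compatibility of the orbit; this also shows the condition is precisely the one needed, matching the non-embedded example in the centre of Figure \ref{fig:hexagon2}. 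The endpoints of $\gamma$ lie in $M$ and project into $M_G$, and since $G$ respects the tagging of $T$, the two ends of $p(\gamma)$ inherit a well-defined plain/notched label. I would then verify conditions (a)--(d) for orbifold arcs one at a time, most being immediate from the corresponding property of $\gamma$ together with the fact that $p$ is a local homeomorphism away from the singular set.

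The inverse map lifts an orbifold tagged arc $\delta$ to its full preimage $p^{-1}(\delta)$ in $(S,M)$, which I claim is a single $G$-orbit of pairwise compatible tagged arcs. Over the complement of the branch locus $p$ is an honest covering with fibres permuted simply transitively by $G$, so away from the orbifold points and $G$-punctures the lifting is routine, and pairwise compatibility of the lifts follows because a crossing of two lifts would project to a self-crossing of $\delta$. I would then check that the two constructions are mutually inverse, which reduces to $p(p^{-1}(\delta))=\delta$ and $p^{-1}(p(\gamma))=\bigcup_{g\in G}g\gamma=G\gamma$, both consequences of $p$ being the quotient by $G$.

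The main obstacle, and where all the genuine work lies, is the behaviour at the two kinds of singular points. Near an orbifold point $o=p(x)$, where $x\in\overline{\mathcal{O}}$ is a rotation centre with $G_x$ of order $3$ by Lemma \ref{lem 22}, the map $p$ is $3$-to-$1$ branched, and I would analyse the folding explicitly: the three sides of the standard internal triangle around $x$ form one $G_x$-orbit of compatible arcs and project to a single loop bounding a monogon containing exactly the one orbifold point $o$, which is precisely the arc permitted by condition (c1); conversely such a loop lifts to the boundary of that triangle. This identification is what makes (c1) the correct local rule, and it must be matched carefully with the triangulation $T^{\mathcal{O}}$ and the gluing of Figure \ref{fig:hexagon2}. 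Near a $G$-puncture $b_0=p(b)$ of isotropy $m_b$, with $G_b$ cyclic of order $m_b$ again by Lemma \ref{lem 22}, the remaining subtlety is the tagging: I would use that $G$ preserves tags to show that plain and notched labels descend and lift consistently, handling the once-punctured monogon together with the $\iota$ and $\tau$ conventions so that loops at $b$ and their notched representatives correspond correctly downstairs.

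Finally, for the ``moreover'' statement I would pass from single arcs to triangulations via orbit decomposition. A $G$-stable tagged triangulation of $(S,M)$ breaks into $G$-orbits, each a subset of a pairwise compatible collection and hence an orbit of compatible arcs, so it maps to a pairwise compatible collection of orbifold tagged arcs; the reverse lift of a tagged triangulation of the orbifold is $G$-stable by construction. Maximality on both sides is then a counting matter: since $G$ acts freely on arcs every orbit has size exactly $|G|$, so the number of orbits equals $n(S,M)/|G|$, and I would check that this equals the orbifold rank $n(S_G,M_G,\mathcal{O})$ from Lemma \ref{LemmaRank}, either through the Riemann--Hurwitz count encoded in the rank formula or directly from the fundamental-domain triangle count $s|G|$. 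Compatibility being preserved in both directions, together with the equality of ranks, upgrades the arc bijection to the desired bijection of tagged triangulations.
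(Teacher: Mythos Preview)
Your approach is essentially the same as the paper's: both realize the bijection geometrically via the quotient map, identify orbit compatibility with embeddedness of the image downstairs, and establish the triangulation statement by the counting $n(S,M)/|G|=n(S_G,M_G,\mathcal{O})$. The one organizational difference is that the paper sidesteps your explicit branch-point analysis by enlarging the marked set to $M'=M\cup\overline{\mathcal{O}}$ and working with $\overline{\mathcal{O}}$-avoiding arcs in the ordinary surface $(S,M')$; since arcs of $(S,M)$ never touch $\overline{\mathcal{O}}$ anyway, this reduces the problem to a setting where the relevant curves see only the unbranched part of the covering, so no separate local analysis near orbifold points is needed.
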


\begin{proof}
We just prove the bijections for ordinary arcs and ordinary triangulations. The cases where there is a tagging can be checked without difficulties, and using the fact that $G$ respects the tagging. We start with the first bijection and we use the above notation, so we have a triangulation $T^{\mathcal{O}}$ of $(S,M')$ which is $G$-stable and we identify the orbit space with $\mathfrak{F}$. First, note that an arc $\alpha$ of $(S,M')$ can be thought of as a curve $\varphi(\alpha)$ of $(\mathfrak{F}, M_{\mathfrak{F}})$ between marked points in $M_{\mathfrak{F}} = M_G \cup \mathcal{O}$. This is done by folding $\alpha$ along the copies $g\mathfrak{F}$, $g \in G$, of $\mathfrak{F}$; see Figure \ref{fig:hexagon2}. Alternatively, $\varphi(\alpha)$ is obtained by restricting the curves $\{g \alpha \mid g \in G\}$ to $\mathfrak{F}$. It is clear that $\varphi(g\alpha) = \varphi(\alpha)$ for all $g \in G$. If $\alpha$ is $\overline{\mathcal{O}}$-avoiding, then $\varphi(\alpha)$ is a curve of $(\mathfrak{F}, M_{\mathfrak{F}})$ between marked points in $M_G$. Observe that if $\alpha, \beta$ are isotopic arcs of $(S,M')$, then $\varphi(\alpha), \varphi(\beta)$ are isotopic in $(\mathfrak{F}, M_{\mathfrak{F}})$, since when deforming two curves in
$(S,M')$, we cannot cross a point in $\overline{\mathcal{O}}$, as the latter are all included in $M'$. Conversely, take any curve $c$ in $(\mathfrak{F}, M_{\mathfrak{F}})$ between marked points of $M_G$. Its fiber is a 
$G$-orbit of $\overline{\mathcal{O}}$-avoiding curves in $(S,M')$. Any deformation of $c$ in $(\mathfrak{F}, M_{\mathfrak{F}})$ corresponds to deformations of the curves in the $G$-orbit. In particular, any curve without self-intersection in $(\mathfrak{F}, M_{\mathfrak{F}})$ between marked points of $M_G$ corresponds to a $G$-orbit of compatible arcs. This shows that the correspondence $\{g \alpha \mid g \in G\} \mapsto \varphi(\alpha)$ gives the first bijection.

Now, consider a partial triangulation $V$ of $(S,M)$ that is $G$-stable. In particular, all arcs are $\overline{\mathcal{O}}$-avoiding. By the alternative description of $\varphi$ above, it is clear that $\varphi$ sends $V$ to a partial triangulation of $(\mathfrak{F}, M_{\mathfrak{F}})$ between marked points in $M_G$. In other words, it is a partial triangulation of $(S_G, M_G, \mathcal{O})$. Now, a $G$-stable triangulation of $(S,M)$ with $n$ arcs has to be sent to a partial triangulation of $(S_G, M_G, \mathcal{O})$ with $n/|G|$ arcs. This has to be a triangulation of $(S_G, M_G, \mathcal{O})$. Conversely, given any triangulation $V'$ of $(S_G, M_G, \mathcal{O})$ with $m = n/|G|$ arcs, its fiber will consist of $|G|m = n$ curves between marked points in $M$. As already argued, no two curves are isotopic and they are pairwise compatible. Therefore, the fiber is indeed a triangulation of $(S,M)$.
\end{proof}

\begin{Cor} \label{G-flip}
There exists a notion of mutation of any tagged arc in a tagged triangulation of $(S_G, M_G, \mathcal{O})$ and, under the above bijection, this corresponds to changing a $G$-orbit $A$ of tagged arcs of a $G$-stable tagged triangulation $V$ of $(S,M)$ to another $G$-orbit $B$ of tagged arcs where $(V\backslash A)\cup B$ is a $G$-stable tagged triangulation of $(S,M)$.
\end{Cor}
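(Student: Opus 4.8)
The plan is to use the bijection of Proposition \ref{PropBijec} to transport the problem to the surface $(S,M)$ and to \emph{define} the orbifold mutation as the image of an equivariant flip. Fix a tagged triangulation $\overline{V}$ of $(S_G,M_G,\mathcal{O})$ and a tagged arc $\overline{a}\in\overline{V}$. By Proposition \ref{PropBijec}, $\overline{V}$ is the image of a unique $G$-stable tagged triangulation $V$ of $(S,M)$, and $\overline{a}$ is the image of a unique $G$-orbit $A\subseteq V$ of compatible tagged arcs; since $G$ acts freely, $|A|=|G|$. It therefore suffices to produce another $G$-orbit $B$ of compatible tagged arcs, with $B\cap(V\setminus A)=\emptyset$, such that $(V\setminus A)\cup B$ is again a $G$-stable tagged triangulation, and to check that $B$ is uniquely determined. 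One then defines the mutation of $\overline{a}$ in $\overline{V}$ to be the image of $(V\setminus A)\cup B$ under the bijection; that this image is a single orbifold arc $\overline{b}$ is guaranteed by $B$ being a single $G$-orbit.

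First I would treat the generic case, in which no triangle of $V$ has two of its sides in $A$ and $A$ does not surround a puncture of positive isotropy. Here the triangles of $V$ incident to the arcs of $A$ assemble into $|G|$ pairwise disjoint quadrilaterals that $G$ permutes freely. Writing $a^\flat$ for the ordinary tagged flip of a fixed $a\in A$ inside its quadrilateral, naturality of flips under homeomorphisms gives $(ga)^\flat=g(a^\flat)$ for all $g\in G$. Hence $B:=\{(ga)^\flat \mid g\in G\}=G a^\flat$ is a single $G$-orbit, its members are pairwise compatible and compatible with $V\setminus A$ because the quadrilaterals are disjoint, and $(V\setminus A)\cup B$ is a $G$-stable tagged triangulation with the same number of arcs as $V$.

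The hard part is the remaining configurations, which are exactly the ones controlled by Lemma \ref{lem 22}: a triangle of $V$ whose isotropy group is nontrivial has order $3$ and is standard, and a puncture may carry a nontrivial cyclic isotropy. When $A$ meets such a configuration --- for instance when all three sides of an order-$3$ isotropy triangle $\delta$ belong to $A$, which is precisely the situation in which $\overline{a}$ is the loop enclosing an orbifold point --- the naive flips $(ga)^\flat$ no longer work: inside $\delta$ they are three cevians that pairwise cross, so their union is not compatible. Instead I would consider the $G$-invariant region $R$ formed by the union of all triangles of $V$ incident to an arc of $A$, delete the arcs of $A$, and re-triangulate $R$ in a $G$-equivariant fashion (equivalently, re-triangulate one local piece $\Z/3$-equivariantly and translate by $G$). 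The point to verify --- and this is the main obstacle --- is that such an equivariant re-triangulation exists, is unique, and introduces exactly one $G$-orbit $B$ of new arcs. This is a finite local check: one enumerates the possible shapes of $R$ (orbifold-point loop, arc adjacent to an orbifold point, loop or arc at a $G$-puncture, together with their tagged variants) and exhibits the equivariant completion in each. For the order-$3$ triangle above, for example, deleting the three sides of $\delta$ leaves a hexagon whose unique $\Z/3$-invariant triangulation replaces them by the three sides of the inner triangle, and these descend to a single loop $\overline{b}$ around the orbifold point. These are exactly the local pictures whose exchange data are computed in Section \ref{TechnicalSection}.

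In every case $V\setminus A$ and $B$ are $G$-invariant, so $(V\setminus A)\cup B$ is $G$-stable; it is a tagged triangulation because $R$ has been re-triangulated and the arc count is preserved, and $B$ is a single orbit by construction. Uniqueness of the equivariant completion of $R$ makes the resulting orbifold arc $\overline{b}$ independent of choices, so the mutation of $\overline{a}$ is well defined and has the asserted description in terms of the replacement of the orbit $A$ by the orbit $B$.
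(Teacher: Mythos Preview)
Your argument is correct in outline, but it runs in the opposite direction from the paper and is considerably more laborious as a result. You pull the orbifold triangulation back to $(S,M)$ and then attempt to \emph{construct} the orbit mutation there, which forces you into a case analysis governed by Lemma~\ref{lem 22}: the generic equivariant flip, the order-$3$ triangle, the $G$-puncture, and so on. Each case requires exhibiting and verifying the unique $G$-equivariant re-triangulation of a local region. This can be done, and indeed these local pictures are exactly the ones worked out in Section~\ref{TechnicalSection}---but note that Section~\ref{TechnicalSection} \emph{uses} Corollary~\ref{G-flip} rather than proving it, so you cannot cite those lemmas here; you would have to carry out the finite check independently.

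The paper treats the corollary as genuinely immediate from Proposition~\ref{PropBijec}, with the flip taken on the orbifold side. The point is that $(S_G,M_G,\mathcal{O})$ is itself a marked surface (with some distinguished interior points), and tagged flips exist and are unique there by the same local argument as for ordinary surfaces: given $\bar a\in\bar V$, there is a unique tagged arc $\bar b\ne\bar a$ with $(\bar V\setminus\{\bar a\})\cup\{\bar b\}$ a tagged triangulation. The bijection of Proposition~\ref{PropBijec} then sends $\bar V$ to $V$ and $(\bar V\setminus\{\bar a\})\cup\{\bar b\}$ to some $G$-stable $V'$; since the two orbifold triangulations differ in a single arc and arcs correspond to $G$-orbits, $V$ and $V'$ differ in a single orbit, i.e.\ $V'=(V\setminus A)\cup B$. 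No case analysis is needed at this stage.

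What your route buys is an explicit geometric description of the orbit mutation on $(S,M)$ in each local configuration---precisely what Section~\ref{TechnicalSection} later needs to compute exchange polynomials. What the paper's route buys is brevity: the existence and uniqueness are free on the orbifold, and the bijection does the rest.
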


Changing a $G$-orbit as in the previous corollary will be called an orbit mutation.

\begin{figure}
\begin{center}
\huge\scalebox{0.4}{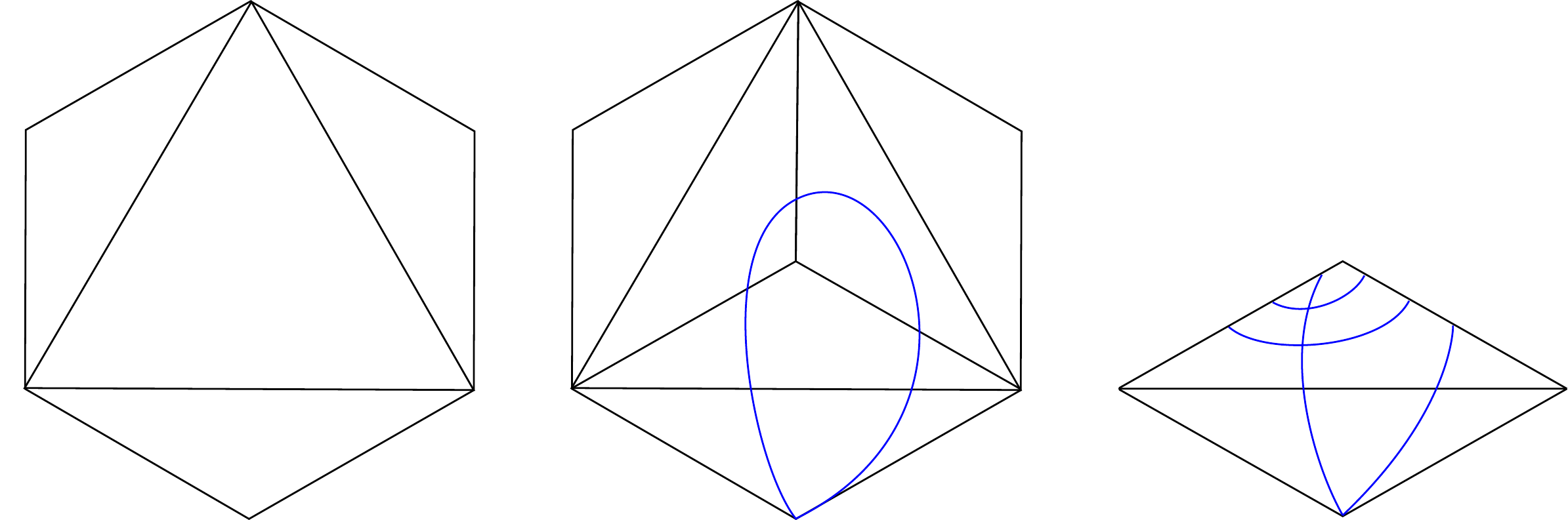}
 \caption{Left: the hexagon $(S,M)$ having a triangulation $T$ with the group of order $3$ acting by rotation. Middle: an arc $\gamma$ in $(S,M')$ with triangulation $T^{\mathcal{O}}$. Right: a fundamental domain $\mathfrak{F}$ with the corresponding curve $\varphi(\gamma)$.}
\label{fig:hexagon2}
\end{center}
\end{figure}

\begin{Exam} \label{ExampleOctahedron}
Consider the regular octahedron, seen as the sphere $S$ with $|M|=6$ punctures and the corresponding triangulation $T$ (without self-folded triangles and all arcs plain). This is a well known fact that there are $24$ orientation-preserving symmetries of the regular octahedron, so $24$ possible $T$-automorphisms of $(S,M)$. Among these symmetries, $6$ are not admissible since they fix two arcs. Take the subgroup $H$ of $G$ generated by rotations of order $2$ around punctures and the rotations of order $3$. Color the facets of the octahedron in two colors, black or white, in such a way that if two triangles share an arc, then they are colored in a different way, see the left picture in Figure \ref{fig octahedron}. The subgroup $H$ can be described as the orientation-preserving symmetries that preserve the colors of the triangles.
\begin{figure}
\begin{center}
\scalebox{0.6}{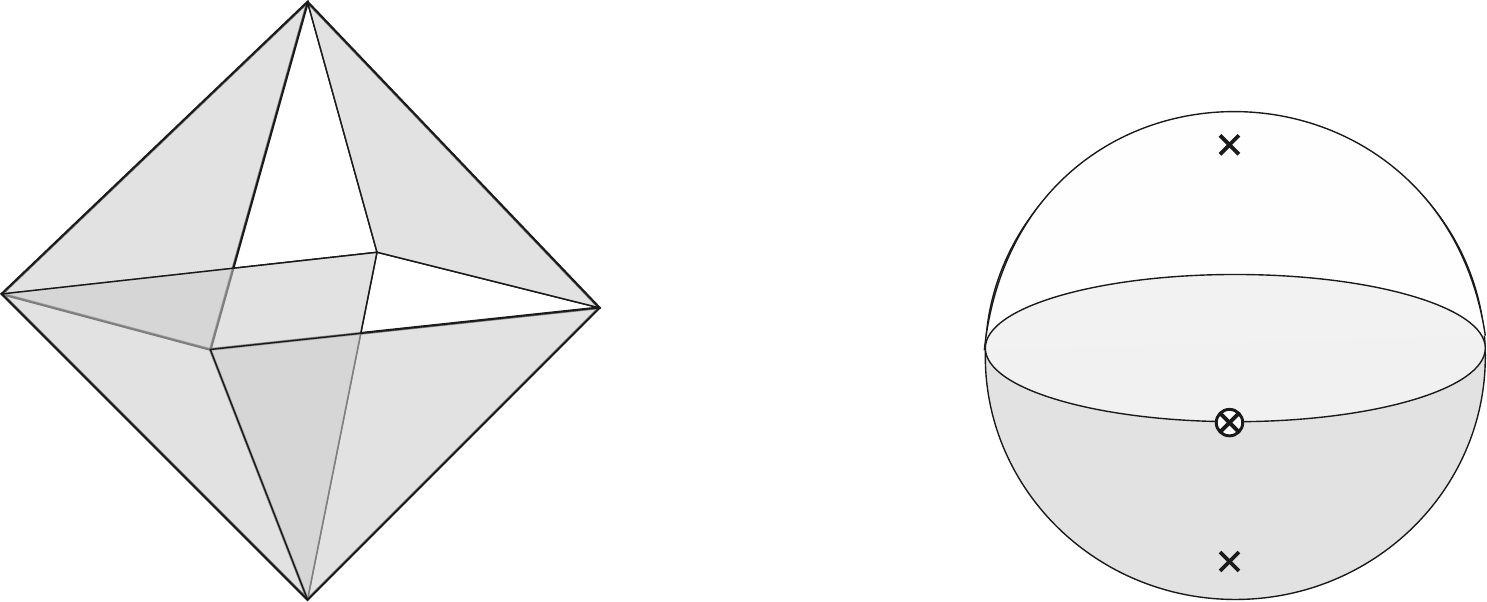}
 \caption{The octahedron of Example \ref{ExampleOctahedron} on the left, and its orbifold, a sphere with one 2-puncture and two orbifold points, on the right.}
\label{fig octahedron}
\end{center}
\end{figure}
This subgroup does not contain the rotations of order $4$ and is admissible. It is clearly non-abelian and every element has order $1,2$ or $3$. Therefore, $H$ is isomorphic to the alternating group $A_4$. Observe that every triangle and every puncture has non-trivial isotropy. Notice that there are two orbits of triangles for the action of $H$, only one orbit of arcs, and only one orbit of punctures for $H$.

The orbifold is a sphere with one $2$-puncture corresponding to the orbit of the punctures of the octahedron, and two orbifold points corresponding to the points fixed by $H$ other than the punctures, see the right picture in Figure \ref{fig octahedron}. One of these points is the center of a white triangle and the other the center of a black triangle. The white triangles become the northern hemisphere while the black triangles become the southern hemisphere. The two triangles of $T_G$ are orbifold triangles. A mutation of the unique arc in $T_G$ would be a change of tagging at both ends of $\gamma$, while the corresponding orbit mutation of the unique orbit of arcs in $T$ would be a simultaneous change of taggings at all ends of arcs.
\end{Exam}

\begin{Exam}\label{ex icosi}
Consider the modified icosidodecahedron illustrated in Figure \ref{fig icosi}.
\begin{figure}
\begin{center}
\Large\scalebox{0.8}{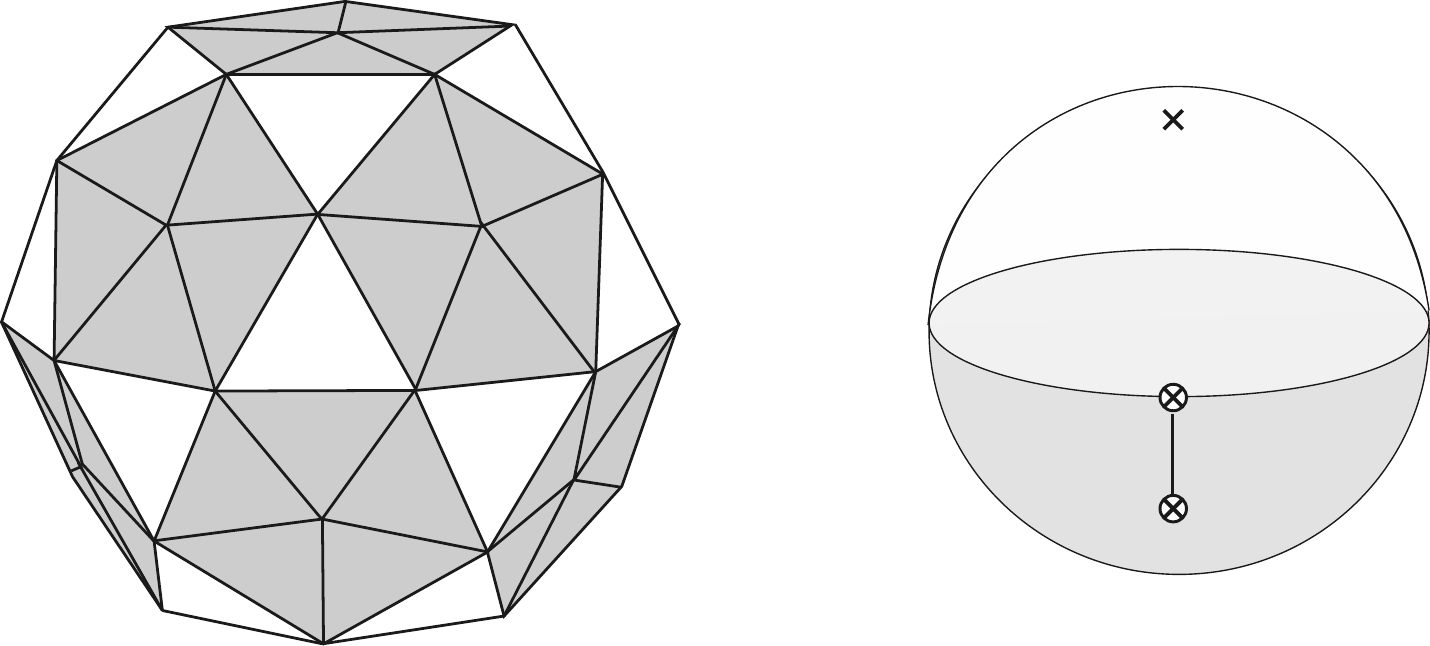}
\caption{The icosidodecahedron of Example \ref{ex icosi} on the left and its orbifold on the right.}
\label{fig icosi}
\end{center}
\end{figure}
 There are $60$ black triangles, $20$ white triangles, $42$ punctures and $120$ arcs. Consider the orientation preserving symmetries generated by rotations of order three at the center of the white triangles and rotations of order five at the center of the black pentagons (build from five black triangles). This generates the subgroup (of order $60$) of all orientation-preserving symmetries preserving the colors of the triangles. We get two orbits of triangles (black and white), two orbits of punctures (a center of a black pentagon and a vertex of a white triangle) and two orbits of arcs (a side of a white triangle denoted $a$ and a common side of two black triangles denoted $b$). Observe that the orbifold $(S_G, M_G, \mathcal{O})$ has one orbifold point, one $2$-puncture and one $5$-puncture, see the right picture in Figure \ref{fig icosi}.

Observe that in the original triangulation $T_1$ of $S$, there is a unique way to change the arcs in $Gb$ to get another triangulation $T_2$ such that the new arcs will form another single $G$-orbit. The same observation holds for the arcs in $Ga$. This orbit mutation at $Gb$ just produces a change of tags at the punctures corresponding to the centers of the black pentagons. The orbit mutation of $Ga$ is illustrated in the left picture in Figure \ref{fig icosi2}. The corresponding mutation in the orbifold is shown on the right of the figure.
\begin{figure}
\begin{center}
\Large\scalebox{0.8}{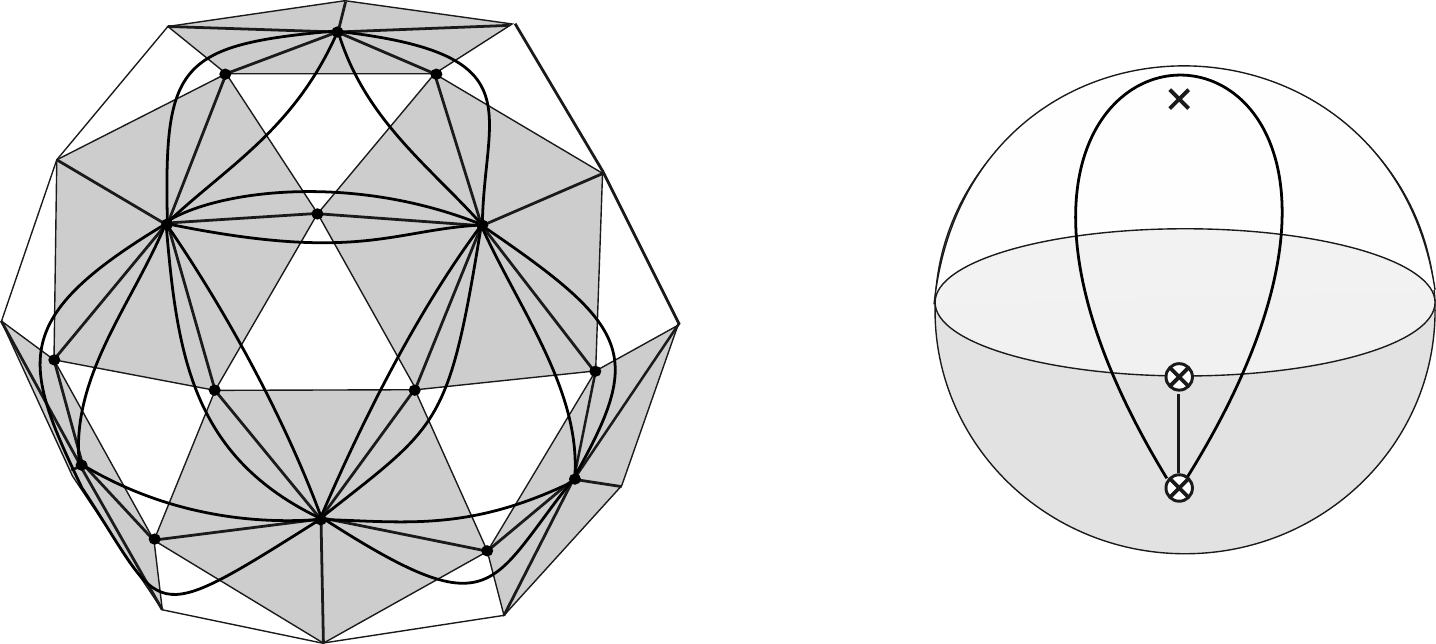}
\caption{The icosidodecahedron of Example \ref{ex icosi} after mutation in the orbit $Ga$ on the left and the corresponding mutation on the orbifold producing the loop $a'$ on the right.}
\label{fig icosi2}
\end{center}
\end{figure}
\end{Exam}

The following will be useful and is well known in case $\mathcal{O} = \emptyset$.

\begin{Prop} Let $(S,M,\mathcal{O})$ be an orbifold with a tagged triangulation $T$. Let $m$ be the number of marked points, $t$ the number of triangles of $\tau(T)$ (including the self-folded triangles and the orbifold triangles) and $a$ the number of arcs. Then the number $\chi(S,M,\mathcal{O}) = m + t - a$ does not depend on the triangulation and equals to $2-2g-b$ where $g$ is the genus of $S$ and $b$ is the number of boundary components in $S$.
\end{Prop}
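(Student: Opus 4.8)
The plan is to reduce the orbifold statement to the classical case of an ordinary surface by promoting the orbifold points to punctures, exactly as in the proof of Lemma~\ref{LemmaRank}, and to recognise $m+t-a$ as a topological Euler characteristic, from which both the triangulation-independence and the value follow. I would begin by isolating the two easy invariances: the count $m$ manifestly depends only on $(S,M,\mathcal{O})$, and by Lemma~\ref{LemmaRank} the number of arcs satisfies $a=n=6g+3b+3p+2x+c-6$, which is again a surface invariant. Hence $\chi=m+t-a$ is independent of $T$ precisely when the number of triangles $t$ is, and the whole problem concentrates on $t$.

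To pin down $t$ I would count triangle--side incidences in two ways. A standard or self-folded triangle has three sides (for a self-folded triangle the loop accounts for two and the radius for one), whereas an orbifold triangle contributes only its bounding loop; dually, every interior arc is a side exactly twice and each of the $c$ boundary segments is a side of exactly one triangle. Writing $x$ for the number of orbifold triangles, this yields $3t-2x=2a+c$, so that $t=\tfrac13(2a+c+2x)$ is a surface invariant. Combined with the previous paragraph this already shows that $\chi$ does not depend on $T$, and substituting the formula for $a$ from Lemma~\ref{LemmaRank} yields the asserted value $2-2g-b$ by a direct arithmetic simplification.

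Conceptually, and as a cross-check of the value, I would run the promotion explicitly: put $M'=M\cup\mathcal{O}$, so that $(S,M',\emptyset)$ is an ordinary surface, and replace each orbifold triangle (a loop around one orbifold point) by the self-folded triangle obtained by adjoining the radius to the new puncture. This adjoins one marked point and one arc per orbifold point while leaving $t$ unchanged, so it preserves $m+t-a$; it therefore suffices to evaluate the alternating count on $(S,M',\emptyset)$. This is the classical computation for ordinary surfaces, where $m+t-a$ is identified with the Euler characteristic $2-2g-b$, the boundary segments cancelling the corresponding boundary marked points, and where $g$ and $b$ are unaffected by the promotion.

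The step I expect to demand the most care is the uniform treatment of the degenerate pieces. One must check that the incidence relation $3t-2x=2a+c$ and the promotion behave exactly as claimed for self-folded and for orbifold triangles, and that nothing is lost for genuinely tagged triangulations, where passing to $\tau(T)$ trades a plain/notched pair of arcs at a puncture for a loop enclosing it. Verifying that each of these local configurations changes $m$, $t$, $a$ and the boundary-segment count in the asserted way is the technical heart of the argument; once it is in place, the invariance (from the fixed values of $a$, $m$ and $t$) and the value (from the reduction to the ordinary surface) follow at once.
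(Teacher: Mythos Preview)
Your third paragraph is precisely the paper's proof: for each orbifold point, add the radius from the base of its enclosing loop to the new puncture, observe that this raises both $m$ and $a$ by one while leaving $t$ fixed, and then invoke the ordinary-surface case. The paper gives no more detail than that single reduction.

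Your first two paragraphs offer a genuinely different, more explicit route via the incidence relation $3t-2x=2a+c$, which pins down $t$ directly as a surface invariant rather than deferring everything to the classical result. That is a nice addition. One caution, however: if you actually carry out the arithmetic you will find
\[
m+t-a \;=\; (p+c)+\tfrac{1}{3}(2a+c+2x)-a \;=\; 2-2g-b+c,
\]
not $2-2g-b$, because boundary segments are omitted from $a$ while boundary marked points are included in $m$. Your remark about ``the boundary segments cancelling the corresponding boundary marked points'' shows you sensed this, but that cancellation happens in the honest CW count $m+t-(a+c)$, not in $m+t-a$. The proposition as literally stated is therefore only correct when $c=0$, i.e.\ for closed $S$, which is in fact the only setting in which the paper ever applies it; the paper's own appeal to the ``well known'' ordinary case carries the same slip. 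So your strategy is sound and matches (indeed extends) the paper's, but do not claim the final arithmetic simplification without either restricting to closed surfaces or replacing $a$ by $a+c$.
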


\begin{proof}
For each orbifold point $x$ in $S$, there is a unique loop $a_x - a_x$ enclosing $x$, where $a_x$ denotes the base point of the loop. Take $M' = M \cup \mathcal{O}$ and consider the triangulation $T^{\mathcal{O}}$ of $(S,M')$ obtained from $T$ by adding, for each orbifold point $x$, the arc $a_x - x$. Clearly, the number $m + t - a$ is the same for $(S,M)$ and $(S,M')$. Since $(S,M')$ is an ordinary surface, this common number is $2-2g-b$.
\end{proof}

The number $\chi(S,M,\mathcal{O})$ of the proposition is called the \emph{Euler characteristic} of the orbifold $(S,M,\mathcal{O})$.

\section{The exchange polynomials for the orbit space}\label{TechnicalSection}
In this section, we determine the exchange polynomials for the generalized cluster algebra structure on the orbit space.

Let $T$ be a tagged triangulation of a surface $(S,M)$, and let $G$ be a non-trivial admissible group of $T$-automorphisms. Denote by $\cala$ the cluster algebra with trivial coefficients associated to $(S,M)$ with initial seed corresponding to the triangulation $T$. Recall that to each tagged arc $\tau$ in $(S,M)$, one can associate a Laurent polynomial, also denoted $\tau$, in $\mathbb{Z}[\mathbf{x}^{\pm 1}]$. This polynomial may not be a cluster variable. It will be convenient to label the arcs of $T$, and hence the initial cluster variables, according to the $G$-orbits as follows. Let $s$ be the number of orbits and let
\[T=\{\tau_{11},\ldots,\tau_{1r}\}\sqcup\{\tau_{21},\ldots,\tau_{2r}\}\sqcup\cdots\sqcup\{\tau_{s1},\ldots,\tau_{sr}\}\]
be the decomposition of $T$ into its $G$-orbits.
Denoting by $x_{ij}$ the cluster variable of $\tau_{ij}$, we obtain the following decomposition of the initial cluster
\[\mathbf{x}=(x_{11},\ldots,x_{1r},
x_{21},\ldots,x_{2r},\ldots
x_{s1},\ldots,x_{sr}).\]
The cluster algebra $\cala$ is a $\mathbb{Z}$-subalgebra of the field $\calf=\mathbb{Q}(\mathbf{x})$ of rational functions in the $x_{ij}$.

For the orbifold $(S_G,M_G,\calo)$, we have the tagged triangulation $T_G=\{\tau_1,\tau_2,\ldots,\tau_s\}$, the cluster $\mathbf{y}= (y_1,y_2,\ldots,y_s)$, and we will work in the field $\calf_G=\mathbb{Q}(\mathbf{y})$, where the arc $\tau_i$ and the variable $y_i$ represent the orbit of arcs $\tau_{i1},\ldots,\tau_{ir_i}$, respectively the orbit of variables $x_{i1},\ldots,x_{ir}$.
In order to determine the (generalized) cluster algebra structure of the orbifold, we must define mutations, which then will allow us to construct generators (generalized cluster variables) starting from the initial seed $\mathbf{y}=(y_1,y_2,\ldots,y_s)$. To this end, we will construct exchange polynomials $p_{y_i}\in\mathbb{Z}[y_1,y_2,\ldots,y_s]$.

In the cluster algebra $\cala$, let $x_{ij}'$ denote the cluster variable obtained by mutation the initial cluster in direction $ij$. Let $p_{x_{ij}}\in \mathbb{Z}[\mathbf{x}\setminus\{x_{ij}\}]$ denote the exchange polynomial of this mutation. Thus
\[x_{ij}x_{ij}'=p_{x_{ij}}.\]
Let $F\colon\mathbb{Z}[\mathbf{x}^{\pm 1}]\to
\mathbb{Z}[\mathbf{y}^{\pm 1}]$
be the ring homomorphism given by $F(x_{ij})=y_i$ and $F(a)=a$, for $a\in\mathbb{Z}$. Thus $F(p_{x_{ij}})$ is the polynomial in $\mathbb{Z}[y_1,y_2,\ldots,y_s]$ obtained by replacing the variables $x_{i1},\ldots,x_{ir}$ of each orbit by the variable $y_i$.

\begin{Remark}\label{rem 4.1}
 Since $G$ is an admissible group of $T$-automorphisms,  we have, for all $j,k\in\{1,\ldots,r\}$,
 $$F(p_{x_{ij}})=F(p_{x_{ik}}).$$\end{Remark}

Determining the exchange polynomials $p_{y_i}$ for the orbifold is not straightforward in general. In the simplest case, when $p_{x_{ij}}$ does not involve any variable of the same orbit $x_{i1},\ldots,x_{ir}$, we have $p_{y_i}=F(p_{x_{ij}})$.
However, if $p_{x_{ij}}$ does involve one of the variables $x_{i1},\ldots,x_{ir}$, the situation is more complex. In this case, it follows from Corollary \ref{G-flip} that there is a unique other tagged triangulation $T'=\left(T\setminus\{\tau_{i1},\ldots,\tau_{ir}\}\right)\cup\{\tau''_{i1},\ldots,\tau''_{ir}\}
$
such that $G$ is also an admissible group of $T'$-automorphisms and $\{\tau''_{i1},\ldots,\tau''_{ir}\}$ is a $G$-orbit. We will see that these tagged arcs correspond to Laurent polynomials $x''_{i1},\ldots,x''_{ir}$ in the initial cluster $\mathbf{x}$, such that $F(x''_{ij}) = F(x''_{ik})$ for $1 \le j,k \le r$. Therefore, it will make sense to define
\[p_{y_i}=F(x_{ij}x''_{ik}),\]
where $j,k\in\{1,\ldots,r\}$ are arbitrary.
We will see that $p_{y_i}$ actually is a polynomial in $\mathbb{Z}[y_1,y_2,\ldots,y_s]$. However this polynomial is not always a binomial and it also may have integer coefficients greater than 2. As a consequence, we do not obtain an honest cluster algebra structure for the orbifold but a \emph{generalized} cluster algebra structure.

\medskip
Consider the ordinary triangulation $\tau(T)$. We now fix $\gamma$ an arc of our tagged triangulation $T$ and denote its endpoints by $a$ and $b$. For simplicity, we identify $\gamma$ with $x_1$ and $G\gamma$ with $y_1$. If $\tau(\gamma)$ is not a radius of a self-folded triangle, then $\tau(\gamma)$ is a diagonal in a quadrilateral $\mathcal{Q}$ from $\tau(T)$ formed by the edges $\tau(\mu): a - c, \tau(\nu): c - b$, $\tau(\alpha): a - d, \tau(\beta): d - b$, which could be arcs or boundary segments, and may possibly be identified, see Figure \ref{fig:local}.
We adopt the convention that whenever $\tau(\gamma)$ is the loop of a self-folded triangle, then $\mu = \nu$ and $\tau(\mu) = \tau(\nu)$ is the radius of this self-folded triangle (and then $a = b$) and $\tau(\alpha), \tau(\beta)$ are the other arcs (or boundary segments) adjacent to $\tau(\gamma)$ in $\tau(T)$.
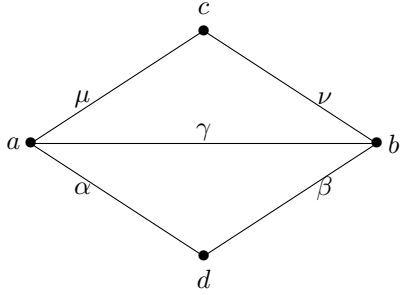
\begin{figure}
  \centering
  \begin{tikzpicture}[xscale=2.30,yscale=1.5]
\node at (0, 0.1) {$\gamma$};
\node at (-0.7, 0.4) {$\mu$};
\node at (-0.7, -0.4) {$\alpha$};
\node at (0.7, 0.4) {$\nu$};
\node at (0.7, -0.4) {$\beta$};
\node at (-1.1, 0) {$a$};
\node at (1.1, 0) {$b$};
\node at (0,1.2) {$c$};
\node at (0,-1.2) {$d$};
\node at (0,1) {$\bullet$};
\node at (0,-1) {$\bullet$};
\node at (-1,0) {$\bullet$};
\node at (1,0) {$\bullet$};
\draw (-1.00,0) -- (0,1);
\draw (1.00,0) -- (0,1);
\draw (-1.00,0) -- (0,-1);
\draw (1.00,0) -- (0,-1);
\draw (-1.00,0) -- (1,0);
\end{tikzpicture}
\caption{The quadrilateral $\mathcal{Q}$}
\label{fig:local}
\end{figure}

The triangle formed by the arcs $\tau(\gamma), \tau(\mu), \tau(\nu)$ is denoted $\Delta_1$ while the triangle formed by arcs $\tau(\gamma), \tau(\alpha), \tau(\beta)$ is denoted $\Delta_2$. As noted, $\Delta_1, \Delta_2$ are distinct triangles. Note that if one of $\Delta_1,\Delta_2$ is self-folded, then the other is not self-folded. Otherwise, the surface $S$ consists exactly of $\Delta_1, \Delta_2$ and therefore has to be the sphere with $3$ punctures, which is excluded.

If $\epsilon\in T$ is such that $\tau(\epsilon)$ is a loop (or radius, respectively) of a self-folded triangle in $\tau(T)$, then we denote by $\bar \epsilon$ the arc in $T$ with $\tau(\bar \epsilon)$ the radius (or loop, respectively) of that triangle. In particular $\epsilon^0 = \bar\epsilon^0$. If $\tau(\epsilon)$ is not a loop or radius of a self-folded triangle, then we set $\bar \epsilon = 1$, by convention. In case $\tau(\gamma)$ is the radius of a self-folded triangle, the quadrilateral $\mathcal{Q}$ does not make sense. We will rather consider the corresponding quadrilateral for $\tau(\bar \gamma)$ and still denote it by $\mathcal{Q}$.

\medskip

Any of $\{\mu, \nu, \alpha, \beta\}$ that is a boundary segment is identified with $1$ in $\cala$. For a marked point $e$ in $M$, we let $m_e$ denote its isotropy. By Lemma \ref{lem 22}, we have $m_e=1$ unless $e$ is a puncture, in which case $m_e \ge 1$. As before, we have an induced tagged triangulation $T_G$ in $S_G$ and identify it with the set of $G$-orbits of tagged arcs of $T$ in $(S, M)$. As seen previously, $T_G$ is a tagged triangulation of $(S_G, M_G, \mathcal{O})$. The triangulation $\tau(T_G)$ corresponds to the $G$-orbits of arcs in $\tau(T)$.

\medskip

To simplify the notions, we identify $\tau(T)$ with $T$ and $\tau(T_G)$ with $T_G$ in the following sense. Whenever we work in a geometric framework, we always refer to the geometric version $\tau(T), \tau(T_G)$ of $T, T_G$, respectively. Whenever we consider elements in $\mathcal{F}$ or in $\mathcal{F}_G$, we always mean the tagged triangulations $T$ or $T_G$. Therefore, we drop the $\tau$.

\begin{Lemma} \label{LemmaExchange}The exchange polynomial $p_{\gamma}$ for $\gamma$ is  $\frac{\mu\bar\mu\beta\bar\beta + \nu\bar\nu\alpha\bar\alpha}{{\rm gcd}(\mu\bar\mu\beta\bar\beta, \nu\bar\nu\alpha\bar\alpha)}$. The denominator is non-trivial in the following cases.
\begin{enumerate}
    \item The arc $\gamma$ is a loop or a radius of a self-folded triangle in $T$.
\item We have $\mu = \alpha$ and $\mathcal{Q}$ is a once-punctured bigon, that is, there are exactly two arcs of $T$ at $a$ and they are not loops.
\item We have $\nu = \beta$ and $\mathcal{Q}$ is a once-punctured bigon, that is, there are exactly two arcs of $T$ at $b$ and they are not loops.
\end{enumerate}
\end{Lemma}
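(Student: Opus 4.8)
The plan is to reduce the statement to a local arrow count around $\gamma$ in the quiver $Q_T$ and then read off the resulting coprime binomial. The basic fact I would use is that in any cluster algebra the exchange relation has the form $x_\gamma x_\gamma' = \prod_i x_i^{[b_{i\gamma}]_+} + \prod_i x_i^{[-b_{i\gamma}]_+}$, a binomial whose two monomials are automatically coprime, where $b_{i\gamma}$ records the signed number of arrows between $i$ and $\gamma$ in $Q_T$. Thus $p_\gamma$ is entirely determined by the integer vector $(b_{i\gamma})_i$, and the task is to match this vector with the exponent data of the proposed formula.

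Writing $M_1=\mu\bar\mu\beta\bar\beta$ and $M_2=\nu\bar\nu\alpha\bar\alpha$, and letting $e_i^{(1)},e_i^{(2)}$ be the exponent of $x_i$ in $M_1,M_2$, I claim it is enough to prove $b_{i\gamma}=e_i^{(1)}-e_i^{(2)}$ for every arc $i$ of $T$. Indeed, dividing $M_1,M_2$ by $g=\gcd(M_1,M_2)$ replaces the pair $(e_i^{(1)},e_i^{(2)})$ by $([e_i^{(1)}-e_i^{(2)}]_+,[e_i^{(2)}-e_i^{(1)}]_+)$; since two coprime monomial pairs with the same difference vector coincide, this shows $\frac{M_1+M_2}{g}=x_\gamma x_\gamma'=p_\gamma$. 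So passing to the gcd is exactly the operation turning the naive geometric (Ptolemy-type) monomials into the coprime monomials of a genuine cluster relation, and the whole lemma is reduced to the single identity $b_{i\gamma}=e_i^{(1)}-e_i^{(2)}$.

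The core is then the case analysis computing $b_{i\gamma}$ from the local picture of $\gamma$ via the arrow rules (a)--(d). In the generic case, where $\tau(\gamma)$ is the diagonal of an honest quadrilateral $\mathcal{Q}$ and none of $\tau(\mu),\tau(\nu),\tau(\alpha),\tau(\beta)$ is a loop or radius, rule (a) applied to $\Delta_1,\Delta_2$ gives, after cancelling $2$-cycles, arrows $\mu,\beta\to\gamma$ and $\gamma\to\nu,\alpha$, so every $\bar{\cdot}=1$, $g=1$, and $p_\gamma=\mu\beta+\nu\alpha$ is the ordinary Ptolemy relation. When a side, say $\tau(\mu)$, is the loop of a self-folded triangle with radius $\tau(\bar\mu)$, rules (b)--(d) force the arrows at $\gamma$ to involve both the loop and the radius vertices, which is precisely what produces the factor $\mu\bar\mu$; the same bookkeeping handles the case in which $\tau(\gamma)$ is itself a loop or radius, where the convention $\mu=\nu$ (respectively the passage to $\bar\gamma$) is used, and the once-punctured bigon cases, where the genuine coincidence $\mu=\alpha$ or $\nu=\beta$ makes $\Delta_1,\Delta_2$ contribute to a common vertex. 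In each configuration I would simply verify the exponent identity above.

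Finally I would settle the non-triviality of the denominator by determining exactly when $M_1$ and $M_2$ share a variable: this happens iff one of the four factors of $M_1$ equals one of the four factors of $M_2$, and running through the possibilities (using admissibility and that a self-folded triangle has a single loop and radius) the only surviving configurations are $\mu=\nu$ with $\bar\mu=\bar\nu$ (so $\gamma$ is a loop or radius, case (1)), $\mu=\alpha$ (the once-punctured bigon at $a$, case (2)), and $\nu=\beta$ (the once-punctured bigon at $b$, case (3)). The hard part will be the bookkeeping in the self-folded and bigon configurations: correctly applying the asymmetric rules (b)--(d), seeing how the $\bar{\cdot}$-doubling arises from the arrows to both loop and radius, and checking that the list of gcd-nontrivial configurations is exhaustive and not merely sufficient, including the mixed degenerate subcases where several of $\mu,\nu,\alpha,\beta$ are simultaneously loops, radii, or equal.
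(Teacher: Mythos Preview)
Your approach is correct and genuinely different from the paper's. The paper proceeds by a direct case analysis on the possible identifications among $\{\mu,\nu,\alpha,\beta\}$: it first reduces to the case where $\gamma$ is not a radius, argues that $\gamma\notin\{\mu,\nu,\alpha,\beta\}$, and then, for each coincidence pattern, simply writes down the known exchange polynomial for that local configuration (ordinary Ptolemy, loop of a self-folded triangle, arc inside a once-punctured bigon) and checks that it matches $\frac{M_1+M_2}{\gcd(M_1,M_2)}$. No exchange matrix or quiver arrows appear; the formula is verified configuration by configuration, and the list (i)--(iii) falls out as the residue of that enumeration.

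Your route is more structural: you reduce everything to the single identity $b_{i\gamma}=e_i^{(1)}-e_i^{(2)}$, which explains \emph{why} the gcd appears --- it is precisely the cancellation that turns the naive Ptolemy monomials into the coprime pair demanded by the cluster exchange relation. This is a nicer conceptual packaging. In practice, however, verifying $b_{i\gamma}=e_i^{(1)}-e_i^{(2)}$ across the self-folded and bigon degenerations via rules (a)--(d), together with the exhaustive check of which pairs of factors of $M_1$ and $M_2$ can coincide (including the $\bar{\,\cdot\,}$ factors, e.g.\ $\bar\mu=\gamma$ when $\gamma$ is the loop), amounts to essentially the same bookkeeping the paper does. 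One small slip: your appeal to ``admissibility'' in the last paragraph is out of place --- this lemma concerns the ordinary exchange polynomial in $\mathcal{A}$ and the group $G$ plays no role; the constraints you need come only from the surface axioms (no self-intersecting arcs, excluded small surfaces) and orientability, exactly as the paper uses them.
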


\begin{proof}
First, $\gamma$ needs to be replaced by $\bar\gamma$ if $\gamma$ is a radius of a self-folded triangle. Indeed, it is well-known that the exchange polynomials for $\gamma$ or $\bar\gamma$ are the same, hence, we may assume already that if $\gamma$ is an arc of a self-folded triangle, then it is a loop. If the arc $\gamma$ is identified with one arc in $\{\alpha, \beta, \mu,\nu\}$, then one of the triangles $\Delta_1, \Delta_2$, say $\Delta_1$, is self-folded. In that case, $\gamma$ is identified with one of $\{\mu, \nu\}$. By our convention, we have an identification $\mu = \nu$. Therefore, we get $\gamma = \mu=\nu$, which yields a triangle having only one arc, a contradiction.

So $\gamma \not \in \{\alpha, \beta, \mu,\nu\}$. If the cardinality of $\{\alpha, \beta, \mu,\nu\}$ is $4$, then ${\rm gcd}(\mu\bar\mu\beta\bar\beta, \nu\bar\nu\alpha\bar\alpha)=1$ and $\mu\bar\mu\beta\bar\beta + \nu\bar\nu\alpha\bar\alpha$ is the usual Ptolemy relation taking radii of self-folded triangles into account. So we may assume that the cardinality of $\{\alpha, \beta, \mu,\nu\}$ is less than $4$. If $\Delta_1, \Delta_2$ are self-folded, then the surface $(S,M)$ is the sphere with three punctures and this is excluded. So assume, as a first case, that $\Delta_1$ is self-folded  but $\Delta_2$ is not, so that $\mu= \nu$ is the radius and $\gamma$ is the loop of $\Delta_1$, and we are in case (i) of the Lemma. In particular, $\alpha \ne \beta$. Also, $\alpha \ne \mu$ and $\beta \ne \mu$. Therefore, there is not other identification among $\alpha, \beta, \mu,\nu$. The expression $\frac{\mu\bar\mu\beta\bar\beta + \nu\bar\nu\alpha\bar\alpha}{{\rm gcd}(\mu\bar\mu\beta\bar\beta, \nu\bar\nu\alpha\bar\alpha)}$ becomes $\beta\bar\beta + \alpha\bar\alpha$. This is the known exchange polynomial for the loop $\gamma$ of a self-folded triangle. The case where $\Delta_2$ is self-folded  is similar.

Therefore, we may assume that none of $\Delta_1, \Delta_2$ is self-folded. This means that $\alpha \ne \beta$ and $\mu \ne \nu$, but not all four are distinct. As a first case, assume that $\alpha = \mu$. Using the orientability of $S$, the arcs $\alpha, \mu$ have to be identified in such a way that $c = d$. Observe that the triangles adjacent to $\mu$ are $\Delta_1, \Delta_2$. Consider a small oriented cycle $\sigma$ having $a$ as center and starting on $\gamma$ and going clockwise. Observe that $\sigma$ first traverses $\Delta_2$ and then, $\Delta_1$. With our identification of $\mu$ with $\za$, we see that $\sigma$ only crosses two ends of arcs. In particular, only $\alpha, \gamma$ have $a$ as endpoint and none of these arcs are loops.
Therefore, the arcs $\beta, \nu$ enclose a once-punctured bigon, and we are in case (ii) of the lemma. In this case, $\frac{\mu\bar\mu\beta\bar\beta + \nu\bar\nu\alpha\bar\alpha}{{\rm gcd}(\mu\bar\mu\beta\bar\beta, \nu\bar\nu\alpha\bar\alpha)}$ becomes $\beta\bar\beta + \nu\bar\nu$ which is the exchange polynomial for an arc $\gamma$ inside a once-punctured bigon.
The case (iii), where $\beta = \nu$, is similar.
We cannot have both $\za=\mu$ and $\zb=\nu$, since this would mean that $(S,M)$ is a sphere with 3 punctures, which is excluded. If $\za=\nu$ or $\zb=\mu$ then
 ${\rm gcd}(\mu\bar\mu\beta\bar\beta, \nu\bar\nu\alpha\bar\alpha) = 1$ and $\mu\bar\mu\beta\bar\beta + \nu\bar\nu\alpha\bar\alpha$ is the usual Ptolemy relation taking radii of self-folded triangles into account.
\end{proof}

According to the preceding result, a special attention has to be given to self-folded triangles and once-punctured bigons.
A self-folded triangle in $T_G$ around an $m$-puncture is called an \emph{$m$-self-folded} triangle.
\begin{Prop} The orbits of the self-folded triangles in $(S,M,T)$ corresponds bijectively to the $1$-self-folded triangles in $(S_G, M_G, \mathcal{O}, T_G)$.
\end{Prop}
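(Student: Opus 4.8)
The plan is to argue geometrically, through the orbit projection $\rho\colon S\to S_G=S/G$ and the identification of $S_G$ with the fundamental domain $\mathfrak{F}$ constructed just before Lemma \ref{lemmaboundary}. Throughout I work with the ideal triangulation $\tau(T)$, where self-folded triangles live (as per the paper's convention identifying $\tau(T)$ with $T$); note that the self-folded triangles of $\tau(T)$ are also triangles of $T^{\mathcal{O}}$, since the extra arcs producing $T^{\mathcal{O}}$ sit inside standard triangles. The crucial point to extract is that a self-folded triangle, together with its enclosed puncture, lies in the locus where $\rho$ is unramified; this is precisely what makes self-foldedness survive in both directions.

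First I would pin down the relevant isotropy data. Let $\Delta$ be a self-folded triangle of $\tau(T)$ with loop $\ell$, radius $r$, base vertex $a$ and enclosed puncture $p$. By Lemma \ref{lem 22}(1), $\Delta$ has trivial isotropy group, so its $G$-orbit has exactly $|G|$ elements and $g\Delta\neq\Delta$ for $g\neq 1$. Moreover $p$ has trivial isotropy: any $g\in G_p$ fixes $p$, hence sends the unique self-folded triangle around $p$ to itself and so fixes $\ell$, forcing $g=1$ because $G$ acts freely on arcs (this is exactly the argument used inside the proof of Lemma \ref{lem 22}). Consequently $m_{\rho(p)}=|G_p|=1$, that is, $\rho(p)$ is an \emph{ordinary} puncture of $(S_G,M_G,\mathcal{O})$.

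Next I would show $\rho$ restricts to a homeomorphism on the closed triangle $\overline{\Delta}$ whose image is a $1$-self-folded triangle. The only points of $S$ fixed by a non-identity element of $G$ are punctures with $m>1$ and preimages of orbifold points, the latter lying in the interiors of \emph{standard} triangles; neither kind occurs inside a self-folded triangle, and $p$ itself has trivial isotropy. Since interiors of distinct triangles of $\tau(T)$ are disjoint and $g\Delta\neq\Delta$ for $g\neq 1$, the map $\rho$ is injective on the interior of $\overline{\Delta}$; on the sides it is injective because $g\ell=\ell$ or $gr=r$ would force $g=1$; and the base vertex $a$, even if it is a puncture of non-trivial isotropy, is a single point whose only identifications under $G_a$ are with itself, so injectivity persists at $a$. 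As $\rho$ carries arcs of $\tau(T)$ onto arcs of $\tau(T_G)$ (Proposition \ref{PropBijec}), the image $\rho(\Delta)$ is a triangle of $\tau(T_G)$ with sides $\rho(\ell),\rho(r),\rho(r)$, i.e. a self-folded triangle around the ordinary puncture $\rho(p)$, hence $1$-self-folded. Because a self-folded triangle is determined by its loop and the bijection of Proposition \ref{PropBijec} is a bijection on $G$-orbits of arcs, the assignment $\Delta\mapsto\rho(\Delta)$ descends to a well-defined \emph{injection} from $G$-orbits of self-folded triangles of $\tau(T)$ to $1$-self-folded triangles of $\tau(T_G)$.

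Finally I would establish surjectivity. Given a $1$-self-folded triangle $\overline{\Delta}$ of $\tau(T_G)$ around an ordinary puncture $\bar p$, the fibre $\rho^{-1}(\bar p)$ is a free $G$-orbit of $|G|$ punctures and $\overline{\Delta}$ contains no orbifold point, so $\rho$ is an unramified (hence trivial) cover over $\overline{\Delta}$; its preimage is $|G|$ pairwise disjoint disks, each bounded by a component of $\rho^{-1}(\text{loop})$ and a component of $\rho^{-1}(\text{radius})$ enclosing one puncture of the fibre, hence each a self-folded triangle of $\tau(T)$. These $|G|$ triangles form a single $G$-orbit (the isotropy of each is trivial, so the orbit has $|G|$ elements), which maps to $\overline{\Delta}$. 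The main obstacle is the careful verification that $\rho$ is genuinely unramified over a closed neighbourhood of a self-folded triangle --- in particular controlling the base vertex $a$ --- so that self-foldedness itself, and not merely the orbit of the loop, is preserved in both directions; once the triviality of the isotropy of $p$ and of $\Delta$ is secured via Lemma \ref{lem 22}, this reduces to the elementary disjointness statements above.
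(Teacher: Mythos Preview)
Your forward direction is correct and matches the paper's: Lemma \ref{lem 22}(1) gives $G_\Delta=1$, hence $G_p=1$, so the image is a $1$-self-folded triangle. The extra topological verification that $\rho|_{\overline{\Delta}}$ is injective is more than the paper does, but it is fine.

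The surjectivity argument has a genuine gap. You claim that ``$\overline{\Delta}$ contains no orbifold point, so $\rho$ is an unramified (hence trivial) cover over $\overline{\Delta}$''. This is false in general: the branch locus of $\rho$ consists of orbifold points \emph{and} $G$-punctures, and the base vertex $Ga$ of a $1$-self-folded triangle may well be a $G$-puncture. For a concrete instance, take the sphere with four punctures $p_1,p_2,p_3,a$ triangulated by three self-folded triangles all based at $a$ (Figure \ref{fig sphere}), with $G=\Z/3\Z$ rotating the $p_i$ and fixing $a$. The orbit space has a $1$-self-folded triangle based at the $3$-puncture $Ga$, and the preimage of that closed triangle is three disks \emph{sharing} the point $a$, not disjoint. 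You flag the base vertex as ``the main obstacle'', but then assert it is handled by the triviality of $G_p$ and $G_\Delta$; those do not control $G_a$, so the obstacle is not resolved. Working over $\overline{\Delta}\setminus\{Ga\}$ does give a trivial cover, but you must still argue that in each sheet the lift of the loop closes up to a loop in $S$---this does not follow from simple connectivity once you add back the possibly-ramified base point.

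The paper bypasses the topology entirely with a two-line combinatorial argument for the converse: since $Gb$ is a $1$-puncture, $G_b=1$; since the radius is the unique arc of $T_G$ at $Gb$, any two arcs of $T$ at $b$ lie in the same $G$-orbit, and triviality of $G_b$ together with $Ga\neq Gb$ forces them to be equal. Thus exactly one arc of $T$ meets $b$, so $b$ sits inside a self-folded triangle of $T$. This one observation is precisely what is needed to make your covering argument airtight (it shows the two lifted radii in a sheet coincide, hence the lifted triangle is self-folded and its loop really is a loop), so the simplest repair of your proof is to insert it.
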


\begin{proof}
Let $\sigma_1,\sigma_2$ be the arcs of a self-folded triangle in $(S,M,T)$ with $\sigma_1:a - a$ the loop and $\sigma_2: a - b$ the radius.
Lemma \ref{lem 22} implies that this self-folded triangle, and hence $b$, has a trivial isotropy group.
 Therefore, we see that $G\sigma_1,G\sigma_2$ is a $1$-self-folded triangle of $T_G$ in $(S_G, M_G, \mathcal{O})$. Conversely, assume that $G\rho_1, G\rho_2$ are the arcs in $T_G$ of a $1$-self-folded triangle in $(S_G, M_G, \mathcal{O})$ with $G\rho_1:Ga \to Ga$ the loop and $G\rho_2: Ga \to Gb$ the radius.
 Since $Gb$ is a 1-puncture in $S_G$, we see that $b$ in $S$ is a puncture with trivial isotropy group. Because $G$ is admissible, this implies that only one arc of $T$ is incident to $b$.
 This means that $b$ lies inside a self-folded triangle in $(S,M,T)$. This self-folded triangle corresponds to the self-folded triangle of $T_G$ given by $G\rho_1, G\rho_2$.
\end{proof}

A once-punctured bigon in $T_G$ containing
an $m$-puncture is called a \emph{once-punctured $m$-bigon.}

\begin{Prop} The orbits of the once-punctured bigons in $(S,M,T)$ correspond bijectively to the $2$-self-folded triangles and once-punctured $1$-bigons in $(S_G, M_G, \mathcal{O}, T_G)$.
\end{Prop}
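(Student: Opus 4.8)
The plan is to mimic the proof of the preceding proposition, replacing self-folded triangles by once-punctured bigons and letting the \emph{isotropy of the puncture inside the bigon} decide which of the two types of region appears in the orbifold. First I would fix a once-punctured bigon of $\tau(T)$, described as in the proof of Lemma~\ref{LemmaExchange}(ii): it has an interior puncture $a$ with exactly two incident arcs $\gamma\colon a-b$ and $\mu\colon a-c$, and its outer boundary is a bigon with corners $b,c$ whose two sides $\nu\colon c-b$ and $\beta\colon b-c$ may be arcs or boundary segments; the two ideal triangles are $\Delta_1=(\gamma,\mu,\nu)$ and $\Delta_2=(\gamma,\mu,\beta)$. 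Since precisely two arcs of $T$ are incident to $a$, Lemma~\ref{lem 22}(2) forces the isotropy group $G_a$ to be cyclic of order dividing $2$, so $|G_a|\in\{1,2\}$; this dichotomy is exactly what produces the two families of regions in $T_G$.

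Next I would carry out the forward map by cases. If $G_a$ is trivial, then $Ga$ is an ordinary ($1$-)puncture of $(S_G,M_G,\mathcal{O})$, and by Proposition~\ref{PropBijec} the $G$-orbits of $\gamma,\mu,\nu,\beta$ descend to tagged arcs bounding a once-punctured $1$-bigon, so the orbit of the bigon maps to a once-punctured $1$-bigon. If $|G_a|=2$, the nontrivial $g\in G_a$ fixes no arc by admissibility, hence interchanges $\gamma$ and $\mu$; as $g$ preserves orientation it cannot fix $\Delta_1$ (which would transpose two of its sides and reverse its orientation), so $g\Delta_1=\Delta_2$, forcing $g$ to interchange $b$ with $c$ and $\nu$ with $\beta$. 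Passing to the quotient folds the bigon about the fixed puncture $a$: the images $G\gamma=G\mu$ and $G\nu=G\beta$ become a single radius and a single loop based at $Gb=Gc$, that is, a self-folded triangle around the $2$-puncture $Ga$, which is a $2$-self-folded triangle.

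For the converse and the bijectivity I would lift. A once-punctured $1$-bigon of $T_G$ surrounds a puncture of trivial isotropy, so its full preimage is a $G$-orbit of disjoint once-punctured bigons mapping back to it. A $2$-self-folded triangle of $T_G$ encloses a $2$-puncture $p$; each lift $\tilde p$ in $S$ has $|G_{\tilde p}|=2$, and since the radius is the \emph{only} arc of $T_G$ incident to $p$, every arc of $T$ incident to $\tilde p$ lies over the radius. As $G$ acts freely on arcs, these form a single free $G_{\tilde p}$-orbit of size $|G_{\tilde p}|=2$, giving exactly the two radii $\gamma,\mu$, while the loop unfolds to the two sides $\nu,\beta$; this reconstructs a once-punctured bigon whose orbit maps to the given triangle. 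Throughout I would work with the ideal triangulations $\tau(T),\tau(T_G)$ under the standing identification of this section, using that $G$ respects taggings to transfer the conclusion back to tagged arcs.

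The step I expect to be the main obstacle is the order-$2$ case: checking that the quotient of the folded bigon is genuinely a \emph{self-folded} triangle around a $2$-puncture (and not some other degeneration, e.g.\ a monogon with an orbifold point), and dually that unfolding a $2$-self-folded triangle yields exactly a once-punctured bigon rather than, say, a once-punctured triangle. This hinges on two inputs already available: the orientation-preservation of $g$, which pins down that $g$ swaps $\gamma\leftrightarrow\mu$ and $\nu\leftrightarrow\beta$ (so that its fixed locus is just the puncture $a$ and no interior orbifold point is created), and Lemma~\ref{lem 22}(2) together with the fact that the enclosed puncture of a self-folded triangle meets only the radius, which forces exactly two arcs at $\tilde p$.
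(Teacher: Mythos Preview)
Your proposal is correct and follows essentially the same approach as the paper: split on the isotropy $m_a\in\{1,2\}$ of the interior puncture (using Lemma~\ref{lem 22}(2)), showing that $m_a=1$ yields a once-punctured $1$-bigon while $m_a=2$ forces the nontrivial $g\in G_a$ to swap the two radii and the two sides, producing a $2$-self-folded triangle. You are in fact more careful than the paper in two places---the orientation argument pinning down exactly how $g$ acts when $m_a=2$, and the explicit lifting for the converse (which the paper dismisses as ``clear'')---but the overall strategy is identical.
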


\begin{proof} Consider a once-punctured bigon $Q$ in $(S,M,T)$ as shown in Figure \ref{fig bigon}.\begin{figure}
\begin{center}
\small\scalebox{1.2}{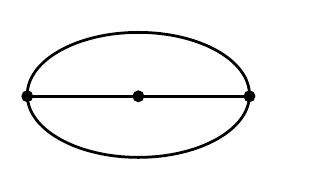}
\caption{A once-punctured bigon}
\label{fig bigon}
\end{center}
\end{figure}
Since the puncture $a$ is
 incident to precisely two arcs in $T$, its isotropy
 $m_a$ must be either 1 or 2. If $m_a=1$ then the 4 arcs of the bigon lie in 4 different $G$-orbits. Moreover, the puncture $a$ does not lie in the orbit of $b$ (or $c$), since there are at least 3 arcs incident to $b$ (and $c$). This shows that the orbit of $Q$ is a bigon in $S_G$.

 Assume now that $m_a = 2$. Then there is $1 \ne g \in G$ with $ga = a$. We must have $g\gamma = \za$ and $g\beta = \nu$. Hence, $Gb = Gc$ and as for the argument above, $Ga \ne Gb$. It follows that the orbit of $\mathcal{Q}$ is a $2$-self-folded triangle in $S_G$. The converse is clear.
\end{proof}

\medskip
We now define the exchange polynomials for $S_G$. We shall use the notation $p_{G,\zg}$ for the exchange polynomial of the variable associated to the $G$-orbit of $\zg$.
We need to distinguish several cases. In each case, we use the notation in Figure \ref{fig:local}.

\subsection{Case where $\gamma$ lies in a self-folded triangle or a once-punctured bigon}\label{sect 7.1}

Let $\gamma$ be the loop of the self-folded triangle, which we may assume to be $\Delta_1$. Then $\{\mu=\nu, \alpha,\beta,\gamma\}$ are four distinct arcs. Let $g \in G$. Observe that a self-folded triangle is sent to a self-folded triangle by $g$ and $g\gamma$ is a loop of a self-folded triangle. Since $\Delta_2$ is not self-folded, none of $G\mu, G\alpha, G\beta$ is equal to $G\gamma$.
Lemma \ref{LemmaExchange} implies that
$p_{\gamma} = \beta\bar\beta + \alpha\bar\alpha$, and since $p_{G,\zg} =F(p_\zg)$, we have
\[p_{G,\zg}=
 G\beta G\bar\beta + G\alpha G\bar\alpha.\]
This is either a sum of two distinct monomials or, if $G\alpha = G\beta$, a single monomial with coefficient $2$. If $\gamma$ is the radius of a self-folded triangle, then the exchange polynomials for $\gamma, \bar\gamma$ are the same.

\medskip

Assume now that $\mathcal{Q}$  is a once-punctured bigon, so we may assume $\alpha = \mu$. The exchange polynomial for $\gamma$ is $\beta\bar\beta + \nu\bar\nu$. Observe that $\beta \ne \nu$ as otherwise, $S$ is a sphere with three punctures. Also, since $\Delta_1, \Delta_2$ are not self-folded, we get that $\{\mu=\alpha, \nu,\beta,\gamma\}$ forms $4$ distinct arcs. If $m_a = 1$, then all $G\mu, G\nu, G\beta, G\gamma$ are distinct. Therefore, in this case,
\[p_{G,\gamma} = F(p_\gamma)= G\beta G\bar\beta + G\nu G\bar\nu.\]
 We get a sum of two distinct monomials. If $m_a = 2$, then we are still in the case where $p_{G, \gamma} = F(p_\gamma)$. Since $G\nu = G\beta$, we get \[p_{G,\gamma} = 2G\beta G\bar\beta.\]

\subsection{Case where $\gamma$ lies in the orbit of one of $\{\alpha, \beta, \mu, \nu\}$}\label{SubsectionIdentifiedOrbits}

Because of Section~\ref{sect 7.1}, we may assume that $\mathcal{Q}$ is not a bigon and none of $\Delta_1, \Delta_2$ are self-folded. By Lemma \ref{LemmaExchange},
the exchange polynomial for $\gamma$ is $\mu\bar\mu\beta\bar\beta + \nu\bar\nu\alpha\bar\alpha$.  We need the following lemma.

\begin{Lemma}\label{LemmaAll} If all arcs of $\mathcal{Q}$ lie in the same orbit, then all arcs in $T$ lie in the same orbit and $\partial S = \emptyset$.
\end{Lemma}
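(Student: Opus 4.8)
The plan is to track the single orbit $G\gamma$ and to show that the $G$-action forces it to sweep out the entire triangulation. The starting observation is that, in the present situation (where $\gamma$ is the diagonal of $\mathcal{Q}$ and neither $\Delta_1$ nor $\Delta_2$ is self-folded), the arc $\gamma$ has exactly two adjacent triangles, namely the distinct triangles $\Delta_1=(\gamma,\mu,\nu)$ and $\Delta_2=(\gamma,\alpha,\beta)$, and by hypothesis all sides of both $\Delta_1$ and $\Delta_2$ are arcs lying in the single orbit $G\gamma$ (in particular none of $\mu,\nu,\alpha,\beta$ is a boundary segment, since an element of $G$ sends arcs to arcs).

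The key propagation step I would establish is the following: \emph{for every arc $\sigma\in G\gamma$, both triangles of $\tau(T)$ adjacent to $\sigma$ have all of their sides in $G\gamma$.} To prove it, pick $g\in G$ with $g\gamma=\sigma$, which exists because $\sigma$ lies in the orbit $G\gamma$. Since $g$ is a homeomorphism mapping the triangulation to itself, it carries the two triangles adjacent to $\gamma$, namely $\Delta_1$ and $\Delta_2$, bijectively onto the two triangles adjacent to $g\gamma=\sigma$; hence these are exactly $g\Delta_1$ and $g\Delta_2$. As $\Delta_1,\Delta_2$ have all their sides in $G\gamma$ and the orbit $G\gamma$ is $G$-invariant (so $g(G\gamma)=G\gamma$), both $g\Delta_1$ and $g\Delta_2$ have all their sides in $G\gamma$, which is the claim. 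This step uses crucially that $\gamma$ has exactly two adjacent triangles, guaranteed here because neither $\Delta_1$ nor $\Delta_2$ is self-folded and $\gamma$ is not a radius.

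With the claim in hand, I would set $\mathfrak{R}=\bigcup_{g\in G}\left(g\Delta_1\cup g\Delta_2\right)$, the union of closed triangles obtained from all $G$-translates of $\Delta_1$ and $\Delta_2$. By the claim every side of every triangle of $\mathfrak{R}$ is an arc of $G\gamma$, hence an internal arc and never a boundary segment, and each such arc is shared by two triangles that again lie in $\mathfrak{R}$. I would then argue that $\mathfrak{R}$ is both closed and open in $S$: closed, as a finite union of closed triangles; open, because every edge of $\mathfrak{R}$ is shared by two triangles of $\mathfrak{R}$, and because around any vertex of a triangle of $\mathfrak{R}$ the incident triangles form a full cycle lying entirely in $\mathfrak{R}$ (no boundary segment can interrupt the fan, since every edge encountered is an arc of $G\gamma$). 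Since $S$ is connected and $\mathfrak{R}\neq\emptyset$, this forces $\mathfrak{R}=S$. Consequently every triangle of $\tau(T)$ lies in $\mathfrak{R}$, every arc of $T$ belongs to the single orbit $G\gamma$, and no edge of any triangle is a boundary segment, so $\partial S=\emptyset$.

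The delicate point, which I would write out most carefully, is the open/closed argument yielding $\mathfrak{R}=S$: one must check that the link of each vertex of a triangle of $\mathfrak{R}$ is a complete cycle of triangles of $\mathfrak{R}$, rather than a fan terminating in boundary segments. This is precisely what both rules out the appearance of boundary and produces the conclusion $\partial S=\emptyset$. By contrast, the verification that $g$ sends the star of $\gamma$ onto the star of $\sigma$, and the invariance $g(G\gamma)=G\gamma$, are routine consequences of the fact that the elements of $G$ are $T$-automorphisms.
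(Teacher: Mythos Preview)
Your argument is correct, and it rests on the same underlying mechanism as the paper's proof: show that the collection of triangles $\{g\Delta_1,g\Delta_2:g\in G\}$ is closed under edge-adjacency and then invoke connectedness of $S$. The executions differ, however. You prove the propagation step by a single clean ``transport of structure'' observation: if $g\gamma=\sigma$, then $g$ carries the two triangles at $\gamma$ onto the two triangles at $\sigma$, so both lie in $\mathfrak{R}$ and have all sides in $G\gamma$. The paper instead fixes specific elements $g,g'\in G$ with $g\gamma=\mu$ and $g'\gamma=\alpha$, and then runs a short case analysis (according to whether $ga=c$ or $ga=a$, etc.) using orientation-preservation to identify $g\Delta_i$ with $\Delta_1$ or $\Delta_2$; from this it reads off that each side of $\Delta_1,\Delta_2$ is also a side of a triangle in $G\Delta_1\cup G\Delta_2$, reaching the same propagation conclusion. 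Your route avoids that case split entirely. On the other hand, the open/closed argument you use to pass from ``closed under adjacency'' to $\mathfrak{R}=S$ is heavier than needed: since every edge of every triangle in $\mathfrak{R}$ is an internal arc, the combinatorial connectedness of the dual graph of the triangulation already forces every triangle of $\tau(T)$ to lie in $\mathfrak{R}$, which immediately gives both $G\gamma=T$ and $\partial S=\emptyset$ without the vertex-link analysis you flag as delicate. The paper effectively uses this simpler combinatorial step (phrased as: a triangle $\Delta_3$ adjacent to $G\Delta_1\cup G\Delta_2$ must itself lie there), and also leaves the conclusion $\partial S=\emptyset$ implicit, whereas you make it explicit.
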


\begin{proof}
Assume that all arcs of $\mathcal{Q}$ lie in the same orbit. Assume to the contrary that $G\gamma \ne T$. Then there is a triangle $\Delta_3$ adjacent to a triangle in $G\Delta_1 \cup G\Delta_2$ having an arc $\epsilon$ not in $G\gamma$. We may assume that $\zD_3$ is adjacent to $\Delta_1$ or $\Delta_2$. Let $g \in G$ with $g\gamma = \mu$ and $g' \in G$ with $g'\gamma = \alpha$.

As a first case, assume that $ga = c$ and $g'a = d$. Then $g'\Delta_2 = \Delta_2$ and $g\Delta_1 = \Delta_1$, since $G$ is orientation-preserving.  By symmetry, we may assume that $\Delta_3$ is adjacent to $\Delta_1$.
However, each side of $\Delta_1$ is a side of a triangle in $G\Delta_2$.
Thus, $\Delta_3 \in G\Delta_1 \cup G\Delta_2$. But this mean that $\epsilon \in G\gamma$, a contradiction.

As a second case, assume that $ga = a$ and $g'a = d$. Then $g'\Delta_2 = \Delta_2$ and $g\Delta_2 = \Delta_1$. So again, we may assume that $\Delta_3$ is adjacent to $\Delta_1$ and we get the same contradiction. The case where $g'a=a$ is similar.
\end{proof}

We will assume now that the arcs of $T$ do not lie in a single orbit, when $\partial S = \emptyset$. This case is treated separately in Subsection \ref{sect 9}.

\begin{Lemma} \label{LemmaCases}
One and only one of the following situations occur.
\begin{enumerate}[$(1)$]
\item
There exists a non-trivial $g\in G$ such that $g\zD_1=\zD_1$. In this case, $G\gamma \ne G\alpha$ and $G\gamma \ne G\beta$.
\item
There exists a non-trivial $g\in G$ such that $g\zD_2=\zD_2$. In this case $G\gamma \ne G\mu$ and $G\gamma \ne G\nu$.
\item
There exists $g\in G$ such that $g\mu=\zg$ and $g\zg=\za$.  In this case, $g\nu=\zb$ and $G\zg\ne G\zb$ and $G\zg \ne G\nu$.
\item
There exists $g\in G$ such that $g\zb=\zg$ and $g\zg=\nu$.  In this case, $g\za=\mu$ and  $G\zg\ne G\za$ and $G\zg\ne G\mu$.
\end{enumerate}
\end{Lemma}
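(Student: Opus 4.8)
The plan is to work entirely in the local picture of Figure~\ref{fig:local}, classifying a nontrivial $g\in G$ that realizes one of the identifications $G\zg=G\mu,\ G\zg=G\nu,\ G\zg=G\za,\ G\zg=G\zb$ forced by the hypothesis of Subsection~\ref{SubsectionIdentifiedOrbits}. Each side shares with $\zg$ exactly one of the two triangles $\zD_1,\zD_2$ and exactly one endpoint: $\mu$ shares $\zD_1$ and $a$, $\nu$ shares $\zD_1$ and $b$, $\za$ shares $\zD_2$ and $a$, and $\zb$ shares $\zD_2$ and $b$. The backbone is the following orientation dichotomy. If $g\zg=\delta$ for a side $\delta$, then $g$ sends the pair of triangles adjacent to $\zg$ to the pair adjacent to $\delta$; the triangle $\zD$ common to $\zg$ and $\delta$ lies in both pairs, so $g$ maps one of $\zD_1,\zD_2$ onto $\zD$. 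If it is $\zD$ itself, then $g$ fixes $\zD$; otherwise $g$ carries the other triangle onto $\zD$, and comparing the cyclic order of edges of the two triangles shows that the only orientation-preserving possibility fixes the common endpoint of $\zg$ and $\delta$. This is the unique place where orientation-preservation of $G$ is used.

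First I would prove that some case occurs. Choosing $g$ with $g\zg\in\{\mu,\nu,\za,\zb\}$, the dichotomy yields that $g$ fixes $\zD_1$, $\zD_2$, $a$, or $b$. If $g$ fixes a triangle, Lemma~\ref{lem 22}(1) shows it is standard and $g$ has order $3$, cyclically permuting its edges; fixing $\zD_1$ gives case~(1) with $G\zg=G\mu=G\nu$, and fixing $\zD_2$ gives case~(2) with $G\zg=G\za=G\zb$. If $g$ fixes $a$, it rotates the star of $a$, whose arcs occur in the clockwise order $\mu,\zg,\za$; since $g\zg\in\{\mu,\za\}$, replacing $g$ by $g^{-1}$ if needed gives $g\mu=\zg$, $g\zg=\za$, which is case~(3). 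The induced action $a\mapsto a,\ c\mapsto b,\ b\mapsto d$ then gives $g\nu=\zb$. Fixing $b$ is symmetric and yields case~(4) with $g\za=\mu$. This proves the stated identifications.

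Next I would prove the inequalities, which also give uniqueness. The sides entering $G\zg$ are $\{\mu,\nu\},\{\za,\zb\},\{\mu,\za\},\{\nu,\zb\}$ in cases (1)--(4); since each of these pairs meets the complement of another, showing that in each case exactly the indicated two sides lie in $G\zg$ makes the cases pairwise incompatible. In cases (3) and (4) this is immediate: there $G\nu=G\zb$ (resp.\ $G\mu=G\za$), so if one excluded side lay in $G\zg$ then both would, placing all four sides in $G\zg$; Lemma~\ref{LemmaAll} would then force every arc of $T$ into one orbit with $\partial S=\emptyset$, the situation excluded just before that lemma. Cases (1) and (2) require one more application of the dichotomy: assuming $\za\in G\zg$ in case~(1), pick $h$ with $h\zg=\za$; if $h$ fixes $\zD_2$ then $\zD_2$ is a rotation triangle and all four sides lie in $G\zg$, while if $h$ fixes $a$ then the case~(3) computation gives $h\nu=\zb$, whence $G\zb=G\nu=G\zg$ and again all four sides lie in $G\zg$. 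Both contradict Lemma~\ref{LemmaAll}, so $\za\notin G\zg$; symmetrically $\zb\notin G\zg$.

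I expect the main obstacle to be the orientation bookkeeping behind the dichotomy and the explicit vertex computations (such as $g\nu=\zb$ in case~(3)): one must check that, of the two ways to match the vertices of the two adjacent triangles, exactly one preserves the surface orientation, and that this survives any global identification of the points $a,b,c,d$ or of the arcs $\mu,\nu,\za,\zb$, which is legitimate because the whole analysis is local around $\zg$ and only uses cyclic orders of edges about a fixed triangle or vertex. A secondary point is that the reductions of Subsection~\ref{sect 7.1} --- no self-folded triangle and no once-punctured bigon at $\mathcal{Q}$ --- are precisely what guarantee that $\zD_1$ and $\zD_2$ are distinct standard triangles, so that Lemma~\ref{lem 22}(1) applies and the dichotomy is non-degenerate.
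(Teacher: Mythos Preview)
Your proposal is correct and follows essentially the same approach as the paper. The paper's proof also splits each identification $h\gamma=\delta$ into the two subcases ``$h$ fixes the common triangle'' versus ``$h$ fixes the common endpoint'' (phrased there as ``$ha=d$'' versus ``$ha=a$'', etc.), and then invokes Lemma~\ref{LemmaAll} to rule out the forbidden orbits; you have simply isolated this orientation dichotomy as a named principle and applied it uniformly, which makes the coverage and mutual exclusivity of the four cases more explicit than the paper's terse ``clearly''.
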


\begin{proof}
Assume first that we are in case (1). Thus $G\zg=G\mu=G\nu$. If there exists $h\in G$ such that $h\zg=\za$ then either $ha=d$ and then $h^2\zg=\zb$, or $ha=a$ and then $h\nu=\zb.$
In both cases, we get that all arcs of $\mathcal{Q}$ lie in the same orbit, and by Lemma \ref{LemmaAll} this contradicts our assumption. This shows that $G\zg\ne G\za$. Similarly, $G\zg\ne G\zb$. This proves the statement in (1).
The case (2) is proved by a similar argument.

Assume now we are in case (3). Then $g\zD_1=\zD_2$ and $g\nu=\zb$. If there exists $h\in G$ such that $h\zb=\zg$ then either $hb=a$ and then $h\zD_2=\zD_2$, or $hb=b$ and then
$h^2\zb=\nu$. In the former case, we are in case (2) which is impossible since $G\zg=G\mu$. In the latter case, all arcs of $\mathcal{Q}$ lie in the same orbit, and again Lemma \ref{LemmaAll} yields a contradiction to our assumption.
This proves that $G\zg\ne G\zb$. Similarly $G\zg \ne G\nu$. This proves (3), and (4) follows by a similar argument.

Since $\zg$ lies in one of the orbits of $\za,\zb,\mu,\nu$, the four cases of the lemma cover all possible situations. Clearly, the cases are mutually exclusive.
\end{proof}

The next two lemmas explain how to find the polynomial $p_{G,\zg}$ in the cases of Lemma \ref{LemmaCases}. Cases (1) and (2) are treated in Lemma \ref{LemmaChekovShapiro} while cases (3) and (4) are treated in Lemma \ref{LemmaPuncturedDisk}.

\begin{Lemma} \label{LemmaChekovShapiro} Let $T$ contain an unpunctured hexagon formed by the arcs
$\za_1,\ldots,\za_6$, $ \zg_1,\zg_2,\zg_3$ as in the left picture in Figure \ref{figlem10}.
Denote by $\Delta$ the triangle formed by $\zg_1, \zg_2,\zg_3$.
Suppose that there is a non-trivial $g\in G$ such that $g\zD=\zD$. Assume moreover that $\alpha_i \not \in G\zg_1$ for all $i$ and $G\alpha_i = G\alpha_j$ if $i \equiv j \mod 2$.
\begin{enumerate}[$(1)$]
\item If $\alpha_1 = \alpha_2$ or $\alpha_2 = \alpha_3$, then $p_{G, \zg_1} = 3G\alpha_1$.
    \item If $G\alpha_1 \ne G\alpha_2$, then $p_{G, \zg_1} = (G\alpha_1G\bar\alpha_1)^2 + G\alpha_1G\bar\alpha_1 G\alpha_2G\bar\alpha_2 + (G\alpha_2G\bar\alpha_2)^2$.
    \item If $G\alpha_1 = G\alpha_2$, then $p_{G,
    \zg_1} = 3(G\alpha_1G\bar\alpha_1)^2$.
\end{enumerate}
\end{Lemma}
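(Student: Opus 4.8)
The plan is to read the geometry off the order‑$3$ symmetry, realize the orbit mutation of $G\gamma_1$ as two ordinary flips in the ambient cluster algebra $\cala$, and then apply the homomorphism $F$, watching the orbit variable $y_1=F(\gamma_i)$ cancel. First I would fix the local picture. We are in case (1) of Lemma \ref{LemmaCases}, so $\Delta=\Delta_1$ has nontrivial isotropy; by Lemma \ref{lem 22} this isotropy group has order $3$ and is generated by an element $g$ rotating $\Delta$. Thus $g$ cyclically permutes $\gamma_1,\gamma_2,\gamma_3$ and the three corner triangles, so $G\gamma_1=G\gamma_2=G\gamma_3$, and $g$ sends $\alpha_1\to\alpha_3\to\alpha_5$ and $\alpha_2\to\alpha_4\to\alpha_6$, which is exactly the parity hypothesis on the $G\alpha_i$. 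By Corollary \ref{G-flip} the orbit mutation of $G\gamma_1$ is the unique $G$‑stable retriangulation replacing the sides $\{\gamma_1,\gamma_2,\gamma_3\}$ of the inner triangle on the vertices $1,3,5$ by the sides $\gamma_1'',\gamma_2'',\gamma_3''$ of the opposite inner triangle on $2,4,6$; the only other $G$‑orbit of three arcs triangulating the hexagon consists of the long diagonals, which pairwise cross, so uniqueness holds.

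Next I would compute the Laurent expansion of $\gamma_1''=(2\text{-}4)$ by two successive flips. Flipping $\gamma_1$ in the quadrilateral with sides $\alpha_1,\alpha_2,\gamma_2,\gamma_3$, using the skein relation of Lemma \ref{LemmaExchange} (which also records the radius factors $\bar\alpha_i$ when some $\alpha_i$ is the loop or radius of a self‑folded triangle of $S$ lying \emph{outside} the hexagon), gives $(2\text{-}5)=(\alpha_1\bar\alpha_1\,\gamma_2+\alpha_2\bar\alpha_2\,\gamma_3)/\gamma_1$; flipping $\gamma_2$ next gives
\[\gamma_1''=\frac{\alpha_1\bar\alpha_1\,\alpha_3\bar\alpha_3\,\gamma_2+\alpha_2\bar\alpha_2\,\alpha_4\bar\alpha_4\,\gamma_1+\alpha_2\bar\alpha_2\,\alpha_3\bar\alpha_3\,\gamma_3}{\gamma_1\gamma_2}.\]
By definition $p_{G,\gamma_1}=F(\gamma_1\gamma_1'')$, so $\gamma_1\gamma_1''$ is the above with one factor $\gamma_1$ cancelled from the denominator. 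Since $F(\gamma_1)=F(\gamma_2)=F(\gamma_3)=y_1$, $F(\alpha_1\bar\alpha_1)=F(\alpha_3\bar\alpha_3)=G\alpha_1 G\bar\alpha_1$ and $F(\alpha_2\bar\alpha_2)=F(\alpha_4\bar\alpha_4)=G\alpha_2 G\bar\alpha_2$, the remaining $y_1$ in numerator and denominator cancel, leaving exactly the trinomial of part (2). Well‑definedness of $p_{G,\gamma_1}$ (independence of which new arc is paired with $\gamma_1$) follows from $F\circ g=F$ as in Remark \ref{rem 4.1}, since $g\gamma_1''=\gamma_2''$ and $F$ is a ring homomorphism; the hypothesis $\alpha_i\notin G\gamma_1$ guarantees we are genuinely in the case (1) geometry and that the surviving polynomial involves only the $\alpha$‑orbit variables.

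Parts (1) and (3) then appear as degenerations of this computation. For (3), when $G\alpha_1=G\alpha_2$ we have $G\alpha_1 G\bar\alpha_1=G\alpha_2 G\bar\alpha_2$ and the trinomial collapses to $3(G\alpha_1 G\bar\alpha_1)^2$, so (3) is immediate once (2) is established. Part (1) is the genuinely singular case: the equality $\alpha_1=\alpha_2$ (or $\alpha_2=\alpha_3$) as arcs forces the endpoints $1$ and $3$ to coincide, so $\gamma_1$ becomes the loop of a self‑folded triangle with radius $\alpha_1=\alpha_2$, and by the $g$‑symmetry the same degeneration occurs at $\gamma_2$ and $\gamma_3$. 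Here I would redo the two flips in this self‑folded configuration, now using the loop case of Lemma \ref{LemmaExchange}; the loop–radius relations drop the total degree, and the resulting $F(\gamma_1\gamma_1'')$ collapses to the linear polynomial $3G\alpha_1$.

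I expect the degenerate case (1) to be the main obstacle. There one must track carefully how the quadrilaterals used for the two flips themselves become self‑folded, so that the plain Ptolemy relations must be systematically replaced by their self‑folded analogues from Lemma \ref{LemmaExchange}, and one must check that the three resulting monomials genuinely coincide and that no spurious radius factor survives the cancellation, yielding the coefficient $3$ and the correct single variable $G\alpha_1$. By contrast, the generic computation of part (2) and the well‑definedness argument are routine once the flip sequence and the action of $g$ are set up.
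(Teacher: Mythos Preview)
Your proposal is correct and follows essentially the same route as the paper: identify the orbit mutation as rotating the inner triangle of the hexagon to the opposite inner triangle, compute the Laurent polynomial of a new arc $\gamma_1''$ by explicit flips, and apply $F$ so that the common factor $y_1=F(\gamma_i)$ cancels, leaving the trinomial in $G\alpha_1G\bar\alpha_1$ and $G\alpha_2G\bar\alpha_2$; the degenerate cases (1) and (3) are then specializations. The only cosmetic difference is that the paper performs four flips $\gamma_1,\gamma_2,\gamma_3,\gamma_1'$ to land in the full new triangulation and writes down all three $\gamma_i''$, verifying $F(\gamma_1'')=F(\gamma_2'')=F(\gamma_3'')$ by direct inspection of the three formulas, whereas you compute only $\gamma_1''$ via two flips and invoke the $g$-equivariance $F\circ g=F$ to obtain well-definedness; both arguments are equally valid and the paper, like you, leaves the self-folded case (1) (which it identifies as the sphere with four punctures) to the reader.
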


\begin{figure}
\begin{center}
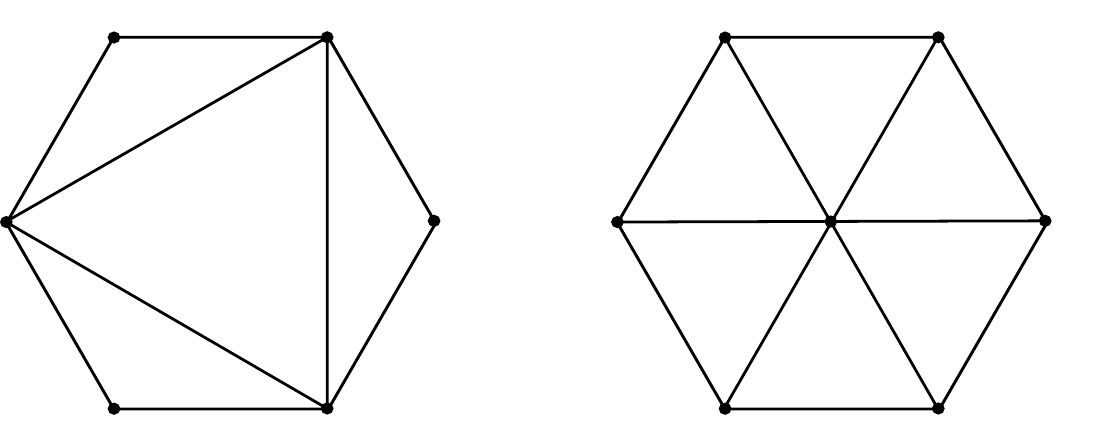
\end{center}
\caption{The triangulation of Lemma \ref{LemmaChekovShapiro} on the left and the triangulation of Lemma \ref{LemmaPuncturedDisk}, in the case where $m=6$, on the right.}\label{figlem10}
\end{figure}

\begin{proof}
Assume first that no arcs of $\{\alpha_1, \ldots, \alpha_6\}$ are identified. Let $T'=(T\setminus\{\zg_1,\zg_2,\zg_3\})\cup \{\zg_1'',\zg_2'',\zg_3''\}$ be the triangulation obtained by mutating in $\zg_1,\zg_2,\zg_3$ and then at $\zg_1'$, where $\zg_1'$ is the arc obtained by flipping $\zg_1$ at the first mutation. We get the following equations in the cluster algebra $\cala$.
$$\zg_1'' = \frac{\alpha_1\bar\alpha_1\alpha_3\bar\alpha_3\zg_2 + \alpha_2\bar\alpha_2\alpha_3\bar\alpha_3\zg_3 + \alpha_2\bar\alpha_2\alpha_4\bar\alpha_4\zg_1}{\zg_1\zg_2},$$ $$\zg_2'' = \frac{\alpha_1\bar\alpha_1\alpha_6\bar\alpha_6\zg_2 + \alpha_2\bar\alpha_2\alpha_6\bar\alpha_6\zg_3 + \alpha_1\bar\alpha_1\alpha_5\bar\alpha_5\zg_1}{\zg_1\zg_3},$$ $$\zg_3'' = \frac{\alpha_4\bar\alpha_4\alpha_6\bar\alpha_6\zg_2 + \alpha_3\bar\alpha_3\alpha_5\bar\alpha_5\zg_3 + \alpha_4\bar\alpha_4\alpha_5\bar\alpha_5\zg_1}{\zg_2\zg_3}\cdot$$
A straightforward check gives that $F(\gamma''_1)=F(\gamma''_2)=F(\gamma''_3)$ and $F(\zg_i''\zg_j)= (G\alpha_1G\bar\alpha_1)^2 + G\alpha_1G\bar\alpha_1G\alpha_2G\bar\alpha_2 +(G\alpha_2G\bar\alpha_2)^2$ for all $1 \le i,j \le 3$.
If $G\alpha_1 = G\alpha_2$, then $G\bar\alpha_1 = G\bar\alpha_2$ and we get the last case. It is not hard to check that if some arcs of $\{\alpha_1, \ldots, \alpha_6\}$ are identified, then we have two cases. Either $\alpha_{1}=\alpha_{2}, \alpha_3 = \alpha_4, \alpha_5 = \alpha_6$ and the left picture in Figure \ref{figlem10} contains three self-folded triangles. Otherwise, we have $\alpha_2 = \alpha_3, \alpha_4 = \alpha_5, \alpha_6 = \alpha_1$. In both cases, $(S,M)$ is the sphere with four punctures. These correspond to the cases in $(1)$ and are left to the reader, as the arguments are similar to the above arguments.
\end{proof}

\begin{Remark}
In the situation of Lemma \ref{LemmaChekovShapiro}, note that the orbit mutation of the arcs $\{g\gamma_1 \mid g \in G\}$ corresponds to rotating all triangles $g\Delta$, $g \in G$, about their respective centers by an angle of $\pi/3$.
\end{Remark}

\begin{Lemma} \label{LemmaPuncturedDisk}
Let $T$ contain a punctured polygon formed by the arcs
$\za_1,\ldots,\za_m$, $ \zg_1,\ldots,\zg_m$ as in the right picture in Figure \ref{figlem10}.
Let $b$ denote the puncture and assume that the isotropy group of $b$ is cyclic of order $m$ and $G\gamma_1 \ne G\alpha_1$.
\begin{enumerate}[$(1)$]
    \item  There exists a sequence of $2m-2$ mutations whose overall effect is a change of tag at the puncture $b$.
    \item We have $p_{G, \zg_1} = m G\alpha G\bar\alpha$.
\end{enumerate}
\end{Lemma}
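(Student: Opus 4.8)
The plan is to realise everything inside the once-punctured $m$-gon cut out by $\alpha_1,\dots,\alpha_m$, with the radial arcs $\gamma_1,\dots,\gamma_m$ joining the boundary vertices $v_1,\dots,v_m$ to the puncture $b$, and with the generator $g$ of $G_b\cong\Z/m$ acting by $g\gamma_i=\gamma_{i+1}$, $g\alpha_i=\alpha_{i+1}$ (indices mod $m$). Since each of $\{\gamma_i\}$ and $\{\alpha_i\}$ is a single $G$-orbit, the homomorphism $F$ is constant on them; I write $y=F(\gamma_i)=G\gamma_1$ and $z=F(\alpha_i\bar\alpha_i)=G\alpha\,G\bar\alpha$ for every $i$, where the $\bar{(-)}$ factors are inserted according to the conventions of Lemma \ref{LemmaExchange}.

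For (1) I would first pin down which orbit mutation Corollary \ref{G-flip} produces. Let $\gamma_i^{\bowtie}$ denote $\gamma_i$ with its tag at $b$ switched. Because $g$ preserves taggings, $\{\gamma_1^{\bowtie},\dots,\gamma_m^{\bowtie}\}$ is again a single $G$-orbit and $(T\setminus\{\gamma_i\})\cup\{\gamma_i^{\bowtie}\}$ is a $G$-stable tagged triangulation differing from $T$ only in the orbit $\{\gamma_i\}$; by the uniqueness in Corollary \ref{G-flip} this is exactly the orbit mutation, so $\gamma_i''=\gamma_i^{\bowtie}$. To exhibit the $2m-2$ ordinary flips, I would unfold the star of $b$: flipping $\gamma_1,\gamma_2,\dots,\gamma_{m-1}$ in turn replaces $\gamma_k$ by the peripheral arc $\delta_k=v_{k+1}-v_m$, and after $m-1$ flips one reaches the self-folded triangle whose loop is $\delta_{m-1}=v_m-v_m$ (enclosing $b$) and whose radius is the still-plain $\gamma_m$, together with the fan $\delta_1,\dots,\delta_{m-2}$. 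A symmetric block of $m-1$ flips (for instance flipping the radius and then the fan arcs in reverse order) rebuilds the star of $b$ with every tag now notched, for a total of $2(m-1)=2m-2$ flips whose net effect is precisely the tag change at $b$.

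For (2) the key is to compute $p_{G,\zg_1}=F(\gamma_1\gamma_1'')=F(\gamma_1\gamma_1^{\bowtie})$, and here I only need the first half of the flip sequence together with the standard factorization of a loop enclosing a puncture, namely that the cluster-algebra element of $\delta_{m-1}$ is the product of the plain and notched radius, $\delta_{m-1}=\gamma_m\,\gamma_m^{\bowtie}$. Each flip above is a (generalized) Ptolemy relation; reading them off gives $\gamma_1\delta_1=\alpha_1\bar\alpha_1\,\gamma_m+\alpha_m\bar\alpha_m\,\gamma_2$ and $\gamma_k\delta_k=\alpha_k\bar\alpha_k\,\gamma_m+\delta_{k-1}\gamma_{k+1}$ for $2\le k\le m-1$ (the peripheral $\delta_{k-1}$ carries no bar). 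Applying $F$ and cancelling the common factor $y$ collapses these into the linear recursion $F(\delta_k)=z+F(\delta_{k-1})$ with base case $F(\delta_1)=2z$, whence $F(\delta_{m-1})=mz$. Combining with the factorization gives $F(\gamma_m\gamma_m^{\bowtie})=mz$, and since $F$ is orbit-constant this equals $F(\gamma_1\gamma_1^{\bowtie})$; therefore $p_{G,\zg_1}=mz=m\,G\alpha\,G\bar\alpha$.

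The hard part will be the tagged bookkeeping rather than the algebra: I expect the main obstacle to be verifying rigorously that the refolding flips in (1) produce exactly the notched star (handling the intermediate self-folded triangle with the correct $\bar{(-)}$ conventions), and justifying the loop factorization $\delta_{m-1}=\gamma_m\gamma_m^{\bowtie}$ in the present trivial-coefficient orbifold setting. Once those geometric facts are secured, part (2) is just the short telescoping recursion above. I would also check the degenerate extreme $m=2$, where the ``$m$-gon'' is a once-punctured bigon and the output $2\,G\alpha\,G\bar\alpha$ must agree with the formula obtained in Subsection \ref{sect 7.1}.
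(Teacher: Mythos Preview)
Your proof is correct and follows the same overall strategy as the paper: perform the flips $\gamma_1,\dots,\gamma_{m-1}$ and track the resulting Ptolemy relations under $F$. The paper differs only in packaging: it runs the full sequence of $2m-2$ mutations and records the closed expression
\[
\gamma_{m-1}'' \;=\; \gamma_m\left(\sum_{i=1}^{m}\frac{\alpha_i\bar\alpha_i}{\gamma_i\gamma_{i+1}}\right),
\]
from which $F(\gamma_j''\gamma_i)=m\,G\alpha\,G\bar\alpha$ is read off directly. Your route stops halfway at the loop $\delta_{m-1}$ and invokes the factorization $\delta_{m-1}=\gamma_m\gamma_m^{\bowtie}$ to bypass the second block of flips; this is a legitimate shortcut (the paper itself appeals to the same kind of identity, citing \cite{MSW}, in Subsection~\ref{sect 9}), and it makes the telescoping structure $F(\delta_k)=z+F(\delta_{k-1})$ more transparent. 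The paper's closed formula, on the other hand, exhibits $\gamma_{m-1}''$ explicitly and is used there to verify $F(\gamma_i'')=F(\gamma_j'')$ for all $i,j$; you achieve the same conclusion more abstractly via the uniqueness in Corollary~\ref{G-flip} and the fact that $F$ is constant on $G$-orbits.
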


\begin{proof} Observe that all $\gamma_i$ are in the same orbit and all $\alpha_i$ are in the same orbit. These two orbits are distinct.
Let $T'=(T\setminus\{\zg_1,\ldots,\zg_m\})\cup \{\zg_1'',\ldots,\zg_m''\}$ be the triangulation obtained by mutating in  $\gamma_1,\gamma_2, \ldots, \gamma_{m-1},\gamma_m, \gamma_{m-2}', \gamma_{m-3}', \ldots, \gamma_2', \gamma_1'$, where $\gamma_i'$ is the arc obtained after mutation at $\gamma_i$. In the cluster algebra $\cala$, we have the following identity
$$\gamma_{m-1}'' = \gamma_{m}\left(\frac{\alpha_1\bar\za_1}{\gamma_1\gamma_2} + \frac{\alpha_2\bar\za_2}{\gamma_2\gamma_3} + \cdots + \frac{\alpha_{m-1}\bar\za_{m-1}}{\gamma_{m-1}\gamma_m} + \frac{\alpha_m\bar\za_m}{\gamma_m\gamma_1}\right).$$
Observe that $F(\gamma_{m-1}''\gamma_i) = m G\alpha G\bar\alpha$ for all $1 \le i,j \le m$. By similar computations, we get arcs $\gamma_1'', \ldots, \gamma_m''$ and one can check that for $1 \le i,j \le m$, we have $F(\gamma''_i) = F(\gamma''_j)$ and $F(\gamma_{j}''\gamma_i) = m G\alpha G\bar\alpha$.  The arcs $\gamma_1'', \ldots, \gamma_m''$ clearly forms a $G$-orbit and $G$ is an admissible group of $T'$ automorphisms.
\end{proof}

\begin{Remark}
In the situation of Lemma \ref{LemmaPuncturedDisk}, note that the orbit mutation of the arcs $\{g\gamma_1 \mid g \in G\}$ corresponds to changing all tagging at $gb$, $g \in G$.
\end{Remark}

\subsection{Case of a single orbit}\label{sect 9}

Let $(S,M)$ be a surface with a tagged triangulation $T$ and assume that $G$ is an admissible group of $T$-automorphisms of $(S,M)$. In this section, we assume that all arcs of $T$ lie in the same orbit.

\begin{Lemma} If $\partial S \ne \emptyset$, then $(S,M,T)$ is one of the following surfaces illustrated in Figure \ref{figlem10}.
\begin{enumerate}[$(a)$]
    \item The disk with $6$ marked points on the boundary and one internal triangle, and $G$ is of order $3$.
\item  The once punctured disk where all arcs are connected to the puncture.\end{enumerate}

\end{Lemma}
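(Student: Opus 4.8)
The plan is to transfer the problem to the orbit orbifold $(S_G,M_G,\mathcal{O})$ and exploit the rank formula of Lemma~\ref{LemmaRank}. Since all arcs of $T$ form a single $G$-orbit, Proposition~\ref{PropBijec} shows that the induced tagged triangulation $T_G$ of $(S_G,M_G,\mathcal{O})$ has exactly one arc, so that $S_G$ is an orbifold of rank $n_G=1$. Moreover, since $\partial S\neq\emptyset$, Lemma~\ref{lemmaboundary} gives $\partial S_G\neq\emptyset$, so its number of boundary components satisfies $b_G\ge 1$. The whole problem thus reduces to classifying the rank-$1$ orbifolds with non-empty boundary and then reconstructing the cover $S$.

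First I would enumerate the rank-$1$ orbifolds with boundary. Writing $g_G,b_G,p_G,x_G,c_G$ for the genus, the number of boundary components, the number of punctures, the number of orbifold points and the number of marked points on the boundary of $S_G$, Lemma~\ref{LemmaRank} reads
\[
6g_G+3b_G+3p_G+2x_G+c_G-6=1,
\]
subject to $g_G\ge 0$, $b_G\ge 1$, $p_G,x_G\ge 0$ and $c_G\ge b_G$ (each boundary component carries a marked point). As every summand is non-negative and $3b_G\ge 3$, one gets $6g_G\le 4$, hence $g_G=0$; and $b_G=2$ would force $c_G\le 1<b_G$, so $b_G=1$. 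The equation becomes $3p_G+2x_G+c_G=4$ with $c_G\ge 1$, whose solutions are exactly
\[
(p_G,x_G,c_G)\in\{(0,0,4),\ (0,1,2),\ (1,0,1)\}.
\]
These correspond, respectively, to a disk with four boundary marked points (an unpunctured quadrilateral), a disk with two boundary marked points and one orbifold point (a bigon with one orbifold point), and a disk with one boundary marked point and one puncture (a once-punctured monogon).

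Next I would eliminate the quadrilateral and identify the remaining two orbifolds. In the quadrilateral case $\mathcal{O}=\emptyset$ and $p_G=0$, so there is neither an orbifold point nor a puncture; since marked points on the boundary have trivial isotropy by Lemma~\ref{lem 22}, the group $G$ acts freely on $S$ and $S\to S_G$ is an honest (unramified) covering of a disk. As the disk is simply connected and $S$ is connected, this forces $|G|=1$, contradicting that $G$ is non-trivial, so this case does not occur. In the bigon case the single orbifold point is the image of the centre of a standard internal triangle $\Delta$ of $\tau(T)$ whose isotropy group $G_\Delta$ has order $3$ by Lemma~\ref{lem 22}$(1)$; the three sides of $\Delta$ lie in the single arc-orbit and are cyclically permuted by $G_\Delta$, and reconstructing $S$ from $(S_G,M_G,\mathcal{O})$ gives $|G|=3$ and $S$ equal to the hexagon with its one internal triangle, i.e.\ case~$(a)$. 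In the monogon case the puncture $b$ of $S_G$ has isotropy $m=|G_b|$; the value $m=1$ is excluded because a once-punctured monogon with an ordinary puncture is one of the degenerate surfaces ruled out at the beginning of Section~\ref{sect 4}, so $m\ge 2$, and by Lemma~\ref{lem 22}$(2)$ the unique puncture of $S$ lying over $b$ is incident to $m$ arcs, all in the single orbit and all ending at $b$. Hence $S$ is the once-punctured disk in which every arc joins a boundary marked point to the puncture, which is case~$(b)$.

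The enumeration is routine; the delicate point, which I expect to be the main obstacle, is the reconstruction of the cover $S$ in the two surviving cases — in particular, confirming that $G$ is cyclic of order $3$ (respectively of order $m$) and acts by rotation, so that $S$ is exactly the hexagon (respectively the once-punctured $m$-gon) and no larger group can occur. This is where the fundamental-domain description preceding Proposition~\ref{PropBijec} and the isotropy computations of Lemma~\ref{lem 22} must be combined carefully, rather than relying on a bare counting of arcs.
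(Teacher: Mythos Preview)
Your approach is correct but takes a genuinely different route from the paper's.  The paper argues directly on $(S,M)$: it fixes a boundary segment $\alpha_1$, looks at the (necessarily standard) triangle $\Delta$ containing it, and splits into cases according to whether the other two sides of $\Delta$ are both arcs or whether one of them is another boundary segment.  In the first case, following the $G$-action around the boundary shows that all arcs share a common endpoint which must be a puncture, yielding case~(b); in the second, a short induction forces $m=6$ and the hexagon with its internal triangle, yielding case~(a).  Your route through the orbifold is more structural: once $T_G$ has a single arc, the rank formula of Lemma~\ref{LemmaRank} does the enumeration for you, and this is precisely the rank-$1$ classification with boundary that the paper carries out later in Section~\ref{sect 61}.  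There is no circularity, since Lemma~\ref{LemmaRank}, Lemma~\ref{lem 22}, Lemma~\ref{lemmaboundary} and Proposition~\ref{PropBijec} all sit in Section~\ref{sect 4}.  Two small comments.  First, your exclusion of the $m=1$ monogon is cleaner via the same free-action argument you already use for the quadrilateral: no orbifold point and trivial puncture isotropy make $G$ act freely on $S$, and $S_G$ simply connected then forces $|G|=1$; invoking the list of excluded surfaces for $(S_G,M_G,\emptyset)$ is not quite legitimate, since those exclusions are hypotheses on the input $(S,M)$, not conclusions about the output orbifold.  Second, the reconstruction of $S$ from $S_G$ that you flag as ``delicate'' is in fact routine once you apply the rank formula to $S$ itself: knowing that arcs and boundary marked points have orbits of size $|G|$ while the puncture (resp.\ the fixed triangle) has orbit size $|G|/m$ (resp.\ $|G|/3$) pins down $(g,b,|G|)$ immediately in each of the two surviving cases.
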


\begin{proof} Let $C$ be a boundary component of $S$ and $m$ be the number of marked points on $C$. Let $\alpha_1: a_1 - a_2, \ldots, \alpha_{m-1}: a_{m-1} - a_m, \alpha_m : a_m - a_1$ be the boundary segments. Consider a triangle $\Delta$ having $\alpha_1$ as a side. If $\Delta$ is self-folded, then, since all arcs lie in the same orbit, we have $m=1$ and $S$ is the once-punctured disk with one marked point on the boundary. So assume that $\Delta$ is not self-folded.
Suppose first that the other two sides of $\zD$ are arcs and denote them by $\zb_1$ and $\zb_2$. Let $b$ be the common vertex of $\zb_1$ and $\zb_2$.
 Then there is $g \in G$ with $g \zb_1=\zb_2$ and such a $g$ sends $\alpha_1$ to a  boundary segment adjacent to $\alpha_1$ on $C$, say $\alpha_2$.
 Let $\zb_3=g\zb_2=g^2\zb_1$. Thus the triangles $\zD $ and $g\zD$ share one side $\zb_2$ and have two adjacent sides $\za_1,\za_2$ on $C$. Moreover, all three edges $\zb_1,\zb_2,\zb_3$ have a common vertex $b$. Repeating this argument, we obtain a sequence of triangles $\zD,g\zD,g^2\zD,\ldots , g^{m-1}\zD$ each of which contains exactly one boundary segment of $C$ and each contains two arcs from the boundary to the point $b$. Therefore, these triangles cover the entire surface and $b$ is a puncture. Thus
  we get a once-punctured disk and all arcs are connected to the puncture.

  Assume now that
  two sides $\za_1,\za_2$ of $\zD$ lie on the boundary and the third is an arc $\zg = \zg_1$.
   We claim that $m$ is even, that there are $m/2$ arcs $\gamma_i: a_i - a_{i+2}$ for all odd $i$ (where indices are taken modulo $m$), and that these arcs are all arcs having an endpoint on $C$. If there is an arc $\gamma'$ other than $\gamma$ having $a_3$ as endpoint, then there is $1 \ne g \in G$ with $g \gamma = \gamma'$. Since $a_3$ has isotropy one, $\gamma' = \gamma_3 : a_3 - a_5$. Since $\gamma'$ is in the $G$-orbit of $\gamma$, we see that $\gamma',\alpha_3, \alpha_4$ form the triangle $g\Delta \ne \Delta$. In particular, in this case, $m > 2, m \ne 3$ and $\gamma'$ is the only other arc adjacent to $a_3$. This yields the claim, by induction. Consider a triangle $\Delta'$ other than $\Delta$ having $\gamma_1$ as a side. We have no choice that this triangle has sides $\gamma_3$ and $\gamma_{m-1}$. Thus, $m = 6$ and $(S,M)$ is the disk with $6$ marked points on the boundary and one internal triangle.
\end{proof}

\begin{Lemma} \label{LemmaOneOrbit} Assume that $\partial S = \emptyset$ and that all arcs are in the same orbit. Then $(S,M,T)$ has exactly two orbits of triangles, one orbit of arcs and one orbit of punctures. In particular, $(S_G, M_G, \mathcal{O})$ is the once-punctured sphere with two orbifold points.
\end{Lemma}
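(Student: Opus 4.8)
The plan is to reduce the statement to two numerical identities, the rank formula of Lemma~\ref{LemmaRank} and the Euler characteristic formula $\chi=m+t-a=2-2g-b$, applied first to $(S,M)$ and then to the orbifold $(S_G,M_G,\mathcal{O})$; throughout I would work with the ideal triangulation $\tau(T)$, noting that the $G$-orbit counts of arcs, triangles and punctures are unchanged. On the closed surface $(S,M)$ the condition $\partial S=\emptyset$ forces $b=0$, $c=0$ and $x=0$, so with $n$ the number of arcs, $t$ the number of triangles, $g$ the genus and $p$ the number of punctures, Lemma~\ref{LemmaRank} gives $n=6g+3p-6$ and the Euler characteristic formula gives $p+t-n=2-2g$, hence $t=4g+2p-4$. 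Since $G$ is admissible (so acts freely on arcs) and, by hypothesis, transitively on the $n$ arcs, we have $|G|=n$. As $n=3(2g+p-2)=|G|\ge 2$ the quantity $2g+p-2$ is positive, and therefore
$$\frac{t}{|G|}=\frac{4g+2p-4}{6g+3p-6}=\frac{2(2g+p-2)}{3(2g+p-2)}=\frac23 .$$

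Next I would count the $G$-orbits of triangles by orbit--stabilizer. If $\Delta_1,\dots,\Delta_k$ are orbit representatives then $t=\sum_{i=1}^{k}|G|/|G_{\Delta_i}|$, so $\sum_{i=1}^{k}1/|G_{\Delta_i}|=t/|G|=2/3$. By Lemma~\ref{lem 22}(1) each isotropy group $G_{\Delta_i}$ has order $1$ or $3$; a summand coming from order $1$ would already exceed $2/3$, which is impossible, so every $|G_{\Delta_i}|=3$ and hence $k=2$. Thus there are exactly two orbits of triangles, each consisting of standard internal triangles with isotropy of order $3$ (in particular $\tau(T)$ has no self-folded triangle). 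By the construction of the orbifold, these two orbits are precisely the orbifold points of $S_G$, so $x_G=2$.

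Finally I would read off the orbifold. By Lemma~\ref{lemmaboundary} the surface $S_G$ is closed, so $b_G=c_G=0$; the single orbit of arcs means $T_G$ has exactly one arc, i.e.\ rank~$1$; and $x_G=2$. The rank formula of Lemma~\ref{LemmaRank} applied to $(S_G,M_G,\mathcal{O})$ then reads $1=6g_G+3p_G+2\cdot 2-6$, that is $2g_G+p_G=1$, whose only solution in non-negative integers is $g_G=0$, $p_G=1$. Hence $S_G$ is the once-punctured sphere with two orbifold points, and since $p_G$ is the number of orbits of punctures, there is a single such orbit. This yields all three orbit counts together with the description of $S_G$.

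The step I expect to require the most care is not a single deep argument but the verification that the isotropy data is exactly as Lemma~\ref{lem 22} predicts: one must ensure that only standard internal triangles and punctures can carry non-trivial isotropy, so that the two order-$3$ triangle orbits account for \emph{all} of $\mathcal{O}$ while puncture orbits contribute only to $p_G$, and one must confirm that $S_G$ is genuinely closed so that the orbifold rank formula applies with $b_G=0$. Once these points are secured, the conclusion is forced by the single integer solution of $2g_G+p_G=1$.
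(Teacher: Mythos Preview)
Your proof is correct and proceeds by a genuinely different route than the paper's. The paper argues geometrically: it first observes directly that there are no self-folded triangles, then assumes for contradiction that there is a single orbit of triangles, analyzes the action of an element sending one triangle adjacent to a fixed arc $\gamma$ to the other, and reaches a contradiction with the elementary count $3t=2n$; from there it deduces that the two triangles adjacent to $\gamma$ lie in distinct orbits, that each has non-trivial isotropy, that all punctures lie in one orbit, and finally computes $\chi(S_G)=2$ to conclude $S_G$ is a sphere. Your approach replaces all of this by two clean numerical steps: from the rank and Euler formulas you extract $t/|G|=2/3$, and the orbit--stabilizer theorem together with Lemma~\ref{lem 22}(1) (isotropy of a triangle is $1$ or $3$) then forces exactly two triangle orbits each with isotropy $3$; a second application of the rank formula, now to the orbifold, pins down $g_G=0$ and $p_G=1$. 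Your argument is shorter and more systematic, and it yields the absence of self-folded triangles as a byproduct rather than as a separate first step; the paper's argument, on the other hand, gives a concrete geometric picture of how the two triangle orbits sit around each arc. The caveats you flag at the end (that Lemma~\ref{lem 22} accounts for all possible non-trivial isotropy, and that $S_G$ is closed by Lemma~\ref{lemmaboundary}) are exactly the right ones, and both are already established in the paper.
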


\begin{proof}Clearly, there is no self-folded triangle in $(S,M,T)$. We claim that there are two orbits of triangles in $(S,M,T)$ for the action of $G$. Assume there is exactly one orbit of triangles. Consider an arc $\gamma$ with its two adjacent triangles as follows.

\begin{figure}[h]
  \centering
  \begin{tikzpicture}[xscale=2.30,yscale=1.5]
\node at (0, 0.1) {$\gamma$};
\node at (-0.7, 0.4) {$\mu$};
\node at (-0.7, -0.4) {$\alpha$};
\node at (0.7, 0.4) {$\nu$};
\node at (0.7, -0.4) {$\beta$};
\node at (-1.1, 0) {$a$};
\node at (1.1, 0) {$b$};
\node at (0,1.1) {$c$};
\node at (0,-1.1) {$d$};
\node at (0,1) {$\bullet$};
\node at (0,-1) {$\bullet$};
\node at (-1,0) {$\bullet$};
\node at (1,0) {$\bullet$};
\draw (-1.00,0) -- (0,1);
\draw (1.00,0) -- (0,1);
\draw (-1.00,0) -- (0,-1);
\draw (1.00,0) -- (0,-1);
\draw (-1.00,0) -- (1,0);
\end{tikzpicture}
\end{figure}

Let $g \in G$ sending the upper triangle to the lower triangle. Since $G$ is admissible, either $g \gamma = \alpha$ or $g \gamma = \beta$. With no loss of generality, assume the first case occurs. Since $g$ is orientation-preserving, we get $ga=a$ and $g \mu = \gamma$.
This implies that no non-trivial element of $G$ maps $\zD$ to itself. Indeed, if $g'\zD=\zD$, say $g'\za=\zg,g'\zg=\zb$ and $g'\zb=\za$, then the element $(g'g)$ fixes $\zg$. Note that $g'g$ sends $\mu$ to $\zb$ and $\nu$ to $\alpha$. Since $S$ is not the once-punctured torus, this yields that $g'g \ne 1$, contradicting that $G$ is admissible.
Now, since all triangles lie in one $G$-orbit and no non-trivial element of $G$ maps a triangle to itself, we see that there are exactly $|G|$ triangles in $(S,M,T)$. But three times the number of triangles should be twice the number of arcs, since $T$ has no self-folded triangles and $\partial S = \emptyset$. This is a contradiction.

Thus, the two triangles in the above figure lie in distinct orbits. Since all arcs of $T$ are in the orbit of $\gamma$, we have exactly two orbits of triangles in $(S,M,T)$. Let $g' \in G$ with $g' \mu = \gamma$. Since the upper triangle is not in the orbit of the lower triangle and since $g'$ is orientation preserving, we see that $g'$ maps the upper triangle to itself. Similarly, there is a non-identity element of $G$ that maps the lower triangle to itself. In particular, we have $|\mathcal{O}| = 2$ and $a,b$ lie in the same orbit, and thus, all punctures lie in the same orbit. Since $S$ has no boundary, so is $S_G$. The Euler characteristic of $S_G$ is $2 + 1 - 1 = 2$, so $S_G$ has to be a sphere.
\end{proof}

Observe that in the situation of the above lemma, the Euler characteristic of $S$ is
$$ \frac{|G|}{m_a} - \frac{|G|}{3},$$
where $a$ is any puncture in $M$.
So if $S$ is a sphere, $m_a=1,2$. In the first case, $|G|=3$, and we have $3$ arcs, $3$ punctures and this is the sphere with three punctures and three arcs on the equator. This is excluded. In the second case, $|G|=12$, and we have $12$ arcs, $8$ triangles and $6$ punctures. This is the octahedron with the regular triangulation.

\begin{Exam}
Consider the torus with four punctures as follows.
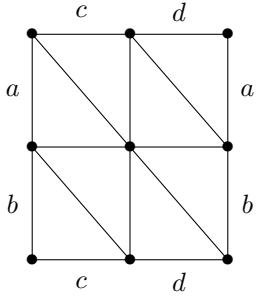
\begin{figure}[h]
  \centering
  \begin{tikzpicture}[xscale=1.30,yscale=1.5]

    \draw (0,2) -- (2,2);
\draw (0,0) -- (2,0);
\draw (0,2) -- (0,0);
\draw (2,2) -- (2,0);
\draw (0,2) -- (2,0);
\draw (0,1) -- (1,0);
\draw (1,2) -- (2,1);
\draw (0,1) -- (2,1);
\draw (1,0) -- (1,2);

\node at (0,0) {$\bullet$};
\node at (0,1) {$\bullet$};
\node at (0,2) {$\bullet$};
\node at (1,0) {$\bullet$};
\node at (1,1) {$\bullet$};
\node at (1,2) {$\bullet$};
\node at (2,0) {$\bullet$};
\node at (2,1) {$\bullet$};
\node at (2,2) {$\bullet$};

\node at (-0.2,1.5) {$a$};
\node at (2.2,1.5) {$a$};
\node at (-0.2,0.5) {$b$};
\node at (2.2,0.5) {$b$};

\node at (1.5, 2.2) {$d$};
\node at (1.5, -0.2) {$d$};
\node at (0.5, 2.2) {$c$};
\node at (0.5, -0.2) {$c$};

  \end{tikzpicture}
\caption{A torus with $4$ punctures}
\label{fig:torus3}
\end{figure}
Consider the group $G$ generated by all rotations of $2\pi/3$ about the punctures and centers of the triangles. It is not hard to check that $G$ has order $12$ and is admissible. All arcs are in the same orbit and we are in the situation of the above lemma.
\end{Exam}


\begin{Remark}In the situation of Lemma \ref{LemmaOneOrbit}, if $(S,M)$ has at least two punctures, then the orbit mutation corresponds to changing all taggings at all punctures. On the orbifold, the mutation changes the taggings (which are necessarily the same) at both ends of the unique tagged arc.

If $(S,M)$ has exactly one puncture, then there is no way to go from a triangulation $T$ to the triangulation $T'$ obtained from $T$ by changing the tag at the puncture, using only finitely many flips.
\end{Remark}

\begin{Lemma}
Assume that $\partial S = \emptyset$ and all arcs of $T$ lie in the same orbit.
If $S$ is not a once-punctured surface, we have $p_{G,\gamma} = (2m_a)^2$.
\end{Lemma}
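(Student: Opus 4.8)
The plan is to combine the explicit geometric description of Lemma \ref{LemmaOneOrbit} with a direct computation of the Laurent polynomial $x''_\gamma$. By Lemma \ref{LemmaOneOrbit}, the orbifold $(S_G,M_G,\mathcal{O})$ is the once-punctured sphere with two orbifold points; its single arc $\gamma$ is a loop at the unique puncture $p$ (with $m_p=m_a$) separating the two orbifold points, and the two orbits of triangles become the two orbifold triangles. Since $S$ is not once-punctured, the Remark following Lemma \ref{LemmaOneOrbit} tells us that the orbit mutation of $G\gamma$ is realizable by finitely many flips and has the effect of changing the tagging at every puncture simultaneously. Writing $\gamma:a-b$, where $a$ and $b$ lie in the same $G$-orbit (so $m_b=m_a$), the arc $\gamma''$ produced by the orbit mutation is therefore exactly $\gamma$ with its tags flipped at both $a$ and $b$, and I must evaluate $p_{G,\gamma}=F(x_\gamma x''_\gamma)$.

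First I would record the local combinatorics at the two endpoints. Since all arcs lie in the single orbit $G\gamma$, the star of $a$ is locally a once-punctured $2m_a$-gon: there are exactly $2m_a$ arcs incident to $a$ and $2m_a$ triangles around it (the two triangle orbits alternating under the rotation of the isotropy group $G_a$, which is cyclic of order $m_a$ by Lemma \ref{lem 22}), and likewise at $b$. There are no self-folded triangles, so every $\bar\epsilon=1$ in the notation of Section \ref{TechnicalSection}. Next I would compute $x''_\gamma$ by writing down an explicit sequence of flips realizing the simultaneous tag change at $a$ and $b$, as in the proof of Lemma \ref{LemmaPuncturedDisk}, and applying the Ptolemy relations of $\cala$ step by step. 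The expected outcome is $F(x''_\gamma)=(2m_a)^2\,y_1^{-1}$: each of the punctures $a,b$, both of degree $2m_a$, contributes a factor $2m_a$, in the same way that the degree-$m$ puncture of Lemma \ref{LemmaPuncturedDisk} contributes the coefficient $m$ in $p_{G,\gamma_1}=mG\alpha G\bar\alpha$. Here all the ``outer'' arc variables again lie in $G\gamma$ and so specialize under $F$ to $y_1$, while the spoke variables cancel against $x_\gamma$ up to a single factor $y_1^{-1}$. Multiplying by $F(x_\gamma)=y_1$ then yields $p_{G,\gamma}=(2m_a)^2$. As a consistency check, the octahedron ($m_a=2$) gives $16$ and the torus examples ($m_a=3$) give $36$, matching the vertex degree $2m_a$ of those triangulations.

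The hard part will be the bookkeeping in the flip sequence, and in particular verifying the collapse $F(x''_\gamma)=(2m_a)^2\,y_1^{-1}$. One must check that every Laurent monomial of $x''_\gamma$ has the same $F$-degree $-1$, and that the number of contributing monomials (equivalently, the relevant perfect matchings of the snake graph of the doubly-notched arc) is exactly $(2m_a)^2$; I expect this to factor as a product of $2m_a$ matchings coming from the star of $a$ and $2m_a$ from the star of $b$. Extra care is needed because the $2m_a$ spokes at $a$ do not themselves form a single $G$-orbit — the isotropy $G_a$ has two orbits on them — so the two tag changes are not independent orbit mutations, and their combined effect must be computed together rather than as a naive composition of two applications of Lemma \ref{LemmaPuncturedDisk}.
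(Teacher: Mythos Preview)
Your overall strategy is correct and matches the paper's: identify $\gamma''$ as the doubly-notched arc $\gamma^{ab}$, then compute $F(x_\gamma x_{\gamma^{ab}})$ using the explicit Laurent polynomials for notched arcs. The difference lies in how that last computation is carried out.

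The paper avoids the ``hard part'' you flag by not computing $\gamma^{ab}$ directly at all. Instead it computes the singly-notched arcs $\gamma^a$ and $\gamma^b$ separately, using exactly the flip sequence from Lemma~\ref{LemmaPuncturedDisk} at each puncture; since every arc lies in $G\gamma$, the formula
\[
\gamma^{a}=\gamma\sum_{i=1}^{2m_a}\frac{\alpha_i}{\gamma_i\gamma_{i+1}}
\]
specializes under $F$ to $2m_a$, and likewise $F(\gamma^b)=2m_b=2m_a$. Then it invokes the identity $\gamma\,\gamma^{ab}=\gamma^a\,\gamma^b$ for doubly-notched arcs from \cite[Theorem~12.9]{MSW}, which immediately gives $F(\gamma\,\gamma^{ab})=(2m_a)^2$. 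This is precisely the factorization you were hoping to see emerge from a snake-graph count; the MSW theorem supplies it without any further bookkeeping, and in particular sidesteps your worry that the two tag changes cannot be treated as independent orbit mutations.

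One small point you skipped: before writing $\gamma:a-b$ with $a\ne b$, you need to rule out loops. The paper does this by observing that if some arc is a loop then (all arcs being in one orbit) every arc is a loop, forcing $S$ to be once-punctured.
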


\begin{proof}
Assume that $S$ is not a once-punctured surface. We claim that $T$ cannot consist of loops only. Indeed, assume it is the case. Consider a triangle from $T$. Then this triangle has a single vertex $a$. Take any arc $\alpha$ of this triangle. Then $\alpha$ is a side of another triangle, which then is also a triangle having only vertex $a$. By continuing this process, we see that all of the triangles from $T$ have only vertex $a$. So $S$ is once-punctured, a contradiction. This proves our claim. Since all arcs are in the same orbit, there is no loop in $T$.

Observe that there exists a sequence of mutations such that the overall effect is changing all tags at the punctures. We just need to apply Lemma \ref{LemmaPuncturedDisk} successively for each puncture. Fix an arc $\gamma: a - b$ in $T$. Then there are exactly $m:=2m_a$ arcs of $T$ having $a$ as endpoint. Let us denote these arcs by $\gamma_1, \ldots, \gamma_{m}$, in clockwise orientation around $a$ such that $\gamma = \gamma_{m}$. Let $\alpha_i$ be such that $\gamma_i, \gamma_{i+1}, \alpha_i$ is a triangle of $T$ (where $\gamma_{m+1}$ means $\gamma_1$). By applying a sequence of mutations at $\gamma_1, \gamma_2, \ldots, \gamma_{m-1}$, the arc $\gamma_{m-1}$ becomes
$$\gamma_{m}\left(\frac{\alpha_1}{\gamma_1\gamma_2} + \frac{\alpha_2}{\gamma_2\gamma_3} + \cdots + \frac{\alpha_{m-1}}{\gamma_{m-1}\gamma_{m}} + \frac{\alpha_{m}}{\gamma_{m}\gamma_1}\right).$$
This gives the arc $\gamma^{a}$ which is obtained from $\gamma$ by changing the tag at $a$. After identifying all arcs in $G\gamma$ to a single variable $x$, this arc $\gamma^{a}$ becomes $m$. Similarly, the arc $\gamma^{b}$ obtained from $\gamma$ by changing the tag at $b$ becomes $2m_b = 2m_a = m$ after identifying all arcs of $G\gamma$ by $x$. Now, using \cite[Theorem 12.9]{MSW}, the arc $\gamma^{ab}$ obtained from $\gamma$ by changing both tags is such that $\gamma^{ab}\gamma = \gamma^{a}\gamma^{b}$. Therefore, after identifying all arcs of $G\gamma$ to the variable $x$, we get $xx' = (m)^2$.
\end{proof}

\subsection{Remaining cases}

We may assume that no triangle in $\mathcal{Q}$ is self-folded also that $\mathcal{Q}$ does not form a once-punctured bigon. We know from Lemma \ref{LemmaExchange} that the exchange polynomial $p_{\zg}$ is $\mu\bar\mu\beta\bar\beta + \nu\bar\nu\alpha\bar\alpha$. Also, we may assume that none of $\mu, \nu, \alpha, \beta$ lie in $G\gamma$. Since $\gamma$ is not an arc of a self-folded triangle, none of $\bar\mu, \bar\nu, \bar\alpha, \bar\beta$ lie in $G\gamma$. Therefore, we have
\[p_{G, \zg}= F(p_\gamma)= G\mu G\bar\mu G\beta G\bar\beta + G\nu G\bar\nu G\alpha G\bar\alpha.\]

\subsection{Exchange polynomials and cluster algebra structure revisited}

As promised at the beginning of this section, the results collected so far yield the following.
\begin{Prop}
Let $G$ be an admissible group of $T$-automorphisms of $(S,M)$ where $T$ is a tagged triangulation. Let $\{\tau_1, \ldots, \tau_r\}$ be a $G$-orbit of tagged arcs and $\{\tau_1', \ldots, \tau_r'\}$ be the orbit mutation, where $T' = (T \backslash \{\tau_1, \ldots, \tau_r\}) \cup (\{\tau_1'', \ldots, \tau_r''\})$ is such that $G$ is an admissible group of $T'$-automorphisms. Then $F(\tau_i'') = F(\tau_j'')$ and $F(\tau_i\tau_j'') \in \mathbb{Z}[\mathbf{y}]$ for all $1 \le i,j \le r$. The polynomial $F(\tau_i\tau_j'') = F(\tau_1)F(\tau_1'')$ is the exchange polynomial $P_{G,\tau_1}$.
\end{Prop}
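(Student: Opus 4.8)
The plan is to assemble the case analysis carried out throughout this section into a single statement, so the proof will consist of a reduction followed by an enumeration of cases, each of which has already been settled. First I would reduce the three assertions to a single one. Since $F$ is the ring homomorphism sending every cluster variable of the orbit $G\gamma$ to the single variable $y_1$, we have $F(\tau_i)=F(\tau_1)$ for all $i$ by the very definition of $F$. Hence, once we know that $F(\tau_i'')=F(\tau_j'')$ for all $i,j$, the multiplicativity of $F$ gives
\[
F(\tau_i\tau_j'')=F(\tau_i)\,F(\tau_j'')=F(\tau_1)\,F(\tau_1''),
\]
so $F(\tau_i\tau_j'')$ is independent of $i,j$ and is, by definition, the exchange polynomial $P_{G,\tau_1}$. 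The only substantive points are therefore (a) the equality $F(\tau_i'')=F(\tau_j'')$, and (b) that the common value $F(\tau_1\tau_1'')$ lies in $\mathbb{Z}[\mathbf{y}]$ rather than merely in $\mathbb{Z}[\mathbf{y}^{\pm1}]$.

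Fix a representative $\gamma=\tau_1$ of the orbit and consider its local quadrilateral $\mathcal{Q}$ with sides $\mu,\nu,\alpha,\beta$ as in Lemma \ref{LemmaExchange}. I would split into two regimes according to whether the orbit $G\gamma$ meets these sides. In the first regime, none of $\mu,\nu,\alpha,\beta$ (nor the associated $\bar\mu,\bar\nu,\bar\alpha,\bar\beta$) lies in $G\gamma$. Then flipping $\gamma$ leaves the remaining arcs of $G\gamma$ untouched, so by Corollary \ref{G-flip} the orbit mutation is the simultaneous ordinary flip of each $\tau_i$; hence $\tau_i''$ is the arc obtained by a single flip of $\tau_i$ and $\tau_i\tau_i''=p_{\tau_i}$ is the usual exchange polynomial of $\cala$, which is already a genuine polynomial in the remaining variables. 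Remark \ref{rem 4.1} yields $F(p_{\tau_i})=F(p_{\tau_j})$, so $F(\tau_i\tau_i'')=F(p_\gamma)\in\mathbb{Z}[\mathbf{y}]$ is independent of $i$, settling (b); the equality $F(\tau_i'')=F(\tau_j'')$ follows from the $G$-equivariance of mutation (if $g\tau_i=\tau_k$ then $g\tau_i''=\tau_k''$) together with the fact that $F$ is constant on $G$-orbits of arcs. The explicit shape of $F(p_\gamma)$ in this regime was recorded in Subsection \ref{sect 7.1} and in the Remaining-cases subsection.

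In the second regime some side of $\mathcal{Q}$ lies in $G\gamma$, and here the flips of the orbit arcs interfere, so each $\tau_i''$ is produced by a whole sequence of mutations rather than a single one. I would handle this by invoking the classification already established: Subsection \ref{sect 7.1} disposes of the self-folded and once-punctured-bigon cases; otherwise Lemma \ref{LemmaCases} shows that exactly one of its four situations holds, with situations $(1)$–$(2)$ treated in Lemma \ref{LemmaChekovShapiro} and situations $(3)$–$(4)$ in Lemma \ref{LemmaPuncturedDisk}, while the degenerate possibility that all arcs of $T$ form one orbit is settled in Subsection \ref{sect 9}. In each of these lemmas the explicit Laurent-polynomial computation in $\cala$ verifies simultaneously that $F(\tau_i'')=F(\tau_j'')$ and that $F(\tau_i\tau_j'')$ equals the stated polynomial in $\mathbb{Z}[\mathbf{y}]$, which finishes the argument. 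The main obstacle is precisely point (b) in this second regime: the formulas for the $\tau_i''$ carry factors such as $\gamma_i\gamma_j$ in their denominators, and it is only after applying $F$—which collapses every arc of $G\gamma$ to the single variable $y_1$—that the numerator acquires a matching factor of $y_1$ and the quotient becomes polynomial. This cancellation is the crux, and it is exactly what the displayed identities in Lemmas \ref{LemmaChekovShapiro} and \ref{LemmaPuncturedDisk} establish.
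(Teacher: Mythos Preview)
Your proposal is correct and follows the same route as the paper: the proposition is stated there precisely as a summary of the case analysis carried out in Section~\ref{TechnicalSection}, and your argument is a faithful assembly of those pieces (Subsection~\ref{sect 7.1}, Lemma~\ref{LemmaCases} feeding into Lemmas~\ref{LemmaChekovShapiro} and~\ref{LemmaPuncturedDisk}, Subsection~\ref{sect 9}, and the Remaining-cases subsection). One small imprecision worth tightening: your dichotomy ``no side of $\mathcal{Q}$ lies in $G\gamma$'' versus ``some side does'' does not line up perfectly with the paper's organization---for instance, the once-punctured bigon with $m_a=2$ has $\alpha=\mu\in G\gamma$ yet is handled in Subsection~\ref{sect 7.1} with $p_{G,\gamma}=F(p_\gamma)$, because the gcd cancellation in Lemma~\ref{LemmaExchange} removes $\alpha$ from $p_\gamma$ before $F$ is applied---but this does not affect the validity of the argument, since you cite all the relevant subsections either way.
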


\begin{Remark} These exchange polynomials allow us to define generalized cluster variables through mutations, and hence a generalized cluster algebra in $\mathcal{F}_G$. A priori, this algebra depends on $(S,M,T)$ and on $G$, however, we will see in Section \ref{sect 7} that it only depends on the orbifold $(S_G, M_G, \mathcal{O})$ with induced triangulation $T_G$.
\end{Remark}

\section{Generalized cluster algebra of an orbifold}
\label{sect 6}
Let $(S, M, \mathcal{O})$ be an orbifold with a tagged triangulation $T$. Consider the function $m: M \to \Z_{\ge 1}$ such that $m_b : = m(b)$ is one whenever $b$ is not a puncture. Let $\{\tau_1, \ldots, \tau_s\}$ denote the set of arcs of $T$. We also identify these arcs with  indeterminates  $y_1, \ldots, y_n$. The boundary segments are identified with $1$ and for each arc $\alpha$, we have $\bar \alpha \in \Z[y_1^{\pm 1}, \ldots, y_s^{\pm 1}]$, which is $1$ unless $\tau(\alpha)$ is an arc of a $1$-self-folded triangle in $\tau(T)$. In the latter case, $\bar \alpha$ is the element corresponding to the unique arc of $T$, also denoted $\bar \alpha$, with $\bar \alpha^{0}=\alpha$.
For $1 \le i \le n$, the \emph{mutation} $\mu_i(T)$ \emph{in direction} $i$ of $T$ is the tagged triangulation $(\{\tau_1, \ldots,\tau_n\} \backslash \{\tau_i\}) \cup \{\tau_i'\}$ of $(S, M, \mathcal{O})$ where $\tau_i'$ is not isotopic to $\tau_i$. Such an arc always exists an is uniquely determined. Now, we explain how to perform the corresponding mutation in  $\mathbb{Q}(y_1, \ldots, y_s)$.
\medskip

For each $\tau\in T$, let $p^{-}_{\tau}$ (respectively $p^{+}_{\tau}$) be the product of all $\za\bar{\za}$ where $\za$ is an arc of $T\setminus\{\tau\}$ or a boundary segment such that $\za,\tau$ are sides of a triangle in $T$ and $\za$ is following $\tau$ in the counter-clockwise (respectively clockwise) direction.
 Observe that $p^{-}_{\tau}$ and $p^{+}_{\tau}$ are not always relatively prime. For example, in the once-punctured bigon of Figure \ref{fig bigon} we have $p_\zg^-=\za\bar\za\nu\bar\nu$ and $p_\zg^+=\za\bar\za\zb\bar\zb$.

\begin{Defn}\label{def ca}
  For each $\tau\in T$, define a polynomial $p_{\tau}$ in $\Z[y_1, \ldots, y_s]$ as follows.
 \begin{enumerate}[$(a)$]
\item If  $S$ is the  sphere with one $m$-puncture with $m \ge 1$ and two orbifold points, then $T$ has only one arc $\tau$ and \[p_{\tau}=(2m)^2.\]
\item Let $\tau: a - a$ enclose a monogon $\Delta$ with an orbifold point $o$, and assume we are not in case $(a)$. Let $\Delta'$ be the other triangle adjacent to $\tau$ (which cannot be an orbifold triangle). Let $\za:a - b, \zb: a - b$ be the other arcs of this triangle.
\begin{enumerate}[$(i)$]
\item If $\Delta$ is $m$-self-folded with $m=m_b=1$ or $m_a =1$, then $\za = \zb = \bar \tau$ and $S$ is a sphere with two punctures, one orbifold point, and  $T$ has precisely two arcs $\tau$ and $\za$. We have \[p_{\tau} = 3\za.\]
\item Otherwise, we have \[p_{\tau} = \za^2 + \za\zb + \zb^2.\]
\end{enumerate}
\item Let $\tau$ be a loop at $a$ of a $1$-self-folded triangle such that $\tau$ is not as in case (b). Then \[p_{\tau} = \frac{p^{-}_{\tau} + p^+_{\tau}}{\tau\bar \tau}.\]
\item Let $\tau$ be a radius of a $1$-self-folded triangle with loop $\bar \tau$ at $a$. Then
$ p_{\tau} = p_{\bar \tau}$, unless $S$ is the once-punctured monogon, in which case we set $p_{\tau}=2$.
\item Let $\tau: a - b$ be a radius of a once-punctured $1$-bigon with radii $\tau: a - b, \za: a - c$ where $a \ne c, a \ne b$. Then \[p_{\tau} = \frac{p^{-}_{\tau} + p^+_{\tau}}{\za\bar \za}.\]
\item Let $\tau: a - b$ be a radius of an $m$-self-folded triangle where $m > 1$ and with loop $\za$. Then \[p_{\tau} = m\za\bar \za.\]
\item Otherwise, we let $p_{\tau} = p^{-}_{\tau} + p^{+}_{\tau}$ (with the possibility that $p^{-}_{\tau}, p^{+}_{\tau}$ have common factors)
\end{enumerate}
\end{Defn}

\begin{Defn} Let $\tau$ be an arc in the triangulation $T$ and let $y\in \{y_1,\ldots,y_s\}$ be the corresponding cluster variable. Let $\tau'$ be the arc obtained by flipping $\tau$ and let $y'$ denote the Laurent polynomial $p_\tau/y$
in $\mathbb{Z}[y_1^{\pm1}, \ldots, y^{\pm1}_s]$. It is not hard to check that $(\{y_1, \ldots, y_s\} \backslash \{y\}) \cup \{y'\}$ are again algebraically independent in $\mathbb{Q}(y_1, \ldots, y_s)$. We call $y_1, \ldots, y_s$ the \emph{initial cluster variables}. Any arc $\zg$ lying in a triangulation that can be obtained from $T$ by a finite sequence of mutations gives rise to a Laurent polynomial $y_\zg$. Such a $y_\zg$ is called a \emph{cluster variable}.
We define an algebra $\cala(S,M, \mathcal{O}) \subseteq \mathbb{Q}(y_1, \ldots, y_s)$ to be the $\Z$-subalgebra of $\mathbb{Q}(y_1, \ldots, y_s)$ generated by all cluster variables. We call it the \emph{generalized cluster algebra of the orbifold} $(S,M, \mathcal{O})$.
\end{Defn}

Some cases of the mutation rules are pictured in Figure \ref{figmut}.  The first column represents a local configuration in the tagged triangulation of the orbifold. The configuration in the second column is obtained by flipping the arc $y_i$ and the third column show the exchange relation in the cluster algebra $\mathcal{A}(S,M,\mathcal{O})$. In the last two cases, $S$ is a sphere and the picture represents the entire triangulation.
\begin{figure}[h]
\begin{center}
\input{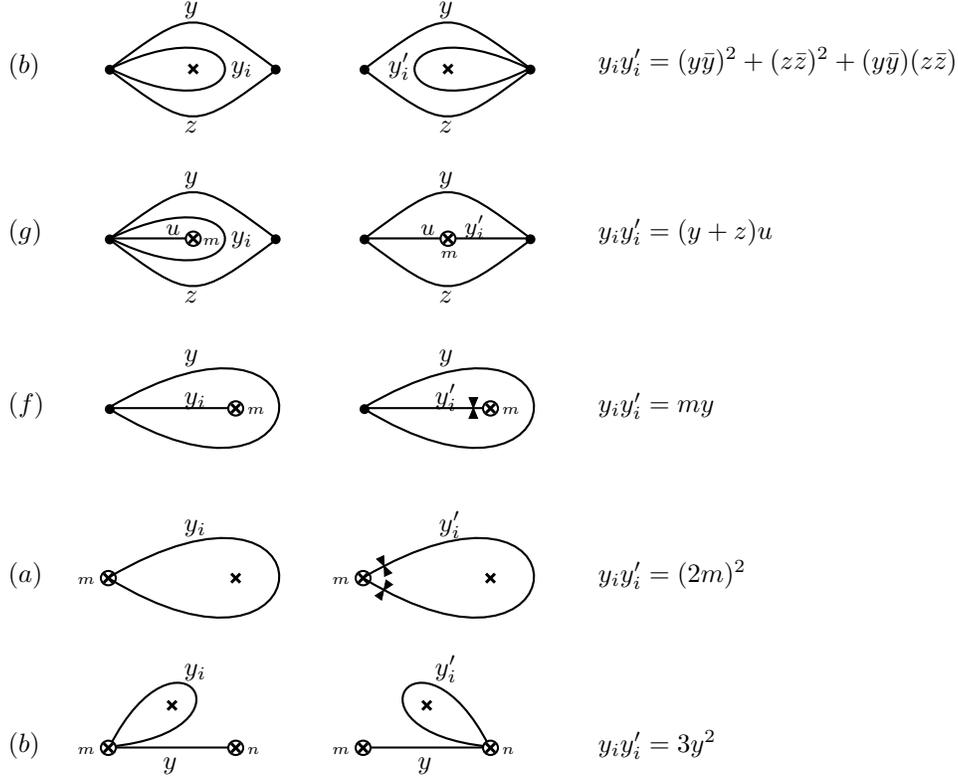}
\caption{Some cases of the mutation rules.
}
\label{figmut}
\end{center}
\end{figure}

\begin{Remark}
 \label{rem LP}
 This computation shows that our notion of generalized cluster algebra is different from the one of Chekhov-Shapiro \cite{ChSh} and Lam-Pylyavskyy \cite{LP}. Indeed in the second row of Figure \ref{figmut}, the two summands of the exchange polynomial have a non-trivial common factor, which is not allowed in loc.cit.
\end{Remark}

\medskip
Now, let us classify the generalized cluster algebras of orbifolds with one or two arcs.

\subsection{Rank n=1}\label{sect 61}
By Lemma \ref{LemmaRank}, we have $1=6(g-1) + 3b +3p+2x+c$. If $g\ge1$ this equation has no solution, because if $b=0$ then $c=0$. Thus $g=0 $ and the equation becomes
\[ 7=3b +3p+2x+c.\]
This equation has the following four solutions.
\subsubsection{The sphere with 1 puncture and 2 orbifold points}
If $b=0$, then $c=0$ and $p=1,x=2$, and we have a sphere with one puncture and two orbifold points. The two cluster variables are
\[ y \quad\textup{and}\quad 4m^2/y\]
where $m$ is the isotropy of the puncture.

\medskip
If $b=1$ our equation becomes
\[ 4=3p+2x+c, \textup{ with } c\ge 1,\]
which has three solutions.

\subsubsection{The square}
If $p=0,x=0$ and $c=4$, we have the disk with 4 marked points on the boundary. The generalized cluster algebra is the honest cluster algebra of rank 1 (type $\mathbb{A}_1$) with cluster variables
\[ y \quad\textup{and}\quad 2/y.\]

\subsubsection{The bigon with 1 orbifold point}
If $p=0,x=1$ and $c=2$, we have the disk with 2 marked points on the boundary and one orbifold point in the interior. The two cluster variables are
\[ y \quad\textup{and}\quad 3/y.\]

\subsubsection{The once-punctured monogon}
If $p=1,x=0$ and $c=1$, we have the disk with 1 puncture and 1 marked point on the boundary.
If the isotropy $m$ of the puncture is one, we obtain the honest cluster algebra of rank 1 again (case (d) of Definition \ref{def ca}). If $m>1$, the two cluster variables are
\[ y \quad\textup{and}\quad m/y.\]

In rank 1 all 4  cases  can be obtained from a triangulation $T$ of a surface $(S,M)$ and an admissible group $G$ of $T$-automorphisms. For case $(1)$, one  takes for $(S,M,T)$ the octahedron as in Example \ref{ExampleOctahedron} (the isotropy of the puncture is then $2$). The group $G$ is the alternating group $A_4$. Case (2) is a surface. For case $(3)$, one takes for $(S,M,T)$ the disk with six marked points on the boundary and a single internal triangle. The group $G$ is of order $3$.
 Finally, case (4) is obtained from the once-punctured disk with $m$ marked points under the action of the group of order $m$ given by rotations.

\subsection{Rank n=2}\label{sect 62}

Now Lemma \ref{LemmaRank} implies
$2=6(g-1) + 3b +3p+2x+c$. Again there is no solution if $g\ge1$. Thus $g=0 $ and the equation reads
\[ 8=3b +3p+2x+c.\]
This equation has the following 6 solutions.

\subsubsection{The sphere with 2 punctures and 1 orbifold point}
If $b=0$, then $c=0$ and $p=2,x=1$, and we have a sphere with 2 punctures and 1 orbifold point. Let $r, s$ be the isotropies of the punctures. When none of $r,s$ is one, the generalized cluster algebra has 8 cluster variables
$$x_1, x_2, \frac{3x_2^2}{x_1}, \frac{3sx_2}{x_1}, \frac{9s^2}{x_1}, \frac{3rs}{x_2}, \frac{3r^2x_1}{x_2^2}, \frac{rx_1}{x_2}.$$
Otherwise, when for instance $s=1$, we get $6$ cluster variables $$x_1, x_2, \frac{3x_2}{x_1}, \frac{3r}{x_1}, \frac{3r}{x_2}, \frac{rx_1}{x_2}.$$
For $r=3$ and $s=1$ this orbifold is obtained from the triangulation of the sphere with $4$ punctures and three self-folded triangles
shown in Figure \ref{fig sphere} under the action of rotations about $\pi/3$ and $2\pi/3$ degrees centered at the common puncture.
\begin{figure}
\begin{center}
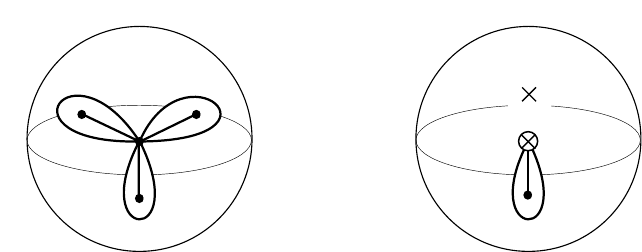
\caption{The sphere with two punctures and one  orbifold point as orbit space of a sphere with four punctures.}
\label{fig sphere}
\end{center}
\end{figure}

\medskip
If $b=1$ our equation becomes
\[ 5=3p+2x+c, \textup{ with } c\ge 1,\]
which has four solutions.

\subsubsection{The pentagon}
If $p=0,x=0$ and $c=5$, we have a disk with 5 marked points on the boundary. The generalized cluster algebra is the honest cluster algebra of the pentagon (type $\mathbb{A}_2$) and has 5 cluster variables
\[x_1,x_2,\frac{x_2+1}{x_1},\frac{x_1+x_2+1}{x_1x_2},\frac{x_1+1}{x_2}.\]

\subsubsection{The triangle with 1 orbifold point}
If $p=0,x=1$ and $c=3$, we have a disk with 3 marked points on the boundary and one orbifold point in the interior. The generalized cluster algebra has 6 cluster variables
\[x_1,x_2,\frac{x_1^2+x_1+1}{x_2},
\frac{x_1^2+x_1+x_2+1}{x_1x_2},
\frac{x_1^2+x_2^2+x_1x_2+x_1+2x_2+1}{x_1^2x_2},\frac{x_2+1}{x_1}.
\]
This orbifold is obtained from the triangulation of the disk with 9 marked points
shown in Figure \ref{fig triangle} under the action of rotations about $\pi/3$ and $2\pi/3$ degrees.
\begin{figure}
\begin{center}
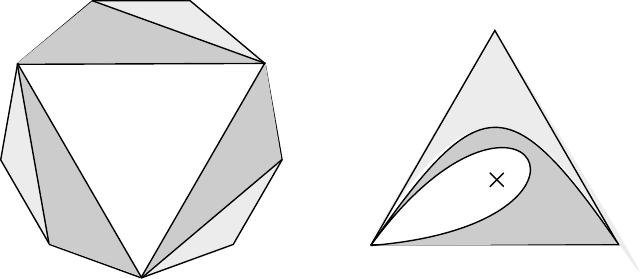
\caption{The triangle with one orbifold point as orbit space of a disk with 9 marked points.}
\label{fig triangle}
\end{center}
\end{figure}

\subsubsection{The monogon with 2 orbifold points}
If $p=0,x=2$ and $c=1$, we have a disk with one marked point on the boundary and two orbifold points in the interior. The generalized cluster algebra has infinitely many cluster variables
\[\cdots,\frac{x_1^2+x_1+1}{x_2}, x_1,x_2,\frac{x_2^2+x_2+1}{x_1},
\frac{({x_1^2+x_1+x_2+1})^2+({x_1^2+x_1+x_2+1})+1}{x_1^2x_2},\ldots
\]
This orbifold is obtained from the triangulation of the sphere with 3 boundary components and 3 marked points
shown in Figure \ref{fig monogon} with a group of order 3 acting by cyclically shifting the boundary components. The north and south pole are fixed by this action and give rise to the two orbifold points in orbit space.
\begin{figure}
\begin{center}
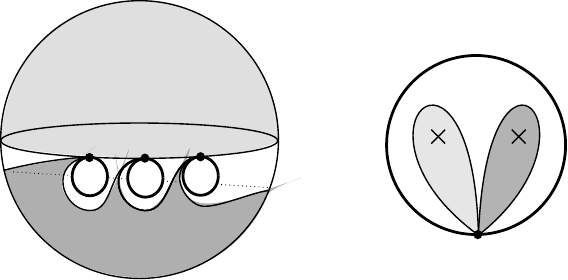
\caption{The monogon with two orbifold points as orbit space of a sphere with 3 boundary components.}
\label{fig monogon}
\end{center}
\end{figure}

\subsubsection{The once-punctured bigon}
If $p=1,x=0$ and $c=2$, we have a disk with one puncture and two marked points on the boundary. If the isotropy $m$ of the puncture is one, the generalized cluster algebra is the honest cluster algebra of type $\mathbb{A}_1\times\mathbb{A}_1$, with 4 cluster variables
\[ x_1,x_2,\frac{2}{x_1},\frac{2}{x_2}.\]
If $m>1$, the generalized cluster algebra has 6 cluster variables
\[x_1,x_2,\frac{2x_2}{x_1},\frac{2m}{x_1}, \frac{2m}{x_2}, \frac{2x_1}{x_2}.\]
The clusters and triangulations are shown in Figure \ref{fig bigonmutation}.

\begin{figure}
\begin{center}
\scalebox{1}{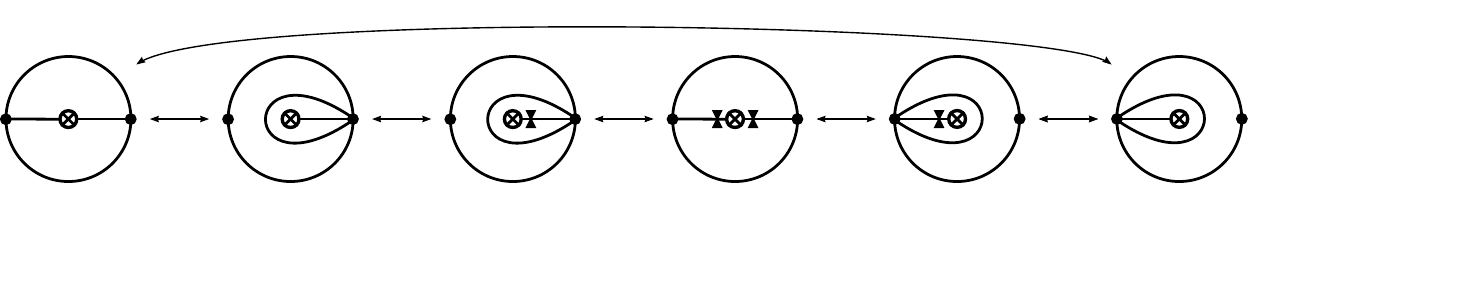}
\caption{The exchange graph of the once-punctured $m$-bigon.}
\label{fig bigonmutation}
\end{center}
\end{figure}

\medskip
Finally, if $b=2$, our equation becomes
\[ 2=3p+2x+c, \textup{ with } c\ge 2,\]
which has one solution.
\subsubsection{The annulus with 2 marked points} If $p=0,x=0$ and $c=2$, we have the annulus with one marked point on each boundary component. The generalized cluster algebra is the honest cluster algebra of type $\widetilde{\mathbb{A}}_{1,1}$ (Kronecker) with infinitely many cluster variables
\[
\ldots \frac{x_1^2+1}{x_2}, x_1,x_2,
\frac{x_2^2+1}{x_1},
\frac{x_2^4+2x_2^2+x_1^2+1}{x_1^2x_2}\ldots\]

\begin{Exam}
Consider the torus in the left picture in Figure \ref{fig:torus} with $15$ vertices, $30$ triangles and $45$ arcs. We consider the group generated by the rotations at the center of the triangles $s_i$, and the rotation around the punctures $p_i$ and $q_i$. It is not hard to check that all triangles $s_i$ are in the same orbit, all triangles $r_i$ are in the same orbit, all triangles adjacent to a $p_i$ are in the same orbit, and all triangles adjacent to a $q_i$ are in the same orbit. There are exactly four orbits of triangles. There are $5$ orbits of arcs (arcs having an $p_i$ as an endpoint, arcs having an $q_i$ as an endpoint, arcs adjacent to a triangle $s_i$, the others). Therefore, the group $G$ has order $9$. Observe also that the only triangles that are mapped to themselves by a non-trivial element of  $G$ are the triangles $s_1, s_2, s_3$. Finally, observe that there are three orbits of punctures, the orbit of the $p_i$, the orbit of the $q_i$ and the orbit of the other punctures. The orbifold has two $3$-punctures, one $1$-puncture, four triangles and $5$ arcs. By computing the Euler characteristic, we get $3 + 4 - 5 = 2$. Since $S$ has no boundary, so does $S_G$. Therefore, $S_G$ is the sphere shown in the right picture in Figure \ref{fig:torus}. It has one orbifold point and two 3-punctures.
\begin{figure}
\begin{center}
\LARGE
\scalebox{0.5}{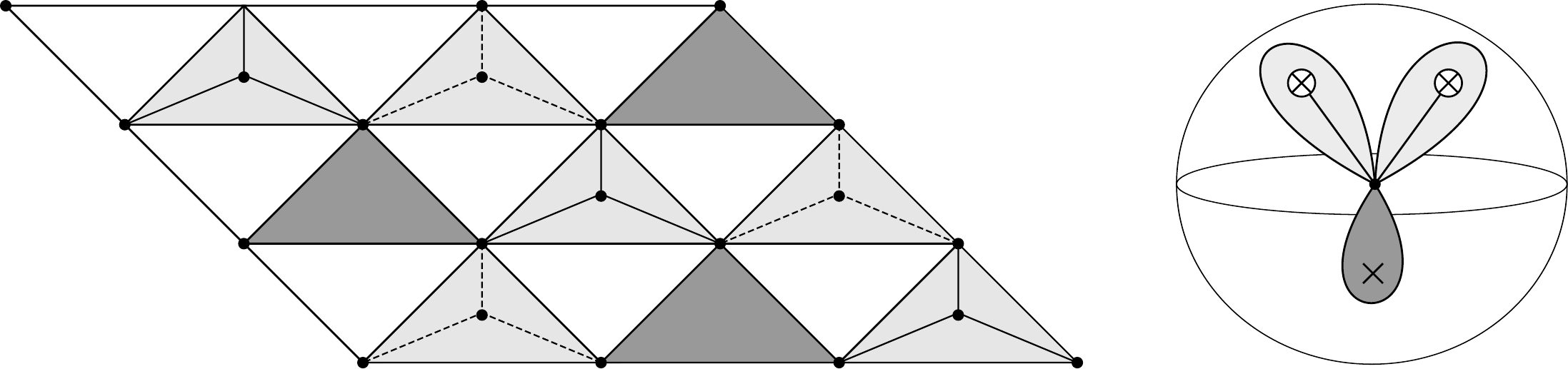}
\caption{A torus with 15 vertices and its orbit space, a sphere with two 3-punctures and one orbifold point.}
\label{fig:torus}
\end{center}
\end{figure}
\end{Exam}

\begin{Remark} \label{RemDP}
Observe that in the rank two case, there are two situations where we have exactly $6$ cluster variables. In these cases, all the cluster variables are Laurent monomials, and this, no matter what seed we use to express them. Consider now the quasi-cluster algebra as defined in \cite[Section 6.1]{DP} obtained by taking the quasi-triangulations of the M$\ddot{o}$bius strip with two marked points on the boundary. There are exactly $6$ quasi-cluster variables and they are not all Laurent monomials in the two initial quasi-cluster variables. Thus, this quasi-cluster algebra cannot be obtained as a generalized cluster algebra of an orbifold. On the other hand, a quasi-cluster algebra of rank two arising from a connected non-orientable surface has to the M$\ddot{o}$bius strip with two marked points; see \cite[Prop. 3.7]{DP}. Therefore, our generalized cluster algebras are not quasi-cluster algebras, and vice versa.
\end{Remark}

\begin{Remark} Not every orbifold is of the form $(S_G, M_G, \mathcal{O})$. The orbifolds of the form $(S_G, M_G, \mathcal{O})$ are called \emph{good orbifolds}, following W. Thurston's terminology. For instance, the sphere with one orbifold point and with all punctures of isotropy one is not a good orbifold. Proposition \ref{PropBijec} guarantees that if our orbifold is of the form $(S_G, M_G, \mathcal{O})$ for a surface $(S,M)$ and an admissible group $G$ of $T$-automorphisms for some tagged triangulation $T$, then any tagged triangulation of the orbifold comes from a ($G$-stable) tagged triangulation of the surface $(S,M)$.
\end{Remark}

\section{Relationship between the cluster algebras}\label{sect 7}

Let $(Q,W)$ be a Jacobi-finite quiver with potential. Denote by $\calc=\calc(Q,W)$ the cluster category and by $B=J(Q,W)$ the Jacobian algebra. Let $\cala=\cala(Q)$ be the cluster algebra (without coefficients), and denote by $U=\zG(Q,W)$ the cluster-tilting object in $\calc$ corresponding to the initial seed.

Let $G$ be an admissible group of automorphisms of $(Q,W)$ and denote by $\calc_G=\calc(Q_G,W_G)$, $B_G=J(Q_G,W_G)$ the cluster category and the Jacobian algebra determined by the action of $G$, respectively.
We denote by $U_G$ the basic cluster-tilting object in $\calc_G$ corresponding to $\zG(Q_G,W_G)$.

We decompose $U$ according to its $G$-orbits as follows
\[U=U_1\oplus\cdots\oplus U_s,\]
where $U_i=\oplus_{g\in G} \,gU_i'$ with $U_i'$ indecomposable.
The initial cluster variables of $\cala$ are denoted accordingly by $x_{i,j}$, $1\le i \le s$, $1\le j\le |G|$, where the variables $x_{i,1},\ldots,x_{i,|G|}$ correspond to the indecomposable summands of $U_i$.

As before, we let $\mathcal{F}_G:=\mathbb{Q}(y_1, \ldots, y_s)$. Let $F\colon\mathbb{Z}[\mathbf{x}^{\pm 1}]\to
\mathbb{Z}[\mathbf{y}^{\pm 1}]$ be the homomorphism  such that $F(x_{i,j})=y_i$.

\smallskip

 We let $\mathcal{G}$ denote the exchange graph of all cluster-tilting objects of $\C$.
 By definition, the vertices of $\mathcal{G}$ are the cluster-tilting objects of $\mathcal{C}$ and the edges are given by mutations. Note that $\mathcal{G}$ does not need to be connected. Let $\mathcal{G}(U)$ be the connected component of $\mathcal{G}$ containing $U$. We denote by $\mathcal{X}$
 the \emph{reachable} indecomposable rigid objects of $\C$. In other words, $\calx$ corresponds to the indecomposable direct summands of the objects from $\mathcal{G}(U)$. If $\C$ is of acyclic type or is the Amiot cluster category of a surface without punctures, then $\mathcal{X}$ is the set of all indecomposable rigid objects in $\C$.

Following \cite{BIRS}, we say that  $\mathcal{C}$ has a \emph{cluster structure} if one of the following equivalent conditions hold.
 \begin{enumerate}
\item Whenever two cluster-tilting objects $T,T'$ in $\mathcal{G}(U)$ are related by a mutation $T'=\mu_i(T)$ then the quivers $Q_T,Q_{T'}$ of the endomorphism algebras ${\rm End}_\C(T)$, ${\rm End}_\C(T')$ are related by the Fomin-Zelevinsky mutation,  $Q_{T'}=\mu_i(Q_T)$.
\item The quiver of any cluster-tilting object in  $\mathcal{G}(U)$  has no loop and no $2$-cycle.
\item The potential $W$ is non-degenerate.
\end{enumerate}

It follows from Proposition \ref{PropOneDim} and the multiplication formula from \cite{Palu} that if $\calc$ has a cluster structure then the cluster character $X$ commutes with mutations in $\mathcal{G}(U)$ and mutations in $\cala$. In particular, the $X_M$ for $M \in \mathcal{X}$ are exactly the cluster variables of $\cala$. Moreover, $\mathcal{G}(U)$ is isomorphic to the exchange graph of $\cala$.

 Let $\mathcal{G}_G$ be the  graph whose vertices are the $G$-stable cluster-tilting object of $\C$ that can be obtained from $U$ by a sequence of  Iyama-Yoshino mutations  of $G$-orbits, and whose edges are the  Iyama-Yoshino mutations.
 Note that  $\mathcal{G}_G$ is connected. We let $\mathcal{X}_G$ denote the set of indecomposable direct summands of the vertices of $\mathcal{G}_G$. In general, $\mathcal{X}_G$ does not need to be a subset of $\mathcal{X}$.

 In terms of the cluster algebra,  when $W$ is non-degenerate and $\calx_G \subseteq \calx$, the set $\calx$ is the set of all cluster variables of $\cala$ and $\calx_G$ is the subset of those cluster variables that lie in the $G$-stable clusters obtained from the initial cluster by $G$-orbit mutations.

\subsection{The $G$-mutation connected case}

\begin{Defn}
The cluster category $\C$ is called $G$-\emph{mutation connected} if any finite sequence of mutations from $U_G$ in $\C_G$ is given by a finite sequence of mutations from $U$ in $\C$.
\end{Defn}

\begin{Remarks}(1) This definition is equivalent to the following. Any vertex  of $\mathcal{G}_G$ can be obtained from $U$ by a finite sequence of mutations in $\C$.

\medskip

\noindent (2) If $\C$ is $G$-mutation connected, then $\mathcal{X}_G$ is a subset of $\mathcal{X}$.\end{Remarks}

Let $\C$ be $G$-mutation connected. For each cluster variable $x$ in $\cala$, we have that $F(x) \in \mathcal{F}_G$. We define the \emph{cluster algebra of orbits} $\cala_G$ associated to $\C_G$ to be the $\mathbb{Z}$-subalgebra of $\mathcal{F}_G$ generated by the set of all $F(x)$ with $x$ a cluster variable of $\mathcal{X}_G$.

\begin{Prop}
Let $\C$ be $G$-mutation connected and let $\cala'$ denote the $\Z$-subalgebra of $\cala$ generated by the cluster variables in $\mathcal{X}_G$. Then we have a commutative diagram of algebras and their generating sets
$$\xymatrix{
\cala' \ar[d] \ar[r] & \cala\ar[d] &\quad& \calx_G\ar[r]\ar[d] &\calx\ar[d] \\
\cala_G \ar[r]  & \cala/\langle x_{i,j}- x_{i, j'}\rangle && F(\calx_G)\ar[r]&F(\calx)}
$$
where the horizontal maps are inclusions and the vertical maps are surjective and induced by $F$.
\end{Prop}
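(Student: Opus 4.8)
The plan is to exhibit every arrow in both squares as a restriction or corestriction of the single ring homomorphism $F$, so that commutativity becomes automatic, and then to check the exactness properties one map at a time.

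First I would fix the ambient homomorphism. By the Laurent phenomenon every cluster variable lies in $\Z[\mathbf{x}^{\pm 1}]$, so $\cala\subseteq\Z[\mathbf{x}^{\pm 1}]$ and $F$ restricts to a ring homomorphism $F\colon\cala\to\Z[\mathbf{y}^{\pm 1}]\subseteq\calf_G$. The group $G$ acts on $\cala$ by algebra automorphisms: it acts on $\C$, hence on cluster-tilting objects, and through the cluster character $X$ on the cluster variables, and on the initial cluster each $g$ permutes $x_{i,1},\dots,x_{i,|G|}$ inside a single orbit. Since $F(x_{i,j})=y_i$ does not depend on $j$, the two ring homomorphisms $F\circ g$ and $F$ agree on the generators $x_{i,j}$ of $\Z[\mathbf{x}^{\pm 1}]$, whence $F\circ g=F$ for all $g\in G$. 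In particular $F$ is constant on the $G$-orbits of cluster variables, $\langle x_{i,j}-x_{i,j'}\rangle\subseteq\ker F$, and $F$ descends to $\bar F\colon\cala/\langle x_{i,j}-x_{i,j'}\rangle\to\calf_G$.

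Next I would assemble the two squares. Because $\C$ is $G$-mutation connected we have $\calx_G\subseteq\calx$, which is the top inclusion of the right-hand square; applying $F$ (which by the previous paragraph and Remark \ref{rem 4.1} sends orbit-equivalent variables to a common element) gives the vertical surjections onto $F(\calx_G)$ and $F(\calx)$ and the bottom inclusion $F(\calx_G)\subseteq F(\calx)$, and this square of sets commutes tautologically. Passing to the generated algebras, $\cala'\subseteq\cala$ is the top inclusion of the left square, $F$ carries $\cala'$ onto $\cala_G=F(\cala')$ and $\cala$ onto $F(\cala)$, and these are the left and right vertical surjections. It then remains to identify the bottom-right corner $\cala/\langle x_{i,j}-x_{i,j'}\rangle$ with $F(\cala)$ through $\bar F$: under this identification the right vertical map is $F$, the bottom map is the honest inclusion $\cala_G=F(\cala')\subseteq F(\cala)$, and commutativity of both squares is inherited from the single homomorphism $F$.

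The one substantial point, and the step I expect to be the main obstacle, is the injectivity of $\bar F$, equivalently the equality $\ker(F|_{\cala})=\langle x_{i,j}-x_{i,j'}\rangle$ of ideals of $\cala$ (the inclusion $\supseteq$ being the easy half above). I would prove this using the linear independence of cluster monomials: given $f\in\ker(F|_{\cala})$, expand it in cluster monomials, group them into $G$-orbits, and use that $F$ takes equal values on two cluster monomials exactly when they are $G$-conjugate while taking linearly independent values on distinct orbits; the vanishing of $F(f)$ then forces the coefficients in each orbit to sum to zero, so that $f$ is a $\Z$-combination of differences $m-gm$ of orbit-equivalent cluster monomials, each of which lies in $\langle x_{i,j}-x_{i,j'}\rangle$ by a telescoping argument. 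The delicate ingredient here is that $F$ separates distinct $G$-orbits of cluster variables, i.e. that $F(X_M)=F(X_{M'})$ forces $M'\cong gM$ for some $g\in G$; this is where the explicit description of the $G$-action on $\calx$ together with the multiplication formula for the cluster character is needed. Once $\bar F$ is known to be injective, every arrow in the two squares is a corestriction of $F$ or an inclusion, the horizontal arrows are inclusions and the vertical arrows the surjections induced by $F$, and the diagram commutes by construction.
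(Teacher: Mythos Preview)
Your argument is correct in spirit but attempts to prove far more than the paper does. The paper's proof is two sentences: it notes that $\calx_G\subseteq\calx$ because $\C$ is $G$-mutation connected (so the top inclusions make sense), and that the quotient $\cala/\langle x_{i,j}-x_{i,j'}\rangle$ is well-defined because $\cala\subseteq\Z[\mathbf{x}^{\pm 1}]$ and is generated by $F(\calx)$. In other words, the paper reads the bottom-right corner simply as $F(\cala)$ (the image of $\cala$ under the Laurent-polynomial map $F$, whose kernel on $\Z[\mathbf{x}^{\pm 1}]$ is exactly $\langle x_{i,j}-x_{i,j'}\rangle$); once that identification is made, every arrow is a restriction or corestriction of the single homomorphism $F$ and the proposition is immediate, just as you say in your first two paragraphs.

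The difference is in your third paragraph. You try to prove the stronger statement that the ideal $\langle x_{i,j}-x_{i,j'}\rangle$ \emph{of $\cala$} coincides with $\ker(F|_\cala)$, invoking linear independence of cluster monomials and the claim that $F$ separates distinct $G$-orbits of cluster variables. The paper neither states nor uses this; it is content with $F(\cala)$ as the bottom-right corner (compare the next proposition, whose conclusion $\cala_G=\cala/\langle x_{i,j}-x_{i,j'}\rangle$ is proven by showing $F(\calx)\subseteq\cala_G$). Your extra step would be interesting if it worked, but it rests on two delicate ingredients: linear independence of cluster monomials (known in many cases but not assumed here), and the separation claim $F(X_M)=F(X_{M'})\Rightarrow M'\cong gM$, which you do not justify and which is genuinely nontrivial. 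For the proposition as stated and proved in the paper, your first two paragraphs already suffice.
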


\begin{proof} First note that  $ \cala/\langle x_{i,j}- x_{i, j'}\rangle $ is generated by $F(\calx)$. This follows from the fact that $\mathcal{X}_G$ is a subset of $\mathcal{X}$, thanks to $\C$ being $G$-mutation connected. Also note that $\cala/\langle x_{i,j}- x_{i, j'}\rangle$ is well-defined since $\cala \subseteq \mathbb{Z}[\mathbf{x}^{\pm 1}]$.
\end{proof}

Recall that every cluster variable in $\cala$ is a Laurent polynomial in the variables of any given cluster. This is the Laurent phenomenon and was proven for cluster algebras by Fomin and Zelevinsky in \cite{FZ}. It follows from this that the generalized cluster variables in $\cala_G$ also satisfy the Laurent phenomenon, in the $G$-mutation connected case. Therefore, we can define the \emph{upper-cluster algebra} $U(\cala_G)$ to be
$$U(\cala_G) = \bigcap_{{\bf x} \in \mathcal{G}_G}\mathbb{Z}[F({\bf x})^{\pm 1}] = \bigcap_{{\bf y} \; \text{cluster in} \; \cala_G}\mathbb{Z}[{\bf y}^{\pm 1}]$$
where for a set $S = \{a_1, \ldots, a_r\}$ of rational functions, we write $S^{\pm 1}$ for $\{a_1^{\pm 1}, \ldots, a_r^{\pm 1}\}$.
Now, the Laurent phenomenon guarantees that $\cala_G \subseteq U(\cala_G)$.

\begin{Prop} Let $\C$ be $G$-mutation connected. Assume that $\cala_G = U(\cala_G)$. Then $\cala_G = \cala/\langle x_{i,j}- x_{i, j'}\rangle$.
\end{Prop}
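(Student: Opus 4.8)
The plan is to deduce the equality from the inclusion $\cala_G \subseteq \cala/\langle x_{i,j}-x_{i,j'}\rangle$ already furnished by the previous proposition, so that only the reverse inclusion remains. Under the identification induced by $F$, the quotient $\cala/\langle x_{i,j}-x_{i,j'}\rangle$ is the $\Z$-subalgebra of $\mathcal{F}_G$ generated by all $F(x)$ with $x$ a cluster variable of $\cala$, that is, with $x$ ranging over $\calx$ (here I use that $\C$ is $G$-mutation connected, which gives $\calx_G\subseteq\calx$). Thus it suffices to show that each such generator $F(x)$ already lies in $\cala_G$. Rather than exhibiting $F(x)$ explicitly in terms of the generators of $\cala_G$, I would exploit the hypothesis $\cala_G=U(\cala_G)$: I will prove that $F(x)$ is a Laurent polynomial with integer coefficients in \emph{every} cluster of $\cala_G$, whence $F(x)\in U(\cala_G)=\cala_G$.

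So I would fix a cluster variable $x$ of $\cala$ and an arbitrary cluster $\mathbf{y}'$ of $\cala_G$. By construction of $\mathcal{G}_G$ one may write $\mathbf{y}'=F(\mathbf{x}')$ for a $G$-stable cluster $\mathbf{x}'=\{x'_{i,j}\}$ of $\cala$ reachable from the initial cluster by a sequence of orbit mutations, with the property that $F$ sends every arc in the $i$-th orbit to the same generalized cluster variable $y'_i$. The Laurent phenomenon in $\cala$ (see \cite{FZ}) expresses $x$ as a Laurent polynomial $P$ with integer coefficients in the $x'_{i,j}$; since $x$ and all the $x'_{i,j}$ lie in $\Z[\mathbf{x}^{\pm 1}]$, where $F$ is defined, this identity holds there. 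Applying the ring homomorphism $F$ and substituting $F(x'_{i,j})=y'_i$ for all $j$ turns $P$ into a Laurent polynomial in the $y'_i$, so $F(x)\in\Z[\mathbf{y}'^{\pm 1}]$. As $\mathbf{y}'$ was an arbitrary cluster of $\cala_G$, this yields $F(x)\in\bigcap_{\mathbf{y}'}\Z[\mathbf{y}'^{\pm 1}]=U(\cala_G)=\cala_G$, giving $F(\cala)\subseteq\cala_G$ and hence the desired equality.

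The step I expect to be the main obstacle is the identity $F(x'_{i,j})=y'_i$ for the variables of a general reachable $G$-stable cluster $\mathbf{x}'$, not merely the initial one. This amounts to knowing that $F$ intertwines $G$-orbit mutation in $\cala$ with mutation in $\cala_G$, so that the $F$-images of the arcs in a single $G$-orbit coincide at every stage and equal the corresponding generalized cluster variable $y'_i$. For the initial cluster this is the definition of $F$, and for a single orbit mutation it is exactly the content of the exchange-polynomial analysis of Section \ref{TechnicalSection} together with Remark \ref{rem 4.1}; propagating it along an entire orbit-mutation sequence is precisely where $G$-mutation connectedness of $\C$ is needed, since it guarantees both that every vertex of $\mathcal{G}_G$ is reachable inside $\C$ and that $\calx_G\subseteq\calx$, so that the generators of $\cala_G$ are genuinely among those of $F(\cala)$.
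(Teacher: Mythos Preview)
Your proposal is correct and follows essentially the same route as the paper's own proof: take an arbitrary cluster variable $x\in\cala$, use the Laurent phenomenon in $\cala$ with respect to each $G$-stable cluster $\mathbf{x}'$ underlying a cluster $\mathbf{y}'$ of $\cala_G$, apply $F$, and conclude $F(x)\in U(\cala_G)=\cala_G$. The paper compresses this into two sentences, while you spell out the mechanism and, helpfully, isolate the one genuine technical point the paper leaves implicit, namely that $F(x'_{i,j})=y'_i$ for every $G$-stable cluster reached by orbit mutations (which is exactly what the Proposition at the end of Section~\ref{TechnicalSection} establishes inductively).
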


\begin{proof} Take any cluster variable $x$ in $\cala$. Since $\C$ is $G$-mutation connected and by the Laurent phenomenon, we see that $F(x)$ is a Laurent polynomial in each cluster of $\cala_G$. By assumption, $F(x) \in U(\cala_G) = \cala_G$. This implies the equalities of algebras of the statement.
\end{proof}

\subsection{The surface type}
We let $(S,M)$ be a surface with an admissible group $G$ of $T$-automorphisms of $(S,M)$ where $T$ is a given tagged triangulation of $(S,M)$. As usual, we exclude the sphere with 1, 2, or 3 punctures and the once-punctured torus. We will also exclude the case of a once-punctured closed surface such that all arcs belong to the same $G$-orbit. Indeed, in this case, an orbit mutation corresponds to changing the taggings at the unique puncture. However, there does not exist a sequence of standard mutations that will have this overall effect.

\begin{Prop} \label{PropositionMutationConnected} Let $(S,M)$ be a surface with triangulation $T$ and assume that $(Q,W)$ is Jacobi-finite, where $W$ is the Labardini potential. Let $G$ be an admissible group of $T$-automorphisms of $(S,M)$. Assume that if $(S,M)$ is a once-punctured closed surface, then there are at least two orbits of tagged arcs. Then the category $\C$ is $G$-mutation connected.
\end{Prop}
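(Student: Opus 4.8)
The plan is to reduce the proposition to a single geometric claim and then assemble the case analysis of Section~\ref{TechnicalSection}. By the Remarks following the definition of $G$-mutation connectedness, it is enough to show that every vertex of $\mathcal{G}_G$ lies in $\mathcal{G}(U)$, that is, can be reached from $U$ by a finite sequence of ordinary mutations in $\C$. Since each vertex of $\mathcal{G}_G$ is obtained from $U$ by finitely many Iyama--Yoshino orbit mutations, I would induct on the length of such a sequence: the only point to establish is that one \emph{single} orbit mutation can be realized by a finite sequence of ordinary mutations. The intermediate triangulations produced by this sequence need not be $G$-stable, but this is harmless, since reachability is measured in the ordinary exchange graph $\mathcal{G}$.

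First I would pass to the geometric model. As $(Q,W)=(Q_T,W_{T,\textup{ca}})$ is Jacobi-finite, $\C=\C(Q_T,W_T)$ is the cluster category of the surface $(S,M)$, whose basic cluster-tilting objects correspond to tagged triangulations, with ordinary mutations corresponding to flips of tagged arcs. By Corollary~\ref{G-flip}, the $G$-stable cluster-tilting objects correspond to $G$-stable tagged triangulations and the orbit mutation corresponds to replacing a $G$-orbit $A$ of tagged arcs by another $G$-orbit $B$. Under this dictionary the claim becomes combinatorial: such a change of orbit is achievable by finitely many individual flips.

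This is exactly the content of the lemmas of Section~\ref{TechnicalSection}, organized by how the orbit of the mutated arc $\gamma$ meets the orbits of its four neighbours $\mu,\nu,\alpha,\beta$. When the orbit of $\gamma$ is disjoint from those of its neighbours---the ``Remaining cases'', together with the self-folded and once-punctured-bigon situations of Subsection~\ref{sect 7.1}---the arcs $\{g\gamma\mid g\in G\}$ are pairwise compatible and can be flipped one at a time, giving the required finite sequence. When the orbit of $\gamma$ meets a neighbouring orbit, Lemma~\ref{LemmaCases} splits the analysis into four cases handled by Lemmas~\ref{LemmaChekovShapiro} and~\ref{LemmaPuncturedDisk}: in the hexagon case the explicit sequence (mutate $\gamma_1,\gamma_2,\gamma_3$, then $\gamma_1'$, repeated on each triangle of $G\Delta$) realizes the orbit mutation, and in the punctured-polygon case the sequence of $2m-2$ mutations of Lemma~\ref{LemmaPuncturedDisk}(1), performed around each puncture of $Gb$, does so. Each sequence is finite and terminates at the prescribed $G$-stable triangulation.

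The hard part will be the once-punctured case. By Lemma~\ref{LemmaOneOrbit} and the Remark following it, on a once-punctured closed surface whose arcs form a single $G$-orbit the orbit mutation is a change of tagging at the unique puncture, and this cannot be produced by any finite sequence of flips. This is precisely the configuration ruled out by the hypothesis that a once-punctured closed surface carry at least two orbits of tagged arcs. The main obstacle is therefore not any individual computation but the bookkeeping needed to confirm that this global obstruction is the \emph{only} one: one must check that, once the single-orbit configuration is excluded (via Lemma~\ref{LemmaAll}), every change of tagging forced by an orbit mutation is localized inside a punctured polygon and hence covered by Lemma~\ref{LemmaPuncturedDisk}, so that the obstruction of Lemma~\ref{LemmaOneOrbit} never reappears.
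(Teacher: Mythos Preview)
Your reduction to the geometry and your survey of the Section~\ref{TechnicalSection} cases are on the right track, but there is a genuine gap at the point where you invoke Corollary~\ref{G-flip}. That corollary is a purely geometric statement about $G$-stable triangulations of $(S,M)$ and triangulations of the orbifold; it says nothing about cluster-tilting objects in $\C$. You therefore need two facts you have not supplied. First, that the standard bijection $\Psi$ between reachable cluster-tilting objects and reachable tagged triangulations is $G$-equivariant, so that a $G$-stable $H\in\mathcal{G}_G$ corresponds to a $G$-stable triangulation $T_H$; the paper proves this by an induction showing $\Psi(gY)=g\Psi(Y)$ one mutation at a time. Second, and more seriously, that the Iyama--Yoshino orbit mutation of $H$ in $\C$ agrees with the geometric orbit replacement $A\rightsquigarrow B$ of Corollary~\ref{G-flip}. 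This is not automatic: when you remove an entire $G$-orbit $GH_1$ from $H$, the rigid object $H/GH_1$ has, a priori, many complements to a cluster-tilting object, and Iyama--Yoshino uniqueness only applies to almost-complete tilting objects (removal of a single indecomposable). The paper closes this by passing through the precovering $F:\C\to\C_G$: in $\C_G$ one removes a \emph{single} summand $F(H_1)$, so ordinary uniqueness applies there, and then the adjoint pair $(F,\bar F)$ and $\bar F F V\cong\oplus_g gV$ are used to lift uniqueness back to $G$-stable complements in $\C$.

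Without this uniqueness, your induction step breaks: even if the geometric orbit mutation $T_H\mapsto T_H'$ is achieved by finitely many flips (which your Section~\ref{TechnicalSection} bookkeeping does show), you cannot conclude that the reachable cluster-tilting object corresponding to $T_H'$ is the Iyama--Yoshino mutation of $H$ rather than some other $G$-stable complement. The explicit flip sequences you describe are thus necessary but not sufficient; the missing ingredient is the categorical uniqueness argument via $\C_G$.
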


\begin{proof} Let $\mathcal{T}$ denote the set of tagged arcs of $(S,M)$ that can be obtained from $T$ by a finite sequence of standard mutations. For instance, if $\partial S \ne \emptyset$, then $\mathcal{T}$ contains all tagged arcs. First, recall from \cite[Section 3.4]{AmiotGen} that since $W$ is non-degenerate, there is a bijection $\Psi: \mathcal{X} \rightarrow \mathcal{T}$ that commutes with the standard mutations. Moreover, for indecomposable objects $X,Y \in \mathcal{X}$, we have that $\Ext^1_\C(X,Y) = 0$ if and only if $\Psi(X), \Psi(Y)$ are compatible tagged arcs. Let $X$ be an indecomposable summand of the initial cluster-tilting object $U$. Then, clearly, $\Psi(gX) = g \Psi(X)$ for all $g \in G$. It follows from the results of Section \ref{sect cto} that if $Y = \mu_X(U)$ is the indecomposable rigid object obtained by mutating $U$ in direction $X$, then for $g \in G$, we have $gY = g \mu_X(U) = \mu_{gX}(gU) = \mu_{gX}(U)$. Since $\Psi$ and $\mu$ commute, we get $\Psi(gY) = \Psi(\mu_{gX}(U))=\mu_{\Psi(gX)}(\Psi(U)) = \mu_{\Psi(gX)}(T)$. On the other hand, we have $g\Psi(Y)= g\Psi(\mu_X(U))=g \mu_{\Psi(X)}(T) = \mu_{g\Psi(X)}(gT)= \mu_{\Psi(gX)}(T)=\Psi(gY)$.  Now, let $U'$ be a reachable cluster-tilting object, $X'$ and indecomposable summand of $U'$ and $U'/X'\oplus Y'$ the cluster-tilting object obtained by mutation in $X'$. Let $Y' = \mu_{X'}(U')$. Assume we know that $\Psi(gX') = g\Psi(X')$ for all $g \in G$. Then, we have $gY' = g \mu_{X'}(U') = \mu_{gX'}(gU')$. Thus in terms of the corresponding arcs in $(S,M)$ we have $\Psi(gY') = \Psi(\mu_{gX'}(gU'))=\mu_{\Psi(gX')}(\Psi(gU'))$. On the other hand, we have $g\Psi(Y')=g\Psi(\mu_{X'}(U'))= g \mu_{\Psi(X')}(\Psi(U')) = \mu_{g\Psi(X')}(g\Psi(U'))= \mu_{\Psi(gX')}(\Psi(gU'))=\Psi(gY')$. This shows, by induction, that $\Psi$ commutes with the action of $G$. Therefore, the $G$-stable cluster-tilting objects from $\mathcal{X}$ correspond to the $G$-stable tagged triangulation under $\Psi$.

Let $H$ in $\C$ be a $G$-stable cluster-tilting object, and let $H_1$ be an indecomposable direct summand of $H$. We let $GH_1 = \oplus_{g \in G}gH_1$. Let $F: \C \to \C_G$ be the $G$-precovering functor. Recall that $F(H)$ is a (non-basic) cluster-tilting object of $\C_G$ and $F(GH_1) \cong F(H_1)^{|G|}$ where $F(H_1)$ is indecomposable. It follows from \cite{IY} that there is a unique indecomposable rigid object $Z$ in $\C_G$ with $Z \not \cong F(H_1)$ such that $F(H/GH_1)\oplus Z$ is cluster-tilting. Therefore, if $V, V'$ are $G$-stable having each $|G|$ indecomposable direct summands, with both $(H/GH_1)\oplus V$ and $(H/GH_1)\oplus V'$ cluster-tilting, then each of $F(V), F(V')$ is isomorphic to $F(H_1)^{|G|}$ or $Z^{|G|}$. Assume that $F(V) \cong F(H_1)^{|G|}$. Applying the adjoint functor $\bar F$ and using Lemma \ref{Adjoint}, we get $V^{|G|} \cong \oplus_{g \in G}gV \cong \bar F F (V) \cong \bar F(F(H_1)^{|G|}) \cong (GH_1)^{|G|}$, so $GH_1 \cong V$. Therefore, if $V \not \cong GH_1$ and $V' \not \cong GH_1$, then $F(V) \cong F(V')$. Thus, $(V)^{|G|} \cong \oplus_{g \in G}gV \cong \bar F F (V) \cong \bar F F (V') \cong (V')^{|G|}$. This yields $V \cong V'$. This shows that orbit mutation in $\C$ is unique up to isomorphism. As we have seen in Section \ref{TechnicalSection}, $\mathcal{T}$ is closed under orbit mutation of tagged arcs. This means that the tagged arcs in $\Psi(GH_1)$ can be replaced by a $G$-orbit $\mathcal{Z}$ of tagged arcs in $\mathcal{T}$ such that $\Psi(H/GH_1)\cup \mathcal{Z}$ is a $G$-stable tagged triangulation. Lifting through $\Psi$, this corresponds to a $G$-stable cluster-tilting object and has to coincide with $(H/GH_1)\oplus V$ by uniqueness of orbit mutation in $\C$.
\end{proof}

\begin{Theo}
Assume that $(S,M)$ is as above. The algebra $\cala_G$ generated by $F(\mathcal{X}_G)$ coincides with the generalized cluster algebra $\cala(S_G, M_G, \mathcal{O})$.
\end{Theo}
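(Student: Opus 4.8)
The plan is to show that the two $\Z$-subalgebras of $\mathcal{F}_G$ have the same generating set, by matching, step by step, the two iterated mutation processes that produce these generators from a common initial seed. By Proposition \ref{PropBijec} and Corollary \ref{G-flip}, the assignment sending a $G$-orbit of compatible tagged arcs to the corresponding tagged arc of $(S_G,M_G,\mathcal{O})$ gives a bijection $\Phi$ between $G$-stable tagged triangulations of $(S,M)$ and tagged triangulations of $(S_G,M_G,\mathcal{O})$ which sends $T$ to $T_G$ and intertwines orbit mutation with orbifold mutation. Since $(S,M)$ is as in Proposition \ref{PropositionMutationConnected}, the category $\C$ is $G$-mutation connected, and $\Psi$ commutes with the $G$-action; this identifies the $G$-orbits of tagged arcs occurring in the $G$-stable triangulations reachable from $T$ by orbit mutations with the indecomposable summands of the objects of $\mathcal{X}_G$, whose cluster-character values are the surface cluster variables $x_{\tilde\gamma}$. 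Thus $F(\mathcal{X}_G)=\{F(x_{\tilde\gamma})\}$, where $\tilde\gamma$ ranges over representatives of the orbit-reachable tagged arcs of $(S,M)$.

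First I would prove, by induction on the number of orbit mutations needed to reach a $G$-stable triangulation $V$ from $T$, the statement: for every tagged arc $\gamma$ of the orbifold triangulation $\Phi(V)$ one has $F(x_{\tilde\gamma})=y_\gamma$, where $\tilde\gamma$ is any arc of $V$ with $\Phi(\{g\tilde\gamma\mid g\in G\})=\gamma$ and $y_\gamma$ is the cluster variable of $\cala(S_G,M_G,\mathcal{O})$ attached to $\gamma$. The base case is the initial seed, where $F(x_{\tau_{ij}})=F(x_{ij})=y_i=y_{\tau_i}$. For the inductive step I would take a reachable $V$, assume the claim for every arc of $\Phi(V)$, and perform an orbit mutation at an orbit $A=\{g\tilde\tau\mid g\in G\}$ of $V$, corresponding under $\Phi$ to the flip of the arc $\tau$ of $\Phi(V)$ to a new arc $\tau'$, with orbit mutation $B=\{g\tilde\tau''\mid g\in G\}$.

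The heart of the argument, which I would treat next, is the identity
\[F(x_{\tilde\tau})\,F(x_{\tilde\tau''})=p_{\tau},\]
where $p_\tau$ is the exchange polynomial of $\tau$ in $\Phi(V)$ prescribed by Definition \ref{def ca}. Here I would invoke the computations of Section \ref{TechnicalSection}: the final proposition of that section shows that $F(x_{\tilde\tau''})$ is independent of the chosen representative, that $F(x_{\tilde\tau}x_{\tilde\tau''})$ lies in $\Z[\mathbf{y}]$, and that it equals the orbit exchange polynomial $p_{G,\tau}$; while Lemma \ref{LemmaExchange}, Lemma \ref{LemmaChekovShapiro}, Lemma \ref{LemmaPuncturedDisk} and Subsections \ref{sect 7.1}--\ref{sect 9} show, case by case, that $p_{G,\tau}$ expressed through the variables $F(x_{\tilde\alpha})$ of the arcs of $V$ is exactly the polynomial given by the matching case of Definition \ref{def ca}. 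These computations are \emph{local}, depending only on the configuration of triangles around $\tilde\tau$ in $V$ and on the isotropy data, so they apply verbatim at the reachable triangulation $V$ and not merely at $T$. Feeding in the inductive hypothesis $F(x_{\tilde\alpha})=y_\alpha$ for every arc $\alpha$ of $\Phi(V)$ turns $p_{G,\tau}$ into $p_\tau$; since the orbifold mutation rule gives $y_{\tau'}=p_\tau/y_\tau$ and $F(x_{\tilde\tau})=y_\tau$ by hypothesis, dividing yields $F(x_{\tilde\tau''})=p_\tau/y_\tau=y_{\tau'}$, completing the induction.

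Finally I would conclude that, as every reachable orbifold arc occurs in some reachable $\Phi(V)$, the induction shows $F$ carries the cluster variables attached to the orbit-reachable arcs of $(S,M)$ onto the cluster variables $y_\gamma$ of $\cala(S_G,M_G,\mathcal{O})$. Hence the generating set $F(\mathcal{X}_G)$ of $\cala_G$ coincides with the generating set $\{y_\gamma\}$ of $\cala(S_G,M_G,\mathcal{O})$, and the two $\Z$-subalgebras of $\mathcal{F}_G$ are equal. I expect the main obstacle to be the bookkeeping needed to guarantee that the local exchange-polynomial computations of Section \ref{TechnicalSection}, originally carried out at $T$, remain valid at an arbitrary reachable $G$-stable triangulation, and that the degenerate configurations—self-folded triangles, once-punctured bigons, and the single-orbit cases—are correctly matched with the cases of Definition \ref{def ca} at every step of the induction.
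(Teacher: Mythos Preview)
Your proposal is correct and follows the same approach as the paper: both match the orbit-mutation exchange polynomials computed in Section~\ref{TechnicalSection} with the orbifold exchange polynomials of Definition~\ref{def ca}, and conclude that the two generating sets coincide. Your version is in fact more complete than the paper's, since you make the induction on the number of orbit mutations explicit and flag the point that the local computations of Section~\ref{TechnicalSection} must be applied at an arbitrary reachable $G$-stable triangulation rather than only at the initial $T$---exactly the step hidden behind the paper's closing ``This implies the statement.''
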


\begin{proof}
Let $\mathbf{x}(T) = \{x_{ij} \mid 1 \le i \le s, 1 \le j \le |G|\}$ be the initial cluster corresponding to the tagged triangulation $T$ of $(S,M)$ and consider $\{x_{11}, \ldots, x_{1,|G|}\}$ corresponding to the $G$-orbit of the arc $x_{11}$ in $T$. Recall that $y_i = F(x_{ij})$ for all $1 \le j \le |G|$.  We have seen that there is a finite sequence of flips going from $T$ to another tagged triangulation $T'$ such that $G$ is an admissible group of $T'$-automorphisms of $(S,M)$.
The corresponding cluster is given  by $\mathbf{x}(T')=\mathbf{x}(T)\backslash \{x_{11}, \ldots, x_{1,|G|}\} \cup \{x_{11}', \ldots, x_{1,|G|}'\}$, and $F(x_{1i}x_{1j}') \in \mathbb{Z}[y_1, \ldots, y_s]$ is independent of the chosen $i,j$. It follows from the results of Section \ref{TechnicalSection} that $F(x_{1i}x_{1j}')$ is the polynomial $p_{y_1}$ of Definition \ref{def ca}. This implies the statement.
\end{proof}

The above theorem allows one to perform mutations in $\cala_G$ directly using the exchange relations listed in Definition~\ref{def ca}. For a general $G$-mutation connected category $\C$, we do not know how to mutate the cluster variables in $\cala_G$.

\subsection{Cluster characters}
Recall from \cite{Palu} that we have a cluster character $X$ in $\C$, that is, a function $X: \C \to \mathbb{Z}[x_1^{\pm 1}, \ldots, x_n^{\pm 1}]$ such that $X_{M \oplus N} = X_{M} X_N$, $X_M=X_{M'}$ if $M \cong M'$, and if ${\rm Hom}_\C(M,N[1])$ is one dimensional, then $X_MX_N = X_B + X_{B'}$ where $B,B'$ are the two middle terms of the two non-split distinguished triangles with end-terms $M,N$.

\begin{Prop} \label{PropOneDim} Suppose $(Q,W)$ is non-degenerate and let $T$ be a basic cluster-tilting object of $\C$ obtained by a finite sequence of mutations from $U$. Let $Y$ be an indecomposable direct summand of $T$ and let $Y^*$ be indecomposable non-isomorphic to $Y$ such that $(T/Y)\oplus Y^*$ is cluster-tilting. Then ${\rm Hom}_\C(Y,Y^*[1])$ is one dimensional.
\end{Prop}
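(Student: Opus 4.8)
The plan is to convert the claim into a statement about the quiver of the endomorphism algebra $B=\End_\C(T)$ at the vertex corresponding to $Y$, and to read off the dimension from the absence of loops there. Write $\bar T=T/Y$ and let $e$ be the idempotent of $B$ attached to the summand $Y$; since $\C$ is Hom-finite Krull--Schmidt over the algebraically closed field $k$ and $Y$ is indecomposable, $\End_\C(Y)=eBe$ is a finite-dimensional local algebra with residue field $k$, and $B$ itself is finite-dimensional.

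First I would invoke the Iyama--Yoshino exchange mechanism of Section~\ref{sect cto} (see \cite{IY}): since $\bar T\oplus Y$ and $\bar T\oplus Y^*$ are the two cluster-tilting objects completing $\bar T$, there is an exchange triangle
$$Y^*\to D\stackrel{g}{\to}Y\to Y^*[1]$$
with $D\in{\rm add}\,\bar T$ and $g$ a minimal right ${\rm add}\,\bar T$-approximation of $Y$. Applying $\Hom_\C(Y,-)$ and using that $T$ is rigid, so that $\Hom_\C(Y,D[1])=0$, I obtain $\Hom_\C(Y,Y^*[1])\cong{\rm coker}\big(\Hom_\C(Y,g)\big)=\End_\C(Y)/{\rm im}\,\Hom_\C(Y,g)$. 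The approximation property of $g$ shows that ${\rm im}\,\Hom_\C(Y,g)$ is precisely the space of endomorphisms of $Y$ factoring through ${\rm add}\,\bar T$: any factorization $Y\to D''\to Y$ with $D''\in{\rm add}\,\bar T$ lifts through $g$, and conversely every element of the image factors through $D$. In the language of $B$ this space is $eB(1-e)Be$, so I arrive at the clean identification
$$\Hom_\C(Y,Y^*[1])\cong eBe/eB(1-e)Be\cong e\big(B/B(1-e)B\big)e.$$

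The second key step evaluates the right-hand side using non-degeneracy. Since $T$ lies in $\mathcal{G}(U)$ and $W$ is non-degenerate, the equivalence of the cluster-structure conditions recalled above (following \cite{BIRS}) ensures that the quiver $Q_T$ of $B\cong J(Q_T,W_T)$ has no loop at the vertex $Y$. The quotient $B/B(1-e)B$ is supported only at that vertex: every path that passes through another vertex involves some $e_j$ with $j\neq Y$ and hence dies, so the quotient is a finite-dimensional local algebra whose Gabriel quiver consists of the loops at $Y$. Having no loops, this quiver is a single vertex with no arrows, so $B/B(1-e)B=k$ by Nakayama's lemma; therefore $e\big(B/B(1-e)B\big)e=k$ and $\dim_k\Hom_\C(Y,Y^*[1])=1$.

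I expect the main obstacle to be the bookkeeping in the two middle steps rather than any deep difficulty: on the one hand, checking carefully that the image of the approximation map is the \emph{full} ideal of maps factoring through ${\rm add}\,\bar T$, and on the other hand justifying that $e(B/B(1-e)B)e\cong k$ from the no-loop condition — in particular verifying that compositions around $2$-cycles or longer cycles through other vertices all vanish in $B/B(1-e)B$, so that only the trivial path at $Y$ survives. It is here that the finite-dimensionality of $B$, coming from Jacobi-finiteness and the Hom-finiteness of $\C$, is essential, as it is what allows Nakayama's lemma to reduce a single loopless vertex to the algebra $k$.
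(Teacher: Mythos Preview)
Your argument is correct and is essentially the same as the paper's: the paper simply records that $\End_\C(Y)/\mathrm{rad}\cong k$, that non-degeneracy forces $Q_T$ to have no loop at $Y$, and then defers to the proof of \cite[Prop.~6.14]{BMRRT}; what you have written is precisely an unpacking of that BMRRT argument via the exchange triangle and the identification $\Hom_\C(Y,Y^*[1])\cong eBe/eB(1-e)Be$. One small sharpening: once you know $B\cong kQ_T/I$ with no loop at $Y$, every path $Y\to Y$ of positive length visits another vertex and hence already lies in $eB(1-e)Be$, so $e(\mathrm{rad}\,B)e\subseteq eB(1-e)Be$ directly and the appeal to Nakayama is unnecessary.
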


\begin{proof} Since $k$ is algebraically closed, the endomorphism algebra of $Y$, modulo its radical, is isomorphic to $k$. Since $(Q,W)$ is non-degenerate, the quiver of ${\rm End}_\C(T)$ has no loop and no $2$-cycle. Now, the result follows from the argument of the proof of Proposition 6.14 in \cite{BMRRT}.
\end{proof}

Now, let $\calc_G^F$ be the set of all objects  of $\C_G$ having all its direct summands in the image of $F\colon   \C \to \C_G$. Let $\bar M \in \C$ be such that $F(\bar M) = M$ and define a function $X^G\colon \calc_G^F\to \Z[y_1^{\pm 1}, \ldots, y_s^{\pm 1}] $
by $X^G_M = F(X_{\bar M}) $.

\begin{Prop} \label{PropClusterChar}
The  function $X^G$ is well-defined and constant within each isomorphism class.
\end{Prop}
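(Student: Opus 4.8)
The plan is to prove that the assignment $\bar M \mapsto F(X_{\bar M})$ depends only on the isomorphism class of $F(\bar M)$ in $\C_G$; this single statement yields both the well-definedness (independence of the chosen preimage $\bar M$) and the constancy on isomorphism classes. Throughout I write $\sigma_g$ for the ring automorphism of $\mathbb{Z}[\mathbf{x}^{\pm 1}]$ induced by $g \in G$: since $g$ is an automorphism of $\C$ that fixes the $G$-stable cluster-tilting object $U=\zG(Q,W)$ and permutes its indecomposable summands $U_{i,1},\ldots,U_{i,|G|}$ within each orbit, it permutes the corresponding variables $x_{i,1},\ldots,x_{i,|G|}$ within each orbit. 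As the ring homomorphism $F$ sends all of $x_{i,1},\ldots,x_{i,|G|}$ to $y_i$, we obtain on generators, and hence everywhere, the identity $F\circ\sigma_g=F$.

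First I would record the equivariance of the cluster character, namely
\[ X_{g\bar M}=\sigma_g(X_{\bar M}) \qquad (g\in G,\ \bar M\in\C), \]
which combined with $F\circ\sigma_g=F$ gives the key invariance $F(X_{g\bar M})=F(X_{\bar M})$. Writing $\Phi(\bar M):=F(X_{\bar M})$, the properties of the cluster character from \cite{Palu} make $\Phi$ multiplicative, $\Phi(\bar M\oplus\bar N)=\Phi(\bar M)\Phi(\bar N)$, constant on isomorphism classes of $\C$, and $G$-invariant. In particular, for every $G$-orbit $\mathcal{O}$ of indecomposable objects of $\C$ the value $v_{\mathcal{O}}:=\Phi(A)$ (for any $A\in\mathcal{O}$) is well defined, and if $c_{\mathcal{O}}(\bar M)$ denotes the number of indecomposable summands of $\bar M$ (with multiplicity) lying in $\mathcal{O}$, then $\Phi(\bar M)=\prod_{\mathcal{O}} v_{\mathcal{O}}^{\,c_{\mathcal{O}}(\bar M)}$.

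Next I would compare two preimages. If $F(\bar M)\cong F(\bar M')$ in $\C_G$, then applying the adjoint functor $\bar F$ and using Lemma \ref{Adjoint} (which descends from ${\rm per}\,\Gamma(Q,W)$ to $\C$) gives
\[ \bigoplus_{g\in G} g\bar M \;\cong\; \bar F F\bar M \;\cong\; \bar F F\bar M' \;\cong\; \bigoplus_{g\in G} g\bar M'. \]
For a fixed indecomposable $A$, a short orbit–stabilizer count shows that the multiplicity of $A$ in $\bigoplus_{g} g\bar M$ equals $|\mathrm{stab}_G(A)|\cdot c_{GA}(\bar M)$, and likewise for $\bar M'$. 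Since $|\mathrm{stab}_G(A)|$ is a common nonzero factor depending only on the orbit, Krull–Schmidt forces $c_{\mathcal{O}}(\bar M)=c_{\mathcal{O}}(\bar M')$ for every orbit $\mathcal{O}$, whence $\Phi(\bar M)=\Phi(\bar M')$. Thus $\Phi$ factors through $F$ on objects up to isomorphism; taking $\bar M=\bar M'$ preimages of the same $M$ gives well-definedness, and allowing $F(\bar M)\cong F(\bar M')$ gives constancy on isomorphism classes of $\calc_G^F$.

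The main obstacle, and the step deserving the most care, is the equivariance $X_{g\bar M}=\sigma_g(X_{\bar M})$: one must verify that the automorphism $g$ of $\C$, the relabeling $\sigma_g$ of variables coming from $g$'s action on the summands of $U$, and Palu's cluster character formula (the Euler pairing together with the Grassmannian of submodules over $\mathrm{End}_\C(U)$) are mutually compatible. Once this naturality is in place, the remaining arguments are a bookkeeping of summand multiplicities in which the possibly nontrivial stabilizers $\mathrm{stab}_G(A)$ cancel automatically, so no positivity or unique-factorization input is needed.
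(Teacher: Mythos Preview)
Your proposal is correct and follows essentially the same strategy as the paper: both reduce the claim to the equivariance $X_{g\bar M}=\sigma_g(X_{\bar M})$ (hence $F(X_{g\bar M})=F(X_{\bar M})$), and both compare two preimages via the adjunction $\bar F F(-)\cong\bigoplus_{g\in G}g(-)$ together with Krull--Schmidt. The paper carries out the equivariance verification explicitly by unwinding Palu's formula (the bilinear forms $\langle-,-\rangle$, $\langle-,-\rangle_a$ and the bijection $\mathrm{Gr}_{\mathbf e}(Z)\cong\mathrm{Gr}_{g\mathbf e}(gZ)$), which is exactly the step you flag as the main obstacle; your identification of this as the crux is accurate.

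There is one minor organizational difference worth noting. The paper first argues that $F$ preserves indecomposability, decomposes $M$ in $\C_G$, and then matches indecomposable summands of two lifts one at a time. Your route avoids this: you compare lifts globally via the orbit-multiplicity vector $(c_{\mathcal O}(\bar M))_{\mathcal O}$, using the count $\mathrm{mult}_A(\bigoplus_g g\bar M)=|\mathrm{stab}_G(A)|\cdot c_{GA}(\bar M)$. This is a clean repackaging that sidesteps the indecomposability-preservation step and makes explicit that possible nontrivial stabilizers cause no trouble; the paper's pairing argument achieves the same conclusion implicitly. Neither approach gains or loses anything substantial.
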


\begin{proof} Let $X = X_1 \oplus \cdots \oplus X_r$ where all $X_i$ are indecomposable.  Let $Y_1, Y_2 \in\C$ such that $F(Y_1)\cong F(Y_2) \cong X$.
 By using the Krull-Remak-Schmidt property in $\C_G$ and the right adjoint $\bar F: \C_G \to \C$, it is not hard to show that indecomposability is preserved by $F$. It follows, for $i = 1,2$, that $Y_i = Y_{i1}\oplus \cdots \oplus Y_{ir}$ such that $F(Y_{ij})\cong X_{ij}$. Now, for each $1 \le j \le r$, we have $$\oplus_{g \in G}gY_{1j} \cong \bar F F(Y_{1j}) \cong \bar F F (Y_{2j}) \cong \oplus_{g \in G}gY_{2j}.$$
By the Krull-Remak-Schmidt property in $\C$, we get that $Y_{1j} \cong g_j Y_{2j}$ for some $g_j \in G$. Observe that if $Y,Y'$ are indecomposable with $Y \cong Y'$ in $\C$, then $X_Y = X_{Y'}$, hence $F(X_Y) = F(X_{Y'})$. Therefore, we need to prove that for $g \in G$ and $Y$ an indecomposable object in $\C$, we have $F(X_{Y}) = F(X_{gY})$.
  Consider the cluster-tilting object $U = \Gamma(Q,W)$ in $\C$ and let $\Lambda = {\rm End}_\C(U)$. Now, $g$ induces an auto-equivalence of $\C$ and of ${\rm mod}\Lambda$. Let $K_0^{\rm sp}({\rm mod}\Lambda)$ denote the split Grothendieck group of $\mmod \Lambda$. Let $\langle -, - \rangle$ denote the bilinear form $K_0^{\rm sp}({\rm mod}\Lambda) \to \Z$ such that for $M,N \in {\rm mod}\Lambda$, we have $\langle M, N \rangle = {\rm dim}\Hom_\C(M,N)-{\rm dim}\Ext^1_\C(M,N)$ and $\langle -, - \rangle_a$ the antisymmetric bilinear form such that $\langle M, N \rangle_a = \langle M, N \rangle - \langle N, M \rangle$. It follows from a result of Palu \cite{Palu} that $\langle -,- \rangle_a$ descends to the usual Grothendieck group $K_0({\rm mod}\Lambda)$. For $M,N \in {\rm mod}\Lambda$, we have
\begin{eqnarray*}\langle M, N \rangle & = & {\rm dim}\Hom_\C(M,N)-{\rm dim}\Ext^1_\C(M,N)\\
& = &{\rm dim}\Hom_\C(g M,g N)-{\rm dim}\Ext_\C(g M,g N)\\
& = & \langle gM, gN \rangle.
\end{eqnarray*}
In a similar way, we have $\langle {\rm dim}M, {\rm dim}N \rangle_a = \langle {\rm dim}gM, {\rm dim}gN \rangle_a$. Let $U = \oplus_{i \in I}U_i$ be a decomposition of $U$ into indecomposable direct summands. Observe that each $g$ induces a permutation $I \to I$ with no fixed point. For each $U_i$, let $S_i$ be the simple top of the projective $\Lambda$-module $\Hom_\C(U,U_i)$. Let also $x_i$ denote the initial cluster variable associated to $U_i$. Observe that $gU_i \cong U_{gi}$, $g S_i \cong S_{gi}$ and $F(x_i) = F(x_{gi})$. Let $Z = \Hom_\C(U,Y)$ and $gZ = \Hom_\C(gU,gY)\cong \Hom_\C(U,gY)$, where the last isomorphism is an isomorphism of $\Lambda$-modules. Assume first that $Z$ is non-zero, so that $Y$ is not isomorphic to a shift of an indecomposable object in $U$. Observe that $g$ induces an isomorphism of projective varieties

\medskip

\noindent $g: {\rm Gr}_{\bf e}(Z) = \{L \subset Z \mid L \; \text{submodule of}\; Z, {\rm dim} L = {\bf e}\}$

\medskip

\noindent  $\qquad \qquad \qquad\qquad \to {\rm Gr}_{g {\bf  e}}(gZ) = \{L \subset gZ \mid L \; \text{submodule of}\; gZ, {\rm dim} L = {g{\bf e}}\}$

\noindent and hence, $\chi({\rm Gr}_{\bf e}(Z)) = \chi({\rm Gr}_{g \bf e}(gZ))$. Now, we have
\begin{eqnarray*}X_Y &=& \sum_{\bf e}\chi({\rm Gr}_{\bf e}(Z))\prod_ix_i^{\langle {\rm dim}S_i, {\bf e}\rangle_a - \langle S_i, Z\rangle}\\
& = &\sum_{g\bf e}\chi({\rm Gr}_{g{\bf e}}(gZ))\prod_{i}x_{i}^{\langle {\rm dim}S_{gi}, g{\bf e}\rangle_a - \langle S_{gi},gZ\rangle}\\
& = &\sum_{\bf e}\chi({\rm Gr}_{\bf e}(gZ))\prod_{i}x_{g^{-1}i}^{\langle {\rm dim}S_{i}, {\bf e}\rangle_a - \langle S_{i}, gZ\rangle},
\end{eqnarray*}
as one can identify the dimension vectors of the submodules of $gZ$ as the $g{\bf e}$ where ${\bf e}$ runs through the dimension vectors of the submodules of $Z$. Now,
$$X_{gY} = \sum_{\bf e}\chi({\rm Gr}_{\bf e}(gZ))\prod_{i}x_{i}^{\langle {\rm dim}S_{i}, {\bf e}\rangle_a - \langle S_{i}, gZ\rangle}$$
and since $F(x_i) = F(x_{g^{-1}i})$ for all $g \in G$, it follows that $F(X_Y) = F(X_{gY})$. Finally, if $Y=U_i[1]$ is a shift of an indecomposable direct summand of $U$, then $X_Y=x_i$ while $X_{gY} = gx_i$. Clearly, $F(X_Y) = F(X_{gY})$ in this case as well.
\end{proof}

\begin{Remark} One can also define a cluster character $X': \C_G \to \Z[y_1^{\pm 1},\ldots,y_s^{\pm 1}]$ directly.  However, we have $X'_M=X^G_M$ only if, for $F(\bar M) = M$, we have
\begin{equation}
\label{eq chi}
\sum_{F({\bf e'})=\bf e}\chi({\rm Gr}_{\bf e'}(\Hom_{\C}(U,\bar M))) = \chi({\rm Gr}_{\bf e}(\Hom_{\C_G}(FU,M))),
\end{equation}
for all ${\bf e}$; but this is not always true. Indeed, the module $\Hom_{\C_G}(F(U),M)$ may have a submodule of dimension vector ${\bf e}$ such that $\bar M$ has no submodule of dimension vector ${\bf e'}$ with $F({\bf e'}) = {\bf e}$. Even when $F$ is dense, that is, when $F$ is a $G$-covering, we do not know whether the above equality on the Euler characteristics of Grassmannians always holds.
\end{Remark}

\begin{Exam}
Let $Q$ be the quiver $$\xymatrix@R0pt{&2\\ 1\ar[ru]^\za\ar[rd]_\zb &&1'\ar[lu]_{\zb'}\ar[ld]^{\za'}\\&2'}$$ with group $G=\mathbb{Z}/2\mathbb{Z} $ acting by rotation. Then the quiver $Q_G$ is the Kronecker quiver
$\xymatrix{1\ar@<1pt>[r]^{\za}\ar@<-1pt>[r]_{\zb}&2}$. Here both potentials $W,W_G$ are zero. Let $M$ be the representation
$$\xymatrix@R0pt{&k\\ k\ar[ru]^1\ar[rd]_1&&k\ar[lu]_{1}\ar[ld]^{1}\\&k}$$
and let $M_G$ denote its image under $F$. Then
$$M_G =
\xymatrix@R55pt{k^2\ar@<1pt>[r]^-{ \left[\begin{smallmatrix}
        1 & 0 \\ 0 & 1
    \end{smallmatrix}\right]} \ar@<-1pt>[r]_-{ \left[\begin{smallmatrix}
        0 & 1 \\ 1 & 0
    \end{smallmatrix}\right]}&k^2},$$
which is isomorphic to the direct sum
$$M_G \cong
\xymatrix@R55pt{k\ar@<1pt>[r]^1 \ar@<-1pt>[r]_{1}&k} \oplus
\xymatrix@R55pt{k\ar@<1pt>[r]^1 \ar@<-1pt>[r]_{-1}&k}.$$
In particular, $M_G$ has two subrepresentations of dimension vector $e=(1,1)$. On the other hand, $M$ has no subrepresentation with a dimension vector $e'$ such that $F(e')=e$. Thus for $e=(1,1)$, the left hand side of equation (\ref{eq chi}) is zero, while the right hand side is not. Therefore the cluster characters $X'$ and the function $X^G$ are not equal in this example. Moreover, $F$ is not dense.
\end{Exam}

The following yields a third way to get the generalized cluster algebra structure on an orbifold $(S_G, M_G, \mathcal{O})$, using Proposition \ref{PropositionMutationConnected}.

\begin{Prop} Let $\C$ be $G$-mutation connected and let $W$ be non-degenerate. Then the $\mathbb{Z}$-subalgebra of $\mathcal{F}_G$ generated by all $X^G_N$, where $N$ is an indecomposable direct summand of a cluster-tilting object obtained from $U_G$ by a finite sequence of standard mutations, coincides with $\mathcal{A}_G$.
\end{Prop}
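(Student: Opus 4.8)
The plan is to identify the generating sets of the two algebras. Write $\mathcal{B}$ for the $\mathbb{Z}$-subalgebra of $\mathcal{F}_G$ generated by the elements $X^G_N$, where $N$ runs over the indecomposable summands of the cluster-tilting objects of $\C_G$ reachable from $U_G$ by standard mutations. By definition $\mathcal{A}_G$ is generated by the elements $F(x)$ with $x$ a cluster variable coming from $\mathcal{X}_G$; since $W$ is non-degenerate, $\C$ has a cluster structure, so the cluster character $X$ commutes with mutations and its values $\{X_M \mid M \in \mathcal{X}\}$ are precisely the cluster variables of $\cala$. Hence the cluster variables attached to $\mathcal{X}_G$ are exactly $\{X_M \mid M \in \mathcal{X}_G\}$, and $\mathcal{A}_G = \langle F(X_M) \mid M \in \mathcal{X}_G\rangle$. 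It therefore suffices to prove that the indecomposable summands $N$ appearing in $\mathcal{B}$ are exactly the objects $F(M)$ with $M \in \mathcal{X}_G$, and that $X^G_{F(M)} = F(X_M)$.

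First I would set up the object-level correspondence $H \mapsto \overline{F(H)}$ between the vertices of $\mathcal{G}_G$ and the basic cluster-tilting objects of $\C_G$ reachable from $U_G = \overline{F(U)}$. The forward direction uses the corollary identifying, under $F$, the orbit mutation of a $G$-stable cluster-tilting object in $\C$ with the classical mutation of its image in $\C_G$: thus an orbit-mutation path $U \to H_1 \to \cdots$ in $\mathcal{G}_G$ maps to a standard-mutation path $U_G \to \overline{F(H_1)} \to \cdots$ in $\C_G$, and every $F(M)$ with $M \in \mathcal{X}_G$ arises as a summand of a reachable object. For the reverse inclusion I would induct on the length of a standard-mutation sequence from $U_G$: assuming a reachable object equals $\overline{F(H)}$ for some $H \in \mathcal{G}_G$, a standard mutation at a summand $F(H_1)$ produces a unique complement, which by the uniqueness of orbit mutation established inside the proof of Proposition \ref{PropositionMutationConnected} equals $\overline{F(H')}$ for $H'$ the orbit mutation of $GH_1$ in $H$, again a vertex of $\mathcal{G}_G$. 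Because $F$ preserves indecomposability (via the adjoint $\bar F$ and Krull-Schmidt, as in the proof of Proposition \ref{PropClusterChar}) and sends summands to summands, the indecomposable summands of reachable objects are precisely $\{F(M) \mid M \in \mathcal{X}_G\}$.

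The last ingredient is immediate: for $M \in \mathcal{X}_G \subseteq \mathcal{X}$ we may choose the lift $M$ itself, so by the definition of $X^G$ and its well-definedness (Proposition \ref{PropClusterChar}) we get $X^G_{F(M)} = F(X_M)$. Combining the two steps, the generating set of $\mathcal{B}$ equals $\{F(X_M) \mid M \in \mathcal{X}_G\}$, which is the generating set of $\mathcal{A}_G$; hence $\mathcal{B} = \mathcal{A}_G$.

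I expect the reverse inclusion in the object-level correspondence to be the main obstacle: one must guarantee that no cluster-tilting object of $\C_G$ reachable from $U_G$ by standard mutations falls outside the image of $\mathcal{G}_G$ under $F$. This is exactly where $G$-mutation connectedness is indispensable, for without it a standard mutation in $\C_G$ need not lift to an orbit mutation of a $G$-stable object in $\C$; once the uniqueness of the orbit complement is available, the lift is forced and the induction closes.
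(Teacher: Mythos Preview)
Your proof is correct and follows the same strategy as the paper: identify the indecomposable summands of cluster-tilting objects reachable from $U_G$ in $\C_G$ with $\{F(M)\mid M\in\mathcal{X}_G\}$, then use $X^G_{F(M)}=F(X_M)$ to match the two generating sets. The paper's own proof is a three-line sketch asserting exactly this identification; you have supplied the details it omits.

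One small correction to your last paragraph. Your induction for the reverse inclusion does not in fact use the $G$-mutation connected hypothesis: given $\overline{F(H)}$ with $H\in\mathcal{G}_G$, the orbit mutation $H'$ of $H$ at $GH_1$ always exists in $\C$ and, by the corollary on orbit mutations together with uniqueness of the complement in $\C_G$, its image $\overline{F(H')}$ is forced to be the standard mutation of $\overline{F(H)}$ at $F(H_1)$. So the bijection between $\mathcal{G}_G$ and the reachable cluster-tilting objects of $\C_G$ holds unconditionally. The $G$-mutation connected hypothesis enters elsewhere: it guarantees $\mathcal{X}_G\subseteq\mathcal{X}$, so that each $X_M$ with $M\in\mathcal{X}_G$ is a genuine cluster variable of $\cala$ (this is what the definition of $\mathcal{A}_G$ requires and what lets you write $\mathcal{A}_G=\langle F(X_M)\mid M\in\mathcal{X}_G\rangle$). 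Your overall argument is unaffected; only the attribution of where the hypothesis is used needs adjusting.
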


\begin{proof}
Since $\C$ is $G$-mutation connected, one can identify the indecomposable direct summands of the cluster-tilting objects obtained from $U_G$ by a finite sequence of standard mutations by the $F(N)$ where $N \in \mathcal{X}_G$. Now, $X^G_{F(N)} = F(X_N)$, and $F(\mathcal{X}_G)$ is the set of generalized cluster variables for $\mathcal{A}_G$.
\end{proof}

Let $\tau$ denote the Auslander-Reiten translation in $\C$. When $k = \mathbb{C}$, the cluster character $X: \C \to \mathbb{Z}[x_1^{\pm 1}, \ldots, x_n^{\pm 1}]$  is such that if $$M_1 \to M_2 \to M_3 \to M_1[1]$$
is an Auslander-Reiten triangle in $\C$, then $X_{M_1}X_{M_3} = 1 + X_{M_2}$; see \cite{DG}. When $F$ is dense, we get a function $X^G: \C_G \to \mathbb{Z}[y_1^{\pm 1}, \ldots, y_s^{\pm 1}]$ as defined in the previous section. A natural question arises here. Is it a cluster character? The next results answers the latter question affirmatively.

\begin{Prop} Assume that $F$ is dense. Then the function $X^G: \C_G \to \mathbb{Z}[y_1^{\pm 1}, \ldots, y_s^{\pm 1}]$ is a cluster character. If $k = \mathbb{C}$ and $L \to M \to N \to L[1]$ is an Auslander-Reiten triangle in $\C_G$, then $X^G_LX^G_N = X^G_M+1$.
\end{Prop}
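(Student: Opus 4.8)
The plan is to verify the three defining axioms of a cluster character for $X^G$ and then to treat the Auslander--Reiten relation separately by lifting it through $F$. Throughout I use that $F$ dense means $F$ is a $G$-covering, so that $\calc_G^F = \C_G$ and $X^G$ is genuinely defined on all of $\C_G$; its independence of the chosen lift and its constancy on isomorphism classes are exactly Proposition \ref{PropClusterChar}. Multiplicativity is then immediate: given $M = F(\bar M)$ and $N = F(\bar N)$, additivity of the functor $F$ gives $M \oplus N \cong F(\bar M \oplus \bar N)$, and since $X$ is a cluster character in $\C$ and the ring homomorphism $F\colon \mathbb{Z}[\mathbf{x}^{\pm1}] \to \mathbb{Z}[\mathbf{y}^{\pm1}]$ is multiplicative, $X^G_{M \oplus N} = F(X_{\bar M \oplus \bar N}) = F(X_{\bar M})F(X_{\bar N}) = X^G_M X^G_N$.

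For the exchange axiom, suppose $\dim_k \Hom_{\C_G}(M, N[1]) = 1$ and lift $M,N$ to $\bar M, \bar N$. Since $F$ is a triangle functor, $F(\bar N[1]) \cong N[1]$, and the $G$-precovering isomorphism of Proposition \ref{prop 4.1} reads
\[ \bigoplus_{g \in G} \Hom_\C(\bar M, g\bar N[1]) \cong \Hom_{\C_G}(M, N[1]). \]
As the right-hand side is one-dimensional, exactly one summand is nonzero and one-dimensional; replacing $\bar N$ by the appropriate $g\bar N$ (which changes neither $N$ nor $X^G_N$, by the natural isomorphisms $\phi_g\colon F\circ g \to F$) we may assume it is the summand indexed by the identity, so $\dim_k\Hom_\C(\bar M, \bar N[1]) = 1$. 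The cluster character of \cite{Palu} then gives $X_{\bar M}X_{\bar N} = X_{\bar B} + X_{\bar B'}$, where $\bar B, \bar B'$ are the middle terms of the two non-split triangles with end terms $\bar M, \bar N$. Applying the exact functor $F$ to these triangles yields triangles in $\C_G$ with end terms $M, N$ and middle terms $F\bar B, F\bar B'$. I would then check that these are the two non-split triangles of $\C_G$ by verifying that the images of the two connecting morphisms are nonzero: under the displayed isomorphism the identity-component generator maps to a generator of $\Hom_{\C_G}(M, N[1])$, and dually the nonzero summand of $\bigoplus_g \Hom_\C(\bar N, g\bar M[1]) \cong \Hom_{\C_G}(N,M[1])$ is again the identity one, located via the $2$-Calabi--Yau duality $\Hom_\C(\bar N, g\bar M[1]) \cong D\Hom_\C(\bar M, g^{-1}\bar N[1])$ in $\C$. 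Hence $X^G_M X^G_N = F(X_{\bar B}) + F(X_{\bar B'}) = X^G_B + X^G_{B'}$.

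Finally, for $k = \mathbb{C}$ and an Auslander--Reiten triangle $L \to M \to N \to L[1]$ in $\C_G$, I would lift it: choose $\bar N$ indecomposable with $F\bar N \cong N$ (density together with the preservation of indecomposability noted in the proof of Proposition \ref{PropClusterChar}), and take the Auslander--Reiten triangle $\bar L \to \bar M \to \bar N \to \bar L[1]$ ending at $\bar N$ in $\C$, where $\bar L = \tau \bar N$. Applying $F$ and using that $\tau = [1]$ in the $2$-Calabi--Yau setting commutes with the triangle functor $F$, one obtains a triangle with end terms $L \cong \tau_G N$ and $N$; showing it is again almost split forces it to coincide with the given triangle and $F\bar M \cong M$. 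The relation $X_{\bar L} X_{\bar N} = 1 + X_{\bar M}$ of \cite{DG} then pushes down through $F$, with $F(1) = 1$, to $X^G_L X^G_N = X^G_M + 1$.

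The main obstacle is the claim that $F$ carries Auslander--Reiten triangles to Auslander--Reiten triangles. Nonvanishing of the pushed-down connecting morphism follows from the precovering isomorphism as in the exchange step, but minimality (the almost split property) must be checked: every non-retraction $X \to N$ should factor through $F\bar M \to N$, which I would verify by lifting it through $\bigoplus_{g}\Hom_\C(g\bar X, \bar N) \cong \Hom_{\C_G}(X, N)$ and applying the almost split property of the triangle ending at $\bar N$ summand by summand, the delicate case being $X \cong N$ where the components indexed by the stabilizer of $\bar N$ must be controlled. This is a covering-theoretic statement in the spirit of Proposition \ref{PropARQUivers} and of Bongartz--Gabriel \cite{BG}, and is the technical heart of the argument; the identification of the middle terms of the exchange triangles in $\C_G$ is a milder instance of the same phenomenon.
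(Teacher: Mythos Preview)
Your proposal is correct and follows essentially the same route as the paper: constancy via Proposition~\ref{PropClusterChar}, multiplicativity by pushing $X_{\bar M\oplus\bar N}=X_{\bar M}X_{\bar N}$ through the ring map $F$, the exchange axiom by using the $G$-precovering isomorphism to locate the unique $g$ with $\Hom_\C(\bar M,g\bar N[1])$ one-dimensional and then applying $F$ to the two non-split triangles in $\C$, and the Auslander--Reiten relation by lifting the triangle and invoking the Dominguez--Geiss formula in $\C$. The paper dispatches the last step by citing Proposition~\ref{PropARQUivers}; you correctly recognise that the relevant content is the preservation of Auslander--Reiten triangles under $F$, and the summand-by-summand factorisation argument you sketch is precisely the one given there, with the delicate point about the stabiliser of $\bar N$ being exactly the place where that proof uses that $G$ acts freely on indecomposables.
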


\begin{proof} Notice now that since $F$ is dense, the construction of $X^G$ extends to any object of $\C_G$. It is clear that $X^G$ is constant within an isomorphism class. Let $M_1, M_2 \in \C_G$. Then $M_i=F(N_i)$ for $N_1,N_2 \in \C$. Therefore, $X^G_{M_1 \oplus M_2} = F(X_{N_1 \oplus N_2}) = F(X_{N_1}X_{N_2}) = F(X_{N_1})F(X_{N_2})=X^G_{M_1}X^G_{M_2}$. Assume now that $\Hom_{\C_G}(M_1, M_2[1])$ one dimensional. We have
$$\oplus_{g \in G}\Hom_\C(N_1,gN_2[1]) \cong \Hom_{\C_G}(M_1, M_2[1]).$$
Therefore, there is exactly one $g \in G$ with $\Hom_\C(N_1,gN_2[1])$ one dimensional. By the $2$-Calabi-Yau property of $\C$, we have that $\Hom_\C(gN_2,N_1[1])$ is one dimensional and $\Hom_\C(g'N_2,N_1[1])=0$ if $g' \ne g$. Consider the non-split exact triangles
$$gN_2 \to B \to N_1 \to gN_2[1]$$
$$N_1 \to B' \to gN_2 \to N_1[1]$$
in $\C$. We know that $X_{N_1}X_{gN_2} = X_B + X_{B'}$. Since $F$ is exact, we get exact triangles
$$\eta_1: \quad F(gN_2) \to F(B) \to F(N_1) \to F(gN_2)[1]$$
$$\eta_2: \quad F(N_1) \to F(B') \to F(gN_2) \to F(N_1)[1]$$
where $F(gN_2) \cong F(N_2) = M_2$ and $F(N_1) = M_1$. Therefore \begin{eqnarray*}X^G_{M_1}X^G_{M_2} &=& F(X_{N_1})F(X_{N_2})\\ &=& F(X_{N_1})F(X_{gN_2})\\ &=& F(X_{N_1}X_{gN_2})\\ & = & F(X_B + X_{B'})\\ & = & F(X_B) + F(X_{B'})\\& =& X^G_{F(B)} + X^G_{F(B')}.\end{eqnarray*} Since $\eta_1, \eta_2$ are clearly non-split, we get that $X^G$ is a cluster character.

The second part of the proposition about Auslander-Reiten triangles follows from Proposition \ref{PropARQUivers} and the remark above this proposition.
\end{proof}

\subsection{The finite representation type}
We assume that $G$ is an admissible group of automorphisms of $(Q,W)$ where $(Q,W)$ is Jacobi-finite. We have seen that we have an induced functor $F: \C \to \C_G$ which is a $G$-precovering. We call a Hom-finite Krull-Schmidt $k$-category $\mathcal{B}$ of \emph{finite type} if $\mathcal{B}$ has finitely many indecomposable objects, up to isomorphism.

\begin{Prop} \label{FiniteType}
The category $\C$ is of finite type if and only if the category $\C_G$ is of finite type.
\end{Prop}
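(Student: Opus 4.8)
The plan is to deduce both implications from the adjoint pair $(F, \bar F)$ of Lemma \ref{Adjoint}, using only the Krull-Schmidt property of the two Hom-finite $2$-Calabi-Yau categories $\C$ and $\C_G$. The whole argument rests on two symmetric observations: every indecomposable object of $\C$ is a direct summand of $\bar F$ applied to an object of $\C_G$, and every indecomposable object of $\C_G$ is a direct summand of $F$ applied to an object of $\C$. Granting these, the two finiteness statements become a routine bookkeeping with the Krull-Schmidt theorem, so I do not expect a genuine difficulty.

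First I would record the two relevant composition formulas. Lemma \ref{Adjoint} already gives $\bar F F M \cong \oplus_{g \in G} gM$, so in particular $M$ is a direct summand of $\bar F F M$ (namely the summand indexed by the identity of $G$). I then need the dual formula $F \bar F N \cong N^{|G|}$; this is not literally stated in Lemma \ref{Adjoint}, but it follows by exactly the same reasoning from the push-down and pull-up functors, since for $N \in \Rep(Q_G)$ one computes degreewise $(\pi_\lambda \pi^\lambda N)(Gx) = \oplus_{y \in Gx} N(Gx) \cong N(Gx)^{|G|}$, naturally in $N$ and compatibly with the differentials, so the identity descends to $\C_G$. Consequently $N$ is a direct summand of $F \bar F N$.

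With these in hand, suppose $\C$ is of finite type, say with indecomposables $M_1, \ldots, M_t$ up to isomorphism. For any indecomposable $N \in \C_G$, Krull-Schmidt writes $\bar F N$ as a finite direct sum of copies of the $M_i$, whence $F \bar F N$ is a finite direct sum of copies of $F M_1, \ldots, F M_t$; since $N$ is an indecomposable summand of $F \bar F N$, it is isomorphic to one of the finitely many indecomposable summands of $F M_1 \oplus \cdots \oplus F M_t$, and $\C_G$ is of finite type. The converse is completely symmetric: if $\C_G$ has indecomposables $N_1, \ldots, N_s$, then for an indecomposable $M \in \C$ the object $\bar F F M \cong \oplus_{g \in G} gM$ is a finite direct sum of copies of $\bar F N_1, \ldots, \bar F N_s$, so the indecomposable summand $M$ of $\bar F F M$ lies among the finitely many indecomposable summands of $\bar F N_1 \oplus \cdots \oplus \bar F N_s$. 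The only non-formal point is the composition formula $F \bar F N \cong N^{|G|}$, which is the main thing to verify carefully; once it is available, everything else is simply the Krull-Schmidt theorem applied to finitely many objects.
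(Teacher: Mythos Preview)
Your argument is correct and takes a genuinely different route from the paper. The paper reduces to the module categories via the equivalences $\C/U[1]\simeq\mmod B$ and $\C_G/F(U)[1]\simeq\mmod B_G$, and then invokes Gabriel's classical results \cite[Lemmas 3.3 and 3.4]{Gabriel} on $G$-coverings of representation-finite algebras, using the covering $B\to B_G$ of Corollary~\ref{CovJacobian}. Your approach stays entirely inside the triangulated categories $\C$ and $\C_G$, using only the composition formulas $\bar F F M\cong\oplus_{g\in G}gM$ and $F\bar F N\cong N^{|G|}$ together with Krull--Schmidt; this is more elementary and self-contained, and in fact does not visibly use the standing hypothesis that ${\rm char}\,k\nmid|G|$, which the paper needs for Gabriel's Lemma~3.4. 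The one point you flag yourself---the formula $F\bar F N\cong N^{|G|}$---is indeed not stated in Lemma~\ref{Adjoint}, but your degreewise computation $(\pi_\lambda\pi^\lambda N)(Gx)=\oplus_{y\in Gx}N(Gx)\cong N(Gx)^{|G|}$ is correct since $G$ acts freely on $Q_0$, and it passes to the DG and cluster-category levels for the same reasons as its companion formula. You might also note, for completeness, that $\bar F$ descends to a functor $\C_G\to\C$ because pull-up along $\pi$ multiplies total homology dimension by $|G|$ and hence preserves the subcategory ${\rm f.d.}$; the paper uses this implicitly elsewhere but never states it.
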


\begin{proof} By \cite[Cor. 4.4]{KZ} and \cite{BMR}, the category ${\rm mod}B$ of finite dimensional representations of $B$ is equivalent to $\C/U[1]$ where $U$ is the cluster-tilting object corresponding to $\Gamma(Q,W)$. Hence, $B$ is of finite type if and only if $\C$ is of finite type. Similarly, $B_G$ is of finite type if and only if $\C_G$ is of finite type. Now, by Corollary~\ref{CovJacobian} we have a $G$-covering $B \to B_G$. Since the characteristic of $k$ does not divide $|G|$, it follows from a result of Gabriel \cite[Lemma 3.4]{Gabriel} that if $B$ is of finite type, then $B_G$ is of finite type. Finally, by \cite[Lemma 3.3]{Gabriel}, if $B_G$ is of finite type, then the algebra $B$ is of finite type.
\end{proof}

\begin{Prop} \label{PropARQUivers}
Assume that one of $\C, \C_G$ is of finite type. Then $F : \C \to \C_G$ is a $G$-covering that preserves indecomposability and Auslander-Reiten triangles. In particular, $F$ induces a $G$-covering of Auslander-Reiten quivers of $\C$ and $\C_G$.
\end{Prop}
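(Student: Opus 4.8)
The plan is to deduce everything from covering theory after reducing to the module-category level. First I would invoke Proposition \ref{FiniteType}: the hypothesis that one of $\C,\C_G$ is of finite type forces both to be of finite type. By \cite{KZ,BMR} we have equivalences $\mathrm{mod}\,B\simeq\C/U[1]$ and $\mathrm{mod}\,B_G\simeq\C_G/U_G[1]$, and by Corollary \ref{CovJacobian} the functor $F$ restricts to the $G$-covering of finite-dimensional algebras $\pi\colon B\to B_G$. Since $F$ is already a $G$-precovering by Proposition \ref{prop 4.1}, the only three things left to establish are that $F$ is dense, that it preserves indecomposability, and that it sends Auslander--Reiten triangles to Auslander--Reiten triangles; the statement about the AR quivers is then formal.

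The heart of the matter is preservation of indecomposability. For an indecomposable $M\in\C$, the precovering isomorphism of Proposition \ref{prop 4.1} gives a $G$-graded ring isomorphism
\[
\End_{\C_G}(FM)\;\cong\;\bigoplus_{g\in G}\Hom_\C(M,gM),
\]
whose identity component $\End_\C(M)$ is local (as $\C$ is Krull--Schmidt and $k$ is algebraically closed, $\End_\C(M)/\mathrm{rad}\,\End_\C(M)\cong k$). I would show this graded ring is local, so that $FM$ is indecomposable. This is equivalent to $gM\not\cong M$ for all $g\neq 1$: if some $g$ of prime order fixed $M$, an isomorphism $M\cong gM$ would contribute a unit in a nonidentity graded component, and because $|G|$ is invertible in $k$ the graded ring would acquire a nontrivial idempotent, contradicting indecomposability. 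Thus the key lemma I must prove is that $G$ acts \emph{freely} on the isomorphism classes of indecomposable objects of $\C$.

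The main obstacle is precisely this freeness. My approach is to transport the question to $\mathrm{mod}\,B$, where $g$ acts by the admissible automorphism permuting the vertices of $Q$ (equivalently the idempotents of $B$) in orbits of size $|g|$, with no fixed vertex and no fixed arrow. Supposing $gN\cong N$ for an indecomposable $B$-module $N$ with $g$ of prime order, I would derive a contradiction from admissibility by locating a $g$-fixed piece of combinatorial data attached to $N$: since $g$ commutes with $\tau$ and with the radical filtration, it stabilises $\mathrm{top}\,N$, $\mathrm{soc}\,N$, and the AR-theoretic position of $N$, and the absence of a $g$-fixed vertex or arrow forbids a ``center'' for such a symmetry. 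This is transparent for string modules, where a $g$-invariant string would need a fixed middle vertex or middle arrow; the general finite-type case I would treat via the action of $\langle g\rangle$ on the finite AR quiver and its compatibility with $\tau$. Granting freeness, density follows by the standard argument: using the biadjoint $\bar F$ with $\bar F F M\cong\bigoplus_{g}gM$ (Lemma \ref{Adjoint}) and the ambidexterity of the adjunction for the finite group $G$, every indecomposable $N\in\C_G$ is a direct summand of $F\bar F N$, hence of some $F M_i$ with $M_i$ indecomposable; the freeness-driven indecomposability of $F M_i$ then forces $N\cong F M_i$, so $F$ is surjective on objects and therefore a $G$-covering.

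Finally, for Auslander--Reiten triangles I would apply the exact functor $F$ to an AR triangle $L\to M\to N\to L[1]$ of $\C$; since $FL$ and $FN$ are indecomposable by the previous step, it suffices to verify that the pushed-down triangle is almost split. I would check this directly from the precovering Hom-isomorphisms of Proposition \ref{prop 4.1}, transporting the defining lifting property of the almost split triangle in $\C$ to $\C_G$ one $G$-component at a time. Preservation of $\tau$ and of irreducible maps then drops out, and combining this with the fact that the indecomposables of $\C_G$ are exactly the $G$-orbits of indecomposables of $\C$ shows that the induced morphism of translation quivers $\Gamma(\C)\to\Gamma(\C_G)$ realises $\Gamma(\C_G)$ as $\Gamma(\C)/G$, i.e.\ $F$ induces a $G$-covering of Auslander--Reiten quivers, as claimed.
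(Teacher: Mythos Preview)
Your overall architecture matches the paper's: reduce to finite type on both sides via Proposition~\ref{FiniteType}, pass to the module categories $\mathrm{mod}\,B$ and $\mathrm{mod}\,B_G$ using $\C/U[1]\simeq\mathrm{mod}\,B$, and then deduce density and preservation of Auslander--Reiten triangles once indecomposability is preserved. Your treatment of the AR-triangle step is essentially the paper's argument: push down the triangle by the exact functor $F$, use that $FL,FN$ are indecomposable, and lift a test non-isomorphism through the precovering decomposition to factor it component-wise. Your density argument via the ambidextrous adjunction is a legitimate variant of what the paper does (the paper instead reads density off the module-category description directly).

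The genuine gap is at the step you yourself flag as ``the key lemma'': that $G$ acts freely on isomorphism classes of indecomposables of $\C$ (equivalently, that $F$ preserves indecomposability). Your sketch reduces to prime-order $g$ and notes that a $g$-fixed string would need a fixed middle vertex or arrow, then says the general finite-type case ``I would treat via the action of $\langle g\rangle$ on the finite AR quiver and its compatibility with $\tau$.'' That is not a proof; nothing about $\tau$-compatibility or the radical filtration by itself rules out a $g$-fixed indecomposable when the support is a $g$-stable union of several vertex orbits. This freeness statement is exactly the content of the theorem of Mart\'{\i}nez-Villa and de la Pe\~na on automorphisms of representation-finite algebras \cite{dlPMV}, and the paper simply invokes it (Theorem~4 in \cite{dlPMV}) to conclude that the induced push-down $\tilde F:\mathrm{mod}\,B\to\mathrm{mod}\,B_G$ sends indecomposables to indecomposables. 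That result is not trivial and does not follow from the soft symmetry considerations you outline; you should either cite it, as the paper does, or supply a real argument (which amounts to reproving a substantial portion of \cite{dlPMV}). Once that is in place, the rest of your outline goes through and coincides with the paper's proof.
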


\begin{proof} By Proposition \ref{FiniteType}, we know that $\C$ is of finite type. As seen in the proof of Proposition \ref{FiniteType}, this implies that both $B, B_G$ are of finite type.
 Let $U$ be the cluster-tilting object of $\C$ corresponding to $\Gamma(Q,W)$.
 Let $\mathcal{I}$ be the ideal of $\C$ of the morphisms which factorize through $U[1]$ and $\mathcal{J}$ be the ideal of $\C_G$ of the morphisms which factorize through $FU[1]$. One can check that for any $M,N \in \C$, we have isomorphisms $$\Hom_\mathcal{J}(FM,FN) \to \oplus_{g \in G}\Hom_\mathcal{I}(M,gN)$$ and $$\Hom_\mathcal{J}(FM,FN) \to \oplus_{g \in G}\Hom_\mathcal{I}(gM,N).$$
Thus, according to the definition in Section \ref{Covering} and the fact that $\textup{mod}\,B=\C/\mathcal{I}$ and $\textup{mod}\,B_G=\C_G/\mathcal{J}$, we see that
 $F$ induces a $G$-precovering $\tilde F: {\rm mod}B \to {\rm mod}B_G$. By Theorem 4 in \cite{dlPMV}, the functor $\tilde F$ sends indecomposable objects to indecomposable objects.
 Consequently, if $M$ is an indecomposable object in $\calc\setminus \textup{add}\, U[1]$  then $ F M$ is indecomposable in $\calc_G$. On the other hand, if $M$ is an indecomposable summand of $U[1]$ then $FM$ is an indecomposable summand of $F(U)[1]$. This shows that all indecomposable objects of $\calc_G$ are isomorphic to an object in the image of $F$. Thus the $G$-precovering $F$ is dense and hence a $G$-covering.

Now
assume that $$\eta: \quad L \stackrel{u}{\to} M \stackrel{v}{\to} N \to L[1]$$ is an Auslander-Reiten triangle in $\C$. The exact functor $F$ sends this distinguished triangle to the distinguished triangle $$F(\eta): \quad FL \stackrel{Fu}{\to} FM \stackrel{Fv}{\to} FN \to FL[1].$$ We know that $FL, FN$ are indecomposable from what was shown above. Let $Z$ be any indecomposable object in $\C_G$ and $\bar Z$ be such that $F(\bar Z) = Z$. Let $f: Z \to FN$ be a non-isomorphism. By the adjunction property of Lemma \ref{Adjoint}, we get an isomorphism
$$\Hom_{\C_G}(Z,FN) \to \oplus_{g \in G}\Hom_{\C}(\bar Z,gN).$$
Therefore, there exists $(f_g)_{g \in G}$ with $f=\sum_{g \in G}F(f_g)$. Now, recall that the $gN$ for $g \in G$ are pairwise non-isomorphic since $G$ acts freely on the indecomposable objects of $\C$. Therefore, there is at most one $f_g$ that is an isomorphism. Since $F$ is exact and $FX$ is non-zero whenever $X$ is non-zero, we see that a non-isomorphism is sent to a non-isomorphism through $F$. Now if one $f_g$ is an isomorphism, then the morphism $\sum_{g \in G}F(f_g) = f$ is the sum of an isomorphism and a nilpotent endomorphism, thus $f$ is an isomorphism, a contradiction. Therefore, no $f_g$ is an isomorphism. Since for $g \in G$, we have that $f_g: \bar Z \to gN$ is a non-isomorphism between indecomposable objects and $g\eta$ is an Auslander-Reiten triangle, we get that $f_g$  factors through $gv$, meaning that $F(f_g)$ factors through $F(gv)=F(v)$. Since each $F(f_g)$ factors through $F(v)$, we see that $f$ factors through $F(v)$. This proves that $F(\eta)$ is an Auslander-Reiten triangle. Therefore, $F$ sends Auslander-Reiten triangles to Auslander-Reiten triangles and the second part of the statement follows.
\end{proof}

\begin{Prop} \label{PropLift}Let $\C$ be of finite type
and let $V \in \C_G$ be cluster-tilting. Then there exists a cluster-tilting object $Z \in \C$ such that $F Z = V$.
\end{Prop}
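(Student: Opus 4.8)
The plan is to combine the density of $F$ with the Hom-isomorphisms defining the $G$-covering, and to produce $Z$ as a $G$-stable lift of $V$. Since $\C$ is of finite type, Proposition \ref{PropARQUivers} tells us that $F\colon \C \to \C_G$ is a $G$-covering which preserves indecomposability, and that $G$ acts freely on the (finitely many) indecomposable objects of $\C$. First I would decompose $V$ into indecomposable summands and, using that $F$ is dense, choose for each indecomposable summand $V_i$ of $V$ an indecomposable object $\tilde V_i \in \C$ with $F\tilde V_i \cong V_i$. If $V_i \not\cong V_j$ then $\tilde V_i$ and $\tilde V_j$ necessarily lie in distinct $G$-orbits, since $F(g\tilde V_i) \cong F\tilde V_i$ for every $g \in G$. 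I would then set $Z = \bigoplus_i \bigoplus_{g \in G} g\tilde V_i$, which is visibly $G$-stable.

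Next I would transport the rigidity of $V$ to $Z$. By Proposition \ref{prop 4.1}, for all $M, N \in \C$ there is a functorial isomorphism $\Hom_{\C_G}(FM, FN[1]) \cong \bigoplus_{g \in G}\Hom_{\C}(M, (gN)[1])$, using that $F$ is a triangle functor and that each $g$ commutes with the shift $[1]$. Applied to the summands $V_i = F\tilde V_i$, the hypothesis $\Hom_{\C_G}(V, V[1]) = 0$ then yields $\Hom_{\C}(\tilde V_i, (g\tilde V_j)[1]) = 0$ for all indices $i,j$ and all $g \in G$. Since each $g \in G$ is an autoequivalence of $\C$ commuting with $[1]$, it follows that $\Hom_{\C}(g\tilde V_i, (h\tilde V_j)[1]) \cong \Hom_{\C}(\tilde V_i, (g^{-1}h\tilde V_j)[1]) = 0$ for all $g,h \in G$, so that $Z$ is rigid.

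Finally I would verify that $Z$ is cluster-tilting and that $FZ \cong V$. Because $G$ acts freely on indecomposables, each orbit $\{g\tilde V_i \mid g \in G\}$ consists of $|G|$ pairwise non-isomorphic objects, and distinct orbits stay disjoint by the first paragraph; hence $Z$ has exactly $s|G| = n$ non-isomorphic indecomposable summands, where $n = |Q_0|$ and $s$ is the number of orbits. In a Hom-finite $2$-Calabi--Yau Krull--Schmidt category, a rigid object with the maximal number (namely the rank $n$) of non-isomorphic indecomposable summands is cluster-tilting, by \cite{IY}, so $Z$ is a $G$-stable cluster-tilting object; applying $F$ orbitwise, each $G$-orbit of summands of $Z$ maps onto the corresponding summand of $V$ with multiplicity $|G|$, so that $FZ \cong V$. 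The hard part will be this last maximality step: lifting summands and closing up under $G$ must be shown to yield a rigid object with the \emph{full} complement of $n$ indecomposable summands rather than too few, and it is exactly the freeness of the $G$-action on indecomposable objects, established in the proof of Proposition \ref{PropARQUivers}, together with the summand count, that forces $Z$ to be maximal rigid and hence cluster-tilting; the remaining point to check is that the multiplicities on the $V$-side are those matched by the $G$-stable object $Z$.
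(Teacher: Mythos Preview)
Your construction of $Z$ by lifting the indecomposable summands of $V$ and closing up under $G$, together with your rigidity argument via the precovering isomorphism, is exactly what the paper does. The genuine difference is in the step showing $Z$ is cluster-tilting. You argue by \emph{counting}: since $V$ is cluster-tilting in $\C_G$ it has $s=|(Q_G)_0|$ non-isomorphic indecomposable summands, and freeness of the $G$-action on indecomposables (which you correctly extract from Proposition~\ref{PropARQUivers}) gives $Z$ exactly $s|G|=n$ summands, whence $Z$ is cluster-tilting by the characterization recalled in Section~\ref{sect cto}. The paper instead verifies the defining property directly: for indecomposable $Y$ with $\Hom_\C(Z,Y[1])=0$, the precovering yields $\Hom_{\C_G}(V,FY[1])=0$, so $FY$ is a summand of $V$; applying the right adjoint $\bar F$ (Lemma~\ref{Adjoint}) then gives that $\oplus_g gY$, and hence $Y$, is a summand of $Z$. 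The paper's route avoids invoking both the summand-count characterization and the freeness of the $G$-action, while yours makes the $G$-stable and basic structure of $Z$ explicit. One small point: in either approach one actually obtains $FZ\cong V^{|G|}$ rather than $FZ\cong V$, a looseness already present in the statement; your last clause acknowledges the multiplicity $|G|$ but then writes $FZ\cong V$, which should read $FZ\cong V^{|G|}$.
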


\begin{proof} Since $F$ is dense, there exists $\bar V \in \C$  such that $F \bar V = V$. We need to prove that $Z:=\oplus_{g \in G}g\bar V$ is cluster-tilting. We have $\Hom_{\C_G}(V, V[1])=0$. This means $\oplus_{g \in G}\Hom_{\C}(\bar V,g \bar V[1])=0$. Similarly, we get $$\oplus_{g \in G}\Hom_{\C}(g'\bar V,g \bar V[1])=0$$ for any $g' \in G$. In particular, $Z$ is rigid. Let $Y \in \C$ be indecomposable with $\Hom_{\C}(Z,Y[1])=0$. Thus, $\oplus_{g \in G}\Hom_{\C}(g\bar V,Y[1])=0$. Then $\Hom_{\C_G}(F\bar V,FY[1])=0$. Hence, we get $\Hom_{\C_G}(V,FY[1])=0$. Since $V$ is cluster-tilting, we know that $FY$ is a summand of $V$. By applying the adjoint $\bar F: \C_G \to \C$ to $F$, we get that $\bar F F Y \cong \oplus_{g \in G}gY$ is a direct summand of $\bar F V = \bar F F \bar V \cong \oplus_{g \in G}g \bar V = Z$. In particular, $Y$ is a direct summand of $Z$. This proves that $Z$ is cluster-tilting.
\end{proof}

In what follows, we call $\C$ of \emph{acyclic type} if there is a cluster-tilting object $M$ of $\C$ such that the quiver of ${\rm End}_{\C}(M)$ has no oriented cycles. By \cite{KR}, this means that $\C$ is equivalent to the (classical) cluster category of a finite quiver without oriented cycles. Observe also that if $(Q,W)$ is non-degenerate and $\C$ is of finite type, then $\C$ is just the (classical) cluster category of a quiver of Dynkin type.

\begin{Prop} Assume that $\C$ is of acyclic and of finite type. Then the indecomposable rigid objects in $\C_G$ are precisely the $\{F(V_i) \mid i \in I\}$, where the $\{V_i \mid i \in I\}$ form a complete set of the representatives of the $G$-orbits of those indecomposable rigid objects $V$ in $\C$ with $\Hom_{\C}(V,gV[1])=0$ for all $g \in G$. Therefore, the generalized cluster variables in $\cala_G$ can be obtained by the following methods.
\begin{enumerate}[$(1)$]
    \item The $F(X_{V_i})$ for $i \in I$.
    \item The $X^G_Y$ where $Y$ is rigid in $\C_G$.
\end{enumerate}
\end{Prop}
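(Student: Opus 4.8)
The plan is to first establish the asserted bijection between $G$-orbits of the relevant indecomposable rigid objects of $\C$ and the indecomposable rigid objects of $\C_G$, and then to read off the two descriptions of the generalized cluster variables from it. By Proposition \ref{FiniteType} the hypothesis forces $\C_G$ to be of finite type as well, so by Proposition \ref{PropARQUivers} the functor $F\colon\C\to\C_G$ is a $G$-covering that preserves indecomposability and sends nonzero objects to nonzero objects. First I would check that $F$ carries the relevant objects of $\C$ to indecomposable rigid objects of $\C_G$: if $V$ is indecomposable rigid with $\Hom_\C(V,gV[1])=0$ for all $g\in G$, then $FV$ is indecomposable by Proposition \ref{PropARQUivers}, and since $F$ is a $G$-precovering (Proposition \ref{prop 4.1}) and triangulated,
\[
\Hom_{\C_G}(FV,FV[1])\cong\bigoplus_{g\in G}\Hom_\C(V,g(V[1]))=\bigoplus_{g\in G}\Hom_\C(V,(gV)[1])=0,
\]
so $FV$ is rigid. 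Moreover $V\mapsto FV$ is injective on $G$-orbits: if $FV_i\cong FV_j$, then applying the adjoint $\bar F$ and using Lemma \ref{Adjoint} gives $\bigoplus_{g}gV_i\cong\bar F FV_i\cong\bar F FV_j\cong\bigoplus_{g}gV_j$, whence $V_i\cong gV_j$ for some $g$ by Krull--Schmidt, i.e. $V_i$ and $V_j$ lie in the same orbit.

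For surjectivity, let $Y$ be an indecomposable rigid object of $\C_G$. Since $F$ is dense there is $\bar Y\in\C$ with $F\bar Y\cong Y$; decomposing $\bar Y$ into indecomposables and using that $F$ preserves indecomposability and nonvanishing, the indecomposability of $Y$ forces $\bar Y$ itself to be indecomposable. The rigidity of $Y$ then gives $0=\Hom_{\C_G}(Y,Y[1])\cong\bigoplus_{g\in G}\Hom_\C(\bar Y,g\bar Y[1])$, so every summand vanishes; in particular $\bar Y$ is rigid and satisfies the orbit-vanishing condition. Hence $\bar Y$ is one of the $V_i$ up to the action of $G$, and $Y\cong FV_i$. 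Combined with the previous paragraph this yields the claimed bijection between the $G$-orbits $\{GV_i\mid i\in I\}$ and the indecomposable rigid objects of $\C_G$.

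It remains to deduce the two descriptions of the generalized cluster variables. By Proposition \ref{PropClusterChar} (and since $F$ is dense) the function $X^G$ is defined on all of $\C_G$, satisfies $X^G_{FM}=F(X_M)$, and is multiplicative on direct sums; in particular $X^G_{FV_i}=F(X_{V_i})$. Every generalized cluster variable of $\cala_G$ has the form $F(X_M)$ for $M$ an indecomposable summand of a reachable $G$-stable cluster-tilting object, and such an $M$ is indecomposable rigid with $\Hom_\C(M,gM[1])=0$ for all $g$ (being a summand of a $G$-stable rigid object); thus $M\cong gV_i$ for some $g\in G$, $i\in I$, and $F(X_{gV_i})=F(X_{V_i})$ by Proposition \ref{PropClusterChar}. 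This gives $(1)$, while the identity $F(X_{V_i})=X^G_{FV_i}$ together with the classification of the indecomposable rigid objects of $\C_G$ gives $(2)$; the values of $X^G$ on the remaining, decomposable, rigid objects are then monomials in these by multiplicativity.

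The main obstacle is the reverse inclusion required for $(1)$: that \emph{every} $V_i$ (equivalently, every indecomposable rigid object of $\C_G$) actually occurs as a summand of a reachable $G$-stable cluster-tilting object, so that $\mathcal{X}_G$ is the full set of orbit-vanishing indecomposable rigids. I would handle this by transporting the question to $\C_G$: by Proposition \ref{PropLift} every cluster-tilting object of $\C_G$ lifts to a $G$-stable cluster-tilting object of $\C$, and by the results of Subsection \ref{sect cto} an orbit mutation in $\C$ corresponds to an ordinary mutation in $\C_G$, so $\mathcal{G}_G$ is identified with the mutation component of $U_G$ in $\C_G$. The claim then reduces to connectedness of the exchange graph of the finite-type category $\C_G$, which is exactly where the finite-type hypothesis is essential; in the acyclic case one also uses that every indecomposable rigid object of $\C$ is reachable, so that $\mathcal{X}$ is the set of all indecomposable rigid objects of $\C$.
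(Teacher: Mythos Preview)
Your argument for the bijection is correct and follows essentially the same route as the paper's, though you spell out more detail (invoking Propositions \ref{FiniteType} and \ref{PropARQUivers}, the adjoint $\bar F$, and Krull--Schmidt explicitly; the paper appeals more tersely to ``the argument of the proof of Proposition \ref{PropLift}''). Where you go further is in the reverse inclusion for $(1)$: the paper's proof only justifies that every generalized cluster variable of $\cala_G$ is of the form $F(X_{V_i})$ (``by definition'', since each $N\in\mathcal{X}_G$ is a summand of a $G$-stable rigid object and hence automatically orbit-vanishing), and does not explicitly argue that every $V_i$ actually lies in $\mathcal{X}_G$. You have correctly flagged this as the main obstacle and reduced it, via Proposition \ref{PropLift} and the correspondence between orbit mutations in $\C$ and ordinary mutations in $\C_G$, to connectedness of the exchange graph of $\C_G$. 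This reduction is sound, but note that $\C_G$ is in general not of acyclic type ($Q_G$ may have loops and $2$-cycles), so connectedness of its exchange graph cannot be read off from the acyclic theory and needs an independent argument valid for Hom-finite $2$-Calabi--Yau categories of finite representation type; you should supply or cite one, together with the fact that every indecomposable rigid object in such a category is a summand of a cluster-tilting object.
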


\begin{proof}
Since $\C$ is of acyclic type, every indecomposable rigid object in $\C$ is a summand of a cluster-tilting object that can be obtained from $U$ by finitely many mutations. Therefore, $\C$ is $G$-mutation connected. It follows from the argument of the proof of Proposition \ref{PropLift} that for $X$ indecomposable in $\C$, $F(X)$ is rigid in $\C_G$ if and only if $\Hom_{\C}(X,gX[1])=0$ for all $g \in G$. Moreover, all indecomposable rigid objects of $\C_G$ can be obtained this way. Clearly, for $X_1, X_2$ rigid in $\C$, we have $F(X_1) \cong F(X_2)$ if and only if $X_1, X_2$ lie in the same $G$-orbit, up to isomorphism. This yields the main part of the proposition. This also shows that $(1)$ and $(2)$ give the same elements.
Now, part $(1)$ gives the description of the generalized cluster variables in $\cala_G$ by definition, since it is well known in this case that the cluster variables of $\cala$ are given by the $X_V$ where $V$ is indecomposable rigid in $\C$.
\end{proof}

We present two examples for illustration.

\begin{Exam}
Let $S$ be the disk with $6$ marked points on the boundary represented by a regular hexagon. Let $T$ be a triangulation of $(S,M)$ such that a rotation of $2\pi/3$ fixes $T$,  see Figure \ref{figlem10}. Let $G = \Z_3$ be the cyclic group of order $3$ generated by a rotation of $2\pi/3$. We let $x_1, x_2, x_3$ be the initial cluster variables corresponding to the arcs of $T$. The quiver $Q$ is an oriented cycle of length $3$  and the potential is this cycle.  In the following picture, we put the Auslander-Reiten quiver of $\C$ where each indecomposable $M$ of $\C$ is replaced by its cluster variable $X_M$.
$$\xymatrixrowsep{12pt}\xymatrixcolsep{10pt}\xymatrix{&&x_3 \ar[dr]&&\frac{x_1 + x_2 + x_3}{x_1x_3}\ar[dr]&&& \\ &\frac{x_1 + x_3}{x_2}\ar[dr] \ar[ur]&&\frac{x_2 + x_3}{x_1}\ar[dr]\ar[ur]&&\frac{x_1 + x_2}{x_3}\ar[dr]\ar@{.>}[ur]&& \\ x_1\ar[ur]&&\frac{x_1 + x_2 + x_3}{x_1x_2}\ar[ur]&&x_2\ar[ur]&&\frac{x_1 + x_2 + x_3}{x_2x_3}\ar@{.>}[ur]&}$$
We know that the set $\calx$ of cluster variables of $\cala$ consists of the  $X_M$ where $M$ is any indecomposable object of $\C$.  On the other hand, the set $\calx_G$ contains only the 6 cluster variables of the top row and the bottom row. Setting $x_1,x_2,x_3$ equal to $y_1$ and making the appropriate identifications in the quiver, we obtain the following picture of the Auslander-Reiten quiver of $\C_G$ where each indecomposable $Y$ of $\C_G$ is replaced by $X^G_Y$.
$$\xymatrixrowsep{12pt}\xymatrixcolsep{10pt}\xymatrix{&2\ar@/_0.5pc/[dr] \ar@/_0.5pc/[dl]&&\\ y_1\ar@/_0.5pc/[ur]&&3/y_1\ar@/_0.5pc/[ul]&}$$

The set of cluster variables $F(\calx_G)$ is  $\{y_1,3/y_1\}$ while the set $F(\calx) $ is $\{y_1,3/y_1,2\}$. Note that both sets generate the same algebra, thus $\cala_G=\cala/ \langle x_{i,j}- x_{i, j'}\rangle$.

If $V$ denotes the indecomposable object of $\C_G$ labeled by a $2$ and $\bar V$ is a lift of it, then $\oplus_{g \in G}g\bar V$ is not rigid in $\C$. So the object with character $2$ is not rigid, even though it comes from a rigid object in $\C$. Observe that an Auslander-Reiten triangle $L \to M \to N \to L[1]$ of $\C_G$ satisfies $X^G_LX^G_N = X^G_M+1$.
\end{Exam}

\begin{Exam}
Let $S$ be the once-punctured disk with $4$ marked points on the boundary. Let $T$ be the  triangulation of $(S,M)$ such that a rotation of $\pi/4$ fixes $T$. Let $G = \Z_4$ be the cyclic group of order $4$ generated by a rotation of $\pi/4$. We let $x_1, x_2, x_3, x_4$ be the initial cluster variables corresponding to the arcs of $T$. The quiver $Q$ is an oriented cycle of length $4$ with arrows $\alpha, \beta, \gamma, \delta$ and the potential is $W = \alpha\beta\gamma\delta$. In the following picture, we put the Auslander-Reiten quiver of $\C$ where each indecomposable $M$ of $\C$ is replaced by $X_M$.
$$\xymatrixrowsep{8pt}\xymatrixcolsep{8pt}\xymatrix{&&x_1 \ar[ddr]&&f_2\ar[ddr]&&x_3 \ar[ddr]&&f_4 \ar@{.>}[ddr] \\
&&f_1 \ar[dr]&&x_2\ar[dr]&&f_3 \ar[dr]&&x_4\ar@{.>}[dr] \\
&g_2\ar[dr]\ar[uur] \ar[ur]&&g_3\ar[dr]\ar[ur]\ar[uur]&&g_4\ar[dr]\ar[ur]\ar[uur]&&g_1\ar@{.>}[dr]\ar[ur]\ar[uur]&&g_2 \\ \frac{x_1 + x_3}{x_2}\ar[ur]&&\frac{x_2 + x_4}{x_3}\ar[ur]&&\frac{x_1 + x_3}{x_4}\ar[ur]&&\frac{x_4 + x_2}{x_1}\ar[ur]&&\frac{x_1 + x_3}{x_2}&}$$
where
$$f_{i} = \frac{x_4x_1 + x_1x_2 + x_2x_3 + x_3x_4}{x_1x_2x_3x_4}x_i$$
and
$$g_i = \frac{x_4x_1 + x_1x_2 + x_2x_3 + x_3x_4 - x_ix_{i+1}}{x_ix_{i+1}}$$
and where indices are taken modulo $4$.
We know that the set $\calx$ of cluster variables of $\cala$ consists of  the 16 variables $X_M$ where $M$ is any indecomposable object of $\C$.  On the other hand, the set $\calx_G$ contains only the 8 cluster variables $x_i,f_i$.
Again setting $x_1,x_2,x_3,x_4$ equal to $y_1$ and making the appropriate identifications in the quiver, we obtain the following picture of the Auslander-Reiten quiver of $\C_G$ where each indecomposable $Y$ of $\C_G$ is replaced by $X^G_Y$.
$$\xymatrixrowsep{12pt}\xymatrixcolsep{10pt}\xymatrix{&y_1 \ar@/_0.5pc/[dl]&\\ 3\ar@/_0.5pc/[ur] \ar@/_0.5pc/[rr] \ar@/_0.5pc/[dr]
&&4/y_1\ar@/_0.5pc/[ll]\\&2\ar@/_0.5pc/[ul] &}$$
The set of cluster variables $F(\calx_G)$ is $\{y_1,4/y_1\}$ whereas the set $F(\calx) $ is $\{y_1,4/y_1,2,3\}$. Again both sets generate the same algebra, thus $\cala_G=\cala/ \langle x_{i,j}- x_{i, j'}\rangle$.
\end{Exam}

\end{document}